\newtheorem{thm}{Theorem}[section]
\newtheorem{cor}[thm]{Corollary}
\newtheorem{defn}[thm]{Definition}
\newtheorem{prop}[thm]{Proposition}
\newtheorem{rmk}[thm]{Remark}
\newtheorem{definition}[thm]{Definition}
\newtheorem{remark}[thm]{Remark}
\newtheorem{example}[thm]{Example}
\newcommand{\ext}{\mathop{\mathrm{ext}}}
\newcommand{\clo}{\mathop{\mathrm{clo}}}
\newcommand{\dist}{\mathop{\mathrm{dist}}}
\newcommand{\re}{\mathop{\mathrm{Re}}}
\newcommand{\oz}{\mathop{{\scriptscriptstyle\mathbb{O}}_L}}
\newcommand{\oo}{\mathop{{\scriptscriptstyle\mathbb{O}}_R}}
\newcommand{\Fi}{{\imath}}
\newcommand{\Fj}{{\jmath}}
\newcommand{\Fk}{{\kappa}}
\newcommand{\Fl}{{\rho}}
\numberwithin{equation}{section}
\pgfplotsset{compat=1.18}
\begin{document}

\title[Convergence domains of sedenionic star-power series]{Convergence domains\\ of sedenionic star-power series}
\author{Xinyuan Dou}
\email[Xinyuan Dou]{douxinyuan@ustc.edu.cn}
\address{Department of Mathematics, University of Science and Technology of China, Hefei 230026, China}
\author{Ming Jin}
\email[Ming Jin]{mjin@must.edu.mo}
\address{Faculty of Innovation Engineering, Macau University of Science and Technology, Macau, China}
\author{Guangbin Ren}
\email[Guangbin Ren]{rengb@ustc.edu.cn}
\address{Department of Mathematics, University of Science and Technology of China, Hefei 230026, China}
\author{Irene Sabadini}
\email[Irene Sabadini]{irene.sabadini@polimi.it}
\address{Dipartimento di Matematica, Politecnico di Milano, Via Bonardi, 9, 20133 Milano, Italy}
%\date{\today}
\keywords{Sedenions; slice regular functions; Domains of holomorphy;  hyper-$\sigma$-balls; hyper-solutions}
\thanks{This work was supported by Xiaomi Young Talents Program. The fourth author is partially supported by PRIN 2022 {\em Real and Complex Manifolds: Geometry and Holomorphic Dynamics} and is member of GNASAGA of INdAM.} %the NNSF of China (11771412)

\subjclass[2010]{Primary: 30G35; Secondary: 32A30, 32D05}

\begin{abstract}
The theory of slice regular  (also called hyperholomorphic) functions is a generalization of complex analysis originally given in the quaternionic framework, and then further extended to Clifford algebras, octonions, and to real alternative algebras. Recently, we have extended this theory to the case of (real) even-dimensional Euclidean space. We provided several tools for studying slice regular functions in this context, such as a path-representation formula, slice-solutions, hyper-solutions and hyper-sigma balls. When adding an algebra structure to Euclidean space we have more properties related to the possibility of
multiplying elements in the algebra. In this paper, we focus on exploring the properties specific to the context of the algebra of sedenions which is rather peculiar, being the algebra non associative and non alternative. The consideration of a non alternative algebra is a novelty in the context of slice regularity which opens the way to a number of further progresses. We offer an explicit characterization for slice-solutions and hyper-solutions, and we determine the convergence domain of the power series in the form $q\mapsto\sum_{n\in\mathbb{N}}(q-p)^{*n}a_n$ with sedenionic coefficients. Unlike the case of complex numbers and quaternions, the convergence domain in this context is determined by two convergence radii instead of one. These results are closely tied to the presence of zero divisors in the algebra of sedenions.

\end{abstract}

\maketitle

\section{Introduction}

A very successful analog of the Cauchy-Riemann equations in the quaternionic case, was introduced by Moisil and Fueter and further developed by Fueter and his school, see \cites{Fueter1934001, Fueter1935001,Moisil}. A quaternionic function $f:U\subset\mathbb{H}\rightarrow\mathbb{H}$ is ``holomorphic" (Fueter-regular, or hyperholomorphic, in modern terms) if, roughly speaking,
\begin{equation*}
    \frac{1}{4}\left(\frac{\partial}{\partial x_0}+i\frac{\partial}{\partial x_1}+j\frac{\partial}{\partial x_2}+k\frac{\partial}{\partial x_3}\right)f(x_0+x_1i+x_2j+x_3k)=0.
\end{equation*}
This function theory is well developed, see e.g., \cites{MR2089988,MR2369875,Sudbery1979001, Qian1998001, Frenkel2008001, Li2013001}. It important to note that none of the monomials $q\mapsto q^n$, $n\in\mathbb N$ is Fueter-regular and to overcome this problem, other function theories have been explored.

In 2007, Gentili and Struppa \cite{Gentili2007001} introduced the theory of slice regular function in one quaternionic variable inspired by an idea to to Cullen \cite{Cullen1965001} and treating, basically, the case of power series on balls centered at the origin. Later, it has been proved that convergent  series of the form $q\mapsto\sum_{n\in\mathbb{N}}(q-p)^{*n}a_n$ are slice regular functions. Then, the theory of slice regular functions has been extended to more general domains and also to the case of Clifford algebras \cite{Colombo2009002}, octonions \cite{Gentili2010001}, real alternative $*$-algebras \cite{Ghiloni2011001} and, more generally, to even-dimensional Euclidean spaces \cite{Dou2023002}. Furthermore, the slice regular functions in several variables \cites{Colombo2012002, Ghiloni2012001, Ghiloni2022001, Dou2023002, Xu2024001} and vector-valued functions \cite{Alpay2016001B} have been studied. The theory of slice regular functions provides powerful mathematical tools for studying quaternionic operators \cites{Colombo2007001, Gantner2020001, Colombo2022001, Wang2022001, Colombo2024001} and quantum mechanics \cites{Khokulan2015001, Muraleetharan2017001, Muraleetharan2018001, Thirulogasanthar2017001}, see also books \cites{Colombo2011001B, Colombo2018001B, Colombo2019001B}.

The natural product of slice regular functions is the so-called $*$-product \cite{Colombo2009001} which coincides with the product of polynomials or convergent series with noncommuting coefficients \cites{Gentili2008001,Gentili2008004}. There are other ways to define the $*$-product to meet different constraints on the domain of definition of the functions. For example, a $*$-product for non-axially symmetric domains is defined in \cites{Dou2024001,Dou2024002,Dou2024003}, and allows to consider the $*$-product $f_1*f_2:\Sigma(I,2)\rightarrow\mathbb{H}$, $I\in\mathbb{S}_\mathbb{H}$, on the so-called $\sigma$-ball $\Sigma(I,2)$ with center at $I$ and radius $2$, where
\begin{equation*}
	\begin{split}
		f_\ell: \quad \Sigma(I,2^\ell) \quad &\xlongrightarrow[\hskip1cm]{}\qquad \mathbb{H},
		\\ q\qquad &\shortmid\!\xlongrightarrow[\hskip1cm]{}\ \sum_{\ell\in\mathbb{N}}\left(\frac{q-I}{2^\ell}\right)^{*2^n},
	\end{split}\qquad\qquad \ell=1,2.
\end{equation*}
The definition in \cites{Dou2024001,Dou2024002,Dou2024003} is treats a very general case and thus it is complicated. In this paper it is enough to consider the $*$-product of power series, which is a classical notion going back to \cite{Fliess} and is defined by taking the Cauchy product of the coefficients, i.e.
\begin{equation}\index{$*$-product}\label{eq-starproduct}
	\left(\sum_{\ell\in\mathbb{N}}q^\ell a_\ell\right)
    *\left(\sum_{\ell\in\mathbb{N}}q^\ell b_\ell\right)
    =\sum_{\ell\in\mathbb{N}} \sum_{k=0}^n q^\ell \left(a_k b_{\ell-k}\right),
\end{equation}
where $a_\ell, b_\ell\in\mathfrak{S}$ are sedenions. The $*$-product is  {ubiquitous} in the theory of slice regular functions and it is used everywhere one needs a multiplication which is an internal operation.

The convergence set of a power series with $*$-product is a $\sigma$-ball in the case of quaternions; see \cite{Gentili2012001}. However, the convergence of such a power series in the case of a real alternative $*$-algebra $A$ is hard to study, see \cite{Ghiloni0214002}*{Theorem 3.4}. In fact, to check that the power series
\begin{equation}\label{eq-powerseries}
    P:\quad q\shortmid\!\xlongrightarrow[\hskip1cm]{} \sum_{\ell\in\mathbb{N}}(q-p)^{*\ell} a_\ell
\end{equation} converges in the $\sigma$-ball follows a standard reasoning. On the other hand, it is hard to prove that $P$ diverges in the exterior of the $\sigma$-ball unless a special condition is added:
\begin{equation*}
    a_\ell\in\mathcal{Q}_A=\bigcup_{I\in\mathbb{S}_A}\mathbb{C}_I.
\end{equation*}
Thus, it is an interesting and difficult question to characterize the convergence of the power series $P$ with $a_\ell\in A$. This question is even more interesting in the general case of slice regular functions in which the codomain is considered a real vector space of even dimension, see \cite{Dou2023002}. It is also possible to check that the power series converges in $\sigma$-ball by the similar method in the proof of \cite{Dou2021001}*{Theorem 8.8}. However, some power series do not diverge in the exterior of the $\sigma$-ball, see the proof of \cite{Dou2021001}*{Proposition 7.9}. This fact shows that the convergence domain of the power series is dependent on the algebraic structure under consideration. We have many concrete examples in the general case, such as Cayley-Dickson algebras. In this paper, we will use the algebraic properties of sedenions, which form the real $16$-dimenional Cayley-Dickson algebra, to characterize the convergence domain of the power series $P$ with sedenionic coefficients.

The Cayley-Dickson algebras $A_\ell$, $\ell\in\mathbb{N}$, play a crucial role in mathematical physics, e.g. \cite{Mirzaiyan2023001} exploits the power of these algebras
to generate stationary rotating black hole solutions in one fell
swoop. Quaternions $\mathbb{H}=A_2$ contribute to the study of quantum mechanics and quantum field, especially the Schr{\"o}dinger equation \cite{Adler1995001B}. Octonions are closely related to $G_2$-manifold \cite{Corti2015001}, $M$-theory \cite{Gunaydin2016001}, and Yang-Mills field theory \cite{Figueroa1998001}. {The algebraic structure of sedenions $\mathfrak{S}=A_4$ is more intricated, and still has applications \cites{Chanyal2014001,Gillard2019001}.} This paper is devoted to the convergence domain of sedenionic power series with $*$-product, aiming to provide more mathematical tools for mathematical physics in higher dimensions.

Our result show that the domain of convergence of the sedenionic $*$-power series $P$   in \eqref{eq-powerseries}, may be the intersection of a $\sigma$-ball $\Sigma(p,R_a^p)$, see \eqref{eq-sigmapab},  and a hyper-$\sigma$ ball $\Sigma(p,R_a,(I_p,J))$, see Theorem \ref{thm-lpmw}. Thus, there are two convergence radii, $R_a$ and $R_a^{p,J}$. This is a new phenomenon that does not appear in complex numbers, quaternions, and octonions. The definition of hyper-$\sigma$-ball is related to the hyper-solutions and the zero divisors of sedenions, see Definition \ref{eq-hypersigmaball} and Corollary \ref{co-lj}. To this end, we introduce a general polar coordinate system of slice-unit $I\in\mathcal{C}_\mathfrak{S}$ with the coordinate
\begin{equation*}
    (\alpha_{[I]},\theta_{[I]},\jmath_{[J]})\in [0,\pi]\times [0,\pi)\times\mathbb{S}_\mathbb{O},
\end{equation*}
see Proposition \ref{pr-fei}. This general polar coordinate is used to describe some intermediate tools, see Propositions \ref{pr-ijj}, \ref{pr-lemi1i2} and \ref{pr-st}.

The paper is organized as follows. In Section \ref{sc-mainresults}, we collect some notations and definitions and we state our main results. In Section \ref{sc-preliminaries}, we recall some properties of sedenions and sedenionic slice analysis useful for the study in the paper. In Section \ref{sc-hyper}, we give a general polar coordinate system of slice-units. Based on this, we prove that a $2$-tuple $(J_1,J_2)$ with $J_1\neq J_2$ is a hyper-solution if and only if $J_1-J_2$ is a zero divisor. In Section \ref{sc-hypersigmaball}, we classify the hyper-$\sigma$ balls, and prove that both hyper-$\sigma$-balls and $\sigma$-balls are domains of regularity. In Section \ref{sc-ods}, we will give two orthogonal decompositions of sedenions. In Section \ref{sc-domainofconvergence}, we prove our main result, that is: the domain of convergence of power series is the intersection of a hyper-$\sigma$-ball and a $\sigma$-ball. It is clear that this new and peculiar phenomenon is caused by the presence of zero divisors  {and to the peculiarities of} the algebra of sedenions.

\section{Main results}\label{sc-mainresults}
In this section, we state our main results. To this end, we need to recall some notations and definitions. The Cayley–Dickson construction \cite{Dickson1919001} inductively produces a sequence of algebras over the field of real numbers, by setting
\begin{equation*}\index{$\mathbb{R}$}
    A_0:=\mathbb{R},
\end{equation*}
and defining
\begin{equation}\index{$A_\ell$}\index{$\mathbb{N}$}\label{eq-el}
    A_{\ell+1}:=A_\ell+A_\ell e_{2^\ell},\qquad \ell\in\mathbb{N}:=\{0,1,2,\ldots\}.
\end{equation}
Each $A_{\ell +1}$ has twice the dimension of the previous $A_\ell$, and $a+be_{2^\ell}$, $c+de_{2^\ell}$ satisfy:\index{Cayley-Dickson construction}
\begin{equation}\label{eq-aell}
    \begin{cases}
        (a+be_{2^\ell})(c+de_{2^\ell}):=ac-\overline{d}b+(da+b\overline{c})e_{2^\ell},\\
        \overline{a+be_{2^\ell}}:=a-be_2^\ell,
    \end{cases}\qquad\forall\ a,b,c,d\in A_\ell,
\end{equation}
where $\overline{x}:=x$ when $x\in\mathbb{R}$, and $e_{2^\ell}$ \index{$e_{2^\ell}$} is an imaginary unit in $A_{\ell+1}$.

The first five Cayley-Dickson algebra respectively are: real numbers $\mathbb{R}=A_0$, complex numbers $\mathbb{C}=A_1$, \index{$\mathbb{C}$} quaternions $\mathbb{H}=A_2$, \index{$\mathbb{H}$} octonions $\mathbb{O}=A_3$ \index{$\mathbb{O}$} and sedenions $\mathfrak{S}=A_4$. \index{$\mathfrak{S}$}

Let $e_0:=1\in\mathbb{R}$, {and $e_{2^n}\ (n=1,2,\cdots)$ be defined as in \eqref{eq-el}. The remaining elements are determined inductively for $n=1,2,3,\cdots$ by
\begin{equation*}\index{$e_\ell$}
    e_{m+2^n}:=e_m e_{2^n},\qquad\qquad m=1,2,...,2^n-1.
\end{equation*}
With these conventions, $\{e_\ell\}_{\ell=0}^{2^{n}-1}$ forms a basis of $A_{n}$, $n\in\mathbb{N}$, as a real algebra.}
In particular, each $s\in\mathfrak{S}$ is of the form
\begin{equation*}
	s=\sum_{m=0}^{15}x_m e_m,\qquad x_m\in\mathbb{R}.
\end{equation*}
{By \eqref{eq-aell},} elements $\{e_\ell\}_{\ell=0}^{15}$ satisfy the multiplication table in the Appendix. For the properties of sedenions, we refer the reader to the papers \cites{Schafer1954001,Moreno1998001,Muses1980001}.

\begin{remark}\label{rmk-zd}
The algebra of sedenions contains zero divisors; for example, direct calculations show that
    \begin{equation}\label{eq-e1e10}
        (e_1-e_{10})(e_4+e_{15})=0,
    \end{equation}
    and so $e_1-e_{10}$ and $e_4+e_{15}$ are zero divisors.
\end{remark}
\begin{definition}
By $L_s$ we denote the left multiplication by $s\in A_\ell$, namely \index{$L_s$} $$L_s:{A_\ell}\rightarrow {A_\ell},\qquad \  a\mapsto sa.$$
The set $\mathcal{S}_{A_\ell}$ of the so-called slice-units in $A_\ell$ is defined by
\begin{equation*}\index{$\mathcal{S}_{A_\ell}$}
	\mathcal{S}_{A_\ell}:=\left\{s\in A_\ell:L_s^2=-id_{A_\ell}\right\};
\end{equation*}
moreover we introduce the notation
\begin{equation*}\index{$\mathcal{C}_{A_\ell}$}
	\mathcal{C}_{A_\ell}:=\left\{L_s:s\in\mathcal{S}_{A_\ell}\right\},
\end{equation*}
while \index{$\mathbb{S}_{A_\ell}$}
\begin{equation*}
	\mathbb{S}_{A_\ell}:=\{\mathfrak{i}\in A_\ell:\mathfrak{i}^2=-1\}
\end{equation*}
denotes the set of imaginary units of the algebra $A_\ell$.
\end{definition}
By direct calculations, in the case of the algebra of sedenions $\mathfrak{S}$, we obtain \index{$\mathcal{S}_{\mathfrak{S}}$}
\begin{equation}\label{eq-sf}
	\mathcal{S}_{\mathfrak{S}}
    (=\mathcal{S}_{{A_4}})
    =\left\{a+be_8\in\mathbb{S}_{\mathfrak{S}}:a,b\in\mathbb{O},\ ab=ba\right\}.
\end{equation}

The so-called slice cone of sedenions $\mathfrak{S}$ is denoted by
\begin{equation*}\index{$\mathcal{W}_\mathfrak{S}$}
	\mathcal{W}_\mathfrak{S}
    :=\bigcup_{I\in\mathcal{C}_\mathfrak{S}}\mathbb{C}_I,
\end{equation*}
and is equipped with the slice topology
\begin{equation*}
\index{$\tau_s(\mathcal{W}_\mathfrak{S})$}
	\tau_s(\mathcal{W}_\mathfrak{S}):=\{\Omega\subset \mathcal{W}_\mathfrak{S}\ :\Omega_I\in\tau(\mathbb{C}_I),\ \forall\ I\in\mathcal{C}_\mathfrak{S}\}.
\end{equation*}
Note that we denote an element in $\mathcal{C}_\mathfrak{S}$ by $I$ but also by $L_s$ when we make explicit that the element corresponds to the left multiplication by $s\in\mathfrak{S}$.
Open sets, connected sets, and paths in $\tau_s(\Omega)$ are called slice-open sets, slice-connected sets, and slice-paths in $\Omega$, respectively. \index{Slice-open, slice-connected, slice-paths} Following \cite{Dou2021001} we introduce the next definition:

\begin{defn}\index{Slice regular}
	Let $\Omega\in\tau_s(\mathcal{W}_\mathfrak{S})$. A function $f:\Omega\rightarrow \mathfrak{S}$ is called slice regular, if for each $I\in\mathcal{C}_\mathfrak{S}$, $f_I:=f|_{\Omega_I}$ is real differentiable and
	\begin{equation*}
		\frac{1}{2}\left(\frac{\partial}{\partial x}+I\frac{\partial}{\partial y}\right)f_I(x+yI)=0,\qquad\mbox{on}\qquad\Omega_I.
	\end{equation*}
\end{defn}

Let $z\in\mathbb{C}$ and $r\in [0,+\infty]$. Let us set
\begin{equation*}\index{$B^*(z,r)$,$B(z,r)$}
	B^*(z,r):=\{w\in\mathbb{C}:|w-z|<r\}\bigcup\{z\}.
\end{equation*}
If $r>0$, then $B^*(z,r)$ is an open ball, and we abbreviate $B^*(z,r)$ as $B(z,r)$. If $r\in[0,\infty)$ is the radius of convergence of $\{a_n\}_{n\in\mathbb{N}}$ with $a_n\in\mathbb{C}$, then the complex power series $w\mapsto(w-z)^{n}a_n$ converges on $B^*(z,r)$.

Let $b=\sum_{\imath=0}^{15} b_\imath e_\imath$ and $c=\sum_{\imath=0}^{15} c_\imath e_\imath$ with $b_\imath,c_\imath\in\mathbb{R}$. Set
\begin{equation}\index{$\langle \cdot,\cdot\rangle$}\label{eq-lr}
	\langle b,c\rangle=\sum_{\imath=0}^{15} b_\imath c_\imath,
	\qquad\mbox{and define}\qquad
	\langle L_b,L_c\rangle:=\langle b, c\rangle.
\end{equation}

Fix $I_0\in\mathcal{C}_\mathfrak{S}$. Let $p\in\mathcal{W}_\mathfrak{S}$ and
\begin{equation*}\index{$Re(p)$}\index{$I_p$}
    Re(p):=\langle p,1\rangle,\qquad\qquad
	I_p:=
    \begin{cases}
        I_0,\qquad & p\in\mathbb{R},\\
	    \frac{p-\re(p)}{|p-\re(p)|},\qquad &\mbox{otherwise},
	\end{cases}
\end{equation*}
\begin{equation*}\index{$Im(p)$}\index{$z_p$}
    Im(p):=\langle p, I_p\rangle,\qquad\qquad\mbox{and}\qquad\qquad z_p:=Re(p)+Im(p)i\in\mathbb{C}.
\end{equation*}

Let $I\in\mathcal{C}_\mathfrak{S}\cup\{0\}$ and $J\in\mathcal{C}_\mathfrak{S}$ and set
\begin{equation*}\index{$\Psi_i^I$}
	\begin{split}
		\Psi_i^I:\quad \mathbb{C}\quad &\xlongrightarrow[\hskip1cm]{}\quad \mathbb{R}+\mathbb{R}I,
		\\ x+yi\ &\shortmid\!\xlongrightarrow[\hskip1cm]{}\quad x+yI,
	\end{split}
\end{equation*}
and
\begin{equation*}\index{$\Psi_J^I$}
    \Psi_J^I:=\Psi_i^I\circ\left(\Psi_i^J\right)^{-1}.
\end{equation*}

\begin{defn}
	Let $p\in\mathcal{W}_\mathfrak{S}$ and $r\in [0,+\infty]$ and
	\begin{equation}\index{$\Sigma(p,r)$}\index{$\sigma$-ball}\label{eq-sigmapr}
		\Sigma(p,r):=\left\{\bigcup_{J\in\mathcal{C}_\mathfrak{S}} \Psi_i^J\left[B^*(z_p,r)\cap B^*(\overline{z_p},r)\right]\right\}\bigcup\Psi_i^{I_p}\left[B^*(z_p,r)\right].
	\end{equation}
	If $r\in(0,+\infty)$, then we call $\Sigma(p,r)$ the \emph{$\sigma$-ball with center $p$ and radius $r$}.
\end{defn}

Our next task is to define hypersolutions, with the ulterior goal of defining the so-called hyper-$\sigma$-balls, and to this end we need some more terminology and concepts.

Let $J=(J_1,...,J_m)\in\left(\mathcal{C}_\mathfrak{S}\right)^m$. Define
\begin{equation}\label{eq-msmfs}\index{$\mathcal{C}_\mathfrak{S}^{\ker}(J)$}
	\begin{split}
    	\mathcal{C}_\mathfrak{S}^{\ker}(J)
        :=&\left(\mathcal{C}_\mathfrak{S}\right)_{\ker}(J)
        \\:=&\left\{I\in\mathcal{C}_\mathfrak{S}:\ker(1,I)\supset\ker[\zeta(J)]=\bigcap_{\ell=1}^m\ker(1,J_\ell)\right\},
	\end{split}
\end{equation}
where
\begin{equation*}
	\zeta(J):=\begin{pmatrix}
		1&J_1\\\vdots&\vdots\\1&J_k
	\end{pmatrix},
\end{equation*}
and
\begin{equation*}
	\ker(1,I):=\left\{\begin{pmatrix} a\\b \end{pmatrix}\in\mathfrak{S}^{2\times 1}:(1,I)\begin{pmatrix} a\\b \end{pmatrix}=0\right\}.
\end{equation*}

\begin{example}\label{EX24}
Some direct computations which will be fully justified in Example \ref{ex-cker} show that
    \begin{equation*}
        \begin{split}
            &\mathcal{C}_{\mathfrak{S}}^{\ker}(L_{e_1},L_{e_{10}})
            \\=&\bigg\{L\left(\cos^2\theta e_1+\cos\theta\sin\theta e_2+[\sin\theta\cos\theta e_1+\sin^2\theta e_2]e_8\right):\theta\in[0,\pi)\bigg\}
        \end{split}
    \end{equation*}
    (where we write $L(s)$ instead of $L_s$) is a closed curve in $\mathcal{C}_\mathfrak{S}$.
\end{example}

\begin{defn}\index{Slice-solution}
The element	$J=(J_1,...,J_m)\in\mathcal{C}_\mathfrak{S}^m$ is called a slice-solution, if $\mathcal{C}_\mathfrak{S}^{\ker}(J)=\mathcal{C}_\mathfrak{S}$.
\end{defn}

\begin{defn}\index{Hyper-solution}
The element	$J=(J_1,...,J_m)\in\mathcal{C}_\mathfrak{S}^m$ is called a hyper-solution, if $J$ is not a slice-solution, and for each $I\in\mathcal{C}_\mathfrak{S}\backslash\mathcal{C}_\mathfrak{S}^{\ker}(J)$, $(J_1,...,J_m,I)$ is a slice-solution.
\end{defn}

\begin{defn}\label{eq-hypersigmaball}
	Let $p\in\mathcal{W}_\mathfrak{S}$, $J=(J_1,J_2,...,J_m)\in\mathcal{C}_\mathfrak{S}^m$ and $r\in[0,+\infty]$. Denote
	\begin{equation}\label{eq-sqrj}\index{$\Sigma(p,r,J)$}\index{Hyper-$\sigma$-ball}
		\begin{split}
		    \Sigma(p,r,J):=&\left\{\bigcup_{K\in\mathcal{C}_\mathfrak{S}} \Psi_i^K\left[B^*(z_p,r)\cap B^*(\overline{z_p},r)\right]\right\}
            \\&\bigcup\left\{\bigcup_{K\in\mathcal{C}_\mathfrak{S}^{\ker}(J)} \Psi_i^{K}\left[B^*(z_p,r)\right]\right\}.
		\end{split}
	\end{equation}
    If $r>0$, $J$ is a hyper-solution with $I_p=J_1$, then we call $\Sigma(p,r,J)$ the \emph{hyper-$\sigma$-ball with hyper-solution $J$, center $p$ and radius $r$}.
\end{defn}
	
\begin{rmk}
	Let $J=(J_1,J_2,...,J_m)\in\mathcal{C}_\mathfrak{S}^m$ and $K_1,...,K_\ell\in\{J_1,J_2,...,J_m\}$. If $\ker(\zeta(J))=\ker(\zeta(K))$, then it is immediate to check that
	\begin{equation*}
		\mathcal{C}_\mathfrak{S}^{\ker}(J)=\mathcal{C}_\mathfrak{S}^{\ker}(K).
	\end{equation*}
	Let $q=x+yJ_1\in\mathbb{C}_{J_1}$ and $r\in[0,+\infty)$. Then
	\begin{equation*}
		\Sigma(q,r,J)=\Sigma(x+yK_1,r,K).
	\end{equation*}
\end{rmk}

{\bf Convention:} As customary, we define
\begin{equation*}
    \frac{1}{0}:=+\infty,\qquad\mbox{and}\qquad \frac{1}{+\infty}:=0.
\end{equation*}

\begin{definition}
Let $p\in\mathcal{W}_{\mathfrak{S}}$, $J\in\mathcal{C}_{\mathfrak{S}}$ and $a:=\{a_\ell\}_{\ell\in\mathbb{N}}$ with $a_\ell\in \mathfrak{S}$. We define two convergence radii by
\begin{equation}\label{eq-Ra}\index{$R_a$}
	R_a:=\frac{1}{\limsup\limits_{\ell\rightarrow +\infty}|a_\ell|^{\frac{1}{\ell}}}
\end{equation}
and
\begin{equation}\label{eq-rapj}\index{$R_{a}^{p,J}$}
	R_{a}^{p,J}:=\begin{cases}
		R_a,\qquad & p\in\mathbb{R}\quad
        \mbox{or}\quad J=I_p,\\
		\left[{\limsup\limits_{\ell\rightarrow +\infty} \dist\big(a_\ell,\ker(I_p-J)\big)^{\frac{1}{\ell}}}\right]^{-1},\qquad\qquad &\mbox{otherwise},
	\end{cases}
\end{equation}
where
\begin{equation*}
    \ker(I_p-J):=\{c\in\mathfrak{S}:(I_p-J)c=0\}.
\end{equation*}
\end{definition}

It is easy to check that
\begin{equation}\label{eq-gek}
    R_a^{p,K}\ge R_a,\qquad\forall\ K\in\mathcal{C}_\mathfrak{S}.
\end{equation}

\begin{example}
Since $\mathfrak S$ contains zero divisors (see Remark \ref{rmk-zd}) according to \eqref{eq-kerkerc}, it follows by taking $u=e_1$, $v=-e_2$ and $w=e_4$ in Proposition \ref{pr-zd} that
    \begin{equation}\label{eq-kere1e10}
        \ker(e_1-e_{10})={\mathrm{span}}_\mathbb{R}\left(e_4+e_{15},e_5-e_{14},e_6+e_{13},e_7-e_{12}\right).
    \end{equation}
    It follows that
    \begin{equation}\label{eq-1perpker}\index{$\perp$}
        1\perp\ker(e_1-e_{10}),
    \end{equation}
    that is $\langle 1,\ker(e_1-e_{10})\rangle=0$.
\end{example}

Let $p\in\mathcal{W}_{\mathfrak{S}}$ and $a:=\{a_\ell\}_{\ell\in\mathbb{N}}$ with $a_\ell\in \mathfrak{S}$. As we shall see in \eqref{eq-lrapk}, there is  $J\in\mathcal{C}_{\mathfrak{S}}$ such that $R_a^{p,J}=R_a^p$, where
\begin{equation}\index{$R_a^p$}\label{eq-rap}
	R_a^p:=\sup_{K\in\mathcal{C}_\mathfrak{S}}\left\{R_a^{p,K}\right\}.
\end{equation}
According to \eqref{eq-gek} and \eqref{eq-rap},
\begin{equation*}
    R_a^p\ge R_a.
\end{equation*}
We now introduce the notation
\begin{equation*}\index{$B^*_{\mathcal{W}_{\mathfrak{S}}}(p,R_a)$}
    B^*_{\mathcal{W}_{\mathfrak{S}}}(p,R_a):=\{q\in\mathcal{W}_{\mathfrak{S}}:|p-q|<R_a\}\cup\{p\}.
\end{equation*}
and
\begin{equation}\index{$\Sigma(p,a)$}\label{eq-sigmapab}
	\Sigma(p,a):=\begin{cases}
		B^*_{\mathcal{W}_{\mathfrak{S}}}(p,R_a),\qquad & p\in\mathbb{R},\\
        \Sigma(p, R_a^p),\qquad & R_a^p=R_a,\\
		\Sigma(p,R_a,(I_p,J))\cap\Sigma(p, R_a^p),\qquad\qquad& \mbox{ortherwise}.
	\end{cases}
\end{equation}
As we shall fully justify later in Proposition \ref{pr-lpimw}, \eqref{it-rapj1} the notation is well-posed.
Moreover, if $R_a>0$, then for $\varepsilon\in\left[0,R_a\right)$
we set
\begin{equation*}\index{$\Sigma(p,a,\varepsilon)$}
	\Sigma(p,a,\varepsilon):=\begin{cases}
		B_{\mathcal{W}_{\mathfrak{S}}}(p,R_a-\varepsilon),\qquad & p\in\mathbb{R},\\
        \Sigma(p, R_a^p-\varepsilon),\qquad & R_a^p=R_a,\\
		\Sigma(p,R_a-\varepsilon,(I_p,J))\cap\Sigma(p, R_a^p-\varepsilon),\qquad\qquad & \mbox{ortherwise},
	\end{cases}
\end{equation*}
where we write \index{$B_{\mathcal{W}_{\mathfrak{S}}}(p,R_a)$} $B_{\mathcal{W}_\mathfrak{S}}(p,r)$ instead of $B^*_{\mathcal{W}_\mathfrak{S}}(p,r)$ when $r>0$.

{\bf Convention}. In the paper, we shall often write $s\in L(\mathfrak{S}):=\{L_a:a\in\mathfrak{S}\}$ instead of $L_s$, $s\in\mathfrak{S}$.

\begin{example}
    Let $p=L_{e_1}$, $J=L_{e_{10}}$ and $a_\ell=\frac{1}{3^\ell}+\frac{e_4+e_{15}}{2^\ell}$, $\ell\in\mathbb{N}_+:=\mathbb{N}\backslash\{0\}$\index{$\mathbb{N}_+$}. According to \eqref{eq-kere1e10} and \eqref{eq-1perpker},
    \begin{equation*}
        R_a=2,\qquad\mbox{and}\qquad R_a^{L_{e_1},J}=3,
    \end{equation*}
    where $a=\{a_\ell\}_{\ell\in\mathbb{N}_+}$. To elaborate this example we need to appeal to Proposition \ref{pr-lpimw} in which we take $J_1=J_2=L_{e_{10}}$, so that
    \begin{equation*}
        R_a^p=R_a^{L_{e_1},J}=3.
    \end{equation*}
    Then, one can check by
    \begin{equation*}
        \Sigma(L_{e_1},a)=\Sigma(L_{e_1},2,(L_{e_1},L_{e_{10}}))\cap\Sigma(L_{e_1},3)
    \end{equation*}
    that
    \begin{equation*}
        [\Sigma(e_1,a)]_I
        =\begin{cases}
            \Psi_i^{L_{e_1}}[B(i,2)],\qquad\qquad &I=\pm L_{e_1},\\
            \Psi_i^{I}[B(i,2)\cap B(-i,3)],\qquad\qquad & I\in\mathcal{C}_\mathfrak{S}^{\ker}(L_{e_1},L_{e_{10}})\backslash\{L_{e_1}\},\\
            \Psi_i^{I}[B(-i,2)\cap B(i,3)],\qquad\qquad & I\in -\left[\mathcal{C}_\mathfrak{S}^{\ker}(L_{e_1},L_{e_{10}})\backslash\{L_{e_1}\}\right],\\
            \Psi_i^{I}[B(i,2)\cap B(-i,2)],\qquad\qquad & \mbox{otherwise}.
        \end{cases}
    \end{equation*}
Figure $1$ displays $[\Sigma(L_{e_1},a)]_I$ for all possible $I\in\mathcal{C}_\mathfrak{S}$.
    \begin{figure}[htbp]
        \includegraphics[width=12.5cm]{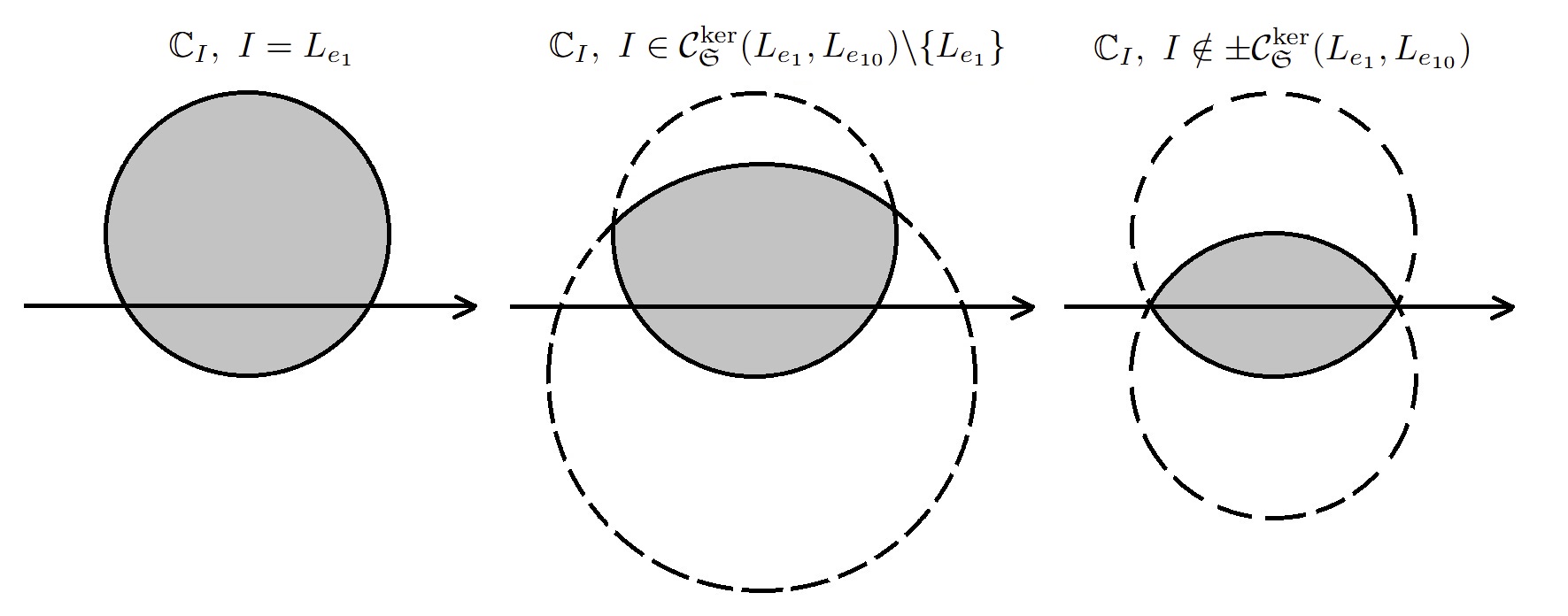}\caption{}
    \end{figure}
\end{example}

{\bf Notation}. Let $(X,\tau)$ be a topological space and $U\subset X$.
\index{${\clo}_{\tau(X)}(U)$, ${\ext}_{\tau(X)}(U)$}
Denote ${\clo}_{\tau(X)}(U)$, ${\ext}_{\tau(X)}(U)$ is the closure and the exterior of $U$ in $\tau(X)$.  In particular,  for $V\subset\mathcal{W}_{\mathfrak{S}}$, we write for short $\ext_{\tau_s}(V)$ instead of  $\ext_{\tau_s(\mathcal{W}_{\mathfrak{S}})}(V)$, and $\clo_{\tau_s}(V)$ instead of $\clo_{\tau_s(\mathcal{W}_{\mathfrak{S}})}(V)$. \index{$\clo_{\tau_s}(V)$, $\ext_{\tau_s}(V)$}

To state the main theorem, studying $*$-power series and their convergence, we recall that the $*$-product is defined by \eqref{eq-starproduct}. Let $p\in\mathcal{W}_{\mathfrak{S}}$. By induction, the $*$-product of polynomials with the variable $q\in\mathcal{W}_{\mathfrak{S}}$ satisfies
\begin{equation*}
	\begin{split}
		(q-p)^{*(n+1)}
        =&\left[(q-p)^{*n}\right]*(q-p)
        \\=&\left[\sum_{\ell=0}^n \begin{pmatrix}n\\ \ell\end{pmatrix} q^{n-\ell}(-p)^\ell\right]*(q-p)
        \\=&\ \sum_{\ell=0}^{n+1} \begin{pmatrix}n+1\\ \ell\end{pmatrix} q^{n+1-\ell}(-p)^\ell.
	\end{split}
\end{equation*}

\begin{thm}\label{thm-lpmw}
	Let $p\in\mathcal{W}_{\mathfrak{S}}$ and $a=\{a_\ell\}_{\ell\in\mathbb{N}}$ with $a_\ell\in\mathfrak{S}$. Then the power series
	\begin{equation*}
		\begin{split}
			P:\ \mathcal{W}_\mathfrak{S}\ &\xlongrightarrow[\hskip1cm]{}\ \mathfrak{S},
			\\ q\ \ &\shortmid\!\xlongrightarrow[\hskip1cm]{}\ P(q)=\sum_{\ell\in\mathbb{N}} (q-p)^{*\ell} a_\ell,
		\end{split}
	\end{equation*}
	converges absolutely and it is slice regular in $\Sigma(p,a)$, moreover it diverges in
    ${\ext}_{\tau_s}\left[\Sigma(p,a)\right]$.
	
	Moreover, if $R_a\in(0,+\infty]$ and $\varepsilon\in\left(0,R_a\right)$, then the power series $P$ converges uniformly in
    \begin{equation*}
        \begin{cases}
            \clo_{\tau_s}\left[\Sigma(p,\varepsilon)\right],\qquad\qquad&R_a=+\infty,
            \\\clo_{\tau_s}\left[\Sigma(p,a,\varepsilon)\right],\qquad\qquad&\mbox{otherwise.}
        \end{cases}
    \end{equation*}
\end{thm}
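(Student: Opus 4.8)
The plan is to reduce the statement to three essentially one-complex-variable facts, controlled slice-by-slice, and then glue them with the slice topology. First I would fix $p\in\mathcal{W}_{\mathfrak S}$ and, for each $I\in\mathcal{C}_{\mathfrak S}$, restrict $P$ to the slice $\mathbb{C}_I$. On a slice, $(q-p)^{*\ell}$ evaluated at $q=x+yI$ becomes, via the isomorphism $\Psi_i^I$, a polynomial in the complex variable $w=\Psi_i^{-1}$ of $q$ with sedenionic coefficients; more precisely the $\ell$-th term has the shape $\bigl[\Psi_i^I(\text{something built from }z_p,\overline{z_p})\bigr]$ acting on $a_\ell$, and the key algebraic point (coming from the path-representation/evaluation formula for $*$-products recalled before the theorem) is that this something is a product of two linear factors: one involving $w-z_p$, which is ``seen'' on every slice, and one involving $w-\overline{z_p}$ whose coefficient is a left-multiplication by $I_p-I$. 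Thus on the slice $\mathbb{C}_I$ the series looks like $\sum_\ell (w-z_p)^{a}(w-\overline{z_p})^{b}\,c_{\ell}$ where $c_\ell$ is either $a_\ell$ (when $I=I_p$, or $p\in\mathbb R$, so the second factor is absent) or the ``twisted'' coefficient $(I_p-I)^{?}a_\ell$, whose size is comparable to $\mathrm{dist}(a_\ell,\ker(I_p-I))$ up to the finitely-many-determined constant $|I_p-I|$.

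Granting that reduction, the three facts to prove are: (i) \emph{convergence and slice regularity on $\Sigma(p,a)$}. For each $I$ I would identify the slice $[\Sigma(p,a)]_I$ explicitly from \eqref{eq-sigmapab} and the definitions \eqref{eq-sigmapr}, \eqref{eq-sqrj}: it is $\Psi_i^I[B^*(z_p,R_a)\cap B^*(\overline{z_p},R_a)]$ for generic $I$, it enlarges to $\Psi_i^I[B^*(z_p,R_a^p)]$ exactly when $I\in\mathcal{C}_{\mathfrak S}^{\ker}(I_p,J)$ (using that $\ker(I_p-I)\supseteq\ker(I_p-J)$ there, hence $\mathrm{dist}(a_\ell,\ker(I_p-I))\le\mathrm{dist}(a_\ell,\ker(I_p-J))$ and so the twisted radius on that slice is $\ge R_a^{p,J}=R_a^p$), and it is the full disc $\Psi_i^{I_p}[B^*(z_p,R_a)]$ on the pivot slice. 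On each such slice the complex power-series root test, applied to the two-factor form above, gives absolute convergence; slice regularity then follows because $P_I$ is, on each slice, a convergent complex-analytic expression satisfying the Cauchy–Riemann operator, and the slice-topology definition of slice regular only asks for this slicewise condition. (ii) \emph{divergence on $\mathrm{ext}_{\tau_s}[\Sigma(p,a)]$}. Here I would use that a point in the slice-exterior lies in the interior, within its own slice $\mathbb{C}_I$, of the complement of $[\Sigma(p,a)]_I$; then on that slice one of the two complex radii is strictly exceeded, and the corresponding complex root test forces $\limsup$ of the terms $\ne 0$ — this is where the sharpness of $R_a$ and $R_a^{p,J}=R_a^p$ as $\limsup^{-1}$ expressions is used, together with the fact, provable by choosing a suitable direction in $\ker(I_p-I)^\perp$, that the twisted coefficients cannot all collapse. (iii) \emph{uniform convergence on $\mathrm{clo}_{\tau_s}[\Sigma(p,a,\varepsilon)]$}. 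Since shrinking both radii by $\varepsilon$ gives, on each slice, a complex set whose closure is a compact subset of the corresponding open disc(s) of convergence, the complex Weierstrass $M$-test applies slicewise with a single geometric majorant $\sum_\ell \bigl(\tfrac{R_a-\varepsilon}{R_a}\bigr)^{\ell}$ (respectively its $R_a^p$-analogue), uniform in the slice.

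I expect the main obstacle to be step (i)–(ii) \emph{the precise matching of the slice $[\Sigma(p,a)]_I$ with the slice of convergence of $P_I$}, i.e. proving that for every $I$ the radius governing the $(w-\overline{z_p})$-factor on $\mathbb{C}_I$ is exactly $R_a^{p,I}$ and that $R_a^{p,I}\ge R_a^p$ precisely on $\mathcal{C}_{\mathfrak S}^{\ker}(I_p,J)$ — this rests on the linear-algebra identity that $\mathrm{dist}(a_\ell,\ker(I_p-I))$ is controlled two-sidedly by $|(I_p-I)a_\ell|$ and on the characterization of $\mathcal{C}_{\mathfrak S}^{\ker}$ via kernel inclusions, so I would isolate it as a lemma (essentially the content behind \eqref{eq-gek} and the well-posedness claim of Proposition \ref{pr-lpimw}) and invoke it. A secondary technical nuisance is bookkeeping the degenerate cases $p\in\mathbb R$ and $I=\pm I_p$, where the second linear factor degenerates or coincides with the first; these are handled by direct inspection and collapse to the classical quaternionic/complex picture. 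Everything else — the root test, the $M$-test, and the verification that a slicewise-analytic function is slice regular — is routine once the geometry of the slices is pinned down.
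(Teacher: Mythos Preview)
Your slice-by-slice strategy and the identification of $R_a^{p,I}$ as the radius governing the ``conjugate'' center are broadly right, and match the paper's outline. But there is a real gap in step (ii), and it stems from a structural misreading of the slice restriction.

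On a slice $\mathbb{C}_J$ with $p\notin\mathbb{C}_J$, the term $(q-p)^{*\ell}a_\ell$ is \emph{not} a product of two linear factors $(w-z_p)^a(w-\overline{z_p})^b$ acting on $a_\ell$. The representation formula gives a \emph{sum}, and after splitting $a_\ell=a_{\ell,=}^{p,q}+a_{\ell,\perp}^{p,q}$ orthogonally along $\ker(I_p-I_q)$ one obtains (see the proof of Proposition~\ref{pr-ljim})
\[
P(q)=\sum_\ell (q-z^{I_q})^\ell o_\ell+\sum_\ell (q-z^{I_q})^\ell c_\ell+\sum_\ell (q-z^{-I_q})^\ell d_\ell,
\]
three genuine power series with two distinct centers, where $o_\ell=a_{\ell,=}^{p,q}$ has radius $R_a$, $d_\ell=\tfrac{1+I_qI_p}{2}a_{\ell,\perp}^{p,q}$ has radius exactly $R_a^{p,J}$, and $R_c\ge R_a^{p,J}$. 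Your root-test argument shows that outside $\mathbb{B}$ the terms of some individual sub-series fail to vanish, but that does not force $P$ to diverge: the three pieces can interfere. This is not a phantom issue --- on the locus where $|q-z^{I_q}|/R_o=|q-z^{-I_q}|/R_d$ the dominant $o$-terms and $d$-terms have the same order of growth, and a comparison of magnitudes alone (your Proposition~\ref{pr-3diverges}-style step) gives nothing.

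The paper closes this gap with an orthogonality argument that your sketch does not contain: one shows (Propositions~\ref{prop-keruv} and \ref{pr-keruv}, resting on the explicit kernel description in Proposition~\ref{pr-zd}) that $\ker(I_p-I_q)$ and $[\ker(I_p-I_q)]^\perp$ are each invariant under left multiplication by $1,I_p,I_q$. Consequently $(q-z^{I_q})^\ell o_\ell\in\ker(I_p-I_q)$ while $(q-z^{-I_q})^\ell d_\ell\in[\ker(I_p-I_q)]^\perp$ for every $\ell$, so these contributions are orthogonal in $\mathfrak{S}$ and cannot cancel (Proposition~\ref{pr-im}). This invariance is a sedenion-specific fact about zero divisors and is the missing idea; without it your divergence claim in the critical-ratio regime is unsupported. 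The degenerate case $R_a=0$ also needs separate treatment (Proposition~\ref{pr-ra0}), since there the convergence set can be a discrete family of points rather than a genuine ball, and your scheme of ``shrinking radii by $\varepsilon$'' does not apply.
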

The proof is postponed to Section \ref{sc-domainofconvergence}.

\section{Some main properties of sedenions}\label{sc-preliminaries}

In this section, we recall some classical properties of the algebra of sedenions (see e.g. \cite{Moreno1998001} for further information), as well as some fundamental facts of slice analysis in the particular case of sedenions, which were originally treated in \cite{Dou2023002}*{Section 9}. Following \cite{Dou2021001}*{Definition 7.1}, we define first domains of slice regularity in $\mathcal{W}_\mathfrak{S}$.
\begin{defn}\index{Domain of slice regularity}
	A slice-open set $\Omega\subset\mathcal{W}_\mathfrak{S}$ is called a domain of slice regularity if there are no slice-open sets $\Omega_1$ and $\Omega_2$ in $\mathcal{W}_\mathfrak{S}$ with the following properties:
	\begin{enumerate}[\upshape (i)]
		\item $\varnothing\neq\Omega_1\subset\Omega_2\cap\Omega$.
		\item $\Omega_2$ is slice-connected and not contained in $\Omega$.
		\item For any slice regular function $f$ on $\Omega$, there is a slice regular function $\widetilde{f}$ on $\Omega_2$ such that $f=\widetilde{f}$ in $\Omega_1$.
	\end{enumerate}
\end{defn}

In the sequel, we shall denote the set of imaginary units in the algebra of octonions by
\begin{equation*}\index{$\mathbb{S}_\mathbb{O}$}\index{$\mathcal{C}_\mathbb{O}$}
	\mathbb{S}_\mathbb{O}:=\{\Fi\in\mathbb{O}:\Fi^2=-1\},
\end{equation*}
and
\begin{equation*} \mathcal{C}_\mathbb{O}:=\{L_{\Fi}:\Fi\in\mathbb{S}_\mathbb{O}\}.
\end{equation*}

{
\begin{defn}\index{Special triple}
    Let $\Fi,\Fj,\Fk\in\mathbb{O}$ with $|\Fi|=|\Fj|=|\Fk|=1$ and
    \begin{equation*}
        (\Fi\Fj)\Fk=-\Fi(\Fj\Fk).
    \end{equation*}
    Then we say that $(\Fi,\Fj,\Fk)$ is a special triple of $\mathbb{O}$. We also say that $(L_\Fi,L_\Fj,L_\Fk)$ is a special triple of $L(\mathbb{O})$.
\end{defn}

\begin{prop}
    Let $(\Fi,\Fj,\Fk)$ be a special triple of $\mathbb{O}$. Then
    \begin{enumerate}[\upshape (i)]
        \item\label{it-ijk} $\Fi,\Fj,\Fk\in\mathbb{S}_\mathbb{O}$.
        \item\label{it-ijji} $\Fi\Fj=-\Fj\Fi$, $\Fj\Fk=-\Fk\Fj$ and $\Fk\Fi=-\Fi\Fk$.
    \end{enumerate}
\end{prop}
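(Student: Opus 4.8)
The plan is to argue entirely with the Euclidean scalar product $\langle\cdot,\cdot\rangle$ on $\mathbb{O}\cong\mathbb{R}^8$ and the Hurwitz (composition‑algebra) identities, never touching the multiplication table. Recall that for all $x,y,a\in\mathbb{O}$ one has $\langle 1,x\rangle=\re(x)$, $|xy|=|x|\,|y|$, $\langle ax,ay\rangle=|a|^2\langle x,y\rangle$, $\langle xa,ya\rangle=\langle x,y\rangle|a|^2$ and $\langle ab,c\rangle=\langle b,\bar a c\rangle=\langle a,c\bar b\rangle$, and that $\bar x=2\re(x)-x$, so $x\in\mathbb{S}_\mathbb{O}$ iff $|x|=1$ and $\bar x=-x$. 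Set $E:=(\Fi\Fj)\Fk+\Fi(\Fj\Fk)$; the hypothesis says $E=0$, while $|\Fi\Fj|=|\Fj\Fk|=|\Fi\Fk|=1$ by multiplicativity of the norm.

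For (i) I would test $E=0$ against three well chosen unit vectors. Pairing with $\Fj\Fk$: successively removing the common right factors $\Fk$ and then $\Fj$ via $\langle xa,ya\rangle=\langle x,y\rangle|a|^2$ collapses both $\langle(\Fi\Fj)\Fk,\Fj\Fk\rangle$ and $\langle\Fi(\Fj\Fk),\Fj\Fk\rangle$ to $\langle\Fi,1\rangle=\re(\Fi)$, so $0=\langle E,\Fj\Fk\rangle=2\re(\Fi)$. Pairing instead with $\Fi\Fj$, now using the companion identity $\langle ax,ay\rangle=|a|^2\langle x,y\rangle$ to strip common left factors, gives $0=2\re(\Fk)$; pairing with $\Fi\Fk$ (stripping one left and one right factor) gives $0=2\re(\Fj)$. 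Hence $\Fi,\Fj,\Fk$ are unit vectors with vanishing real part, i.e. $\Fi,\Fj,\Fk\in\mathbb{S}_\mathbb{O}$, which is (i); in particular $\Fi^2=\Fj^2=\Fk^2=-1$ and $\bar\Fi=-\Fi$, $\bar\Fj=-\Fj$, $\bar\Fk=-\Fk$.

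For (ii), granting (i), polarization gives $\Fa\Fb+\Fb\Fa=-2\langle\Fa,\Fb\rangle$ for $\Fa,\Fb\in\{\Fi,\Fj,\Fk\}$, so (ii) is equivalent to pairwise orthogonality of $\Fi,\Fj,\Fk$. Testing $E=0$ against $\Fk$ and using $\langle ab,c\rangle=\langle b,\bar a c\rangle$, $\langle ab,c\rangle=\langle a,c\bar b\rangle$ together with $\bar\Fi=-\Fi$, $\bar\Fk=-\Fk$, reduces both summands of $\langle E,\Fk\rangle$ to $-\langle\Fi,\Fj\rangle$, so $\langle\Fi,\Fj\rangle=0$ and $\Fi\Fj=-\Fj\Fi$. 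Testing against $\Fi$ similarly yields $\langle\Fj,\Fk\rangle=0$ and $\Fj\Fk=-\Fk\Fj$. Finally, testing against $\Fj$ --- now also invoking the two anticommutation relations just obtained, e.g. to rewrite $(\Fi\Fj)\Fj=\Fi\Fj^2=-\Fi$ and to commute $\Fj$ past $\Fi$ and past $\Fk$ --- reduces $\langle E,\Fj\rangle$ to $2\langle\Fi,\Fk\rangle$, giving $\langle\Fi,\Fk\rangle=0$ and $\Fk\Fi=-\Fi\Fk$, completing (ii).

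All six inner‑product computations are one or two lines; the only real obstacle is combinatorial, namely choosing for each of them the correct order in which to remove left/right factors so that everything reduces either to a real part (for (i)) or to one of $\langle\Fi,\Fj\rangle,\langle\Fj,\Fk\rangle,\langle\Fi,\Fk\rangle$ (for (ii)). Two logical dependencies must be respected: (i) has to precede (ii), since the reductions in (ii) use $\bar\Fi=-\Fi$, $\bar\Fj=-\Fj$, $\bar\Fk=-\Fk$; and within (ii) the third relation uses the first two, so $\Fi\Fj=-\Fj\Fi$, $\Fj\Fk=-\Fk\Fj$, $\Fk\Fi=-\Fi\Fk$ should be established in that order.
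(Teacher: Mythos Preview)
Your proof is correct and takes a genuinely different route from the paper's. The paper argues structurally: it splits into the case where $\Fi,\Fj$ lie in a common complex line $\mathbb{C}_\Fl$ (and then $\Fi,\Fj,\Fk$ sit in a quaternionic subalgebra, where the associator vanishes, a contradiction) versus the case where $\mathbb{H}_{\Fi,\Fj}$ is a genuine quaternion subalgebra, and then decomposes $\Fk=\Fk_1+\Fk_2 e_4'$ along the Cayley--Dickson splitting $\mathbb{O}=\mathbb{H}_{\Fi,\Fj}\oplus\mathbb{H}_{\Fi,\Fj}e_4'$ to force $\Fk_1=0$ by a norm estimate on the associator. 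Your argument, by contrast, never leaves the level of the bilinear form: you pair the single equation $E=(\Fi\Fj)\Fk+\Fi(\Fj\Fk)=0$ with six well chosen unit vectors and reduce everything via the Hurwitz adjunction identities $\langle ax,y\rangle=\langle x,\bar a y\rangle$, $\langle xa,y\rangle=\langle x,y\bar a\rangle$ and their symmetric forms. This is cleaner and more portable---it works verbatim in any composition algebra with nondegenerate form---whereas the paper's proof leans on the specific subalgebra lattice of $\mathbb{O}$. The paper's approach, on the other hand, makes the geometric content (that $\Fk\perp\mathbb{H}_{\Fi,\Fj}$) explicit, which is exactly how special triples are used later. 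One small remark: in your final step (pairing with $\Fj$) you do not actually need the alternative law $(\Fi\Fj)\Fj=\Fi\Fj^2$; the anticommutation $\Fi\Fj=-\Fj\Fi$ already lets you write $\langle\Fi\Fj,\Fj\Fk\rangle=-\langle\Fj\Fi,\Fj\Fk\rangle=-\langle\Fi,\Fk\rangle$, so the argument stays purely at the inner-product level.
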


\begin{proof}
    If there is $\Fl\in\mathbb{S}_\mathbb{O}$ such that $\Fi,\Fj\in\mathbb{C}_L$, then
    \begin{equation*}\index{$\mathbb{H}_{L,K}$}
        \Fi,\Fj,\Fk\in\mathbb{H}_{\Fl,\Fk}:=\mathbb{R}\langle 1,\Fk,\Fl,\Fk\Fl\rangle.
    \end{equation*}

    It implies that
    \begin{equation*}
        (\Fi\Fj)\Fk=\frac{(\Fi\Fj)\Fk-\Fi(\Fj\Fk)}{2}=\frac{(\Fi\Fj)\Fk-(\Fi\Fj)\Fk}{2}=0,
    \end{equation*}
    a contradiction to $\Fi,\Fj,\Fk\neq 0$.

    Otherwise, $\mathbb{H}_{\Fi,\Fj}$ is an algebra of quaternions. Write
	\begin{equation*}
		\Fk=\Fk_1+\Fk_2 e_4',
	\end{equation*}
	for some $\Fk_1,\Fk_2\in\mathbb{H}'$ and $e_4'\in\mathbb{S}_\mathbb{O}\cap(\mathbb{H}_{L,K})^\perp$. Then
	\begin{equation*}
    	\begin{split}
			2=&|2(\Fi\Fj)\Fk|=|(\Fi\Fj)\Fk-\Fi(\Fj\Fk)|
            \\=&|(\Fi\Fj)(\Fk_2e_4')-\Fi[\Fj(\Fk_2e_4')]|\le 2|\Fk_2|\le2.
        \end{split}
	\end{equation*}
	It implies that
    \begin{equation*}
        |\Fk_2|=1,\qquad \Fk_1=0\qquad\mbox{and}\qquad \Fk=\Fk_2e_4'\in\mathbb{S}_\mathbb{O}\cap\mathbb{H}_{\Fi,\Fj}.
    \end{equation*}
    Then $\Fk\perp \Fi$ and $\Fk\Fi=-\Fi\Fk$. Similarly, \eqref{it-ijk} and \eqref{it-ijji} hold.
\end{proof}
}

\begin{rmk} We note that
	$(\Fi,\Fj,\Fk)$ is a special triple of $\mathbb{O}$ if and only if it is a triple of unit imaginary octonions such that each of them is orthogonal to the algebra generated by the other two.
\end{rmk}

\begin{prop}\label{pr-lij}
	Let $(\Fi,\Fj,\Fk)$ be a special triple of $\mathbb{O}$. For each $\theta\in[0,2\pi)$, let us set
    \begin{equation*}
		\begin{cases}
			\Fi'=\cos(\theta)\Fi+\sin(\theta)\Fj,\\
			\Fj'=\cos\left(\theta+\frac{\pi}{2}\right)\Fi+\sin\left(\theta+\frac{\pi}{2}\right)\Fj=-\sin(\theta)\Fi+\cos(\theta)\Fj.
		\end{cases}
	\end{equation*}
  Then  $(\Fi',\Fj',\Fk)$ and $(\Fi',-\Fj',\Fk)$ are special triples of $\mathbb{O}$.
\end{prop}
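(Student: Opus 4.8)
The plan is to verify directly that $(\Fi',\Fj',\Fk)$ satisfies the defining identity $(\Fi'\Fj')\Fk=-\Fi'(\Fj'\Fk)$, using the known linearity of octonionic multiplication in each argument together with the special-triple relation for $(\Fi,\Fj,\Fk)$. First I would observe that since $\Fi'$ and $\Fj'$ are real linear combinations of $\Fi$ and $\Fj$, the expressions $(\Fi'\Fj')\Fk$ and $\Fi'(\Fj'\Fk)$ are $\mathbb{R}$-bilinear in the pairs of coefficients, so it suffices to expand both sides into the four "basis" associator terms obtained from $\Fi,\Fj$ and compare. Concretely, writing $\Fi'=c\,\Fi+s\,\Fj$ and $\Fj'=-s\,\Fi+c\,\Fj$ with $c=\cos\theta$, $s=\sin\theta$, bilinearity gives
\begin{equation*}
  (\Fi'\Fj')\Fk=-cs\,(\Fi\Fi)\Fk+c^2(\Fi\Fj)\Fk-s^2(\Fj\Fi)\Fk+sc\,(\Fj\Fj)\Fk,
\end{equation*}
and similarly for $\Fi'(\Fj'\Fk)$.

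Next I would use the elementary facts $\Fi^2=\Fj^2=-1$ (from part \eqref{it-ijk} of the preceding proposition), $\Fj\Fi=-\Fi\Fj$ (from part \eqref{it-ijji}), and the observation that $\mathbb{R}\langle 1,\Fi,\Fj,\Fi\Fj\rangle$ is a quaternion subalgebra of $\mathbb{O}$, hence associative, so in particular $(\Fi\Fi)\Fk$ is \emph{not} automatically associative with $\Fk$; however, the terms $(\Fi\Fi)\Fk=-\Fk$ and $(\Fj\Fj)\Fk=-\Fk$ are scalars times $\Fk$, and likewise $\Fi(\Fi\Fk)$, $\Fj(\Fj\Fk)$ can be controlled since $\mathbb{O}$ is alternative, giving $\Fi(\Fi\Fk)=(\Fi\Fi)\Fk=-\Fk$ and $\Fj(\Fj\Fk)=-\Fk$. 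The only genuinely non-associative contributions are the cross terms involving $(\Fi\Fj)\Fk$, $(\Fj\Fi)\Fk$, $\Fi(\Fj\Fk)$ and $\Fj(\Fi\Fk)$. Here I would invoke the standard alternative-algebra identity that the associator $[\,x,y,z\,]:=(xy)z-x(yz)$ is alternating in its three arguments; applied to $(\Fi,\Fj,\Fk)$ it yields $(\Fj\Fi)\Fk-\Fj(\Fi\Fk)=-\big[(\Fi\Fj)\Fk-\Fi(\Fj\Fk)\big]$. Combining these, the two expanded sides collapse to
\begin{equation*}
  (\Fi'\Fj')\Fk+\Fi'(\Fj'\Fk)=(c^2+s^2)\big[(\Fi\Fj)\Fk+\Fi(\Fj\Fk)\big]+(\text{terms that cancel})=0,
\end{equation*}
using $(\Fi\Fj)\Fk=-\Fi(\Fj\Fk)$ and $c^2+s^2=1$. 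This establishes that $(\Fi',\Fj',\Fk)$ is a special triple; the case $(\Fi',-\Fj',\Fk)$ follows by replacing $\Fj'$ by $-\Fj'$, which only flips an overall sign in the bilinear expansion and hence preserves the identity (alternatively, note $(\Fi',-\Fj')$ is obtained from $(\Fi,\Fj)$ by the rotation of angle $\theta$ composed with the reflection $\Fj\mapsto-\Fj$, and the special-triple condition is invariant under such orthogonal changes of the pair).

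I expect the main obstacle to be purely bookkeeping: correctly tracking which products associate and which do not, since $\mathbb{O}$ is alternative but not associative, so one must be careful to only use the alternating property of the associator and the alternative laws $x(xy)=(xx)y$, $x(yx)=(xy)x$, rather than full associativity. A clean way to minimize this is to work with the associator trilinear form $[\,\cdot,\cdot,\cdot\,]$ directly: by alternation it vanishes whenever two arguments are equal or proportional, so $[\Fi',\Fj',\Fk]$ expands (by trilinearity) to $c^2[\Fi,\Fj,\Fk]-s^2[\Fj,\Fi,\Fk]=(c^2+s^2)[\Fi,\Fj,\Fk]=[\Fi,\Fj,\Fk]$, and the hypothesis says exactly $2[\Fi,\Fj,\Fk]=(\Fi\Fj)\Fk-\Fi(\Fj\Fk)-[\Fi(\Fj\Fk)-(\Fi\Fj)\Fk]$ — more simply, $(\Fi\Fj)\Fk=-\Fi(\Fj\Fk)$ is equivalent to $[\Fi,\Fj,\Fk]=(\Fi\Fj)\Fk-\Fi(\Fj\Fk)=2(\Fi\Fj)\Fk$, and the same relation then holds verbatim for $(\Fi',\Fj',\Fk)$ once one checks $(\Fi'\Fj')\Fk=(\Fi\Fj)\Fk$ up to the vanishing symmetric part. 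This associator-form approach reduces the whole proof to trilinearity plus the alternating property, which I would spell out as the one real computation.
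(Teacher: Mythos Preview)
Your proposal is correct and is exactly the direct calculation the paper has in mind; its own proof reads in full ``This proposition holds by direct calculation and elementary trigonometric identities.'' The only omission is the (trivial) check that $|\Fi'|=|\Fj'|=1$, which holds because $(\Fi',\Fj')$ is an orthogonal rotation of the orthonormal pair $(\Fi,\Fj)$.
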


\begin{proof}
	This proposition holds by direct calculation and elementary trigonometric identities.
\end{proof}

The following Proposition \ref{eq-ab8} is proved in \cite{Moreno1998001}*{Proposition 2.11}. However, to avoid the use of several additional concepts in \cite{Moreno1998001},  we give another, direct proof. Note that we identify the algebra of octonions with the algebra generated by the basis elements $e_0,\ldots, e_7$ obeying the standard defining relations.

\begin{prop}\label{eq-ab8}
	Let $a+be_8,c+de_8$ be non-zero sedenions with $a,b,c,d\in\mathbb{O}$. Then
	\begin{equation}\label{eq-abe}
		(a+be_8)(c+de_8)=0,
	\end{equation}
	if and only if $a,b,c\neq 0$, $d=\frac{a(bc)}{|a||b|}=\frac{(ba)c}{|a||b|}$, $|a|=|b|$ and $\left\{\frac{a}{|a|},\frac{b}{|b|},\frac{c}{|c|}\right\}$ is a special triple of $\mathbb{O}$.
\end{prop}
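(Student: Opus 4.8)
The plan is to analyze the equation $(a+be_8)(c+de_8)=0$ directly via the Cayley--Dickson multiplication rule \eqref{eq-aell} at level $\ell=3$ (so $e_{2^\ell}=e_8$), which gives
\begin{equation*}
    (a+be_8)(c+de_8) = (ac-\overline{d}b) + (da+b\overline{c})e_8,
\end{equation*}
and hence \eqref{eq-abe} is equivalent to the system $ac=\overline{d}b$ and $da=-b\overline{c}$ in the octonions $\mathbb{O}$. The strategy is: (1) show that none of $a,b,c$ can vanish (and solve for $d$); (2) take norms to force $|a|=|b|$; (3) extract the special-triple condition on $\left(\frac{a}{|a|},\frac{b}{|b|},\frac{c}{|c|}\right)$; and conversely (4) check that these conditions are sufficient. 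Throughout I will use that $\mathbb{O}$ is a normed alternative division algebra, so $|xy|=|x||y|$, $\overline{xy}=\overline{y}\,\overline{x}$, $x\overline{x}=|x|^2$, and any two elements generate an associative subalgebra; the failure of associativity in $\mathbb{O}$ only among three ``independent'' elements is exactly what produces the special-triple constraint.

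First I would dispose of the degenerate cases. If $a=0$ then the system reads $0=\overline{d}b$ and $0=-b\overline{c}$; since $a+be_8\neq 0$ we get $b\neq 0$, and because $\mathbb{O}$ has no zero divisors, $d=0$ and $c=0$, forcing $c+de_8=0$, a contradiction; so $a\neq 0$. Symmetrically, writing the system as $\overline{d}=ac\,b^{-1}$ (valid once $b\neq 0$) shows that if $b=0$ then $ac=0$ hence $c=0$ and also $da=0$ hence $d=0$, again contradicting $c+de_8\neq 0$; so $b\neq 0$. Then from $ac=\overline{d}b$ we solve $\overline{d}=(ac)\overline{b}/|b|^2$, i.e. $d=(b\overline{c})\,\overline{a}/|b|^2\cdot$(conjugate bookkeeping); and if $c=0$ this gives $d=0$, contradiction, so $c\neq 0$ too. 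Taking norms in $ac=\overline{d}b$ gives $|a||c|=|d||b|$, and taking norms in $da=-b\overline{c}$ gives $|d||a|=|b||c|$; multiplying these two and cancelling $|a||b||c|\neq 0$ yields $|a||d|=|b||d|$ — wait, more carefully, dividing the two norm relations gives $|a|^2=|b|^2$, hence $|a|=|b|$ as claimed, and then each relation individually gives $|d||a|=|a||c|$, so $|d|=|c|$ as well (a fact implicitly needed for the clean formula for $d$).

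Next, with $|a|=|b|$, normalize: set $\Fi=a/|a|$, $\Fj=b/|b|$, $\Fk=c/|c|$, all unit octonions. The two scalar equations become, after dividing by the appropriate norm products, a relation of the shape $\Fi\Fk = \pm(\text{unit})\,\Fj$ and a compatibility relation that, upon substituting $d$ from one into the other, collapses to a single identity among $\Fi,\Fj,\Fk$. The key computation is to substitute $d = \frac{a(bc)}{|a||b|}$ (the candidate value) back into $da=-b\overline{c}$ and use alternativity/Moufang identities in $\mathbb{O}$ to reduce it to $(\Fi\Fj)\Fk = -\Fi(\Fj\Fk)$ — precisely the defining equation of a special triple. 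Concretely: $ac=\overline{d}b$ together with $\overline{d}=\overline{b}\,\overline{c}\,\overline{a}/(|a||b|)$ (conjugate of the claimed $d$) must be checked to be consistent, and the residual content is the associator identity; here I would lean on the Moufang identity $(x y)(z x) = x(yz)x$ and on the fact that in $\mathbb{O}$ the associator $[\Fi,\Fj,\Fk]$ is alternating, so $(\Fi\Fj)\Fk+\Fi(\Fj\Fk)=0$ is symmetric enough to be read off either equation. I also need to verify the two stated expressions for $d$ agree: $\frac{a(bc)}{|a||b|}=\frac{(ba)c}{|a||b|}$, i.e. $a(bc)=(ba)c$; this is a genuine octonionic identity valid when (and essentially because) $\left(\frac{a}{|a|},\frac{b}{|b|},\frac{c}{|c|}\right)$ is a special triple — it follows from the alternating property of the associator together with $\Fj\Fi=-\Fi\Fj$ etc. (the orthogonality relations in the preceding Remark), since then $a(bc)-(ab)c = [a,b,c]$ and $(ba)c - (ab)c = ([b,a]-0)c$-type terms cancel against each other.

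For the converse, I assume $a,b,c\neq 0$, $|a|=|b|$, $\left(\frac{a}{|a|},\frac{b}{|b|},\frac{c}{|c|}\right)$ a special triple, and $d=\frac{a(bc)}{|a||b|}$, and simply verify $ac-\overline{d}b=0$ and $da+b\overline{c}=0$ by the same alternative-algebra manipulations run backwards: compute $\overline{d}b = \frac{\overline{(a(bc))}}{|a||b|}b = \frac{(\overline{bc}\,\overline a)b}{|a||b|}$ and use that $\Fi,\Fj,\Fk$ pairwise anticommute and the associator is alternating to collapse this to $ac$; similarly for the second equation. The main obstacle I anticipate is bookkeeping the conjugates and norms while correctly invoking only the \emph{valid} associativity relations in $\mathbb{O}$ (two-generator subalgebras are associative; the Moufang identities hold; but $[\Fi,\Fj,\Fk]\neq 0$ in general) — getting the signs right in the associator manipulations, and confirming that the special-triple hypothesis is exactly strong enough to make both the formula $a(bc)=(ba)c$ and the vanishing of $ac-\overline d b$ hold, is where the real work lies; everything else is the routine division-algebra cancellation sketched above.
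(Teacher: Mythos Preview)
Your plan is correct and follows essentially the same route as the paper: expand by the Cayley--Dickson rule to get the system $ac=\overline{d}b$, $da=-b\overline{c}$; rule out $a,b,c=0$ using that $\mathbb{O}$ has no zero divisors; take norms to force $|a|=|b|$ (and $|c|=|d|$); then solve each equation for $d$ and equate the two expressions to extract the associator identity, which is the special-triple condition. The converse is a direct verification, as you say.

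One wobble worth flagging: your sentence ``substitute $d=\frac{a(bc)}{|a||b|}$ (the candidate value) back into $da=-b\overline{c}$'' is circular as written in the $\Rightarrow$ direction, since that formula for $d$ is part of the conclusion. What actually works (and what the paper does, and what your \emph{previous} sentence correctly describes) is: from $ac=\overline{d}b$ conjugate to get $d=\overline{b^{-1}}(\overline{c}\,\overline{a})$; from $da=-b\overline{c}$ get $d=-(b\overline{c})a^{-1}$; equate and use $|a|=|b|$ to obtain $b(\overline{c}\cdot\overline{a})=-(b\overline{c})\overline{a}$, i.e.\ $\bigl(\tfrac{b}{|b|},\tfrac{\overline{c}}{|c|},\tfrac{\overline{a}}{|a|}\bigr)$ is a special triple. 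The preceding proposition then forces $\overline{a}=-a$, $\overline{c}=-c$, so $\bigl(\tfrac{a}{|a|},\tfrac{b}{|b|},\tfrac{c}{|c|}\bigr)$ is a special triple (the associator is alternating), and the displayed formula for $d$ follows. You do not need Moufang identities anywhere; alternativity (two generators associate) suffices for all the inversions and conjugate manipulations.
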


{
\begin{proof}
    ``$\Leftarrow$" Suppose that $a,b,c\neq 0$, $d=a(bc)$, $|a|=|b|$ and $\left\{\frac{a}{|a|},\frac{b}{|b|},\frac{c}{|c|}\right\}$ is a special triple of $\mathbb{O}$. Then
	\begin{equation*}
		\left\{
		\begin{aligned}
			&\overline d=\frac{\overline{a(bc)}}{|a||b|}=-\frac{a(bc)}{|a||b|}=\frac{a(cb)}{|a||b|}=-\frac{(ac)b}{|b|^2},
			\\&d=\frac{a(bc)}{|a||b|}=-\frac{(bc)a}{|a|^2}.
		\end{aligned}
		\right.
	\end{equation*}
	It implies that
	\begin{equation}\label{eq-oo}
		\left\{
		\begin{aligned}
			&ac-\overline{d}b=ac-\left(-\frac{(ac)b}{|b|^2}\right) b=ac-ac=0,
			\\&da+b\overline c=\left(-\frac{(bc)a}{|a|^2}\right) a-bc=bc-bc=0.
		\end{aligned}
		\right.
	\end{equation}
	Then \eqref{eq-abe} holds by \eqref{eq-aell} and \eqref{eq-oo}.
	
	``$\Rightarrow$'' Suppose that \eqref{eq-abe} holds. Then by \eqref{eq-aell},
	\begin{equation}\label{eq-lb}
		\left\{
		\begin{aligned}
			&ac-\overline{d}b=0,
			\\&da+b\overline{c}=0.
		\end{aligned}
		\right.
	\end{equation}
	Since $\mathbb{O}$ is a normed division algebra,
	\begin{equation*}
		\left\{
		\begin{aligned}
			&|a||c|=|d||b|,
			\\&|d||a|=|b||c|.
		\end{aligned}
		\right.
	\end{equation*}
	Thence
	\begin{equation*}
		|a|^2\big(|c|^2+|d|^2\big)=|b|^2\big(|c|^2+|d|^2\big).
	\end{equation*}
	According to $c+de_8\neq 0$, we have $|c|^2+|d|^2\neq 0$. Thence
	\begin{equation}\label{eq-asb}
		|a|=|b|.
	\end{equation}
	Since $a+be_8$ is non-zero, we have $a,b\neq 0$. It follows that
	\begin{equation}\label{eq-am1}
		a^{-1}=\frac{\overline{a}}{|a|^2}\qquad\mbox{and}\qquad b^{-1}=\frac{\overline{b}}{|b|^2}.
	\end{equation}
	On the other hand, by \eqref{eq-lb},
	\begin{equation}\label{eq-obm}
		\overline{b^{-1}}(\overline{c}\cdot\overline{a})=d=-(b\overline{c})a^{-1}.
	\end{equation}
	By \eqref{eq-asb}, \eqref{eq-am1} and \eqref{eq-obm},
	\begin{equation*}
		b(\overline{c}\cdot\overline{a})=-(b\overline{c})\overline{a}.
	\end{equation*}
	By definition, $\left\{\frac{b}{|b|},\frac{\overline{c}}{|c|},\frac{\overline{a}}{|a|}\right\}$ is a special triple of $\mathbb{O}$. It implies that
	\begin{equation*}
		\overline{c}=-c\qquad\mbox{and}\qquad\overline{a}=-a.
	\end{equation*}
	Thus
	\begin{equation*}
		b(ca)=-(bc)a
	\end{equation*}
	and $\left\{\frac{b}{|b|},\frac{c}{|c|},\frac{a}{|a|}\right\}$ is also a basic triple of $\mathbb{O}$.

	By \eqref{eq-lb},
	\begin{equation*}
		d=-(b\overline c)a^{-1}=-\big[b(-c)\big]\frac{-a}{|a|^2}=-\frac{(bc)a}{|a|^2}=\frac{a(bc)}{|a||b|}.
	\end{equation*}
\end{proof}}

\begin{prop}\label{pr-ab}
	Let $(a_\ell+b_\ell e_8)(c+de_8)=0$, $\ell=1,2$, for some $a_\ell,b_\ell,c,d\in\mathbb{O}$. If $(c+d e_8)\neq 0$, then
	\begin{equation}\label{eq-la}
		\langle a_1,a_2\rangle=\langle b_1,b_2\rangle.
	\end{equation}
\end{prop}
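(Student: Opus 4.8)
The plan is to extract, from the characterization of zero divisors in Proposition \ref{eq-ab8}, enough information about the octonionic components to deduce the claimed equality of inner products. Write $a_\ell + b_\ell e_8$, $\ell = 1,2$, and observe that since $(c+de_8)\neq 0$, each $a_\ell + b_\ell e_8$ is either zero or a zero divisor annihilating $c+de_8$. If $a_\ell + b_\ell e_8 = 0$ then $a_\ell = b_\ell = 0$ and the corresponding side of \eqref{eq-la} vanishes, so it suffices to treat the case where both factors are nonzero; the mixed case (one factor zero) is then immediate. So assume $a_\ell, b_\ell \neq 0$ for $\ell = 1,2$.

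By Proposition \ref{eq-ab8}, for each $\ell$ we have $|a_\ell| = |b_\ell|$, the element $c\neq 0$ is fixed, and $\{a_\ell/|a_\ell|,\, b_\ell/|b_\ell|,\, c/|c|\}$ is a special triple of $\mathbb{O}$; in particular $a_\ell$ and $b_\ell$ are purely imaginary (as shown inside the proof of Proposition \ref{eq-ab8}, $\overline{a_\ell} = -a_\ell$ and $\overline{b_\ell} = -b_\ell$), and each is orthogonal to $c$ and to the subalgebra generated by the other two. The key structural fact I would exploit is that $b_\ell$ is determined by $a_\ell$ and $c$: since $\{a_\ell/|a_\ell|, b_\ell/|b_\ell|, c/|c|\}$ is a special triple and $d = \dfrac{a_\ell(b_\ell c)}{|a_\ell||b_\ell|}$, one recovers $b_\ell$ from the relation $d |a_\ell||b_\ell| = a_\ell(b_\ell c)$, i.e. $b_\ell c = \overline{a_\ell}(|a_\ell||b_\ell| d)/|a_\ell|^2 = -a_\ell(|b_\ell|d)/|a_\ell|$, hence $b_\ell = -\dfrac{|b_\ell|}{|a_\ell|}\,(a_\ell d) c^{-1}$ using alternativity in the subalgebra generated by $a_\ell$ and $d$ together with the triple relations. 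Thus, up to the positive scalar $|b_\ell|/|a_\ell| = 1$, the map $a_\ell \mapsto b_\ell$ is a fixed real-linear isometry $T$ of the relevant octonionic subspace, namely $T(x) = -(x d)c^{-1}$ restricted appropriately, and $b_\ell = T(a_\ell)$.

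Granting that $b_\ell = T(a_\ell)$ for a real-linear map $T$ that is an isometry on the span of $a_1, a_2$ (both norm-preserving, since $|b_\ell| = |a_\ell|$), I would conclude by polarization: an isometry preserves inner products, so $\langle b_1, b_2\rangle = \langle T(a_1), T(a_2)\rangle = \langle a_1, a_2\rangle$, which is exactly \eqref{eq-la}. To make ``$T$ is an isometry on $\mathrm{span}_\mathbb{R}(a_1,a_2)$'' rigorous I would note that $T$ is given by right multiplication operators $x \mapsto -(xd)c^{-1}$, and on octonions the operator of right multiplication by a unit element is a real-linear isometry of the whole of $\mathbb{O}$ (Moufang/norm multiplicativity), so the composition is an isometry of $\mathbb{O}$; then its restriction to any subspace preserves inner products. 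The only subtlety is that the formula $b_\ell = -(a_\ell d)c^{-1}/|a_\ell| \cdot |b_\ell|$ requires $d \neq 0$; if $d = 0$ then from $da + b\overline{c} = 0$ and $c \neq 0$ we get $b_\ell = 0$, contradicting $b_\ell \neq 0$, so indeed $d \neq 0$ automatically in the nonzero-factor case and $c, d$ are both fixed nonzero octonions independent of $\ell$.

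The main obstacle I anticipate is the bookkeeping with nonassociativity: the identity $b_\ell = -(a_\ell d)c^{-1}\cdot(|b_\ell|/|a_\ell|)$ is only valid because all the products involved take place inside a quaternionic (associative) or at worst alternative subalgebra, and one must justify that $a_\ell$, $c$, $d$ generate such a subalgebra and that the reassociations used (e.g. passing from $a_\ell(b_\ell c)$ to $(a_\ell b_\ell)c$ up to sign via the special-triple relation, and inverting $c$) are legitimate. An alternative, perhaps cleaner route that avoids solving for $b_\ell$ explicitly: use the second equation $d a_\ell + b_\ell \overline{c} = 0$ directly, which gives $b_\ell = -(d a_\ell)\overline{c}^{-1} = -(d a_\ell)\overline{c}/|c|^2$; then $\langle b_1, b_2\rangle = |c|^{-4}\langle (d a_1)\overline{c}, (d a_2)\overline{c}\rangle = |c|^{-2}\langle d a_1, d a_2\rangle$ (right multiplication by $\overline c$ scales the inner product by $|c|^2$) $= |c|^{-2}|d|^2\langle a_1, a_2\rangle$ (left multiplication by $d$ scales by $|d|^2$), and since $|a_\ell||c| = |d||b_\ell|$ and $|a_\ell| = |b_\ell|$ force $|d| = |c|$, this equals $\langle a_1, a_2\rangle$. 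This version uses only that one-sided multiplications on $\mathbb{O}$ scale the inner product by the squared norm of the multiplier — a fact that does not require any reassociation — so I would present the proof in this form, flagging only the norm computation $|c| = |d|$ as the one place where Proposition \ref{eq-ab8} is invoked.
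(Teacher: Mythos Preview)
Your cleaner route is correct and shares the paper's core strategy: express one of $a_\ell, b_\ell$ as the image of the other under a fixed $\mathbb{R}$-linear norm-preserving map of $\mathbb{O}$, then polarize. The difference lies in how that map is obtained. The paper uses the special-triple structure from Proposition~\ref{eq-ab8} to derive, after several reassociations justified by the triple relations, the formula $a_\ell = b_\ell\cdot\dfrac{dc}{|c||d|}$ (right multiplication by a fixed unit octonion), and then concludes $\langle a_1,a_2\rangle = \bigl\langle b_1 \tfrac{dc}{|c||d|},\, b_2 \tfrac{dc}{|c||d|}\bigr\rangle = \langle b_1,b_2\rangle$. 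You instead read off $b_\ell\overline{c} = -da_\ell$ directly from the Cayley--Dickson multiplication \eqref{eq-aell}, solve for $b_\ell$ using alternativity, and use only that one-sided multiplications in $\mathbb{O}$ scale the inner product by the squared norm of the multiplier; the equality $|c|=|d|$ then closes the argument. Your version is more economical: it needs nothing from Proposition~\ref{eq-ab8} beyond $|a_\ell|=|b_\ell|$ (and even that follows directly from the two Cayley--Dickson equations by taking norms), whereas the paper's route leans on the full special-triple description. One cosmetic slip: $\overline{c}^{\,-1} = c/|c|^2$, not $\overline{c}/|c|^2$; this does not affect your computation since only $|\overline{c}^{\,-1}| = |c|^{-1}$ enters the inner-product identity.
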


\begin{proof}
	Obviously, \eqref{eq-la} holds for $a_1+b_1e_8=0$ or $a_2+b_2e_8=0$. Otherwise, by Proposition \ref{eq-ab8},
    \begin{equation*}
        d=\frac{(b_\ell a_\ell)c}{|a_\ell||b_\ell|},\qquad
        |a_\ell|=|b_\ell|,\qquad\mbox{and}\qquad |c_\ell|=|d_\ell|.
    \end{equation*}
    It follows from
    \begin{equation*}
        \left\{\frac{b_\ell}{|b_\ell|},\frac{b_\ell a_\ell}{|a_\ell||b_\ell|},\frac{c}{|c|}\right\}
        \qquad\mbox{and}\qquad
        \left\{\frac{b_\ell}{|b_\ell|},\frac{(b_\ell a_\ell)c}{|a_\ell||b_\ell||c|},\frac{c}{|c|}\right\}
    \end{equation*}
    being special triple of $\mathbb{O}$ that
    \begin{equation*}
        \begin{split}
            a_\ell=&[(-1)a_\ell](-1)=\left[\left(\frac{b_\ell}{|b_\ell|}\right)^2a_\ell\right]\left(\frac{c}{|c|}\right)^2
            =\left[\left(b_\ell\frac{b_\ell a_\ell}{|a_\ell||b_\ell|}\right)c\right]\frac{c}{|c||d|}
            \\=&-\left[b_\ell\frac{(b_\ell a_\ell)c}{|a_\ell||b_\ell|}\right]\frac{c}{|c||d|}
            =-\left[b_\ell d\right]\frac{c}{|c||d|}
            =\frac{b_\ell(dc)}{|c||d|}.
        \end{split}
    \end{equation*}
    Therefore
	\begin{equation*}
		\langle a_1,a_2\rangle=\left\langle b_1\frac{dc}{|c||d|},b_2\frac{dc}{|c||d|}\right\rangle=\langle b_1,b_2\rangle.
	\end{equation*}
\end{proof}

\section{Hyper-solutions in sedenions}\label{sc-hyper}

In \cite{Dou2023002}*{Section 6} we introduced the general definition of slice-solutions and hyper-solutions in the general case of $\mathbb R^{2n}$-valued functions. In this section, we fully describe these two concepts in the case of sedenions. Specifically, for a pair $ (J_1,J_2) \in\mathcal{C}_\mathfrak{S}^2$ with $ J_1 \ne J_2 $, we give three equivalent characterizations for this pair being a hyper-solution.
 In addition, this part of the paper prepares the study of  hyper-$\sigma$-balls which is provided in Section \ref{sc-hypersigmaball}.

Let $I\in\mathcal{C}_\mathfrak{S}$ and,  {according to \eqref{eq-lr},} we set
\begin{equation}\label{eq-alpha}\index{$\alpha_{[I]}$}
    \alpha_{[I]}:=\arccos\ \langle I, L_{e_8}\rangle.
\end{equation}

\begin{prop}\label{pr-fei}
	For each $I\in\mathcal{C}_\mathfrak{S}$, there is $\theta\in[0,\pi)$ and $\jmath\in\mathbb{S}_\mathbb{O}$ such that
	\begin{equation}\label{eq-ref}
		I=L\bigg(\sin\left(\alpha_{[I]}\right)\cos(\theta)\jmath+\left[\cos\left(\alpha_{[I]}\right)+\sin\left(\alpha_{[I]}\right)\sin(\theta)\jmath\right]e_8\bigg),
	\end{equation}
	where $L(s):=L_s$\index{$L(s)$}, for all $s\in\mathfrak{S}$  {and $\alpha_{[I]}$ as in \eqref{eq-alpha}}.
	
	Moreover, if $I\notin\{\pm L_{e_8}\}$, then the pair $(\theta,\jmath)$ is unique.
\end{prop}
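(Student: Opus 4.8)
The plan is to understand the structure of the slice-units $\mathcal{S}_{\mathfrak{S}}$ explicitly via \eqref{eq-sf}, and then extract the claimed polar coordinates. By \eqref{eq-sf}, every $I \in \mathcal{C}_\mathfrak{S}$ is $I = L_s$ with $s = a + be_8 \in \mathbb{S}_\mathfrak{S}$, $a,b \in \mathbb{O}$, $ab = ba$, and $|s| = 1$. First I would write $a = a_0 + a'$ and $b = b_0 + b'$ with $a_0, b_0 \in \mathbb{R}$ the real parts and $a', b'$ purely imaginary octonions; the condition $ab = ba$ forces $a'b' = b'a'$, so (since in $\mathbb{O}$ two purely imaginary elements commute iff they are real multiples of each other) $a'$ and $b'$ are parallel, say $a' = \lambda \jmath$, $b' = \mu \jmath$ for a unit imaginary octonion $\jmath \in \mathbb{S}_\mathbb{O}$ and $\lambda,\mu \in \mathbb{R}$ (when both vanish, $I = \pm L_{e_8}$ and any $\jmath$ works). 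Next I would impose $s^2 = -1$, equivalently $L_s^2 = -\id$; computing $s^2 = (a+be_8)^2$ via \eqref{eq-aell} and separating the $\mathbb{O}$- and $e_8$-parts gives $a^2 - \overline{b}b = -1$ and $ba + b\overline{a} = 0$, i.e. $2b_0 b' \cdot(\text{stuff}) = 0$ type relations; carefully unwinding these (using $\overline{a} = a_0 - a'$, etc.) yields $a_0 = 0$, so $a = \lambda \jmath$ is purely imaginary, and $|a|^2 + |b|^2 = 1$ with $b = b_0 + \mu\jmath$.

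Having reduced to $s = \lambda \jmath + (b_0 + \mu \jmath)e_8$ with $\lambda^2 + b_0^2 + \mu^2 = 1$, I would identify the three real parameters $(\lambda, b_0, \mu)$ on the unit $2$-sphere with spherical coordinates. The definition \eqref{eq-alpha} gives $\cos \alpha_{[I]} = \langle I, L_{e_8}\rangle = \langle s, e_8\rangle = b_0$, so $b_0 = \cos\alpha_{[I]}$ and $\sqrt{\lambda^2 + \mu^2} = \sin\alpha_{[I]}$ (note $\alpha_{[I]} \in [0,\pi]$ makes $\sin\alpha_{[I]} \geq 0$). Then I write $\lambda = \sin\alpha_{[I]}\cos\theta$ and $\mu = \sin\alpha_{[I]}\sin\theta$ for a suitable angle; substituting back gives exactly \eqref{eq-ref}. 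The parameter $\theta$ a priori lives in $[0,2\pi)$, but the sign ambiguity $\jmath \mapsto -\jmath$ (both are in $\mathbb{S}_\mathbb{O}$) identifies $(\theta, \jmath)$ with $(\theta + \pi, -\jmath)$, which lets me restrict to $\theta \in [0,\pi)$; this is where I must be slightly careful, choosing a representative so that, e.g., the first nonzero of $(\cos\theta, \sin\theta)$ along a fixed ordering is nonnegative, or simply noting that $(\theta,\jmath)$ and $(\theta+\pi,-\jmath)$ give the same element and exhaust all cases.

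For the uniqueness claim when $I \notin \{\pm L_{e_8}\}$: here $\sin\alpha_{[I]} > 0$, so from \eqref{eq-ref} the purely imaginary octonion $\sin\alpha_{[I]}\cos\theta\,\jmath + \sin\alpha_{[I]}\sin\theta\,\jmath = \sin\alpha_{[I]}(\cos\theta + \sin\theta)\jmath$ — more precisely the two octonionic ``coordinate'' vectors $\lambda\jmath$ (the $\mathbb{O}$-part of $s$) and $\mu\jmath$ (the imaginary part of the $e_8$-component) — determine $\jmath$ up to sign, and then $(\cos\theta,\sin\theta)$ is determined once the sign of $\jmath$ is fixed by the $\theta \in [0,\pi)$ normalization, UNLESS $\lambda = \mu = 0$; but $\lambda = \mu = 0$ forces $\sin\alpha_{[I]} = 0$, contradicting $I \notin \{\pm L_{e_8}\}$. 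Hence $\theta$ and $\jmath$ are unique. The main obstacle I anticipate is purely computational bookkeeping: correctly deriving $a_0 = 0$ and the norm constraint from $L_s^2 = -\id$ while tracking the octonionic (non-associative) products, and handling the degenerate locus $I = \pm L_{e_8}$ separately throughout. Everything else is elementary trigonometry and the sphere-coordinate change.
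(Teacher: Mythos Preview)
Your proposal is correct and follows essentially the same route as the paper: use \eqref{eq-sf} to write $s = a + be_8$ with $a,b\in\mathbb{O}$ commuting, reduce to $s = \lambda\jmath + (b_0 + \mu\jmath)e_8$ with $\lambda^2+b_0^2+\mu^2=1$, and then read off spherical coordinates with $b_0=\cos\alpha_{[I]}$; the uniqueness via the sign identification $(\theta,\jmath)\sim(\theta+\pi,-\jmath)$ matches the paper's $\tan\theta'=\tan\theta$ argument. One small caveat: your aside ``$s^2=-1$, equivalently $L_s^2=-\id$'' is not correct in the non-alternative sedenions---indeed \eqref{eq-sf} exhibits $\mathcal{S}_\mathfrak{S}$ as a \emph{proper} subset of $\mathbb{S}_\mathfrak{S}$---but since you invoke \eqref{eq-sf} directly (which already encodes both $s^2=-1$ and the commuting condition), this slip does not affect the argument.
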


\begin{proof}
	By \eqref{eq-sf},
	\begin{equation}\label{eq-abc}
		I=L\bigg(a\jmath+[b+c\jmath]e_8\bigg)
	\end{equation}
	for  {$\jmath\in\mathcal{C}_\mathbb{O}$ and some $a,b\in\mathbb{R}$, $c\in\mathbb{R}_+$ }  satisfying
    \begin{equation*}
        a\ge 0,\qquad\mbox{when}\qquad c=0.
    \end{equation*}
    Note that $a^2+b^2+c^2=1$. Then \eqref{eq-ref} holds by \eqref{eq-alpha} and taking
	\begin{equation}\label{eq-theta}
		\theta:=\begin{cases}
			0,\qquad&I\in\{\pm L_{e_8}\},\\
			\arccos\left(\frac{a}{
				 {\sin(\alpha_{[I]})}}\right),\qquad\qquad &\mbox{otherwise}.
		\end{cases}
	\end{equation}

	Suppose that $I\notin\{\pm L_{e_8}\}$ and
	\begin{equation*}
		I=L\bigg(\sin(\alpha_{[I]})\cos(\theta')\jmath'+\left[\cos(\alpha_{[I]})+\sin(\alpha_{[I]})\sin(\theta')\jmath'\right]e_8\bigg),
	\end{equation*}
	for some $\theta'\in[0,\pi)$ and $\jmath'\in\mathcal{C}_\mathbb{O}$. Since $I\notin\{\pm L_{e_8}\}$, $a\neq 0$ or $c\neq 0$. If $a\neq 0$, then
	\begin{equation*}
		\tan(\theta')=\frac{\sin(\theta')}{\cos(\theta')}=\frac{\sin(\alpha_{[I]})\sin(\theta')\jmath'}{\sin(\alpha_{[I]})\cos(\theta')\jmath'}=\frac{\sin(\alpha_{[I]})\sin(\theta)\jmath}{\sin(\alpha_{[I]})\cos(\theta)\jmath}=\frac{\sin(\theta)}{\cos(\theta)}=\tan(\theta).
	\end{equation*}
	We have
	\begin{equation}\label{eq-tt}
		\theta'=\theta,\qquad\mbox{and then}\qquad\jmath'=\jmath.
	\end{equation}
	Similarly if $c\neq 0$, then \eqref{eq-tt} holds since $\cot(\theta')=\cot(\theta)$.
\end{proof}

{\bf Notations}. When $I\notin\{\pm L_{e_8}\}$, by the uniqueness of the pair $(\theta,\jmath)$  we shall set
\begin{equation*}\index{$\theta_{[I]}$, $\jmath_{[I]}$}
	\begin{cases}
		\theta_{[I]}:=\theta,\\
		\jmath_{[I]}:=\jmath.
	\end{cases}
\end{equation*}
For $I\in\{\pm L_{e_8}\}$, we set
\begin{equation*}
	\begin{cases}
		\theta_{[I]}:=0,\\
		\jmath_{[I]}:=e_1.
	\end{cases}
\end{equation*}

Denote
\begin{equation*}\index{$\mathbb{S}^{2,\perp}_{\mathbb{O}}$}
	\mathbb{S}^{2,\perp}_{\mathbb{O}}:=\{(I,J)\in\mathbb{S}_\mathbb{O}^2:I\perp J\},
\end{equation*}
where the orthogonality is induced by the Euclidean inner product in \eqref{eq-lr}.

\begin{example}\label{exa-e1e10}
	By \eqref{eq-alpha}, \eqref{eq-abc} and \eqref{eq-theta},
    \begin{equation*}
        \begin{cases}
            \alpha_{L_{e_1}}=\arccos\langle L_{e_1}, L_{e_8}\rangle=\frac{\pi}{2},\\
            \jmath_{L_{e_1}}=e_1,\\
            \theta_{L_{e_1}}=0,
        \end{cases}
    \end{equation*}
    and
    \begin{equation*}
        \begin{cases}
            \alpha_{L_{e_{10}}}=\arccos\langle L_{e_{10}}, L_{e_8}\rangle=\frac{\pi}{2},\\
            \jmath_{L_{e_{10}}}=e_2,\\
            \theta_{L_{e_{10}}}=\frac{\pi}{2}.
        \end{cases}
    \end{equation*}
\end{example}

\begin{prop}\label{pr-ijj}
	If $J=(J_1,J_2)\in\mathcal{C}_\mathfrak{S}^2$ is not a slice-solution, then
    \begin{equation*}
        \index{$\alpha_{[J]}, \alpha_{[(J_1,J_2)]}$}
        \alpha_{[J_1]}=\alpha_{[J_2]}=:\alpha_{[J]}.
    \end{equation*}

	Moreover if $J_1\neq J_2$, then $\alpha_{[J]}\neq 0$, $\theta_{[J_1]}\neq\theta_{[J_2]}$ and there is {a unique element} in $\mathbb{S}_\mathbb{O}^2$ denoted by
	\index{$\imath^{[J]}$, $\imath_1^{[J]}$, $\imath_2^{[J]}$}
	$\imath^{[J]}=\left(\imath_1^{[J]},\imath_2^{[J]}\right)$ such that
	\begin{equation}\label{eq-js}
		\begin{cases}
			\jmath_{[J_1]}=\cos\left(\theta_{[J_1]}\right)\imath_1^{[J]}+\sin\left(\theta_{[J_1]}\right)\imath_2^{[J]},\\
			\jmath_{[J_2]}=\cos\left(\theta_{[J_2]}\right)\imath_1^{[J]}+\sin\left(\theta_{[J_2]}\right)\imath_2^{[J]}.
		\end{cases}
	\end{equation}
	Furthermore,
	\begin{equation}\label{eq-iperp}
		\imath_1^{[J]}\perp\imath_2^{[J]},\qquad\mbox{i.e.}\qquad \imath^{[J]}\in\mathbb{S}_\mathbb{O}^{2,\perp}.
	\end{equation}
\end{prop}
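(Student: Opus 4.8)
The plan is to separate the trivial case $J_1=J_2$, where $\alpha_{[J_1]}=\alpha_{[J_2]}$ holds tautologically, from the case $J_1\neq J_2$; in the latter, write $J_\ell=L_{s_\ell}$ with $s_\ell=a_\ell+b_\ell e_8$, where $a_\ell=\sin(\alpha_{[J_\ell]})\cos(\theta_{[J_\ell]})\jmath_{[J_\ell]}\in\mathbb{O}$ and $b_\ell=\cos(\alpha_{[J_\ell]})+\sin(\alpha_{[J_\ell]})\sin(\theta_{[J_\ell]})\jmath_{[J_\ell]}\in\mathbb{O}$, as furnished by Proposition \ref{pr-fei}. First I would unwind the definition of slice-solution: for $J=(J_1,J_2)$ one has $\ker[\zeta(J)]=\ker(1,J_1)\cap\ker(1,J_2)=\{(-s_1b,b):(s_1-s_2)b=0\}$, hence $\mathcal{C}_\mathfrak{S}^{\ker}(J)=\{L_t\in\mathcal{C}_\mathfrak{S}:(t-s_1)b=0\ \text{for all}\ b\in\ker L_{s_1-s_2}\}$. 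Consequently, if $\ker L_{s_1-s_2}=\{0\}$ then $\mathcal{C}_\mathfrak{S}^{\ker}(J)=\mathcal{C}_\mathfrak{S}$ and $J$ is a slice-solution; contrapositively, if $J$ is not a slice-solution and $J_1\neq J_2$ (so $s_1\neq s_2$), then $s_1-s_2$ is a left zero divisor of $\mathfrak{S}$.

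Next I would apply Proposition \ref{eq-ab8} to $s_1-s_2=(a_1-a_2)+(b_1-b_2)e_8$, with $c+de_8$ taken to be any nonzero element of $\ker L_{s_1-s_2}$; this yields $a_1-a_2\neq0$, $b_1-b_2\neq0$, $|a_1-a_2|=|b_1-b_2|$, and that $\big\{\tfrac{a_1-a_2}{|a_1-a_2|},\tfrac{b_1-b_2}{|b_1-b_2|},\cdot\big\}$ is a special triple of $\mathbb{O}$. Since the members of a special triple are pairwise-orthogonal unit imaginary octonions (by the proposition and remark preceding Proposition \ref{pr-lij}), $b_1-b_2$ is purely imaginary and $a_1-a_2\perp b_1-b_2$. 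Because $\re(b_\ell)=\cos(\alpha_{[J_\ell]})$ and $\alpha_{[J_\ell]}\in[0,\pi]$, the first fact gives $\cos\alpha_{[J_1]}=\cos\alpha_{[J_2]}$, hence $\alpha_{[J_1]}=\alpha_{[J_2]}=:\alpha_{[J]}$. Moreover $a_1-a_2\neq0$ forces $\sin\alpha_{[J]}\neq0$, so $\alpha_{[J]}\neq0$ (equivalently, $\alpha_{[J]}=0$ would give $s_1=s_2=e_8$, contradicting $J_1\neq J_2$).

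To get $\theta_{[J_1]}\neq\theta_{[J_2]}$ I would argue by contradiction: if $\theta_{[J_1]}=\theta_{[J_2]}=:\theta$, then the two octonion components of $s_1-s_2$ are $\sin(\alpha_{[J]})\cos(\theta)(\jmath_{[J_1]}-\jmath_{[J_2]})$ and $\sin(\alpha_{[J]})\sin(\theta)(\jmath_{[J_1]}-\jmath_{[J_2]})$, i.e.\ real multiples of the single nonzero octonion $\jmath_{[J_1]}-\jmath_{[J_2]}$ (nonzero since $J_1\neq J_2$), so they cannot be a pair of mutually orthogonal nonzero octonions, contradicting Proposition \ref{eq-ab8} (which moreover excludes $\cos\theta=0$ and $\sin\theta=0$ directly, as it requires both components nonzero). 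With $\theta_{[J_1]}\neq\theta_{[J_2]}$ both in $[0,\pi)$, the real $2\times2$ matrix $M$ with rows $(\cos\theta_{[J_1]},\sin\theta_{[J_1]})$ and $(\cos\theta_{[J_2]},\sin\theta_{[J_2]})$ satisfies $\det M=\sin(\theta_{[J_2]}-\theta_{[J_1]})\neq0$, so \eqref{eq-js} has a unique solution $\imath^{[J]}=(\imath_1^{[J]},\imath_2^{[J]})$ in $\mathbb{O}^2$, obtained by applying $M^{-1}$ to $(\jmath_{[J_1]},\jmath_{[J_2]})$; this is a fortiori the only candidate in $\mathbb{S}_\mathbb{O}^2$, and both $\imath_1^{[J]},\imath_2^{[J]}$ are purely imaginary, being real linear combinations of $\jmath_{[J_1]},\jmath_{[J_2]}$.

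It remains to check $|\imath_1^{[J]}|=|\imath_2^{[J]}|=1$ and $\imath_1^{[J]}\perp\imath_2^{[J]}$; this is the computational core, and the step I expect to be the main obstacle. The crux is to determine $\langle\jmath_{[J_1]},\jmath_{[J_2]}\rangle$: expanding $|a_1-a_2|=|b_1-b_2|$ and $\langle a_1-a_2,b_1-b_2\rangle=0$ using $\alpha_{[J_1]}=\alpha_{[J_2]}$ and $\langle 1,\jmath_{[J_\ell]}\rangle=0$, and applying product-to-sum identities, one obtains $\cos(\theta_{[J_1]}+\theta_{[J_2]})\big[\cos(\theta_{[J_1]}-\theta_{[J_2]})-\langle\jmath_{[J_1]},\jmath_{[J_2]}\rangle\big]=0$ and $\sin(\theta_{[J_1]}+\theta_{[J_2]})\big[\cos(\theta_{[J_1]}-\theta_{[J_2]})-\langle\jmath_{[J_1]},\jmath_{[J_2]}\rangle\big]=0$; since $\sin$ and $\cos$ of $\theta_{[J_1]}+\theta_{[J_2]}$ never vanish together, $\langle\jmath_{[J_1]},\jmath_{[J_2]}\rangle=\cos(\theta_{[J_1]}-\theta_{[J_2]})$. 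Substituting the entries of the solution $M^{-1}(\jmath_{[J_1]},\jmath_{[J_2]})$ together with this value, a direct trigonometric computation gives $|\imath_i^{[J]}|^2=1$ and $\langle\imath_1^{[J]},\imath_2^{[J]}\rangle=0$, which establishes \eqref{eq-js} and \eqref{eq-iperp}. The only genuinely delicate points are the translation ``not a slice-solution $\Rightarrow$ zero divisor'' and the extraction, from Proposition \ref{eq-ab8}, of precisely the two scalar identities that pin down $\langle\jmath_{[J_1]},\jmath_{[J_2]}\rangle$; everything afterwards is bookkeeping with elementary trigonometry.
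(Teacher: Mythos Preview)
Your argument is correct and follows essentially the same route as the paper's: reduce ``not a slice-solution with $J_1\neq J_2$'' to $s_1-s_2$ being a zero divisor, feed this into Proposition~\ref{eq-ab8} to extract the scalar constraints, and then produce $\imath^{[J]}$ by inverting the $2\times 2$ trigonometric matrix. One small difference worth noting: you use \emph{both} $|a_1-a_2|=|b_1-b_2|$ and $\langle a_1-a_2,b_1-b_2\rangle=0$ to pin down $\langle\jmath_{[J_1]},\jmath_{[J_2]}\rangle=\cos(\theta_{[J_1]}-\theta_{[J_2]})$, whereas the paper's step~(iii) derives this from the orthogonality alone (which tacitly divides by $\sin(\theta_{[J_1]}+\theta_{[J_2]})$); your two-equation version avoids that issue, and your contradiction for $\theta_{[J_1]}=\theta_{[J_2]}$ via parallel components is a touch more direct than the paper's route through $\jmath_{[J_1]}=\jmath_{[J_2]}$.
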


\begin{proof}
	Obviously, the statement holds when $J_1=J_2$, so we assume $J_1\neq J_2$.

    We divide the proof in steps.

	(i) We prove that $\alpha_{[J_1]}=\alpha_{[J_2]}=\alpha_{[J]}$.

    Since $(J_1,J_2)$ is not a slice-solution, it follows by the reasoning in \cite{Dou2021001}*{Remark 5.9} that there is non-zero $c=(c_1,c_2)^T\in\mathfrak{S}^2$ such that
	\begin{equation}\label{eq-p1}
		\begin{pmatrix}
			1&J_1\\1&J_2
		\end{pmatrix}
		\begin{pmatrix}
			c_1\\c_2
		\end{pmatrix}=0.
	\end{equation}
	It is immediate to check that $c_2\neq 0$, otherwise,
	by (\ref{eq-p1}), we have $c_1=0$,
		and then $c=0$, a contradiction, and
	\begin{equation}\label{eq-jj}
		(J_1-J_2)c_2=0.
	\end{equation}
	
	According to Proposition \ref{eq-ab8}, it follows from \eqref{eq-ref} that
	\begin{equation*}
		0=\langle J_1-J_2,L_{e_8}\rangle=\cos(\alpha_{[J_1]})-\cos(\alpha_{[J_2]}).
	\end{equation*}
	It implies that $\alpha_{[J_1]}=\alpha_{[J_2]}=:\alpha_{[J]}$.
	
	(ii) We prove that $\alpha_{[J]}\neq 0$.

    Since $J_1\neq J_2$, $\sin\left(\alpha_{[J]}\right)\neq 0$ (otherwise $J_1=J_2\in\{\pm L_{e_8}\}$) and the conclusion follows.

	(iii) We prove that if $J_1\not= J_2$ then $\theta_{[J_1]}\not= \theta_{[J_2]}$.

    We write
	\begin{equation*}
		J_1-J_2=L\bigg(o_1+o_2 e_8\bigg),\qquad\mbox{for some}\qquad o_1,o_2\in\mathbb{O}.
	\end{equation*}
	Again by Proposition \ref{eq-ab8} and \eqref{eq-jj}, $o_1\perp o_2$, i.e.
	\begin{equation}\label{eq-ll}
		\begin{split}
			&0=\langle o_1,o_2\rangle
			\\=&\sin^2(\alpha_{[J]})\left\langle\cos(\theta_{[J_1]})\jmath_{[J_1]}-\cos(\theta_{[J_2]})\jmath_{[J_2]},\sin(\theta_{[J_1]})\jmath_{[J_1]}-\sin(\theta_{[J_2]})\jmath_{[J_2]}\right\rangle.
		\end{split}
	\end{equation}
	
	It follows from \eqref{eq-ll} that
	\begin{equation*}
		\begin{split}
			\langle\jmath_{[J_1]},\jmath_{[J_2]}\rangle
			=&\frac{\cos(\theta_{[J_1]})\sin(\theta_{[J_1]})+\cos(\theta_{[J_2]})\sin(\theta_{[J_2]})}{\cos(\theta_{[J_2]})\sin(\theta_{[J_1]})+\cos(\theta_{[J_1]})\sin(\theta_{[J_2]})}
			\\=&\cos(\theta_{[J_1]})\cos(\theta_{[J_2]})+\sin(\theta_{[J_1]})\sin(\theta_{[J_2]})
            \\=&\cos(\theta_{[J_1]}-\theta_{[J_2]}).
		\end{split}
	\end{equation*}
	
	If $\theta_{[J_1]}=\theta_{[J_2]}$, then
	\begin{equation*}
		\left\langle \jmath_{[J_1]},\jmath_{[J_2]}\right\rangle=\cos\left(\theta_{[J_1]}-\theta_{[J_2]}\right)= 1.
	\end{equation*}
	It implies that $\jmath:=\jmath_{[J_1]}=\jmath_{[J_2]}$ and $o_1,o_2\in\mathbb{C}_{\jmath}$. By Proposition \ref{eq-ab8}, the non-zero sedenion $o_1+o_2 e_8$ is not a zero divisor, a contradiction to \eqref{eq-jj}. Therefore, $\theta_{[J_1]}\neq\theta_{[J_2]}$.
	
	(iv) We show that \eqref{eq-js} and \eqref{eq-iperp} hold.

    Let
	\begin{equation*}
		\left\{
            \begin{aligned}
                \imath_1^{[J]}:=&\cos\left(\theta_{[J_1]}\right)\jmath_{[J_1]}-\sin\left(\theta_{[J_1]}\right)\kappa,
                \\=&\cos\left(-\theta_{[J_1]}\right)\jmath_{[J_1]}+\sin\left(-\theta_{[J_1]}\right)\kappa,
			    \\\imath_2^{[J]}
                :=&\sin\left(\theta_{[J_1]}\right)\jmath_{[J_1]}
                +\cos\left(\theta_{[J_1]}\right)\kappa,
                \\=&\cos\left(-\theta_{[J_1]}+\frac{\pi}{2}\right)\jmath_{[J_1]}+\sin\left(-\theta_{[J_1]}+\frac{\pi}{2}\right)\kappa,
			\end{aligned}
		\right.
	\end{equation*}
	where
	\begin{equation*}
		\kappa:=\frac{\jmath_{[J_2]}-\cos\left(\theta_{[J_2]}-\theta_{[J_1]}\right)\jmath_{[J_1]}}{\sin\left(\theta_{[J_2]}-\theta_{[J_1]}\right)}\in\mathbb{S}_\mathbb{O},
	\end{equation*}
	i.e. $\kappa$ is the unique imaginary unit in $\mathbb{S}_\mathbb{O}$ with $\jmath_{[J_1]}\perp\kappa$ such that
	\begin{equation*}
		\jmath_{[J_2]}=\cos\left(\theta_{[J_2]}-\theta_{[J_1]}\right)\jmath_{[J_1]}+\sin\left(\theta_{[J_2]}-\theta_{[J_1]}\right)\kappa.
	\end{equation*}
	By directly calculation, \eqref{eq-js} and \eqref{eq-iperp} hold.
	
	(v) We prove the uniqueness of $\iota^{[J]}$.
    \\
    By \eqref{eq-js} and $\theta_{[J_1]}\neq\theta_{[J_2]}$,
	\begin{equation}\label{eq-ij}
		\imath^{[J]}=\begin{pmatrix}
			\imath_1^{[J]}\\\imath_2^{[J]}
		\end{pmatrix}
		=\begin{pmatrix}
			\cos\left(\theta_{[J_1]}\right)&\sin\left(\theta_{[J_1]}\right)\\
			\cos\left(\theta_{[J_2]}\right)&\sin\left(\theta_{[J_2]}\right)
		\end{pmatrix}^{-1}
		\begin{pmatrix}
			\jmath_{[J_1]}\\\jmath_{[J_2]}
		\end{pmatrix}.
	\end{equation}
	By Proposition \ref{pr-fei}, $\theta_{[J_\ell]}$ and $\jmath_{[J_\ell]}$, $\ell=1,2$, are unique. It implies by \eqref{eq-ij} that $\imath^{[J]}$ is unique.
\end{proof}

Although for the sake of consistency, the following example should be put after Corollary \ref{co-lj}, we write it here for the reader's convenience.
\begin{example}
By \eqref{eq-e1e10}, we know that $e_1-e_{10}$ is a  {zero divisor} and Corollary \ref{co-lj} shows that $J=(L_{e_1},L_{e_{10}})$ is not a slice-solution. By \eqref{eq-js} and Example \ref{exa-e1e10} we obtain
\begin{equation}\label{eq-alphae1e10}
    \alpha_{[(L_{e_1},L_{e_{10}})]}=\frac{\pi}{2}
\end{equation}
and
\begin{equation}\label{eq-imathe1e10}
    \begin{split}
        \imath^{[(L_{e_1},L_{e_{10}})]}=&
        \left[\begin{pmatrix}
            \cos\left(\theta_{L_{e_1}}\right) & \sin\left(\theta_{L_{e_1}}\right)\\
            \cos\left(\theta_{L_{e_{10}}}\right) & \sin\left(\theta_{L_{e_{10}}}\right)
        \end{pmatrix}^{-1}
        \begin{pmatrix}
            \jmath_{L_{e_1}}\\\jmath_{L_{e_{10}}}
        \end{pmatrix}\right]^T
        \\=&\left[\begin{pmatrix}
            1 & \\ & 1
        \end{pmatrix}^{-1}\begin{pmatrix}
            e_1 \\ e_2
        \end{pmatrix}\right]^T
        =(e_1,e_2).
    \end{split}
\end{equation}
\end{example}

Let \index{$\psi$}$\psi:[0,\pi]\times[0,\pi)\times \mathbb{S}^{2,\perp}_\mathbb{O}\rightarrow \mathcal{C}_\mathfrak{S}$ be defined by
\begin{equation}\label{eq-defpsi}\index{$\psi$}
	\psi\bigg(\alpha,\theta,(\imath_1,\imath_2)\bigg):= L\bigg(\sin(\alpha)\cos(\theta)\kappa+[\cos(\alpha)+\sin(\alpha)\sin(\theta)\kappa]e_8\bigg),
\end{equation}
where
\begin{equation*}
	\kappa:=\cos(\theta)\imath_1+\sin(\theta)\imath_2.
\end{equation*}

\begin{example}
	By direct calculations we have that
	\begin{equation}\label{eq-psib}
        \begin{split}
            & \psi \bigg(\frac{\pi}{2},\theta,(e_1,e_2)\bigg)
            \\=&L\left(\cos^2\theta e_1+\cos\theta\sin\theta e_2+[\sin\theta\cos\theta e_1+\sin^2\theta e_2]e_8\right).
        \end{split}
	\end{equation}
\end{example}

\begin{prop}
	Let $\alpha'\in(0,\pi)$, $\theta'\in[0,\pi)$, $\imath=(\imath_1,\imath_2)\in\mathbb{S}_\mathbb{O}^{2,\perp}$ and $J=\psi(\alpha,\theta',\imath)$. Then
	\begin{equation}\label{eq-pri}
		\alpha_{[J]}=\alpha',\qquad \theta_{[J]}=\theta'\qquad\mbox{and}\qquad \jmath_{[J]}=\cos(\theta')\imath_1+\sin(\theta')\imath_2.
	\end{equation}
\end{prop}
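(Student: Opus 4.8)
The plan is to verify \eqref{eq-pri} by unwinding the definitions and comparing the expression $J=\psi(\alpha',\theta',\imath)$ against the canonical form \eqref{eq-ref} guaranteed by Proposition \ref{pr-fei}, then invoking the uniqueness clause there. First I would write out $J$ explicitly from \eqref{eq-defpsi}: with $\kappa':=\cos(\theta')\imath_1+\sin(\theta')\imath_2$ we have
\begin{equation*}
    J=L\bigg(\sin(\alpha')\cos(\theta')\kappa'+[\cos(\alpha')+\sin(\alpha')\sin(\theta')\kappa']e_8\bigg).
\end{equation*}
Since $\imath_1\perp\imath_2$ and $\imath_1,\imath_2\in\mathbb{S}_\mathbb{O}$, an elementary computation (the same kind underlying Proposition \ref{pr-lij}) shows $\kappa'\in\mathbb{S}_\mathbb{O}$, so $J$ is already presented in precisely the shape of the right-hand side of \eqref{eq-ref} with $\jmath=\kappa'$ and angle parameter $\theta'$, provided the ``$\alpha$'' appearing there equals $\alpha'$.

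Next I would confirm $\alpha_{[J]}=\alpha'$ directly from the definition \eqref{eq-alpha}: since $\langle L_s,L_{e_8}\rangle=\langle s,e_8\rangle$ by \eqref{eq-lr}, and the $e_8$-component of the sedenion defining $J$ is $\cos(\alpha')+\sin(\alpha')\sin(\theta')\kappa'$ whose real part is $\cos(\alpha')$ (because $\kappa'$ is a pure imaginary octonion, hence $\langle\kappa',1\rangle=0$, and $\langle\kappa',e_8\rangle=0$ as well since $\kappa'\in\mathbb{O}$), we get $\langle J,L_{e_8}\rangle=\cos(\alpha')$, so $\alpha_{[J]}=\arccos(\cos(\alpha'))=\alpha'$ because $\alpha'\in(0,\pi)\subset[0,\pi]$. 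Once $\alpha_{[J]}=\alpha'$ is known, the displayed expression for $J$ matches \eqref{eq-ref} verbatim with the pair $(\theta,\jmath)=(\theta',\kappa')$. Since $\alpha'\in(0,\pi)$ forces $J\notin\{\pm L_{e_8}\}$ (the $e_8$-coefficient is not $\pm 1$), Proposition \ref{pr-fei} gives uniqueness of this pair, whence $\theta_{[J]}=\theta'$ and $\jmath_{[J]}=\kappa'=\cos(\theta')\imath_1+\sin(\theta')\imath_2$, which is exactly \eqref{eq-pri}.

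The only genuinely non-routine point — and thus the main (minor) obstacle — is checking that $\kappa'=\cos(\theta')\imath_1+\sin(\theta')\imath_2$ is itself an imaginary unit in $\mathbb{S}_\mathbb{O}$, i.e.\ that the reduction of $\psi$'s output to the standard form is legitimate; this uses $\imath_1^2=\imath_2^2=-1$ together with $\imath_1\imath_2=-\imath_2\imath_1$ (the anticommutation being equivalent to $\imath_1\perp\imath_2$ for unit imaginary octonions), giving $\kappa'^2=-\cos^2(\theta')-\sin^2(\theta')=-1$. Everything else is bookkeeping with \eqref{eq-lr}, \eqref{eq-alpha}, and the uniqueness statement already proved in Proposition \ref{pr-fei}, so the write-up will be short.
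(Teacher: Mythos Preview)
Your proposal is correct and follows exactly the route the paper intends: the paper's proof is the single sentence ``It is a consequence of \eqref{eq-defpsi} and Proposition \ref{pr-fei},'' and your write-up simply makes explicit the unwinding of $\psi$ and the appeal to the uniqueness clause of Proposition \ref{pr-fei}. Your verification that $\kappa'\in\mathbb{S}_\mathbb{O}$ and the direct computation of $\alpha_{[J]}$ from \eqref{eq-alpha} are precisely the details the paper omits.
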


\begin{proof}
It is a consequence of \eqref{eq-defpsi} and Proposition \ref{pr-fei}.
\end{proof}

\begin{prop}
	If $J=(J_1,J_2)\in\mathcal{C}_\mathfrak{S}^2$ is not a slice-solution and $J_1\neq J_2$, then
	\begin{equation}\label{eq-nss}
		J_\ell=\psi\left(\alpha_{[J]},\theta_{[J_\ell]},\imath^{[J]}\right),\qquad\ell=1,2.
	\end{equation}
\end{prop}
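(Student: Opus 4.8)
The plan is to unwind the definitions and reduce the claim to the uniqueness statements already proved. Recall that $J=(J_1,J_2)$ is not a slice-solution and $J_1\neq J_2$, so Proposition \ref{pr-ijj} applies: it gives $\alpha_{[J_1]}=\alpha_{[J_2]}=\alpha_{[J]}$, that $\theta_{[J_1]}\neq\theta_{[J_2]}$, and the existence of the unique pair $\imath^{[J]}=(\imath_1^{[J]},\imath_2^{[J]})\in\mathbb{S}_\mathbb{O}^{2,\perp}$ satisfying \eqref{eq-js}. First I would fix $\ell\in\{1,2\}$ and simply compute $\psi\left(\alpha_{[J]},\theta_{[J_\ell]},\imath^{[J]}\right)$ directly from the definition \eqref{eq-defpsi}. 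By \eqref{eq-defpsi}, this equals $L\bigl(\sin(\alpha_{[J]})\cos(\theta_{[J_\ell]})\kappa_\ell+[\cos(\alpha_{[J]})+\sin(\alpha_{[J]})\sin(\theta_{[J_\ell]})\kappa_\ell]e_8\bigr)$, where $\kappa_\ell:=\cos(\theta_{[J_\ell]})\imath_1^{[J]}+\sin(\theta_{[J_\ell]})\imath_2^{[J]}$.

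The key observation is that $\kappa_\ell$ is precisely $\jmath_{[J_\ell]}$: this is exactly the content of \eqref{eq-js} in Proposition \ref{pr-ijj}, which states $\jmath_{[J_\ell]}=\cos(\theta_{[J_\ell]})\imath_1^{[J]}+\sin(\theta_{[J_\ell]})\imath_2^{[J]}$ for $\ell=1,2$. Substituting $\kappa_\ell=\jmath_{[J_\ell]}$ into the expression above yields
\begin{equation*}
	\psi\left(\alpha_{[J]},\theta_{[J_\ell]},\imath^{[J]}\right)=L\bigl(\sin(\alpha_{[J]})\cos(\theta_{[J_\ell]})\jmath_{[J_\ell]}+[\cos(\alpha_{[J]})+\sin(\alpha_{[J]})\sin(\theta_{[J_\ell]})\jmath_{[J_\ell]}]e_8\bigr).
\end{equation*}
But the right-hand side is, by Proposition \ref{pr-fei} and the notational conventions following it (using $\alpha_{[J_\ell]}=\alpha_{[J]}$), exactly the representation \eqref{eq-ref} of $J_\ell$. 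Hence $\psi\left(\alpha_{[J]},\theta_{[J_\ell]},\imath^{[J]}\right)=J_\ell$, which is \eqref{eq-nss}.

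There is essentially no obstacle here; the statement is a bookkeeping corollary assembling \eqref{eq-ref}, the equality $\alpha_{[J_1]}=\alpha_{[J_2]}=\alpha_{[J]}$, and the defining relation \eqref{eq-js} for $\imath^{[J]}$. The only point requiring a word of care is the degenerate case $J_\ell\in\{\pm L_{e_8}\}$: there Proposition \ref{pr-fei} forces $\alpha_{[J_\ell]}\in\{0,\pi\}$ and the pair $(\theta_{[J_\ell]},\jmath_{[J_\ell]})$ is not unique, but by step (ii) of the proof of Proposition \ref{pr-ijj} we have $\alpha_{[J]}\neq 0$, and one checks $\alpha_{[J]}\neq\pi$ similarly (otherwise $J_1=J_2=-L_{e_8}$, contradicting $J_1\neq J_2$); so this case does not occur, and the conventions $\theta_{[\pm L_{e_8}]}=0$, $\jmath_{[\pm L_{e_8}]}=e_1$ are never invoked. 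I would state this briefly at the start of the proof and then carry out the one-line substitution above.
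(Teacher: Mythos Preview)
Your proposal is correct and takes essentially the same approach as the paper: the paper's proof is the one-line ``It follows directly by definition and formulas \eqref{eq-pri} and \eqref{eq-js}'', and your argument is exactly this computation spelled out, using \eqref{eq-js} to identify $\kappa_\ell=\jmath_{[J_\ell]}$ and then matching with the representation \eqref{eq-ref}. Your remark ruling out $J_\ell\in\{\pm L_{e_8}\}$ is also implicit in the paper (via $\sin(\alpha_{[J]})\neq 0$ in the proof of Proposition \ref{pr-ijj}).
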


\begin{proof}
	It follows directly by definition and formulas \eqref{eq-pri} and \eqref{eq-js}.
\end{proof}

\begin{prop}\label{pr-lemi1i2}
	Let $\alpha\in[0,\pi]$, $\theta_1,\theta_2\in[0,\pi)$, $(\imath_1,\imath_2)\in\mathbb{S}_\mathbb{O}^{2,\perp}$, and
    \begin{equation*}
        I_\ell=\psi(\alpha,\theta_\ell,(\imath_1,\imath_2)),\qquad \ell=1,2.
    \end{equation*}
    Then the following statements hold.
	\begin{enumerate}[\upshape (i)]
		\item \label{eq-i1neqi1} Let $\imath_3\in\mathbb{S}_\mathbb{O}$ be such that $\{\imath_1,\imath_2,\imath_3\}$ is a special triple of $\mathbb{O}$; then
		\begin{equation}\label{eq-i1i2zero}
			(I_1-I_2)(\imath_3+((\imath_1 \imath_2)\imath_3) e_8)=0.
		\end{equation}
		
		\item \label{pr-lemit2} If
		\begin{equation}\label{eq-i1neqi2}
			I_1\neq I_2,\qquad{\mbox{and}\qquad (I_1-I_2)c=0,\qquad\mbox{for some}\ c\in\mathfrak{S},}
		\end{equation}
		then
		\begin{equation}\label{eq-defd}
			c=d+((\imath_1 \imath_2)d) e_8
		\end{equation}
		for some $d\in\mathbb{O}$ with $\left\{\imath_1,\imath_2,\frac{d}{|d|}\right\}$ special triple of $\mathbb{O}$.
	\end{enumerate}
\end{prop}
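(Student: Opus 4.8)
The statement has two parts, and both reduce to the structure theorem for sedenionic zero divisors, Proposition \ref{eq-ab8}, applied to the difference $I_1-I_2$. First I would write out $I_1$ and $I_2$ explicitly using \eqref{eq-defpsi}: with $\kappa_\ell:=\cos(\theta_\ell)\imath_1+\sin(\theta_\ell)\imath_2$ we have $I_\ell=L(o_1^{(\ell)}+o_2^{(\ell)}e_8)$ where $o_1^{(\ell)}=\sin(\alpha)\cos(\theta_\ell)\kappa_\ell$ and $o_2^{(\ell)}=\cos(\alpha)+\sin(\alpha)\sin(\theta_\ell)\kappa_\ell$. Subtracting and using the addition formulas for $\cos(\theta_\ell)\kappa_\ell$ and $\sin(\theta_\ell)\kappa_\ell$ (which expand into $\cos/\sin$ of $2\theta_\ell$ times $\imath_1,\imath_2$), one gets $I_1-I_2=L(u+v e_8)$ with $u,v\in\mathrm{span}_\mathbb{R}(\imath_1,\imath_2)$, the $\cos(\alpha)$ terms cancelling in $v$. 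The key bookkeeping is that $u$ and $v$ lie in the two-dimensional real span of the orthonormal pair $\imath_1,\imath_2$, and a short trigonometric computation gives $|u|=|v|=|\sin\alpha|\,|{\sin(\theta_1-\theta_2)}|$ and $u\perp v$ (indeed $v$ is obtained from $u$ by the rotation sending $\imath_1\mapsto\imath_2,\ \imath_2\mapsto-\imath_1$, up to sign). In particular $I_1\ne I_2$ is equivalent to $\sin\alpha\ne 0$ and $\theta_1\ne\theta_2$, and then $u/|u|,v/|v|$ is an orthonormal pair in $\mathrm{span}_\mathbb{R}(\imath_1,\imath_2)$.

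\textbf{Part (i).} Here $c=\imath_3+((\imath_1\imath_2)\imath_3)e_8$. If $\sin\alpha=0$ or $\theta_1=\theta_2$ then $I_1-I_2=0$ and there is nothing to prove, so assume $I_1\ne I_2$. By the ``$\Leftarrow$'' direction of Proposition \ref{eq-ab8} it suffices to check: $u,v,\imath_3$ are all nonzero; $\{u/|u|,\,v/|v|,\,\imath_3\}$ is a special triple of $\mathbb{O}$; $|u|=|v|$; and the third coordinate $d$ of $c$ equals $\frac{u(v\imath_3)}{|u||v|}$. The norm equality $|u|=|v|$ was noted above. For the special-triple condition: $u/|u|$ and $v/|v|$ are orthonormal imaginary units in $\mathrm{span}(\imath_1,\imath_2)$, so by Proposition \ref{pr-lij} (applied to the special triple $(\imath_1,\imath_2,\imath_3)$ and a suitable rotation angle) $\{u/|u|,v/|v|,\imath_3\}$ is again a special triple. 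Finally one must verify the identity $((\imath_1\imath_2)\imath_3)=\frac{u(v\imath_3)}{|u||v|}$; since $u/|u|,v/|v|$ is the image of $\imath_1,\imath_2$ under a planar rotation and $(\imath_1\imath_2)$ is unchanged by such a rotation (both $\imath_1\imath_2$ and $u\,v/|u||v|$ equal the same unit imaginary octonion orthogonal to the plane, by the alternativity and the orthogonality relations), this is again Proposition \ref{pr-lij} together with a one-line octonionic computation. Then \eqref{eq-i1i2zero} follows.

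\textbf{Part (ii).} Assume $I_1\ne I_2$ and $(I_1-I_2)c=0$ with $c=c_1+c_2e_8$, $c_1,c_2\in\mathbb{O}$, $c\ne 0$. Apply the ``$\Rightarrow$'' direction of Proposition \ref{eq-ab8} to $(u+ve_8)(c_1+c_2e_8)=0$: it yields $u,v,c_1\ne 0$, $|u|=|v|$ (already known), $\{u/|u|,\,v/|v|,\,c_1/|c_1|\}$ a special triple of $\mathbb{O}$, and $c_2=\frac{u(vc_1)}{|u||v|}$. Set $d:=c_1$. From the special-triple property and the fact that $u/|u|,v/|v|$ span the same plane as $\imath_1,\imath_2$, the remark after Proposition \ref{eq-ab8} gives that $d/|d|$ is orthogonal to $\mathrm{span}(\imath_1,\imath_2)$ and hence $\{\imath_1,\imath_2,d/|d|\}$ is a special triple of $\mathbb{O}$ (a unit imaginary octonion orthogonal to $\imath_1,\imath_2$ and to the subalgebra they generate). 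It remains to identify $c_2$ with $(\imath_1\imath_2)d$: using $c_2=\frac{u(vc_1)}{|u||v|}$ and the computation from Part (i) that $\frac{u\,v}{|u||v|}=\imath_1\imath_2$ as elements of $\mathbb{O}$ (valid because both are the normalized product of an orthonormal pair spanning the same oriented plane — orientation matching because $v$ comes from $u$ by the same rotation $\imath_1\mapsto\imath_2$), together with a short alternativity argument to move the parenthesization, one gets $c_2=((\imath_1\imath_2)d)$, which is \eqref{eq-defd}.

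\textbf{Main obstacle.} The routine-but-delicate point is the sign/orientation bookkeeping in the claim $\frac{u\,v}{|u||v|}=\imath_1\imath_2$ (as opposed to $\imath_2\imath_1=-\imath_1\imath_2$): one must check that the planar rotation carrying $(\imath_1,\imath_2)$ to $(u/|u|,v/|v|)$ is orientation-preserving, using the explicit formulas for $u,v$ in terms of $2\theta_1$ versus $2\theta_2$ and being careful about which of $\theta_1-\theta_2$ or $\theta_2-\theta_1$ lies in $(0,\pi)$. If that orientation check failed one would only get \eqref{eq-i1i2zero}/\eqref{eq-defd} up to a sign, so this is exactly where the precise choice of the element $\imath_3+((\imath_1\imath_2)\imath_3)e_8$ (rather than $\imath_3-((\imath_1\imath_2)\imath_3)e_8$) in the statement is pinned down. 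Everything else is an application of Propositions \ref{eq-ab8} and \ref{pr-lij} plus elementary trigonometric and octonionic identities.
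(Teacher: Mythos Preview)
Your approach is essentially the paper's: write $I_1-I_2=L(\sin(\alpha)(a+be_8))$ with $a,b\in\mathrm{span}_{\mathbb R}(\imath_1,\imath_2)$, check $|a|=|b|$ and $a\perp b$, invoke Proposition~\ref{pr-lij} to see $\{a/|a|,b/|b|,\imath_3\}$ is special, and feed everything into Proposition~\ref{eq-ab8} in both directions.

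One correction on the sign bookkeeping you flag. With $a=p\imath_1+q\imath_2$ (where $p=\cos^2\theta_1-\cos^2\theta_2$, $q=\cos\theta_1\sin\theta_1-\cos\theta_2\sin\theta_2$) one gets $b=q\imath_1-p\imath_2$, so the planar rotation is $\imath_1\mapsto-\imath_2$, $\imath_2\mapsto\imath_1$, not the one you wrote. A direct multiplication gives $ab=-(p^2+q^2)\imath_1\imath_2$, i.e.\ $\frac{ab}{|a||b|}=-\imath_1\imath_2$ rather than $+\imath_1\imath_2$. This is harmless because the special-triple identity $a(b\imath_3)=-(ab)\imath_3$ introduces a compensating sign, yielding $\frac{a(b\imath_3)}{|a||b|}=(\imath_1\imath_2)\imath_3$ as required. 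The paper sidesteps the orientation discussion altogether by simply verifying $a\cdot\imath_1\imath_2=b$ directly (equivalently $\frac{ba}{|a||b|}=\imath_1\imath_2$) and then reading off $\frac{(ba)\imath_3}{|a||b|}=(\imath_1\imath_2)\imath_3$, which matches the form of $d$ in Proposition~\ref{eq-ab8}. Either route works; the direct verification is cleaner than arguing about orientations.
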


\begin{proof}
	(i) If $\theta_1=\theta_2$ or $\sin(\alpha)=0$, then $I_1=I_2$ and \eqref{eq-i1i2zero} holds. Otherwise, $\theta_1\neq\theta_2$ and $\sin(\alpha)\neq 0$. Therefore, $I_1,I_2\notin\{\pm L_{e_8}\}$. By \eqref{eq-pri},
	\begin{equation}\label{diesis}
		\theta_{[I_\ell]}=\theta_{\ell},\qquad\mbox{and}\qquad\jmath_{[I_\ell]}=\cos(\theta_\ell)\imath_1+\sin(\theta_\ell)\imath_2.
	\end{equation}
	By \eqref{eq-ref} and direct calculation,
	\begin{equation}\label{eq-i1i2}
		I_1-I_2=L\bigg(\sin\left(\alpha\right)\left(a+b e_8\right)\bigg),
	\end{equation}
	where
	\begin{equation}\label{eq-aemi}
		\begin{split}
			a:=&\cos(\theta_{[I_1]})\jmath_{[I_1]}-\cos(\theta_{[I_2]})\jmath_{[I_2]}
			\\=&\left[\cos^2(\theta_{1})-\cos^2(\theta_{2})\right]\imath_1
			+\left[\cos(\theta_{1})\sin(\theta_{1})-\cos(\theta_{2})\sin(\theta_{2})\right]\imath_2,
		\end{split}
	\end{equation}
	and
	\begin{equation}\label{eq-bemi}
		\begin{split}
			b:=&\sin(\theta_{[I_1]})\jmath_{[I_1]}-\sin(\theta_{[I_2]})\jmath_{[I_2]}
			\\=&[\cos(\theta_{1})\sin(\theta_{1})-\cos(\theta_{2})\sin(\theta_{2})]\imath_1
			+[\sin^2(\theta_{1})-\sin^2(\theta_{2})]\imath_2.
		\end{split}
	\end{equation}
	If $a=0$, then by \eqref{eq-aemi}
	\begin{equation*}
		\begin{cases}
			\cos(2\theta_1)-\cos(2\theta_2)=2[\cos^2(\theta_{1})-\cos^2(\theta_{2})]=0,\\
			\sin(2\theta_1)-\sin(2\theta_2)= {2[\cos(\theta_{1})\sin(\theta_{1})-\cos(\theta_{2})\sin(\theta_{2})]=0}.
		\end{cases}
	\end{equation*}
	It implies that $\theta_1=\theta_2$, a contradiction, therefore, $a\neq 0$. Similarly, if we assume $b\neq 0$. It follows from
	\begin{equation*}
		\left[\cos^2(\theta_1)-\cos^2(\theta_2)\right]=-[\sin^2(\theta_1)-\sin^2(\theta_2)],\qquad \imath_2\imath_1\imath_2=-\imath_1\imath_2\imath_2=\imath_1,
	\end{equation*}
	that
	\begin{equation*}
		a\cdot\imath_1\cdot\imath_2=b,\qquad\mbox{and}\qquad|a|=|b|.
	\end{equation*}
	By $b^{-1}=\frac{-b}{|b|^2}\in\mathbb{S}_\mathbb{O}$,
	\begin{equation}\label{eq-ab}
		\frac{ba}{|a||b|}
		=\imath_1\imath_2,\qquad\mbox{and}\qquad \frac{(ba)\imath_3}{|a||b|}=(\imath_1\imath_2)\imath_3.
	\end{equation}
	By Proposition \ref{pr-lij}, $\left\{\frac{a}{|a|},\frac{b}{|b|},\imath_3\right\}$ is a special triple of $\mathbb{O}$. Then \eqref{eq-i1i2zero} holds by \eqref{eq-i1i2}, \eqref{eq-ab} and Proposition \ref{eq-ab8}.
	
	(ii) Suppose that \eqref{eq-i1neqi2} holds. By Proposition \ref{eq-ab8}, there is $d\in\mathbb{O}$ such that \eqref{eq-defd} holds and $\left\{\frac{a}{|a|},\frac{b}{|b|},\frac{d}{|d|}\right\}$ is a special triple of $\mathbb{O}$. According to \eqref{eq-aemi}, \eqref{eq-bemi} and Proposition \ref{pr-lij}, $\left\{\imath_1,\imath_2,\frac{d}{|d|}\right\}$ is also a special triple of $\mathbb{O}$.
\end{proof}

\begin{prop}\label{pr-st}
	Assume that $J=(J_1,J_2)\in\mathcal{C}_\mathfrak{S}^2$ is not a slice-solution and $J_1\neq J_2$, and let $K\in\mathcal{C}_\mathfrak{S}$. Then
	\begin{equation*}
		\mathcal{C}_\mathfrak{S}^{\ker}(J)=\psi\bigg(\left\{\alpha_{[J]}\right\}\times[0,\pi)\times\left\{\imath^{[J]}\right\}\bigg)
	\end{equation*}
	and the following statements are equivalent:
	\begin{enumerate}[\upshape (i)]
		\item $K=\psi(\alpha_{[J]},\theta_{[K]},\imath^{[J]})$;
		\item $K\in\mathcal{C}_\mathfrak{S}^{\ker}(J)$;
		\item $(J_1,J_2,K)$ is not a slice-solution.
	\end{enumerate}
\end{prop}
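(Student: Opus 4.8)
The plan is to establish the equality $\mathcal{C}_\mathfrak{S}^{\ker}(J)=\psi(\{\alpha_{[J]}\}\times[0,\pi)\times\{\imath^{[J]}\})$ first, and then deduce the equivalence of (i)--(iii); indeed (i)$\Leftrightarrow$(ii) is a direct rephrasing of that equality together with Proposition \ref{pr-fei}, while (ii)$\Leftrightarrow$(iii) follows from the definitions: by \eqref{eq-msmfs}, $K\in\mathcal{C}_\mathfrak{S}^{\ker}(J)$ means $\ker(1,K)\supset\ker[\zeta(J)]$, and since $\ker[\zeta(J)]=\ker(1,J_1)\cap\ker(1,J_2)=\ker[\zeta(J_1,J_2,K)]\cap(\text{terms from }K)$, one checks that $\ker[\zeta(J_1,J_2,K)]=\ker[\zeta(J)]$ exactly when the $K$-row adds no new constraint, which is precisely $\ker(1,K)\supset\ker[\zeta(J)]$; and $(J_1,J_2,K)$ being a slice-solution would force $\mathcal{C}_\mathfrak{S}^{\ker}(J_1,J_2,K)=\mathcal{C}_\mathfrak{S}$, whereas $(J_1,J_2)$ is already not a slice-solution, so $(J_1,J_2,K)$ is not a slice-solution iff $\mathcal{C}_\mathfrak{S}^{\ker}(J_1,J_2,K)=\mathcal{C}_\mathfrak{S}^{\ker}(J)$ iff $\ker[\zeta(J_1,J_2,K)]=\ker[\zeta(J)]$ iff $K\in\mathcal{C}_\mathfrak{S}^{\ker}(J)$.

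For the main equality I would argue by double inclusion, and the bridge is the kernel description. Since $J_1\neq J_2$ and $J$ is not a slice-solution, $(J_1-J_2)$ is (the left multiplication by) a nonzero zero divisor, and by \eqref{eq-jj} the space $\ker[\zeta(J)]$ is carried by $\ker(J_1-J_2)=\{c\in\mathfrak{S}:(J_1-J_2)c=0\}$; more precisely an element $(c_1,c_2)^T$ lies in $\ker[\zeta(J)]$ iff $c_1=-J_1c_2=-J_2c_2$, i.e. iff $c_2\in\ker(J_1-J_2)$ and $c_1=-J_1c_2$. Writing $J_\ell=\psi(\alpha_{[J]},\theta_{[J_\ell]},\imath^{[J]})$ via \eqref{eq-nss}, Proposition \ref{pr-lemi1i2}\eqref{pr-lemit2} tells us that $\ker(J_1-J_2)$ consists exactly of the $c=d+((\imath_1^{[J]}\imath_2^{[J]})d)e_8$ with $d$ running over the unit-imaginary octonions $\imath_3$ completing $\{\imath_1^{[J]},\imath_2^{[J]}\}$ to a special triple (together with their real multiples). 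Then for $K=\psi(\alpha_{[J]},\theta_{[K]},\imath^{[J]})$, part \eqref{eq-i1neqi1} of that same proposition (applied to the pair $J_1,K$, which has the same $\alpha$ and $\imath$ data) gives $(J_1-K)(\imath_3+((\imath_1^{[J]}\imath_2^{[J]})\imath_3)e_8)=0$, hence the same $c$'s kill $J_1-K$; since also $J_1c=J_2c$, we get $Kc=J_1c=J_2c$, so $\ker(1,K)\supset\ker[\zeta(J)]$, i.e. $K\in\mathcal{C}_\mathfrak{S}^{\ker}(J)$. This proves $\psi(\{\alpha_{[J]}\}\times[0,\pi)\times\{\imath^{[J]}\})\subset\mathcal{C}_\mathfrak{S}^{\ker}(J)$.

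For the reverse inclusion, take $K\in\mathcal{C}_\mathfrak{S}^{\ker}(J)$. Since $J$ is not a slice-solution there is a nonzero $c=(c_1,c_2)^T$ as above with $c_2\in\ker(J_1-J_2)$ nonzero, and $K\in\mathcal{C}_\mathfrak{S}^{\ker}(J)$ forces $(J_1-K)c_2=0$ as well; in particular $K\neq$ some slice-unit only if... — more to the point, $(J_1,K)$ is not a slice-solution, so Proposition \ref{pr-ijj} applies to it and yields $\alpha_{[K]}=\alpha_{[J_1]}=\alpha_{[J]}$. It remains to identify $\imath^{[K\text{-data}]}$ with $\imath^{[J]}$: from $(J_1-K)c_2=0$ and $c_2=d+((\imath_1^{[J]}\imath_2^{[J]})d)e_8$ with $\{\imath_1^{[J]},\imath_2^{[J]},d/|d|\}$ a special triple (this is where Proposition \ref{pr-lemi1i2}\eqref{pr-lemit2} enters, now for the pair $J_1,K$), we read off via \eqref{eq-i1i2}--\eqref{eq-bemi} that the octonion parts of $J_1-K$ lie in $\mathrm{span}_\mathbb{R}\{\imath_1^{[J]},\imath_2^{[J]}\}$, and combining with the corresponding identity for $J_1-J_2$ and the uniqueness statement in Proposition \ref{pr-ijj} forces $\imath^{[(J_1,K)]}=\imath^{[J]}$; then \eqref{eq-nss} gives $K=\psi(\alpha_{[J]},\theta_{[K]},\imath^{[J]})$.

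**Main obstacle.** The delicate point is the reverse inclusion, specifically pinning down that the orthonormal pair $\imath^{[(J_1,K)]}$ produced by Proposition \ref{pr-ijj} for $(J_1,K)$ is the \emph{same} pair $\imath^{[J]}$ — a priori it only spans the same octonionic directions up to a rotation. The resolution is that $J_1=\psi(\alpha_{[J]},\theta_{[J_1]},\imath^{[J]})=\psi(\alpha_{[J]},\theta_{[J_1]},\imath^{[(J_1,K)]})$ simultaneously, so $\jmath_{[J_1]}=\cos(\theta_{[J_1]})\imath_1^{[J]}+\sin(\theta_{[J_1]})\imath_2^{[J]}=\cos(\theta_{[J_1]})\imath_1^{[(J_1,K)]}+\sin(\theta_{[J_1]})\imath_2^{[(J_1,K)]}$, and the common zero divisor $c_2$ forces $(\imath_1^{[J]}\imath_2^{[J]})d=(\imath_1^{[(J_1,K)]}\imath_2^{[(J_1,K)]})d$; these two relations, together with $\imath\perp$ orthonormality, over-determine the rotation and pin it to the identity. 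Carrying this last normalization cleanly is the only non-formal step; everything else is bookkeeping on top of Propositions \ref{pr-fei}, \ref{pr-ijj} and \ref{pr-lemi1i2}.
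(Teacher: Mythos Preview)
Your forward inclusion ($\psi(\{\alpha_{[J]}\}\times[0,\pi)\times\{\imath^{[J]}\})\subset\mathcal{C}_\mathfrak{S}^{\ker}(J)$) matches the paper's (i)$\Rightarrow$(ii) essentially verbatim. There is, however, a genuine gap in your claim that (ii)$\Leftrightarrow$(iii) ``follows from the definitions'': while (ii)$\Rightarrow$(iii) is immediate (then $\ker[\zeta(J_1,J_2,K)]=\ker[\zeta(J)]\neq\{0\}$), the converse is not. From (iii) you only obtain \emph{one} nonzero $c_2\in\ker(J_1-J_2)\cap\ker(J_1-K)$, not the inclusion $\ker(1,K)\supset\ker[\zeta(J)]$; the latter asks that $(J_1-K)c_2'=0$ for \emph{every} $c_2'$ in the $4$-dimensional space $\ker(J_1-J_2)$. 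The fix is easy: your reverse-inclusion argument in fact uses only the existence of a single such $c_2$, so it proves (iii)$\Rightarrow$(i) directly. You should therefore organize the proof as the cycle (i)$\Rightarrow$(ii)$\Rightarrow$(iii)$\Rightarrow$(i), which is exactly what the paper does.

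For (iii)$\Rightarrow$(i) itself your route genuinely differs from the paper's. The paper introduces the auxiliary $I=\psi(\alpha_{[J]},\theta_{[K]},\imath^{[J]})$, notes (via the already-proved (i)$\Rightarrow$(ii)) that $(K-I)c_2=0$, and since $\alpha_{[I]}=\alpha_{[K]}$ and $\theta_{[I]}=\theta_{[K]}$ one has $K-I=L\big(\sin\alpha_{[J]}(\jmath_{[K]}-\jmath_{[I]})[\cos\theta_{[K]}+\sin\theta_{[K]}e_8]\big)$; the zero-divisor condition forces $\cos\theta_{[K]}(\jmath_{[K]}-\jmath_{[I]})\perp\sin\theta_{[K]}(\jmath_{[K]}-\jmath_{[I]})$, hence $\jmath_{[K]}=\jmath_{[I]}$ and $K=I$. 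You instead apply Proposition~\ref{pr-ijj} to the pair $(J_1,K)$ and argue $\imath^{[(J_1,K)]}=\imath^{[J]}$. This works, but your sketch understates what is needed: the two orthonormal pairs are not a priori a rotation of one another in a common plane. What saves you is that the shared $c_2$ gives $\imath_1^{[J]}\imath_2^{[J]}=\imath_1^{[(J_1,K)]}\imath_2^{[(J_1,K)]}=:w$ (comparing $e_8$-parts and using that $\mathbb{O}$ has no zero divisors), and then writing $\imath_2=-\imath_1w$ turns the $\jmath_{[J_1]}$-relation into $\imath_1(\cos\theta_{[J_1]}-\sin\theta_{[J_1]}w)=\jmath_{[J_1]}$, which determines $\imath_1$ uniquely in $\mathbb{O}$. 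The paper's auxiliary-$I$ trick avoids this octonionic inversion altogether; your argument is a legitimate alternative but you should spell out the division-algebra step rather than calling it a ``rotation''.
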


\begin{proof}
	
	(i)$\Rightarrow$(ii) Let $c=(c_1,c_2)^T\in\ker[\zeta(J)]\backslash\{0\}$. By definition and direct calculation, $c_2\neq 0$ and $(J_1-J_2)c_2=0$. According to \eqref{eq-iperp}, $\imath^{[J]}=\left(\imath_1^{[J]},\imath_2^{[J]}\right)\in\mathbb{S}_\mathbb{O}^{2,\perp}$. By \eqref{eq-nss} and Proposition \ref{pr-lemi1i2} \eqref{pr-lemit2}, there is $d\in\mathbb{O}$ such that
	\begin{equation}\label{eq-c2}
		c_2=d+\left((\imath_1^{[J]}\imath_2^{[J]})d\right)e_8
	\end{equation}
	and $\left\{\imath_1^{[J]},\imath_2^{[J]},\frac{d}{|d|}\right\}$ is a special triple of $\mathbb{O}$. According to \eqref{eq-i1i2zero} and \eqref{eq-c2}, $(K-J_1)c_2=0$. Therefore
	\begin{equation*}
		(1,K)c=(1,K)\begin{pmatrix}
			c_1\\c_2
		\end{pmatrix}=c_1+Kc_2=c_1+J_1c_2=(1,J_1)c=0.
	\end{equation*}
	It implies that $\ker(1,K)\supset\ker[\zeta(J)]$ and (ii) holds.
	
	(ii)$\Rightarrow$(iii) Suppose that $K\in\mathcal{C}_\mathfrak{S}^{\ker}(J)$. By definition,
	\begin{equation*}
		\ker\left(\zeta\left((J_1,J_2,K)\right)\right)=\ker(\zeta(J))\neq\{0\}.
	\end{equation*}
	According to \cite{Dou2022001}*{Remark 5.9}, $(J_1,J_2,K)$ is not a slice-solution.
	
	(iii)$\Rightarrow$(i) Let $I=\psi(\alpha_{[J]},\theta_{[K]},\imath^{[J]})$. By \eqref{eq-pri},
	\begin{equation}\label{eq-ik}
		\alpha_{[I]}=\alpha_{[J]},\qquad\mbox{and}\qquad\theta_{[I]}=\theta_{[K]}.
	\end{equation}
	It follows from the method in the proof of (i)$\Rightarrow$(ii) that $I\in\mathcal{C}_\mathfrak{S}^{\ker}(J)$. Suppose that $(J_1,J_2,K)$ is not a slice-solution. By \cite{Dou2021001}*{Remark 5.9}, there is non-zero $c=(c_1,c_2)^T\in\mathfrak{S}^{2\times 1}$ such that
	\begin{equation*}
		\begin{pmatrix}
			1&J_1\\1&J_2\\1&K
		\end{pmatrix}
		\begin{pmatrix}
			c_1\\c_2
		\end{pmatrix}=0.
	\end{equation*}
	Since $I\in\mathcal{C}_\mathfrak{S}^{\ker}(J)$,
	\begin{equation*}
		(1,I)c=0.
	\end{equation*}
	According to Proposition \ref{pr-ijj},
	\begin{equation}\label{eq-ak1}
		\alpha_{[K]}=\alpha_{[L]}=\alpha_{[J_1]}=\alpha_{[J]}=\alpha_{[I]}.
	\end{equation}
	If $K=I$, then (i) holds. Otherwise $K\neq I$, then $K-I=L\left(u+v e_8\right)$, where
	\begin{equation*}
		\begin{cases}
			u=\sin(\alpha_{[J]})\cos(\theta_{[K]})(\jmath_{[K]}-\jmath_{[I]}),\\
			v=\sin(\alpha_{[J]})\sin(\theta_{[K]})(\jmath_{[K]}-\jmath_{[I]}).
		\end{cases}
	\end{equation*}
	According to $(K-I)c_2=0$, $\left\{\frac{u}{|u|},\frac{v}{|v|},\frac{d_1}{|d_1|}\right\}$ is a special triple of $\mathbb{O}$, where $c_2=d_1+d_2 e_8$ for some $d_1,d_2\in\mathbb{O}$. It implies that $u,v\neq 0$, $\sin(\theta_{[K]}),\cos(\theta_{[K]})\neq 0$, and
	\begin{equation*}
		0=\left\langle\frac{u}{|u|},\frac{v}{|v|}\right\rangle
		=\sin^2(\alpha_{[J]})\cos(\theta_{[K]})\sin(\theta_{[K]})
		\left\langle\jmath_{[K]}-\jmath_{[I]},\jmath_{[K]}-\jmath_{[I]}\right\rangle.
	\end{equation*}
	Therefore $\jmath_{[K]}=\jmath_{[I]}$. By \eqref{eq-ik} and \eqref{eq-ak1} we  have $I=K$ hence (i) holds.
\end{proof}

\begin{example}\label{ex-cker}
 {Here we expand the calculations which led to Example \ref{EX24}. From Proposition \ref{pr-st}, \eqref{eq-alphae1e10}, \eqref{eq-imathe1e10} and \eqref{eq-psib}, we deduce}
	\begin{equation*}
        \begin{split}
	        &\mathcal{C}_{\mathfrak{S}}^{\ker}(L_{e_1},L_{e_{10}})
            \\=&\psi\bigg(\{\alpha_{[(L_{e_1},L_{e_{10}})]}\}\times[0,\pi)\times\left\{\imath^{[(L_{e_1},L_{e_{10}})]}\right\}\bigg)
            \\=&\psi\bigg(\left\{\frac{\pi}{2}\right\}\times[0,\pi)\times\{(e_1,e_2)\}\bigg)
            \\=&\bigg\{L\left(\cos^2\theta e_1+\cos\theta\sin\theta e_2+[\sin\theta\cos\theta e_1+\sin^2\theta e_2]e_8\right):\theta\in[0,\pi)\bigg\}.
        \end{split}
	\end{equation*}
    It is easy to check that
    \begin{equation*}
        \lim_{\theta\rightarrow \pi}\psi\left(\frac{\pi}{2},\theta,(e_1,e_2)\right)=L_{e_1}=\psi\left(\frac{\pi}{2},0,(e_1,e_2)\right).
    \end{equation*}
    Therefore, $\mathcal{C}_{\mathfrak{S}}^{\ker}(L_{e_1},L_{e_{10}})$ is a closed curve in $\mathcal{C}_\mathfrak{S}$.
\end{example}

\begin{cor}\label{co-lj}
	Let $J=(J_1,J_2)\in\mathcal{C}_\mathfrak{S}^2$ with $J_1\neq J_2$. Then the following statements are equivalent:
	\begin{enumerate}[\upshape (i)]
		\item $J$ is a hyper-solution;
		\item $J_1-J_2$ is a zero divisor, that is $J_1-J_2=L_a$, for some zero divisor $a\in\mathfrak{S}$;
		\item $J$ is not a slice-solution.
	\end{enumerate}
\end{cor}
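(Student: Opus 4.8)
The plan is to prove the implications (i)$\Rightarrow$(iii), (ii)$\Leftrightarrow$(iii) and (iii)$\Rightarrow$(i), leaning almost entirely on the machinery already built in Propositions~\ref{pr-ijj}, \ref{pr-lemi1i2} and \ref{pr-st}. The implication (i)$\Rightarrow$(iii) is immediate, since by definition a hyper-solution is in particular not a slice-solution.

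For (ii)$\Leftrightarrow$(iii), I would write $J_\ell=L_{a_\ell}$ with $a_\ell\in\mathfrak{S}$, so that $J_1-J_2=L_a$ with $a:=a_1-a_2$, and note $a\neq 0$ because $J_1\neq J_2$. By the argument recalled in the proof of Proposition~\ref{pr-ijj} (following \cite{Dou2021001}*{Remark 5.9}), the pair $(J_1,J_2)$ fails to be a slice-solution exactly when $\ker[\zeta(J)]\neq\{0\}$, i.e. when there is a non-zero $c=(c_1,c_2)^T\in\mathfrak{S}^{2\times 1}$ with $\zeta(J)c=0$; as shown there this forces $c_2\neq 0$ and $(J_1-J_2)c_2=0$. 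Conversely, given any non-zero $c_2\in\mathfrak{S}$ with $ac_2=0$, the column $c=(-J_1c_2,\,c_2)^T$ is a non-zero element of $\ker[\zeta(J)]$. Hence (iii) holds if and only if there is a non-zero $c_2\in\mathfrak{S}$ with $ac_2=0$, which is precisely the assertion that $a=J_1-J_2$ is a zero divisor, i.e. (ii).

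For (iii)$\Rightarrow$(i), assume $J$ is not a slice-solution; together with $J_1\neq J_2$ this places us under the hypotheses of Proposition~\ref{pr-st}, which asserts, for every $K\in\mathcal{C}_\mathfrak{S}$, the equivalence of ``$K\in\mathcal{C}_\mathfrak{S}^{\ker}(J)$'' and ``$(J_1,J_2,K)$ is not a slice-solution''. Taking contrapositives, for every $K\in\mathcal{C}_\mathfrak{S}\setminus\mathcal{C}_\mathfrak{S}^{\ker}(J)$ the triple $(J_1,J_2,K)$ is a slice-solution; since moreover $J$ itself is not a slice-solution, this is exactly the definition of $J$ being a hyper-solution.

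Since the genuine computation has been front-loaded into Propositions~\ref{eq-ab8}, \ref{pr-ijj}, \ref{pr-lemi1i2} and \ref{pr-st}, I do not expect a serious obstacle here. The only step requiring a little care is (ii)$\Leftrightarrow$(iii), where one must check that $J_1-J_2$ is left multiplication by a single, necessarily non-zero, sedenion and that ``$L_a$ has non-trivial kernel'' coincides with the notion of zero divisor used in Remark~\ref{rmk-zd}.
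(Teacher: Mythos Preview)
Your proposal is correct and follows essentially the same approach as the paper. The only cosmetic difference is that the paper proves the cycle (i)$\Rightarrow$(ii)$\Rightarrow$(iii)$\Rightarrow$(i), whereas you prove (i)$\Rightarrow$(iii), (ii)$\Leftrightarrow$(iii), and (iii)$\Rightarrow$(i); the underlying arguments (the $\ker[\zeta(J)]$ computation and the appeal to Proposition~\ref{pr-st}) are identical.
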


\begin{proof}
	(i)$\Rightarrow$(ii) Suppose that $J$ is a hyper-solution. Let  $c=(c_1,c_2)^T\in\ker(\zeta(J))$ be a non-zero constant. Then it is easy to check that $c_2\neq 0$ and $(J_1-J_2)c_2=0$. Therefore, $J_1-J_2$ is a zero divisor.
	
	(ii)$\Rightarrow$(iii) Suppose that $J_1-J_2$ is a zero divisor. Then there is non-zero $c_2\in\mathcal{C}_\mathfrak{S}$ such that $(J_1-J_2)c_2=0$. Therefore $0\neq(-J_1c_2,c_2)^T\in\ker(\zeta(J))$. It follows from \cite{Dou2021001}*{Remark 5.9} that $J$ is not a slice-solution.
	
	(iii)$\Rightarrow$(i) Suppose that $J$ is not a slice-solution. Suppose that $J$ is not a hyper-solution. Then we can choose $ K\in\mathcal{C}_\mathfrak{S}\backslash\mathcal{C}_\mathfrak{S}^{\ker}(J)$ such that $(J_1,J_2,K)$ is not a slice-solution. By Proposition \ref{pr-st}, $ K\in\mathcal{C}_\mathfrak{S}^{\ker}(J)$, a contradiction.
\end{proof}

\section{Hyper-$\sigma$-balls in sedenions}\label{sc-hypersigmaball}

In this section, we show that there are only two types of hyper-$\sigma$-balls in sedenions.
One  {belongs to the plane associated with $ \{\pm e_8\} $, the other belongs to the planes associated with $ (J_1, J_2)\in\mathcal{C}_\mathfrak{S}^2$ that meets certain conditions}.
In other words, a hyper-$\sigma$-ball is either a ball extending in just one slice plane, or a ball extending in exactly two slice planes.
The former is consistent with a $\sigma$-ball, while the latter is
a new feature appearing in sedenions. We also show that both hyper-$\sigma$-balls and $\sigma$-balls are domains of regularity.

In the results below, for any $I\in\mathcal{C}_\mathfrak{S}$ we shall use the set defined by
\begin{equation*}\index{$\mathcal{H}(I)$}
	\mathcal{H}(I):=\{K\in \mathcal{C}_\mathfrak{S}\backslash\{I\}:(I,K)\mbox{ is a hyper-solution}\},
\end{equation*}
and  {recalling \eqref{diesis}, we set}
\begin{equation*}\index{$\mathbb{S}_\mathbb{O}(I)$}
	\mathbb{S}_\mathbb{O}(I):=\{\kappa\in\mathbb{S}_\mathbb{O}:\kappa\perp \jmath_{[I]}\}.
\end{equation*}
It is possible to check that there is $e_1',e_2',\cdots,e_6'\in\mathbb{S}_\mathfrak{S}$ such that $\{1,\jmath_{[I]},e_1',e_2',\cdots,e_6'\}$ is an orthonormal basis of $\mathbb{O}$. It follows that
\begin{equation}\label{eq-5sphere}
    \mathbb{S}_\mathbb{O}(I)=\left\{\sum_{\imath=1}^6\lambda_ie_i':\sum_{\imath=1}^6 \lambda_\imath^2=1\right\}
\end{equation}
is a $5$-sphere in the $8$-dimensional real Euclidean space $\mathbb{O}$.

\begin{rmk}
	Let $I\in\{\pm L_{e_8}\}$. It is easy to verify, using  Proposition \ref{pr-ijj}, that $\mathcal{H}(I)=\varnothing$, moreover $(I)$ is a hyper-solution.
\end{rmk}

Let $I\in\mathcal{C}_\mathfrak{S}\backslash\{\pm L_{e_8}\}$. We define a map $\varphi[I]:\mathbb{S}_\mathbb{O}(I)\times[0,\pi)\rightarrow\mathcal{C}_\mathfrak{S}$ by
\begin{equation}\label{eq-varphidef}\index{$\varphi[I]$}
	\varphi[I](\kappa,\vartheta):=\psi\left(\alpha_{[I]},\vartheta,\imath^{\{ I,\kappa\} }\right),
\end{equation}
where
\begin{equation}\label{eq-ii}\index{$\imath^{\{ I,\kappa\} }$}
	\imath^{\{ I,\kappa\} }=\begin{pmatrix}\imath_1^{\{ I,\kappa\} }\\\imath_2^{\{ I,\kappa\} }\end{pmatrix}=\begin{pmatrix}
		\cos(\theta_{[I]})&-\sin(\theta_{[I]})\\
		\sin(\theta_{[I]})&\cos(\theta_{[I]})
	\end{pmatrix}
	\begin{pmatrix}
		\jmath_{[I]}\\ \kappa
	\end{pmatrix}.
\end{equation}
By definition,
\begin{equation}\label{eq-kappa}
	I=\varphi[I](\kappa,\theta_{[I]})=\psi\left(\alpha_{[I]},\theta_{[I]},\imath^{\{ I,\kappa\} }\right),\qquad\forall\ \kappa\in\mathbb{S}_\mathbb{O}(I).
\end{equation}

\begin{prop}\label{pr-li}
	Let $I\in\mathcal{C}_\mathfrak{S}\backslash\{\pm L_{e_8}\}$. Then $\varphi[I]|_{E_{[I]}}$ is injective and
    \begin{equation}\label{eq-mhi}
        \mathcal{H}(I)=\varphi[I](	E_{[I]})
    \end{equation}
    is a $6$-manifold, where
	\begin{equation*}\index{$E_{[I]}$}
		E_{[I]}:=\mathbb{S}_\mathbb{O}(I)\times([0,\pi)\backslash\{\theta_{[I]}\}).
	\end{equation*}
\end{prop}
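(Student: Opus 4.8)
The plan is to prove the three assertions in Proposition \ref{pr-li} in order: that $\mathcal{H}(I)$ equals $\varphi[I](E_{[I]})$, that $\varphi[I]|_{E_{[I]}}$ is injective, and that this image is a $6$-manifold. Throughout I fix $I\in\mathcal{C}_\mathfrak{S}\backslash\{\pm L_{e_8}\}$, so that $\alpha_{[I]},\theta_{[I]},\jmath_{[I]}$ are well-defined, and I write $\alpha:=\alpha_{[I]}$, which satisfies $\sin(\alpha)\neq 0$ since $I\notin\{\pm L_{e_8}\}$.

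\emph{Step 1: the image equals $\mathcal{H}(I)$.} For ``$\supset$'', take $K\in\mathcal{H}(I)$, so $(I,K)$ is a hyper-solution with $I\neq K$; by Corollary \ref{co-lj} this means $(I,K)$ is not a slice-solution, so Proposition \ref{pr-ijj} applies and gives $\alpha_{[K]}=\alpha_{[I]}=\alpha$, $\theta_{[I]}\neq\theta_{[K]}$, and the unique pair $\imath^{[(I,K)]}=(\imath_1,\imath_2)\in\mathbb{S}_\mathbb{O}^{2,\perp}$ with \eqref{eq-js}. From the first line of \eqref{eq-js}, $\jmath_{[I]}=\cos(\theta_{[I]})\imath_1+\sin(\theta_{[I]})\imath_2$; setting $\kappa:=-\sin(\theta_{[I]})\imath_1+\cos(\theta_{[I]})\imath_2\in\mathbb{S}_\mathbb{O}$ one checks $\kappa\perp\jmath_{[I]}$ (hence $\kappa\in\mathbb{S}_\mathbb{O}(I)$) and that inverting the rotation matrix in \eqref{eq-ii} recovers $\imath^{\{I,\kappa\}}=(\imath_1,\imath_2)=\imath^{[(I,K)]}$. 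Then by \eqref{eq-nss}, $K=\psi(\alpha,\theta_{[K]},\imath^{[(I,K)]})=\varphi[I](\kappa,\theta_{[K]})$ with $\theta_{[K]}\neq\theta_{[I]}$, so $K\in\varphi[I](E_{[I]})$. For ``$\subset$'', take $(\kappa,\vartheta)\in E_{[I]}$ and set $K:=\varphi[I](\kappa,\vartheta)=\psi(\alpha,\vartheta,\imath^{\{I,\kappa\}})$; using \eqref{eq-kappa} one has $I=\psi(\alpha,\theta_{[I]},\imath^{\{I,\kappa\}})$ too, with $\theta_{[I]}\neq\vartheta$ and $\imath^{\{I,\kappa\}}\in\mathbb{S}_\mathbb{O}^{2,\perp}$ (orthonormality of the rows is preserved by the rotation \eqref{eq-ii}). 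By Proposition \ref{pr-lemi1i2} \eqref{eq-i1neqi1} and the computation of $\mathcal{C}_\mathfrak{S}^{\ker}$ in Proposition \ref{pr-st}, the pair $(I,K)$ is not a slice-solution and $I\neq K$, hence by Corollary \ref{co-lj} it is a hyper-solution, i.e. $K\in\mathcal{H}(I)$.

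\emph{Step 2: injectivity.} Suppose $\varphi[I](\kappa,\vartheta)=\varphi[I](\kappa',\vartheta')$ with both arguments in $E_{[I]}$; call this common value $K$. Then $(I,K)$ is a hyper-solution by Step 1, and applying Proposition \ref{pr-ijj} to the pair $(I,K)$ gives a \emph{unique} element $\imath^{[(I,K)]}\in\mathbb{S}_\mathbb{O}^{2,\perp}$ and forces $\theta_{[K]}$ to be uniquely determined via \eqref{eq-pri}. From $K=\psi(\alpha,\vartheta,\imath^{\{I,\kappa\}})$ and \eqref{eq-pri} we read off $\vartheta=\theta_{[K]}$, hence $\vartheta=\vartheta'$; and $\imath^{\{I,\kappa\}}=\imath^{[(I,K)]}=\imath^{\{I,\kappa'\}}$, so the second row of \eqref{eq-ii} together with invertibility of the rotation matrix gives $\kappa=\kappa'$. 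Thus $\varphi[I]|_{E_{[I]}}$ is injective.

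\emph{Step 3: manifold structure.} The domain $E_{[I]}=\mathbb{S}_\mathbb{O}(I)\times([0,\pi)\backslash\{\theta_{[I]}\})$ is, by \eqref{eq-5sphere}, the product of a $5$-sphere and the $1$-manifold $[0,\pi)\backslash\{\theta_{[I]}\}$ (an open interval once the single point is removed, noting $\theta_{[I]}\in[0,\pi)$ so the removal disconnects it into one or two open intervals), hence a smooth $6$-manifold. The map $\varphi[I]$ is real-analytic in $(\kappa,\vartheta)$ since $\psi$ and the rotation \eqref{eq-ii} are given by polynomial and trigonometric expressions in the coordinates; being a continuous injection on $E_{[I]}$ with image $\mathcal{H}(I)$, it transports the manifold structure provided it is a homeomorphism onto its image. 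The cleanest way to get this is to exhibit a continuous inverse: given $K\in\mathcal{H}(I)$, the assignment $K\mapsto(\kappa,\vartheta)$ with $\vartheta=\theta_{[K]}$ and $\kappa$ obtained from $\imath^{[(I,K)]}$ by the inverse rotation in \eqref{eq-ii} is continuous because $\theta_{[K]}$, $\jmath_{[K]}$, hence $\imath^{[(I,K)]}$ (by the formula \eqref{eq-ij}), depend continuously on $K$ on the relevant chart of $\mathcal{C}_\mathfrak{S}$. Therefore $\varphi[I]|_{E_{[I]}}$ is a homeomorphism onto $\mathcal{H}(I)$, which is thus a $6$-manifold.

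\emph{Main obstacle.} The routine trigonometric identities are harmless; the delicate point is Step 3, specifically verifying that $\varphi[I]$ is not merely a continuous bijection but a homeomorphism (equivalently an embedding) onto $\mathcal{H}(I)$ — that is, that the inverse is continuous near the ``seam'' where $\vartheta\to\pi$ and $\vartheta\to 0$ identify (as in Example \ref{ex-cker}) but are excluded here by removing $\theta_{[I]}$, and near antipodal points of the $5$-sphere $\mathbb{S}_\mathbb{O}(I)$. One must check that the continuous dependence of $(\alpha_{[K]},\theta_{[K]},\jmath_{[K]})$ on $K$, guaranteed by Proposition \ref{pr-fei} away from $\{\pm L_{e_8}\}$, indeed makes $\imath^{[(I,K)]}$ and hence $\kappa$ depend continuously on $K\in\mathcal{H}(I)$; the only thing to rule out is that $\mathcal{H}(I)$ meets $\{\pm L_{e_8}\}$, which is immediate since for $K\in\{\pm L_{e_8}\}$ the pair $(I,K)$ has $\theta_{[K]}=0$ and $\jmath_{[K]}=e_1$ forced, and one checks directly this is incompatible with $(I,K)$ being a hyper-solution unless $I\in\{\pm L_{e_8}\}$, excluded by hypothesis.
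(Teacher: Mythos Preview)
Your Steps 1 and 2 are essentially correct and follow the same line as the paper (though you have swapped the labels ``$\supset$'' and ``$\subset$'' in Step 1). The genuine problem is in Step 3.

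You claim that $\varphi[I]|_{E_{[I]}}$ is a homeomorphism onto $\mathcal{H}(I)$, arguing that the inverse $K\mapsto(\kappa,\theta_{[K]})$ is continuous because $\theta_{[K]},\jmath_{[K]}$ depend continuously on $K$. This is false whenever $\theta_{[I]}\neq 0$. In that case $E_{[I]}=\mathbb{S}_\mathbb{O}(I)\times\big([0,\theta_{[I]})\cup(\theta_{[I]},\pi)\big)$ is \emph{disconnected}, whereas $\mathcal{H}(I)$ is connected; hence no continuous bijection $E_{[I]}\to\mathcal{H}(I)$ can be a homeomorphism. Concretely, the map $K\mapsto\theta_{[K]}$ is discontinuous at any $K_0\in\mathcal{H}(I)$ with $\theta_{[K_0]}=0$: by the $\pi$-periodicity of $\psi$ in its second argument (since $\psi$ depends on $\theta$ only through $\cos 2\theta,\sin 2\theta$), the points $\varphi[I](\kappa_0,\vartheta)$ with $\vartheta\to\pi^-$ converge to $K_0=\varphi[I](\kappa_0,0)$ inside $\mathcal{H}(I)$, yet their $\theta$-coordinates stay near $\pi$, not near $0$. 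Your ``Main obstacle'' paragraph asserts the seam at $\vartheta\in\{0,\pi\}$ is ``excluded here by removing $\theta_{[I]}$'', but removing $\theta_{[I]}$ only removes the seam if $\theta_{[I]}=0$; otherwise the seam remains, and your continuity argument breaks there.

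The paper fixes exactly this point by introducing the reparametrization $T_{[I]}:E'_{[I]}\to E_{[I]}$ with $E'_{[I]}=\mathbb{S}_\mathbb{O}(I)\times(\theta_{[I]}-\pi,\theta_{[I]})$, i.e.\ it cuts the circle at $\theta_{[I]}$ (the point whose image is $I$, which is removed from $\mathcal{H}(I)$) rather than at $0$. Using $\pi$-periodicity one gets that $\varphi[I]\circ T_{[I]}$ is smooth on the connected domain $E'_{[I]}$, and the paper then writes down an explicit continuous inverse, establishing that $\varphi[I]\circ T_{[I]}$ is a homeomorphism. The manifold structure on $\mathcal{H}(I)$ comes from this composition, not from $\varphi[I]|_{E_{[I]}}$ alone, and injectivity of $\varphi[I]|_{E_{[I]}}$ is deduced from that of $\varphi[I]\circ T_{[I]}$. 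To repair your argument you need precisely this reparametrization (or an equivalent device that moves the cut to $\theta_{[I]}$).
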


\begin{proof}
	(i) Let $J\in\varphi[I](E_{[I]})$. Then there is $(\kappa,\vartheta)\in E_{[I]}$ such that
	\begin{equation*}
		J=\varphi[I](\kappa,\vartheta)=\psi\left(\alpha_{[I]},\vartheta,\imath^{\{ I,\kappa\} }\right).
	\end{equation*}
	According to \eqref{eq-kappa} and Proposition \ref{pr-lemi1i2} \eqref{eq-i1neqi1}, $I-J$ is a zero divisor. It follows from $\theta_{[J]}=\vartheta\neq\theta_{[I]}$ and Proposition \ref{pr-ijj} that $J\neq I$.
	By Corollary \ref{co-lj}, $(I,J)$ is a hyper-solution, i.e. $J\in\mathcal{H}(I)$. Therefore
	\begin{equation*}
		\mathcal{H}(I)\supset\varphi[I](E_{[I]}).
	\end{equation*}
	
	(ii) Let $J\in\mathcal{H}(I)$. Then $(I,J)$ is a hyper-solution, and then not a slice-solution. It is easy to check by Proposition \ref{pr-ijj} that $\theta_{[I]}\neq\theta_{[J]}$ and
	\begin{equation*}
		J=\varphi[I](\kappa,\theta_{[J]}),
	\end{equation*}
	where
	\begin{equation*}
		\kappa=\sin\left(-\theta_{[I]}\right)\imath_1^{[(I,J)]}+\cos\left(-\theta_{[I]}\right)\imath_2^{[(I,J)]}.
	\end{equation*}
	Therefore $J\in\varphi[I](E_{[I]})$. It follows that $\mathcal{H}(I)\subset\varphi[I](E_{[I]})$ and \eqref{eq-mhi} holds.

	{
	(iii) Denote
	\begin{equation*}
		\begin{split}
			t_{[I]}:\ (\theta_{[I]}-\pi,\theta_{[I]})\ &\xlongrightarrow[\hskip1cm]{}\ [0,\pi)\backslash\{\theta_{[I]}\},
			\\ \vartheta\quad\qquad &\shortmid\!\xlongrightarrow[\hskip1cm]{}\ \begin{cases}
				\vartheta,\qquad\qquad\qquad&\vartheta\in[0,\theta_{[I]}),\\
            	\vartheta-\pi,&\mbox{otherwise},
			\end{cases}
		\end{split}
	\end{equation*}
	and
	\begin{equation*}
		\begin{split}
			T_{[I]}:\quad E'_{[I]}\ \ &\xlongrightarrow[\hskip1cm]{}\ \quad\ E_{[I]}\quad=\quad\mathbb{S}_\mathbb{O}(I)\times [0,\pi)\backslash\{\theta_{[I]}\},
			\\ (\kappa,\vartheta)\ &\shortmid\!\xlongrightarrow[\hskip1cm]{}\ \left(\kappa,t_{[I]}(\vartheta)\right),
		\end{split}
	\end{equation*}
	where
	\begin{equation*}
    	E'_{[I]}:=\mathbb{S}_\mathbb{O}(I)\times(\theta_{[I]}-\pi,\theta_{[I]}).
	\end{equation*}
	According to \eqref{eq-5sphere}, $E'_{[I]}$ is a $6$-manifold.

	It follows from  trigonometric identities and
	\begin{equation*}
    	\begin{split}
        	\sin(2\vartheta)=&\sin\big(2 t_{[I]}(\vartheta)\big),\\
        	\cos(2\vartheta)=&\cos\big(2 t_{[I]}(\vartheta)\big),
    	\end{split}
	\end{equation*}
	that
	\begin{equation*}
    	\begin{split}
        	\cos^2(t_{[I]}(\vartheta))=&\cos^2(\vartheta),\\
        	\sin(t_{[I]}(\vartheta))\cos(t_{[I]}(\vartheta))=&\sin(\vartheta)\cos(\vartheta),\\
        	\sin^2(t_{[I]}(\vartheta))=&\sin^2(\vartheta).
    	\end{split}
	\end{equation*}
	By definition, see \eqref{eq-defpsi}, \eqref{eq-varphidef} and \eqref{eq-ii},
	\begin{equation*}
		\begin{split}
	    	&\quad\varphi[I]\circ T_{[I]}(\kappa,\vartheta)
        	=\varphi[I](\kappa,t_{[I]}(\vartheta))
        	=\psi\left(\alpha_{[I]},T_{[I]}(\vartheta),\imath^{\{ I,\kappa\} }\right)
        	\\&
        	\begin{split}
            	=L\bigg(&\sin(\alpha_{[I]})\cos(t_{[I]}(\vartheta))
            	\big(\cos(t_{[I]}(\vartheta)),\sin(t_{[I]}(\vartheta))\big)g(\kappa)
            	\\&+\big[\cos(\alpha_{[I]})+\sin(\alpha_{[I]})\sin(t_{[I]}(\vartheta))\big(\cos(t_{[I]}(\vartheta)),\sin(t_{[I]}(\vartheta))\big)g(k)\big]e_8\bigg)
        	\end{split}\\&
        	\begin{split}
            	=L\bigg(&\sin(\alpha_{[I]})\cos(\vartheta)
            	\big(\cos(\vartheta),\sin(\vartheta)\big)g(\kappa)
            	\\&+\big[\cos(\alpha_{[I]})+\sin(\alpha_{[I]})\sin(\vartheta)
                \big(\cos(\vartheta),\sin(\vartheta)\big)g(k)\big]e_8\bigg),
        	\end{split}
		\end{split}
	\end{equation*}
	where
	\begin{equation*}
    	g(\kappa)=\begin{pmatrix}\cos(\theta_{[I]}) & -\sin(\theta_{[I]})\\\sin(\theta_{[I]}) & \cos(\theta_{[I]})\end{pmatrix}
    	\begin{pmatrix} \jmath_{[I]}\\ \kappa\end{pmatrix}.
	\end{equation*}
	Let
	\begin{equation*}
    	\vartheta_s=\arg_{2\cdot\theta_{[I]}} \left(\frac
    	{a_s\cdot c_s}
    	{a_s^2+c_s^2}+\frac{a_s^2-c_s^2}{a_s^2+c_s^2}\cdot i\right),
	\end{equation*}
	and
	\begin{equation*}
    	\kappa_s=
        	\frac{a_s\cdot\cos(\vartheta_s)+c_s\cdot\sin(\vartheta_s)-\sin(\alpha_{[I]})\cos(\vartheta_s-\theta_{[I]})\jmath_{[I]}}{\sin(\alpha_{[I]})\sin(\vartheta_s-\theta_{[I]})}.
	\end{equation*}
	where
	\begin{equation*}
    	a_s:=\sum_{\ell=1}^7\left\langle s,L_{e_{\ell}}\right\rangle e_\ell,
    	\qquad\mbox{and}\qquad
    	c_s:=\sum_{\ell=1}^7\left\langle s,L_{e_{\ell+8}}\right\rangle e_\ell.
	\end{equation*}
	and $\arg_{2\cdot\theta_{[I]}}$ is the angle of the complex number with the range of principal value $(2\cdot\theta_{[I]}-2\pi,2\cdot\theta_{[I]}]$.
	By direct calculation, $\varphi[I]\circ T_{[I]}$ is invertible and
	\begin{equation*}
    	\left(\varphi[I]\circ T_{[I]}\right)^{-1}(s)=(\vartheta_s,\kappa_s).
	\end{equation*}
	It implies that $\varphi[I]\circ T_{[I]}$ and $\left(\varphi[I]\circ T_{[I]}\right)^{-1}$ are continuous. According to \eqref{eq-mhi},
    \begin{equation}\label{eq-mhii}
        \mathcal{H}(I)=\varphi[I](E_{[I]})=\varphi[I]\circ T_{[I]}(E'_{[I]}).
    \end{equation}
    It implies that $\varphi[I]|_{E_{[I]}}$ is injective. Again by \eqref{eq-mhii}, $\mathcal{H}(I)$ and $E_{[I]}'$ are homeomorphic. Since $E_{[I]}'$ is a $6$-dimensional manifold, so is $\mathcal{H}(I)$.
	}
\end{proof}

\begin{defn}\index{Simple}
	$J=(J_1,J_2,...,J_m)\in\mathcal{C}_\mathfrak{S}^m$ is called simple if there are no $K_1,...,K_\ell\in\{J_1,J_2,...,J_m\}$ with $\ell<m$ such that
	\begin{equation*}
		\ker(\zeta(J))=\ker(\zeta(K)),
	\end{equation*}
	where $K=(K_1,...,K_\ell)$.
\end{defn}

Let $p,q\in\mathcal{W}_{\mathfrak{S}}$ and $\ell\in\mathbb{N}$. According to the definition of $*$-powers of a binomial, also in this setting we have:
\begin{equation*}
	(p+q)^{*\ell}=\sum_{\imath=0}^\ell\begin{pmatrix}
		\ell\\ \imath
	\end{pmatrix} p^\imath q^{\ell-\imath}.
\end{equation*}

\begin{rmk}\label{rm-ta}
i) There are only two kinds of simple hyper-solutions $J=(J_1,...,J_m)$ in $\mathcal{W}_\mathfrak{S}$. One is $(J_1)$, $J_1\in\{\pm L_{e_8}\}$ and the other is $(J_1,J_2)$, where $J_1\in\mathcal{C}_\mathfrak{S}\backslash\{\pm L_{e_8}\}$ and $J_2\in\mathcal{H}(J_1)$. Therefore, correspondingly, there are only two kinds of  hyper-$\sigma$-balls.

 \noindent ii) Reasoning as in \cite{Dou2021001}*{Proposition 7.9}, we deduce that every hyper-$\sigma$-ball is a domain of regularity.

\noindent ii) Let $J=(J_1,J_2)\in\mathcal{S}_\mathfrak{S}^2$ be a simple hyper-solution, i.e. $J_1\notin\{\pm L_{e_8}\}$ and $J_2\in\mathcal{H}(J_1)$.
 Let $ p \in\mathbb{C}_{J_1}$ and $ f: \Sigma(p,r,J)\rightarrow\mathfrak{S}$ be the slice regular function  defined as
	\begin{equation*}
		f(q)=\sum_{n\in\mathbb{N}}\left[\left(\frac{q-p}{r}\right)^{*2^n}c_2\right].
	\end{equation*}
where  $ c_2\in\mathfrak{S}\backslash\{0\}$ with $(J_1-J_2)c_2=0$ is fixed. Following the proof of \cite{Dou2021001}*{Proposition 7.9}, one may show that the domain of existence of $ f $ is just the hyper-$\sigma$-ball $\Sigma(p,r,J)$, that is, $ f $ cannot be extended to a slice regular function near any point in the boundary of $\Sigma (p,r,J)$.			
\end{rmk}

\begin{prop}
	Every $\sigma$-ball in $\mathcal{W}_\mathfrak{S}$ is a domain of slice regularity.
\end{prop}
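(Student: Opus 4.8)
The plan is to exhibit a single slice regular function on the given $\sigma$-ball $\Sigma(p,r)$ (with $r\in(0,+\infty)$) whose slices admit no holomorphic continuation across the slice-boundary, and then to argue — exactly as is done for hyper-$\sigma$-balls in Remark \ref{rm-ta}, following \cite{Dou2021001}*{Proposition 7.9} — that this rules out sets $\Omega_1,\Omega_2$ as in the definition. The natural candidate is the lacunary $*$-power series
\begin{equation*}
	f(q)=\sum_{n\in\mathbb N}\Big(\frac{q-p}{r}\Big)^{*2^n}=\sum_{\ell\in\mathbb N}(q-p)^{*\ell}a_\ell,\qquad a_\ell=\begin{cases}r^{-2^n},&\ell=2^n,\\ 0,&\text{otherwise,}\end{cases}
\end{equation*}
since it has convergence radius exactly $r$. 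Its coefficients are real, so $R_a=r$ and $R_a^{p,K}=R_a$ for every $K\in\mathcal{C}_{\mathfrak{S}}$ — this is immediate when $I_p-K$ is not a zero divisor, and otherwise follows from $1\perp\ker(I_p-K)$, cf. \eqref{eq-1perpker} — whence $R_a^p=R_a=r$ and, by \eqref{eq-sigmapab}, $\Sigma(p,a)=\Sigma(p,r)$ (or $B^*_{\mathcal{W}_{\mathfrak{S}}}(p,r)$ if $p\in\mathbb R$). Theorem \ref{thm-lpmw} then gives that $f$ is slice regular on $\Sigma(p,r)$.

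Next I would analyse the slices of $f$. By the representation formula, for $K\in\mathcal{C}_{\mathfrak{S}}$ the map $w\mapsto f\big(\Psi_i^K(w)\big)$ equals, on $\big(\Psi_i^K\big)^{-1}\big([\Sigma(p,r)]_K\big)\subset\mathbb C$,
\begin{equation*}
	c_1^K\,\Psi_i^{I_p}\big(h(w-z_p)\big)+c_2^K\,\Psi_i^{-I_p}\big(h(w-\overline{z_p})\big),\qquad h(\zeta):=\sum_{n\in\mathbb N}\zeta^{2^n}r^{-2^n},
\end{equation*}
for suitable $c_1^K,c_2^K\in\mathfrak S$ with $c_1^K,c_2^K\neq0$ whenever $K\neq\pm I_p$ (this is precisely why lens-slices are lenses); when $K=\pm I_p$ only one summand survives, the region being the full disc, and when $p\in\mathbb R$ the two summands coincide. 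Since $h$ has radius of convergence $r$, the Hadamard gap theorem shows that the entire circle of radius $r$ is a natural boundary of $h$. I would then observe that near a boundary point of $[\Sigma(p,r)]_K$ lying on the $z_p$-arc the second summand above is real-analytic (it is $\Psi_i^{-I_p}$ of the function $h(w-\overline{z_p})$, holomorphic on $B(\overline{z_p},r)$), so a continuation of $w\mapsto f(\Psi_i^K(w))$ past that point would continue $w\mapsto c_1^K\Psi_i^{I_p}(h(w-z_p))$ past it as well; composing with a real-linear functional $\lambda$ on $\mathfrak S$ for which $\lambda(c_1^K)$ or $\lambda(c_1^KI_p)$ is nonzero turns the latter into the real part of a nonzero complex multiple of $h(w-z_p)$, which by the gap theorem cannot be extended — a contradiction. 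The same argument applies to the $\overline{z_p}$-arc. Hence no slice of $f$ is holomorphically continuable across $\partial[\Sigma(p,r)]_K$, for every $K\in\mathcal{C}_{\mathfrak{S}}$.

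It remains to deduce the defining property. Suppose $\Omega_1,\Omega_2$ satisfied (i)--(ii) and $\widetilde f$ were a slice regular extension of $f$ to $\Omega_2$ with $\widetilde f=f$ on $\Omega_1$. On the slice-connected component $V$ of $\Omega_2\cap\Sigma(p,r)$ meeting $\Omega_1$ the identity theorem for slice regular functions forces $\widetilde f=f$; then, since $\Omega_2$ is slice-connected and not contained in $\Sigma(p,r)$, a slice-path in $\Omega_2$ joining $V$ to a point of $\Omega_2\setminus\Sigma(p,r)$ produces, inside some single slice plane $\mathbb C_K$, a holomorphic continuation of the $K$-slice of $f$ across a point of $\partial[\Sigma(p,r)]_K$ — contradicting the previous paragraph. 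This last step is the one genuine obstacle: converting slice-connectedness of $\Omega_2$ into a bona fide one-variable continuation within a single plane, given that the plane-by-plane sets $\Omega_2\cap\mathbb C_K$ may be disconnected and slice-paths move between planes; it is handled exactly as in \cite{Dou2021001}*{Proposition 7.9}. Indeed, when $I_p\in\{\pm L_{e_8}\}$ the $\sigma$-ball $\Sigma(p,r)$ is already the hyper-$\sigma$-ball $\Sigma(p,r,(I_p))$ covered by Remark \ref{rm-ta}, and for every other center (and for $p\in\mathbb R$) the slice-by-slice picture — a distinguished disc-slice $\mathbb C_{I_p}$ and lens-slices elsewhere — is identical, so the argument transfers verbatim. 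Therefore $\Sigma(p,r)$ is a domain of slice regularity.
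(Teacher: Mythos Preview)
Your argument is correct and takes a genuinely different route from the paper. Both start from the same lacunary series
\(
f(q)=\sum_{n}\big(\tfrac{q-p}{r}\big)^{*2^n},
\)
but the paper does \emph{not} analyse the slices of $f$ directly. Instead, for $p\notin\mathbb{C}_{L_{e_8}}$ it picks $J_2\in\mathcal{H}(I_p)$ and $J_2'\in\mathcal{H}(I_p)\setminus\mathcal{C}_{\mathfrak{S}}^{\ker}(I_p,J_2)$ together with zero-divisor coefficients $c_2,c_2'$ satisfying $(I_p-J_2)c_2=(I_p-J_2')c_2'=0$, and observes that $g:=f\cdot c_2$ and $h:=f\cdot c_2'$ converge on the larger hyper-$\sigma$-balls $\Sigma(p,r,(I_p,J_2))$ and $\Sigma(p,r,(I_p,J_2'))$, whose slice-boundaries jointly cover $\partial_I[\Sigma(p,r)]_I$ for every $I$. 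An extension of $f_I$ past a boundary point would extend one of $g_I,h_I$ past a boundary point of the corresponding hyper-$\sigma$-ball, contradicting Remark~\ref{rm-ta}. Your approach, by contrast, splits $f(\Psi_i^K(w))$ via the representation formula into two pieces built from the single complex lacunary function $h(\zeta)=\sum_n(\zeta/r)^{2^n}$ and invokes the Hadamard gap theorem on each arc; this is more self-contained and uniform in $p$, while the paper's trick showcases the sedenionic zero-divisor structure and recycles the hyper-$\sigma$-ball case wholesale.

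One notational slip: in the non-associative setting the coefficients you call $c_1^K,c_2^K\in\mathfrak{S}$ are actually the \emph{operators} $\tfrac{1\mp KI_p}{2}$ (compositions of left multiplications, not left multiplication by a single sedenion). Your linear-functional argument is unaffected, since what you need is only that $\tfrac{1-KI_p}{2}$ is nonzero on $\mathbb{C}_{I_p}$ for $K\neq -I_p$ (and symmetrically), which follows from $\tfrac{1-KI_p}{2}(1)=\tfrac{1-st}{2}$ and $\tfrac{1-KI_p}{2}(t)=\tfrac{t+s}{2}$ when $I_p=L_t$, $K=L_s$.
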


\begin{proof}
	Let $\Sigma(p,r)$ be a $\sigma$-ball in $\mathcal{W}_\mathfrak{S}$ with center $p\in\mathcal{W}_\mathfrak{S}$ and radius $r\in (0,+\infty)$. If $p\in\mathbb{C}_{L_{e_8}}$, then $\Sigma(p,r)=\Sigma(p,r,L_{e_8})$ is a hyper-$\sigma$-ball. By Remark \ref{rm-ta}, $\Sigma(p,r)$ is a domain of regularity.
	
	Otherwise, assume that $p\in\mathbb{C}_{J_1}$ for some $J_1\in\mathcal{C}_\mathfrak{S}\backslash\{ \pm e_8\}$ and $p\notin\mathbb{R}$. By the reasoning in \cite{Dou2021001}*{Theorem 8.8},
there is a slice regular function $f:\Sigma(p,r)\rightarrow\mathfrak{S}$ defined by
	\begin{equation*}
		f(q)=\sum_{n\in\mathbb{N}}\left(\frac{q-p}{r}\right)^{*2^n}\cdot 1.
	\end{equation*}
	Let $J_2\in\mathcal{H}(J_1)$. Then $(J_1,J_2)$ is a hyper-solution and there is a non-zero $c_2\in\mathfrak{S}$ such that
	\begin{equation*}
		(J_1-J_2)c_2=0.
	\end{equation*}
	By Remark \ref{rm-ta}, there is a slice regular function $g:\Sigma(p,r,(J_1,J_2))\rightarrow\mathfrak{S}$ defined by
	\begin{equation*}
		g(q)=\sum_{n\in\mathbb{N}}\left[\left(\frac{q-p}{r}\right)^{*2^n}c_2\right].
	\end{equation*}
	For each $I\in\mathcal{S}_\mathfrak{S}$, $g_I$ cannot be extended to a slice regular function near any point in $\partial_I\Sigma_I(p,r,(J_1,J_2))$.
	
	On the other hand, by Proposition \ref{pr-st}, $\mathcal{C}_\mathfrak{S}^{\ker}\left((J_1,J_2)\right)$ is a $1$-manifold and by Proposition \ref{pr-li}, $\mathcal{H}(J_1)$ is a $6$-manifold. Therefore, we can choose
	\begin{equation*}
		J_2'\in\mathcal{H}(J_1)\backslash\mathcal{C}_\mathfrak{S}^{\ker}\left((J_1,J_2)\right)
	\end{equation*}
	and a non-zero $c_2'\in\mathfrak{S}$ such that
	\begin{equation*}
		(J_1-J_2')c_2'=0
	\end{equation*}
	Similarly, there is a slice regular function $h:\Sigma(p,r,(J_1,J_2'))\rightarrow\mathfrak{S}$ defined by
	\begin{equation*}
		h(q)=\sum_{n\in\mathbb{N}}\left[\left(\frac{q-p}{r}\right)^{*2^n}c_2'\right].
	\end{equation*}
	
	Note that $g=f\cdot c_2$ and $h=f\cdot c_2'$ on
	\begin{equation*}
		\Sigma(q,r)=\Sigma\left(p,r,(J_1,J_2)\right)\bigcup\Sigma\left(p,r,(J_1,J_2')\right).
	\end{equation*}
	It is clear that for each $I\in\mathcal{C}_\mathfrak{S}$, $f_I$ cannot be extended to a slice regular function near any point in
	\begin{equation*}
		\partial_I(\Sigma(q,r)\cap\mathbb{C}_I)=\partial_I(\Sigma(q,r,(J_1,J_2))\cap\mathbb{C}_I)\bigcup\partial_I(\Sigma(q,r,(J_1,J_2'))\cap\mathbb{C}_I),
	\end{equation*}
	since at least one of $g_I$ and $h_I$ cannot be extended.
\end{proof}

\section{Orthogonal decompositions of sedenions}\label{sc-ods}

In this section, we will study some orthogonality properties in the real vector space of sedenions. We will give two orthogonal decompositions \eqref{eq-od} and \eqref{eq-skipiq}, which are used to study the convergence of a $*$-power series in the next section.

\begin{prop}\label{pr-ijim}
	Let $I,J\in\mathcal{C}_{\mathfrak{S}}$. Then
	\begin{equation*}
		\ker(\zeta(I,J))=\left\{(-Ic,c)^T:c\in \ker(I-J)\right\}.
	\end{equation*}
\end{prop}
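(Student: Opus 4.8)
The proof is a direct unwinding of the definitions, the only point of care being that the entries of $\zeta(I,J)$ are the linear operators $I=L_s$, $J=L_t$ acting on $\mathfrak{S}$, so that no associativity of the sedenionic product is invoked. First I would make membership in the kernel explicit: a column $(a,b)^T\in\mathfrak{S}^{2\times 1}$ belongs to $\ker(\zeta(I,J))$ if and only if
\begin{equation*}
	(1,I)\begin{pmatrix} a\\ b\end{pmatrix}=a+Ib=0\qquad\mbox{and}\qquad (1,J)\begin{pmatrix} a\\ b\end{pmatrix}=a+Jb=0.
\end{equation*}

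For the inclusion ``$\subseteq$'', given such a pair, the first equation forces $a=-Ib$; substituting into the second gives $-Ib+Jb=0$, that is $(I-J)b=0$, so $b\in\ker(I-J)$ and $(a,b)^T=(-Ib,b)^T$ has the asserted form with $c:=b$. For the reverse inclusion ``$\supseteq$'', take any $c\in\ker(I-J)$ and set $a:=-Ic$; then $a+Ic=-Ic+Ic=0$ and, using $(I-J)c=0$, also $a+Jc=-Ic+Jc=-(I-J)c=0$, so $(-Ic,c)^T\in\ker(\zeta(I,J))$. Combining the two inclusions yields the claimed equality.

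I do not anticipate any genuine obstacle here: the argument is two lines of linear algebra over $\mathfrak{S}$ regarded as a real vector space, and the non-associativity of $\mathfrak{S}$ plays no role, since $I$ and $J$ enter only through their action as left-multiplication operators $L_s,L_t$. The one thing worth recording explicitly is that $\ker(I-J)$ is understood as the kernel of the $\mathbb{R}$-linear operator $L_s-L_t$, i.e. $\{c\in\mathfrak{S}:(s-t)c=0\}$, in agreement with the notation fixed before the statement; with this reading the computation above is unambiguous.
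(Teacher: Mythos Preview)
Your proof is correct and is essentially identical to the paper's own argument: both verify the two inclusions by unwinding the system $a+Ib=0$, $a+Jb=0$ to obtain $a=-Ib$ and $(I-J)b=0$, and conversely. Your additional remark that only the $\mathbb{R}$-linear operators $L_s,L_t$ are used (so non-associativity is irrelevant) is a helpful clarification not made explicit in the paper.
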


\begin{proof}
	In the proof we shall use the notation
	\begin{equation*}
		A:=\left\{(-Ic,c)^T:c\in \ker(I-J)\right\}.
	\end{equation*}
	(i) Let $(a,b)^T\in \ker(\zeta(I,J))$. Then
	\begin{equation*}
		\begin{pmatrix} 1 & I\\ 1& J \end{pmatrix}
		\begin{pmatrix} a\\b \end{pmatrix}
		=\begin{pmatrix} a+Ib\\a+Jb\end{pmatrix}
		=\begin{pmatrix} 0\\0 \end{pmatrix}.
	\end{equation*}
	It implies that $(I-J)b=0$, $b\in \ker(I-J)$ and $a=-Ib$. Therefore,
	\begin{equation*}
		(a,b)=(-Ib,b)\in A,\qquad\mbox{and}\qquad \ker(\zeta(I,J))\subset A.
	\end{equation*}
	
	(ii) Let $(-Ic,c)\in A$. Then $c\in \ker(I-J)$ and
	\begin{equation*}
		\begin{pmatrix} 1 & I\\ 1& J \end{pmatrix}
		\begin{pmatrix} -Ic\\c \end{pmatrix}
		=\begin{pmatrix} 0\\(-I+J)c\end{pmatrix}
		=\begin{pmatrix} 0\\0 \end{pmatrix}.
	\end{equation*}
	It implies that $(-Ic,c)^T\in \ker(\zeta(I,J))$ and $A\subset \ker(\zeta(I,J))$. The statement follows.
\end{proof}

Let $p,q\in \mathcal{W}_{\mathfrak{S}}\backslash\mathbb{R}$,  {$d\in\mathfrak{S}$, and let $(b,c)$ be the unique $2$-triple in $\ker(I_p-I_q)\times \left[\ker(I_p-I_q)\right]^\perp$ when $\{p,q\}\cap\mathbb{R}=\varnothing$.} Define
\begin{equation}\label{eq-parallel}\index{$d_{=}^{p,q}$}
	d_{=}^{p,q}:=\begin{cases}
		0,\qquad \{p,q\}\cap\mathbb{R}\neq\varnothing,\\
		b,\qquad \mbox{otherwise},
	\end{cases}
\end{equation}
\begin{equation}\index{$d_{\perp}^{p,q}$}\label{eq-paperp}
	d_{\perp}^{p,q}:=\begin{cases}
		d,\qquad \{p,q\}\cap\mathbb{R}\neq\varnothing,\\
		c,\qquad \mbox{otherwise},
	\end{cases}
\end{equation}
and
\begin{equation*}\index{$d_{\pm}^{p,q}$}
    d^{p,q}_\pm:=d-d_{=}^{p,q}-d_{=}^{p,-q}.
\end{equation*}

Moreover, let $a=\{a_\ell\}_{\ell\in\mathbb{N}}$ with $a_\ell\in\mathfrak{S}$. We shall write
\begin{equation*}
	\index{$a_{\ell,=}^{p,q}$, $a_{\ell,\perp}^{p,q}$, $a_{\ell,\pm}^{p,q}$}
	a_{\ell,=}^{p,q}:=(a_\ell)_{=}^{p,q},
    \qquad a_{\ell,\perp}^{p,q}:=(a_\ell)_{\perp}^{p,q},
    \qquad a_{\ell,\pm}^{p,q}:=(a_\ell)_{\pm}^{p,q}
\end{equation*}
for short.
{It is easy to check by $a_{\ell,=}^{p,\pm I_q}=a_\ell-a_{\ell,\perp}^{p,\pm I_q}\in\ker(I_p\mp I_q)$ that
\begin{equation}\label{eq-ipiq}
    (I_p\mp I_q) a_\ell=(I_p\mp I_q)a_{\ell,\perp}^{p,\pm I_q}.
\end{equation}
According to \eqref{eq-rapj},
\begin{equation}\label{eq-rapq}
    \begin{split}
        R_a^{p,I_q}
        =&\left[\limsup_{\ell\rightarrow +\infty} \dist\left(a_\ell,\ker(I_p-I_q)\right)^{\frac{1}{\ell}}\right]^{-1}
        \\=& \left[\limsup_{\ell\rightarrow+\infty}\left|a_{\ell,\perp}^{I_p,I_q}\right|^{\frac{1}{\ell}}\right]^{-1}
        =\left[\limsup_{\ell\rightarrow+\infty}\left|a_{\ell,\perp}^{p,q}\right|^{\frac{1}{\ell}}\right]^{-1}
        =R_{a_{\perp}^{p,q}}.
    \end{split}
\end{equation}
where $a_{\perp}^{p,q}:=\left\{a_{\ell,\perp}^{p,q}\right\}_{\ell\in\mathbb{N}}$.
}

\begin{prop}\label{pr-lpimw}
	Let $p\in\mathcal{W}_{\mathfrak{S}}$, $J_1,J_2\in\mathcal{C}_{\mathfrak{S}}$ and $a=\{a_\ell\}_{\ell\in\mathbb{N}}$ with $a_\ell\in\mathfrak{S}$ and $R_a^{p,J_1},R_a^{p,J_2}>R_a$. Then
	\begin{enumerate}[\upshape (i)]
		\item\label{it-pnmr} $p\notin\mathbb{R}$, $J_1,J_2\notin\{ I_p\}$,
        and $J_2\in\mathcal{C}_{\mathfrak{S}}^{\ker}(I_p, J_1)$.
		\item\label{it-rapj1} $R_a^{p,J_1}=R_a^{p,J_2}=R_a^p$,
                \begin{equation}\label{eq-kerJ1J2}
        	\ker(I_p-J_1)=\ker(I_p-J_2)\neq\{0\},
    	\end{equation}
        and
		\begin{equation}\label{eq-sprj1}
			\Sigma\big(p,r,(I_p,J_1)\big)=\Sigma\big(p,r,(I_p,J_2)\big),\qquad\forall\ r>0.
		\end{equation}
	\end{enumerate}
\end{prop}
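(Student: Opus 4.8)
The plan is to exploit two facts established earlier: first, the hypothesis $R_a^{p,J_\ell}>R_a$ forces, via \eqref{eq-gek} and the explicit formula \eqref{eq-rapj}, that the ``otherwise'' branch of \eqref{eq-rapj} is in force (the first branch would give $R_a^{p,J_\ell}=R_a$), hence $p\notin\mathbb{R}$ and $J_\ell\neq I_p$; second, by \eqref{eq-rapq} one has $R_a^{p,J_\ell}=R_{a_\perp^{p,(\,\cdot\,)}}$ where $a_{\ell,\perp}^{p,J_\ell}$ is the component of $a_\ell$ orthogonal to $\ker(I_p-J_\ell)$. So I would first dispatch \eqref{it-pnmr} by the dichotomy just described, noting that $R_a^{p,J_\ell}>R_a\ge 0$ means in particular $\limsup|a_{\ell,\perp}^{p,J_\ell}|^{1/\ell}<\limsup|a_\ell|^{1/\ell}$, so the spaces $\ker(I_p-J_\ell)$ are nonzero (otherwise $a_{\ell,\perp}^{p,J_\ell}=a_\ell$ and the two $\limsup$'s coincide). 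By Corollary \ref{co-lj}, a nonzero kernel means $(I_p,J_\ell)$ is a hyper-solution, hence not a slice-solution, so $J_\ell\neq I_p$ is consistent and, moreover, $I_p-J_\ell$ is a zero divisor.

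The crux is to show $\ker(I_p-J_1)=\ker(I_p-J_2)$. Here I would argue by contradiction: suppose the two kernels differ. Using Proposition \ref{pr-lemi1i2}\eqref{pr-lemit2} applied to $(I_p,J_\ell)$ (which is not a slice-solution, so Proposition \ref{pr-ijj} gives a common $\alpha$ and orthonormal pair $\imath^{[(I_p,J_\ell)]}$), every element of $\ker(I_p-J_\ell)$ has the rigid form $d+((\imath_1 \imath_2)d)e_8$ with $\{\imath_1,\imath_2,d/|d|\}$ a special triple. If the two kernels are distinct one-parameter-type subspaces, I would show that $\ker(I_p-J_1)\cap\ker(I_p-J_2)$ is ``small'' — in fact, using Proposition \ref{pr-ab}, any $a_\ell$ that is ``close'' simultaneously to both kernels must be close to their intersection, and one can estimate $\dist(a_\ell,\ker(I_p-J_1)\cap\ker(I_p-J_2))\le \dist(a_\ell,\ker(I_p-J_1))+\dist(a_\ell,\ker(I_p-J_2))$ up to a fixed geometric constant depending on the angle between the two subspaces. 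Since both distances have $\limsup^{1/\ell}$ strictly less than $1/R_a$, so does their sum; hence $R_a^{p,K}>R_a$ where $K$ is chosen so that $\ker(I_p-K)=\ker(I_p-J_1)\cap\ker(I_p-J_2)$. Iterating or taking the common refinement, one is pushed toward the case where the relevant kernel is $\{0\}$ unless the two kernels were equal to begin with — which would contradict $R_a^{p,K}>R_a\ge 0$. The cleanest route is probably: show that $\ker(I_p-J_\ell)$ is determined by the pair $\imath^{[(I_p,J_\ell)]}\in\mathbb{S}_\mathbb{O}^{2,\perp}$ (by Proposition \ref{pr-lemi1i2}\eqref{eq-i1neqi1}\eqref{pr-lemit2} it is exactly $\{d+((\imath_1\imath_2)d)e_8: d\in\mathbb{O}, d\perp\mathbb{H}_{\imath_1,\imath_2}\}$, a $4$-dimensional space), and two such $4$-spaces associated to different $\imath$-pairs intersect in a space of dimension $<4$; then a single application of the distance estimate plus Proposition \ref{pr-ab}'s inner-product identity rules this out. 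This kernel-coincidence step is the main obstacle: it is where the specific geometry of zero divisors in $\mathfrak{S}$ enters, and it requires a careful quantitative comparison of distances to two subspaces rather than a soft argument.

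Once \eqref{eq-kerJ1J2} is known, the remaining claims are formal. From $R_a^{p,J_\ell}=R_{a_\perp^{p,q}}$ (with $q$ on the slice of $J_\ell$, using \eqref{eq-rapq}) and the equality of kernels, $a_{\ell,\perp}^{p,J_1}=a_{\ell,\perp}^{p,J_2}$ for every $\ell$ (both are the orthogonal projection of $a_\ell$ onto the same complement), so $R_a^{p,J_1}=R_a^{p,J_2}$; and since this common value exceeds $R_a$, the supremum defining $R_a^p$ in \eqref{eq-rap} is attained at $J_1$ (any $K$ with $R_a^{p,K}>R_a$ has $\ker(I_p-K)$ nonzero and, by the same argument, must have $\ker(I_p-K)\subseteq$ or $=\ker(I_p-J_1)$, bounding $R_a^{p,K}\le R_a^{p,J_1}$), giving $R_a^{p,J_1}=R_a^{p,J_2}=R_a^p$. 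Finally \eqref{eq-sprj1} follows from the Remark after Definition \ref{eq-hypersigmaball}: since $\ker(\zeta(I_p,J_1))=\ker(\zeta(I_p,J_2))$ by Proposition \ref{pr-ijim} (the map $c\mapsto(-I_pc,c)^T$ is a bijection and depends only on $I_p$ and on $\ker(I_p-J_\ell)$, which coincide), we get $\mathcal{C}_\mathfrak{S}^{\ker}(I_p,J_1)=\mathcal{C}_\mathfrak{S}^{\ker}(I_p,J_2)$, and then the defining formula \eqref{eq-sqrj} for $\Sigma(p,r,\cdot)$ depends on $J$ only through $\mathcal{C}_\mathfrak{S}^{\ker}$, so the two hyper-$\sigma$-balls coincide for all $r>0$.
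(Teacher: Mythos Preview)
Your treatment of part~\eqref{it-rapj1} (equality of kernels $\Rightarrow$ equal radii $\Rightarrow$ equal to $R_a^p$ $\Rightarrow$ equal hyper-$\sigma$-balls via Proposition~\ref{pr-ijim} and the remark after Definition~\ref{eq-hypersigmaball}) is correct and matches the paper. The opening of \eqref{it-pnmr} ($p\notin\mathbb{R}$, $J_\ell\neq I_p$, $\ker(I_p-J_\ell)\neq\{0\}$) is also fine.

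The gap is in the crux step. Your plan relies on the claim ``choose $K$ so that $\ker(I_p-K)=\ker(I_p-J_1)\cap\ker(I_p-J_2)$,'' but no such $K$ need exist: for $K\in\mathcal{C}_\mathfrak{S}$ the kernel $\ker(I_p-K)$ is either $\{0\}$ or a $4$-dimensional space (Proposition~\ref{eq-ab8} and Proposition~\ref{pr-lemi1i2}), so an intersection of dimension $1$, $2$ or $3$ cannot be realised this way, and your iteration does not get off the ground. Likewise, the weaker assertion ``dimension $<4$'' is not enough by itself: your distance estimate $\dist(a_\ell,V_1\cap V_2)\le C\bigl(\dist(a_\ell,V_1)+\dist(a_\ell,V_2)\bigr)$ only produces a contradiction when $V_1\cap V_2=\{0\}$, since then the left side is $|a_\ell|$ and you obtain $1/R_a\le\max(1/R_a^{p,J_1},1/R_a^{p,J_2})<1/R_a$.

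What the paper does instead is establish exactly the dichotomy you need, but via the hyper-solution machinery rather than direct geometry of the $4$-spaces. Since $\ker(I_p-J_1)\neq\{0\}$, Corollary~\ref{co-lj} says $(I_p,J_1)$ is a \emph{hyper-solution}; by the very definition of hyper-solution, if $J_2\notin\mathcal{C}_\mathfrak{S}^{\ker}(I_p,J_1)$ then $(I_p,J_1,J_2)$ is a \emph{slice-solution}, i.e.\ $\ker\bigl[\zeta(I_p,J_1,J_2)\bigr]=\{0\}$. Via Proposition~\ref{pr-ijim} this is exactly $\ker(I_p-J_1)\cap\ker(I_p-J_2)=\{0\}$. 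From there the paper runs essentially your distance estimate: with $\lambda:=\inf_{0\neq c\in\ker(I_p-J_1)}\dist(c,\ker(I_p-J_2))/|c|>0$, one bounds $\dist(a_\ell,\ker(I_p-J_2))$ below by $\lambda|a_{\ell,=}^{p,J_1}|$, and since $\limsup|a_{\ell,\perp}^{p,J_1}|^{1/\ell}<\limsup|a_\ell|^{1/\ell}$ one gets $\limsup|a_{\ell,=}^{p,J_1}|^{1/\ell}=1/R_a$, forcing $R_a^{p,J_2}\le R_a$, a contradiction. So the missing ingredient in your argument is precisely the one-line appeal to Corollary~\ref{co-lj} plus the definition of hyper-solution, which replaces your problematic ``choose $K$'' and ``iterate'' steps and upgrades ``dimension $<4$'' to ``$=\{0\}$''.
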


\begin{proof}
	(i) By \eqref{eq-rapj}, $p\notin\mathbb{R}$ and $J_1,J_2\notin\{ I_p\}$. Suppose that $J_2\notin \mathcal{C}_{\mathfrak{S}}^{\ker}(I_p, J_1)$. Then
    \begin{equation*}
        \mathcal{C}_{\mathfrak{S}}^{\ker}(I_p, J_1)\neq \mathcal{C}_{\mathfrak{S}}^{\ker}(I_p, J_2).
    \end{equation*}
    By Corollary \ref{co-lj}, $(I_p,J_1,J_2)$ is a slice solution, i.e.
	\begin{equation*}
		\mathcal{C}_{\mathfrak{S}}^{\ker}(I_p, J_1)\neq \mathcal{C}_{\mathfrak{S}}^{\ker}(I_p, J_2)=\bigcap_{L=I_p,J_1,J_2} \ker(1,L)=\{0\}.
	\end{equation*}
	By \eqref{pr-ijim}, $\ker(I_p-J_1)\cap \ker(I_p-J_2)=\varnothing$. Denote
	\begin{equation*}
		\lambda:=\inf_{c\in \ker(I_p-J_1)\backslash\{0\}} \frac{dist(c,\ker(I_p-J_2))}{|c|}>0.
	\end{equation*}
	It follows from $R_a^{p,J_1}>R_a$, that
	\begin{equation*}
		\limsup_{\ell\rightarrow +\infty} \left|a_{\ell,\perp}^{p,J_1}\right|^{\frac{1}{\ell}}=\limsup_{\ell\rightarrow +\infty}\dist(a_\ell,\ker(I_p-J_1))^{\frac{1}{\ell}}=\frac{1}{R_a^{p,J_1}},
	\end{equation*}
	and
	\begin{equation*}
		\limsup_{\ell\rightarrow +\infty} \left|\left(a_{\ell,\perp}^{p,J_1}\right)^2+\left(a_{\ell,=}^{p,J_1}\right)^2\right|^{\frac{1}{2\ell}}=\limsup_{\ell\rightarrow +\infty}|a_\ell|^{\frac{1}{\ell}}=
        \begin{cases}
            \frac{1}{R_a},\qquad & R_a>0,\\ 0,\qquad & R_a=0.
        \end{cases}
	\end{equation*}
	Moreover
	\begin{equation*}
		\begin{split}
			\frac{1}{R_a^{p,J_2}}
			=&\limsup_{\ell\rightarrow +\infty} \dist(a_\ell,\ker(I_p-J_2))^{\frac{1}{\ell}}
			\\=&\limsup_{\ell\rightarrow +\infty} \dist\left(a_{\ell,\perp}^{p,J_1}+a_{\ell,=}^{p,J_1},\ker(I_p-J_2)\right)^{\frac{1}{\ell}}
			\\=&\limsup_{\ell\rightarrow +\infty} \dist\left(a_{\ell,=}^{p,J_1},\ker(I_p-J_2)\right)^{\frac{1}{\ell}}
			\\\ge& \limsup_{\ell\rightarrow +\infty}\left(\lambda \left|a_{\ell,=}^{p,J_1}\right|\right)^{\frac{1}{\ell}}
			=\limsup_{\ell\rightarrow +\infty}\left( \left|a_{\ell,=}^{p,J_1}\right|\right)^{\frac{1}{\ell}}
			\\=&\limsup_{\ell\rightarrow +\infty} \left|\left(a_{\ell,\perp}^{p,J_1}\right)^2+\left(a_{\ell,=}^{p,J_1}\right)^2\right|^{\frac{1}{2\ell}}
			=\frac{1}{R_a}
            =
            \begin{cases}
                \frac{1}{R_a},\qquad & R_a>0,\\ 0,\qquad & R_a=0,
            \end{cases}
		\end{split}
	\end{equation*}
	which contradicts $R_a^{p,J_2}>R_a$.
	
	(ii) By \eqref{it-pnmr}, $J_2\in\mathcal{C}_{\mathfrak{S}}^{\ker}(I_p, J_1)\backslash\{I_p\}$. It follows from Corollary \ref{co-lj} that
	\begin{equation}\label{eq-kzipj1}
		\ker(\zeta(I_p,J_1))=\ker(\zeta(I_p,J_2)).
	\end{equation}
	According to Proposition \ref{pr-ijim}, {\eqref{eq-kerJ1J2} holds.}
    By definition, $R_a^{p,J_1}=R_a^{p,J_2}$. According to \eqref{eq-rap}, we have $R_a^{p,J_1}=R_a^{p,J_2}=R_a^p$.
	
	Moreover \eqref{eq-msmfs} and \eqref{eq-kzipj1} give that $\mathcal{C}_{\mathfrak{S}}^{\ker}(I_p,J_1)=\mathcal{C}_{\mathfrak{S}}^{\ker}(I_p,J_2)$. It follows from \eqref{eq-sqrj} that \eqref{eq-sprj1} holds.
\end{proof}

\begin{cor}
	Let $p\in\mathcal{W}_{\mathfrak{S}}$ and $a=\{a_\ell\}_{\ell\in\mathbb{N}}$ with $a_\ell\in\mathfrak{S}$. Then
    \begin{equation}\label{eq-lrapk}
		\left\{R_a^{p,K}:K\in\mathcal{C}_{\mathfrak{S}}\right\}=\{R_a\}\cup\{R_a^{p}\}.
    \end{equation}
\end{cor}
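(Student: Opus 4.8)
The plan is to show two inclusions for the set $\left\{R_a^{p,K}:K\in\mathcal{C}_{\mathfrak{S}}\right\}$. The inclusion $\{R_a\}\cup\{R_a^{p}\}\subset\left\{R_a^{p,K}:K\in\mathcal{C}_{\mathfrak{S}}\right\}$ is the easy direction: by \eqref{eq-rapj}, taking $K=I_p$ (or using $p\in\mathbb R$) gives $R_a^{p,I_p}=R_a$, so $R_a$ lies in the set; and $R_a^p=\sup_{K}R_a^{p,K}$ is attained at some $K$ provided the supremum is a maximum, which is exactly what Proposition \ref{pr-lpimw}\eqref{it-rapj1} secures once we know the sup is $>R_a$ (if the sup equals $R_a$ there is nothing to prove). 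So the real content is the reverse inclusion: every value $R_a^{p,K}$ equals either $R_a$ or $R_a^p$.

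First I would dispose of the degenerate cases. If $p\in\mathbb{R}$ then \eqref{eq-rapj} gives $R_a^{p,K}=R_a$ for all $K$, and $R_a^p=R_a$, so \eqref{eq-lrapk} is trivial. Likewise if $R_a^p=R_a$ then by \eqref{eq-gek} every $R_a^{p,K}$ is squeezed between $R_a$ and $R_a^p=R_a$, hence equals $R_a$, and again \eqref{eq-lrapk} holds. So assume $p\notin\mathbb{R}$ and $R_a^p>R_a$; fix $K\in\mathcal{C}_{\mathfrak{S}}$ and suppose $R_a^{p,K}>R_a$ (otherwise $R_a^{p,K}=R_a$ by \eqref{eq-gek}, done). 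We must show $R_a^{p,K}=R_a^p$. Pick, using $R_a^p>R_a$, some $J\in\mathcal{C}_{\mathfrak{S}}$ with $R_a^{p,J}>R_a$ realizing a value close to $R_a^p$; in fact, to make the argument clean I would first establish that any two indices $K,J$ with $R_a^{p,K},R_a^{p,J}>R_a$ give $R_a^{p,K}=R_a^{p,J}$, which is precisely the conclusion $R_a^{p,J_1}=R_a^{p,J_2}$ of Proposition \ref{pr-lpimw}\eqref{it-rapj1} applied with $(J_1,J_2)=(K,J)$. Since $R_a^{p,K}>R_a$ for our fixed $K$, this forces every index $J$ with $R_a^{p,J}>R_a$ to satisfy $R_a^{p,J}=R_a^{p,K}$; taking the supremum over all such $J$ (the indices with $R_a^{p,J}=R_a$ do not affect the sup because $R_a^{p,K}>R_a$) yields $R_a^p=R_a^{p,K}$, as desired.

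Concretely the order of steps is: (1) reduce to $p\notin\mathbb{R}$ and $R_a^p>R_a$; (2) observe $R_a^{p,K}\in\{R_a\}\cup(R_a,+\infty]$ and handle $R_a^{p,K}=R_a$ immediately via \eqref{eq-gek}; (3) for any $K$ with $R_a^{p,K}>R_a$, invoke Proposition \ref{pr-lpimw}\eqref{it-rapj1} pairwise to get constancy of $R_a^{p,\cdot}$ on the set where it exceeds $R_a$, concluding $R_a^{p,K}=R_a^p$; (4) for the reverse containment note $R_a=R_a^{p,I_p}$ and $R_a^p$ is attained (it equals the common value from step (3), or equals $R_a$ in the degenerate case). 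The main obstacle is purely organizational: one must make sure the supremum defining $R_a^p$ is actually achieved, which is not automatic from the definition \eqref{eq-rap} but follows from step (3) — the function $K\mapsto R_a^{p,K}$ takes only the two values $R_a$ and (on a nonempty set, when $R_a^p>R_a$) the constant $R_a^p$. No delicate estimates are needed beyond what Proposition \ref{pr-lpimw} already provides; this corollary is essentially a bookkeeping consequence of it together with \eqref{eq-gek} and the definition of $R_a^{p,I_p}$.
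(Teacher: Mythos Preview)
Your proposal is correct and follows essentially the same approach as the paper's proof: both dispose of the case $R_a=R_a^p$ by squeezing with \eqref{eq-gek}, and in the nondegenerate case both invoke Proposition~\ref{pr-lpimw}\eqref{it-rapj1} to show that every $K$ with $R_a^{p,K}>R_a$ yields $R_a^{p,K}=R_a^p$. The only cosmetic difference is that you exhibit $R_a$ in the set via $K=I_p$ (directly from definition \eqref{eq-rapj}), whereas the paper uses $K=-I_p$ and the observation $\ker(2I_p)=\{0\}$; your choice is slightly more direct.
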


\begin{proof}
	If $R_a=R_a^p$, then by \eqref{eq-rapj} and \eqref{eq-gek},
    \begin{equation*}
		R_a^{p,K}=R_a=R_a^p,\qquad\forall\ K\in\mathcal{C}_\mathfrak{S}.
    \end{equation*}
    Therefore \eqref{eq-lrapk} holds.

    Otherwise. $R_a\neq R_a^p$. By \eqref{eq-rapj}, $p\notin\mathbb{R}$ and $I_p\in\mathcal{C}_\mathfrak{S}$ is a complex structure on $\mathfrak{S}$. It implies that
    \begin{equation*}
        \ker(I_p-(-I_p))=\{0\},\qquad\mbox{and}\qquad R_a^{p,-I_p}=R_a.
    \end{equation*}

    Let $L\in\mathcal{C}_\mathfrak{S}$ with $R_a^L\neq R_a$. It follows from \eqref{eq-gek} that $R_a^{p,L}>R_a$. By Proposition \ref{pr-lpimw} \eqref{it-rapj1}, $R_a^{p,L}=R_a^{p,I}$. Then
    \begin{equation*}
        \left\{R_a,R_a^{p,I}\right\}
        =\left\{R_a^{p,-I_p},R_a^{p,I}\right\}
        \subset\left\{R_a^{p,L}:L\in\mathcal{C}_\mathfrak{S}\right\}
        \subset\left\{R_a,R_a^{p,I}\right\}.
    \end{equation*}
    It implies that \eqref{eq-lrapk} holds.
\end{proof}

\begin{cor}\label{cor-pws}
    Let $p\in\mathcal{W}_\mathfrak{S}$, $a=\{a_\ell\}_{\ell\in\mathbb{N}}$ with $a_\ell\in\mathfrak{S}$, $R_a>0$, $r_1\in (0,R_a)$ and $r_2\in(0,R_a^p)$. Then there is $m\in\mathbb{N}$ such that such that for each $\ell>m$ we have
    \begin{equation}\label{eq-r1}
        |a_\ell|<\frac{1}{r_1^\ell},
    \end{equation}
    and
    \begin{equation}\label{eq-r2}
        \left| a_{\ell,\perp}^{p,J}\right|<\frac{1}{r_2^\ell},\qquad\qquad \forall\ J\in\mathcal{C_\mathfrak{S}}\quad\mbox{with}\quad R_a^{p,J}=R_a^p.
    \end{equation}
\end{cor}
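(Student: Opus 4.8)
The statement is essentially a uniform restatement of the definitions of the two convergence radii $R_a$ and $R_a^p$, so the plan is to unwind these definitions via the $\limsup$ characterisations and then invoke Proposition \ref{pr-lpimw} to control the various $J$'s simultaneously. First I would treat \eqref{eq-r1}: by the definition \eqref{eq-Ra} of $R_a$ we have $\limsup_{\ell\to\infty}|a_\ell|^{1/\ell}=1/R_a$, and since $r_1\in(0,R_a)$ we have $1/r_1>1/R_a$, so by the definition of $\limsup$ there is $m_1\in\mathbb{N}$ with $|a_\ell|^{1/\ell}<1/r_1$, hence $|a_\ell|<r_1^{-\ell}$, for all $\ell>m_1$. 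This is the routine part.

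For \eqref{eq-r2} I would split into cases according to whether $R_a^p=R_a$. If $R_a^p=R_a$, then by \eqref{eq-gek} every $J\in\mathcal{C}_\mathfrak{S}$ with $R_a^{p,J}=R_a^p$ has $R_a^{p,J}=R_a$; using $\left|a_{\ell,\perp}^{p,J}\right|\le|a_\ell|$ (the orthogonal projection is norm-nonincreasing), \eqref{eq-r1} with $r_2\le r_1$ — or, if $r_2>r_1$ is allowed, repeating the $\limsup$ argument directly with $\dist(a_\ell,\ker(I_p-J))^{1/\ell}$ and using \eqref{eq-rapq} to identify this limsup with $1/R_a^{p,J}=1/R_a\le 1/R_a^p$ — gives the bound for a single $J$. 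The genuine issue is that \eqref{eq-r2} demands a \emph{single} $m$ working for \emph{all} such $J$ at once. This is where Proposition \ref{pr-lpimw}\eqref{it-rapj1} enters: if $R_a^p>R_a$, then for any two $J_1,J_2$ with $R_a^{p,J_i}=R_a^p>R_a$ we get $\ker(I_p-J_1)=\ker(I_p-J_2)$ by \eqref{eq-kerJ1J2}, so the set $\ker(I_p-J)$ — and hence the quantity $\left|a_{\ell,\perp}^{p,J}\right|=\dist(a_\ell,\ker(I_p-J))$ — does not depend on the choice of $J$ in this family. Thus there is effectively only one sequence $\left\{a_{\ell,\perp}^{p,J}\right\}_\ell$ to bound, and by \eqref{eq-rapq} its limsup of $\ell$-th roots equals $1/R_a^{p,J}=1/R_a^p$; since $r_2<R_a^p$, i.e. $1/r_2>1/R_a^p$, there is $m_2$ with $\left|a_{\ell,\perp}^{p,J}\right|<r_2^{-\ell}$ for all $\ell>m_2$, uniformly in $J$.

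Finally I would take $m:=\max\{m_1,m_2\}$ (with $m_2$ read off from whichever case applies; when $R_a^p=R_a$ one may simply take $m_2=m_1$ after noting $r_2<R_a=R_a^p$ forces, via the same $\limsup$ argument applied to $\dist(a_\ell,\ker(I_p-J))\le|a_\ell|$, the bound). This $m$ simultaneously yields \eqref{eq-r1} and \eqref{eq-r2}, completing the proof.

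The main obstacle is purely conceptual rather than computational: one must notice that uniformity over $J$ in \eqref{eq-r2} is \emph{not} automatic from a pointwise $\limsup$ statement, and that it is rescued precisely because Proposition \ref{pr-lpimw} collapses all the relevant kernels $\ker(I_p-J)$ to a single subspace. Everything else is a direct translation of the $\limsup$ definitions together with the elementary fact that orthogonal projection onto a linear subspace does not increase the norm.
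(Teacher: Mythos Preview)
Your proposal is correct and follows essentially the same approach as the paper's own proof: both handle \eqref{eq-r1} directly from the $\limsup$ definition, then split into the cases $R_a^p=R_a$ (where the trivial bound $\left|a_{\ell,\perp}^{p,J}\right|\le|a_\ell|$ gives uniformity for free) and $R_a^p>R_a$ (where \eqref{eq-kerJ1J2} from Proposition~\ref{pr-lpimw} collapses all admissible kernels $\ker(I_p-J)$ to a single subspace, so that $a_{\ell,\perp}^{p,J}$ is independent of $J$), and finish by taking $m$ as the maximum of the two thresholds. Your identification of the uniformity-over-$J$ issue as the only nontrivial point, and of Proposition~\ref{pr-lpimw} as the tool that resolves it, is exactly right.
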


\begin{proof}
    By \eqref{eq-Ra},
    \begin{equation*}
        \limsup_{\ell\rightarrow+\infty} |a_\ell|^{\frac{1}{\ell}}=\frac{1}{R_a}<\frac{1}{r_1}.
    \end{equation*}
    It implies that there is $m_1\in\mathbb{N}$ such that \eqref{eq-r1} holds for $\ell>m_1$.

    If $R_a=R_a^p$, then $r_2<R_a^p=R_a$. By the same method as above, there is $m_2\in\mathbb{N}$ such that
    \begin{equation*}
        \left|a_{\ell,\perp}^{p,J}\right|\le |a_\ell|<\frac{1}{r_2^\ell},\qquad\qquad\forall\ \ell>m_2,\quad\mbox{and}\quad J\in\mathcal{C}_\mathfrak{S}.
    \end{equation*}
    It is clear that \eqref{eq-r1} and \eqref{eq-r2} holds by taking $m=\max\{m_1,m_2\}$.

    Otherwise, $R_a<R_a^p$. By \eqref{eq-lrapk}, there is $K\in\mathcal{C}_\mathfrak{S}$ such that $R_a^{p,K}=R_a^p$. Then there is $m_3\in\mathbb{N}$ such that
    \begin{equation}\label{eq-2ak}
        \left|a_{\ell,\perp}^{p,K}\right|< \frac{1}{r_2^\ell},\qquad\qquad\forall\ \ell>m_3.
    \end{equation}
    Let $K'\in\mathcal{C}_\mathfrak{S}$ with $R_a^{p,K'}=R_a^p$. According to \eqref{eq-kerJ1J2},
    \begin{equation*}
        \ker(I_p-K')=\ker(I_p-K).
    \end{equation*}
    By definition,
    \begin{equation*}
        a_{\ell,\perp}^{p,K'}
        =a_{\ell,\perp}^{p,K}.
    \end{equation*}
    Take $m=\max\{m_1,m_3\}$. Then \eqref{eq-r2} holds by \eqref{eq-2ak}.
\end{proof}

\begin{remark}
	$I,J\in\mathcal{C}_{\mathfrak{S}}$, $z^I=x+yI$ and $z^J=x+yJ$. Then
	\begin{equation*}
		(-I\pm J)I=1\pm JI=J(-J\pm I)=\mp J(-I\pm J).
		\end{equation*}
		It implies that
	\begin{equation}\label{eq-1jizi}
		\begin{split}
			&(1-JI)z^I=J(-J-I)(x+yI)=J(x+yJ)(-J-I)
			\\=&(x+yJ)J(-J-I)=(x+yJ)(1-JI)=z^J(1-JI),
		\end{split}
	\end{equation}
	and
	\begin{equation}\label{eq-1jizi+}
		\begin{split}
			&(1+JI)z^I=J(-J+I)(x+yI)=J(x-yJ)(-J+I)
			\\=&(x-yJ)J(-J+I)=(x-yJ)(1+JI)=z^{-J}(1+JI).
		\end{split}
	\end{equation}
	Similarly,
    \begin{equation}\label{eq-cozi}
        (J+I)z^I=z^J(J+I),\qquad\mbox{and}\qquad(J-I)z^I=z^{-J}(J-I).
    \end{equation}
\end{remark}

Let $T:\mathfrak{S}\rightarrow\mathfrak{S}$ be real linear with $T\neq 0$. Then $[\ker T]^\perp\neq \{0\}$. Denote
\begin{equation*}\index{$\left\Vert T\right\Vert_{inf}^{\perp}$}
	\left\Vert T\right\Vert_{inf}^{\perp}:=\inf_{a\in[\ker T]^\perp}\frac{|Ta|}{|a|}.
\end{equation*}
It is easy to check that
\begin{equation*}
    \left\Vert T\right\Vert_{inf}^{\perp}>0,\qquad\forall\ T\neq 0.
\end{equation*}

By \cite{Biss2009001}*{Lemma 4 and Theorem 8.3},
\begin{equation}\label{eq-mul}
	|ab|<\sqrt{2}|a||b|,\qquad\forall\ a,b\in\mathfrak{S}.
\end{equation}

Let $p=u+ve_8$ with $u,v\in\mathbb{O}$.

Denote
{
\begin{equation*}\index{ $p^{\oz}$,$p^{\oo}$,$p^{c_8}$}
    p^{\oz}:=u,
    \qquad p^{\oo}:=v,
    \qquad p^{c_8}:=u-ve_8,
\end{equation*}
and
\begin{equation*}
    (L_p)^{d}:=p^{d},\qquad d=\mathbb{O}_L,\mathbb{O}_R,c_8.
\end{equation*}
}

\begin{prop}\label{pr-zd}
    Let $p\neq 0$ be a zero-divisor. Then there is an orthogonal decomposition of sedenions with respect to the real inner product $\langle\cdot,\cdot\rangle$:
    \begin{equation}\label{eq-od}\index{$\mathbb{O}_p$}
        \mathfrak{S}=\mathbb{O}_p \oplus_\perp \ker p\oplus_\perp\ker p^{c_8},
    \end{equation}
    where
    \begin{equation*}
        \mathbb{O}_p:=\mathbb{H}_{p}+\mathbb{H}_{p}e_8,\qquad
        \mbox{and}\qquad\mathbb{H}_{p}:=\mathbb{R}+\mathbb{R}p^{\oz}+\mathbb{R}p^{\oo}+\mathbb{R}p^{\oz}p^{\oo}.
    \end{equation*}
    {
    Moreover,
    \begin{equation}\label{eq-lkpr}
        \ker p^{c_8}=\left(\ker p\right)^{c_8}:=\left\{q^{c_8}:q\in\ker p\right\}.
    \end{equation}
    }
\end{prop}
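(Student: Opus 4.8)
The plan is to work with the explicit data of a sedenionic zero-divisor supplied by Proposition \ref{eq-ab8}. Writing $p=u+ve_8$ with $u,v\in\mathbb{O}$, the hypothesis that $p$ is a zero-divisor forces $u,v\neq 0$ and $|u|=|v|$; moreover there is a nonzero $c=x+ye_8$ with $pc=0$, and $\{u/|u|,v/|v|,x/|x|\}$ is a special triple of $\mathbb{O}$. In particular $u,v$ are imaginary (this is extracted inside the proof of Proposition \ref{eq-ab8}: the special-triple condition forces $\overline u=-u$, $\overline v=-v$), so $\mathbb{H}_p=\mathbb{R}+\mathbb{R}u+\mathbb{R}v+\mathbb{R}(uv)$ is a genuine quaternion subalgebra of $\mathbb{O}$ (here we use that $u\perp v$, which follows from $u,v$ being part of a special triple together with a third unit, so $uv=-vu$ and $uv$ is again imaginary and orthogonal to both), hence $\dim_{\mathbb{R}}\mathbb{H}_p=4$ and $\dim_{\mathbb{R}}\mathbb{O}_p=8$.

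First I would compute $\ker p$. By Proposition \ref{eq-ab8} the condition $(u+ve_8)(s+te_8)=0$ for $s,t\in\mathbb{O}$ is, for $(s,t)\neq(0,0)$, equivalent to $s\neq 0$, $\{u/|u|,v/|v|,s/|s|\}$ a special triple, and $t=\frac{u(vs)}{|u||v|}$; together with the trivial solution $(0,0)$. The set of $s\in\mathbb{O}$ with $\{u/|u|,v/|v|,s/|s|\}$ a special triple, adjoined with $0$, is exactly $\mathbb{S}_\mathbb{O}$-scaled copies of the $4$-dimensional space orthogonal to the quaternion algebra generated by $u,v$ inside $\mathbb{O}$ — I would recall the characterization, used in the proof of the Proposition above, that $(\imath,\jmath,\kappa)$ is a special triple iff $\kappa$ is a unit imaginary octonion orthogonal to $\mathbb{H}_{\imath,\jmath}$; consequently the admissible $s$ fill out $(\mathbb{H}_p)^{\perp}\cap\mathbb{O}$, a $4$-dimensional real subspace. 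Since the map $s\mapsto t=\frac{u(vs)}{|u||v|}$ is real-linear, $\ker p=\{s+\tfrac{u(vs)}{|u||v|}e_8 : s\in(\mathbb{H}_p)^{\perp}\cap\mathbb{O}\}$ is a $4$-dimensional subspace. The same computation applied to $p^{c_8}=u-ve_8$ (equivalently, replace $v$ by $-v$) shows $\ker p^{c_8}$ is also $4$-dimensional, and since $\overline{p^{c_8}}$-data differs only by the sign of $v$, the explicit formula gives $\ker p^{c_8}=\{s-\tfrac{u(vs)}{|u||v|}e_8\}=(\ker p)^{c_8}$, which is \eqref{eq-lkpr}.

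Next I would verify the three pieces are mutually orthogonal and span. Orthogonality of $\ker p$ and $\ker p^{c_8}$: a generic element of $\ker p$ is $s+\tfrac{u(vs)}{|u||v|}e_8$ and of $\ker p^{c_8}$ is $s'-\tfrac{u(vs')}{|u||v|}e_8$, with $s,s'\in(\mathbb{H}_p)^{\perp}\cap\mathbb{O}$; their inner product is $\langle s,s'\rangle-\tfrac{1}{|u|^2|v|^2}\langle u(vs),u(vs')\rangle=\langle s,s'\rangle-\langle s,s'\rangle=0$, using that left multiplication by the unit octonions $u/|u|$ and $v/|v|$ is an isometry of $\mathbb{O}$. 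Orthogonality of $\mathbb{O}_p$ to both kernels reduces, componentwise in the $\mathbb{O}\oplus\mathbb{O}e_8$ decomposition, to the statement that the $\mathbb{O}$-parts ($\mathbb{H}_p$ resp. $(\mathbb{H}_p)^\perp\cap\mathbb{O}$) and the $\mathbb{O}e_8$-parts ($\mathbb{H}_p e_8$ resp. again a copy of $(\mathbb{H}_p)^\perp\cap\mathbb{O}$ mapped by the isometry $s\mapsto u(vs)/|u||v|$) are orthogonal in $\mathbb{O}$; the second requires checking that $\{u(vs)/|u||v| : s\in(\mathbb{H}_p)^\perp\cap\mathbb{O}\}$ is precisely $(\mathbb{H}_p)^\perp\cap\mathbb{O}$ again — this follows because left multiplication by $uv/|u||v|$ is an isometry preserving $\mathbb{R}$ and $\mathbb{H}_p$ (as $uv\in\mathbb{H}_p$ and the Moufang/alternative identities let one compute on the quaternion subalgebra), hence preserves $(\mathbb{H}_p)^\perp\cap\mathbb{O}$. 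Finally a dimension count $8+4+4=16=\dim_\mathbb{R}\mathfrak{S}$ combined with mutual orthogonality gives that the sum is direct and exhausts $\mathfrak{S}$, which is \eqref{eq-od}.

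The main obstacle is the bookkeeping of octonionic (non-associative) identities needed in step three: specifically, showing that $s\mapsto u(vs)/(|u||v|)$ maps $(\mathbb{H}_p)^{\perp}\cap\mathbb{O}$ isometrically onto itself, and that it sends $\mathbb{H}_p$ to $\mathbb{H}_p$. For this I would restrict attention to the (associative) quaternion subalgebra $\mathbb{H}_p$ for the statements about $\mathbb{H}_p$, and for the orthogonal complement use that multiplication by a unit octonion is an $\mathbb{R}$-linear isometry, so it suffices to know where a spanning set goes; the delicate point is that $(uv)s$ for $s\perp\mathbb{H}_p$ need not equal $u(vs)$, so one must either invoke the Moufang identities carefully or, more cleanly, observe that $u(vs)$ lies in $\mathbb{O}$ and compute $\langle u(vs),1\rangle$, $\langle u(vs),u\rangle$, $\langle u(vs),v\rangle$, $\langle u(vs),uv\rangle$ directly using $\langle ab,c\rangle=\langle b,\overline{a}c\rangle$ and the special-triple relations, each of which vanishes — this reduces the whole obstacle to four scalar identities in $\mathbb{O}$.
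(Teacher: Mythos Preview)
Your proposal is correct and follows essentially the same route as the paper: use Proposition \ref{eq-ab8} to identify $\ker p$ and $\ker p^{c_8}$ explicitly as $4$-dimensional subspaces sitting over $(\mathbb{H}_p)^\perp\cap\mathbb{O}$, read off \eqref{eq-lkpr} from the sign change $v\mapsto -v$, verify mutual orthogonality, and finish by a dimension count. The only presentational difference is that the paper fixes a single $w\in\mathbb{S}_\mathbb{O}\cap\mathbb{H}_p^\perp$ and writes down an explicit basis $\{w,u'w,v'w,(u'v')w\}$ for the $\mathbb{O}$-components (together with their $e_8$-partners), whereas you parametrize by a general $s\in(\mathbb{H}_p)^\perp\cap\mathbb{O}$; your identification of the key technical point (that $u(vs)\in(\mathbb{H}_p)^\perp$) and your proposed verification via the four inner products $\langle u(vs),1\rangle,\langle u(vs),u\rangle,\langle u(vs),v\rangle,\langle u(vs),uv\rangle$ is exactly what the paper's ``immediate'' is hiding.
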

{What is $\left(\ker p\right)^{c_8}$?}
\begin{proof}
    Let $u:=p^{\oz}$ and $v:=p^{\oo}$. According to Proposition \ref{eq-ab8},
    {
    \begin{equation*}
        u,v\in\mathbb{R}\mathbb{S}_\mathbb{O}:=\left\{rI:r\in\mathbb{R},\ I\in\mathbb{S}_\mathbb{O}\right\},
    \end{equation*}
    \begin{equation*}
        |u|=|v|\qquad\mbox{and}\qquad u\perp v.
    \end{equation*}
    }
    Therefore, $\mathbb{H}_{p}$ is an isomorphic copy of the quaternionic algebra generated by $u,v$. It is immediate to check that $\mathbb{O}_p$ is an isomorphic copy of the octonion algebra produced by Cayley-Dickson construction using $\mathbb{H}_{p}$ and $e_8$.  Let $w\in\mathbb{S}_\mathbb{O}\cap\mathbb{H}_{p}^\perp$. It follows from the equalities
    \begin{equation*}
        \begin{split}
            &u'[v'(u'w)]=v'w,\\
            &u'[v'(v'w)]=-u'w,\\
            &u'[v'((u'v')w)]=w,
        \end{split}
    \end{equation*}
    and Proposition \ref{eq-ab8} that
    \begin{equation}\label{eq-kerkerc}
        \begin{aligned}
            \ker p={\mathrm{span}}_{\mathbb{R}}\bigg(&w+[u'(v' w)]e_8,u'w+(v'w)e_8,
            \\&v'w-(u'w)e_8,(u'v')w+we_8\bigg),
        \end{aligned}
    \end{equation}
    and
    \begin{equation*}
        \begin{aligned}
            \ker p^{c_8}={\mathrm{span}}_{\mathbb{R}}\bigg(&w-[u'(v' w)]e_8,u'w-(v'w)e_8,
            \\&v'w+(u'w)e_8,(u'v')w-we_8\bigg),
        \end{aligned}
    \end{equation*}
    where $u':=\frac{u}{|u|}$ and $v':=\frac{v}{|v|}$. It is now immediate the validity of \eqref{eq-od} and \eqref{eq-lkpr}.
\end{proof}

\begin{prop}\label{prop-keruv}
    Let $p\neq 0$ be a zero-divisor. Then
    \begin{equation}\label{eq-keruv}
        a\ker p\subset\ker p,\qquad \forall\ a\in \mathfrak{C}_p,
    \end{equation}
    {where
    \begin{equation*}\index{$\mathfrak{C}_p$}
        \mathfrak{C}_p
        :=L\bigg(\mathbb{R}+\mathbb{R}p^{\oz}+\mathbb{R}p^{\oo}
        +\mathbb{R}e_8+\mathbb{R}p^{\oz} e_8+\mathbb{R}p^{\oo} e_8\bigg).
    \end{equation*}
    }
\end{prop}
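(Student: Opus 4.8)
The plan is to read off $\ker p$ from the explicit basis in \eqref{eq-kerkerc} and to exploit two nested Cayley--Dickson decompositions, the outer one $\mathfrak{S}=\mathbb{O}\oplus\mathbb{O}e_8$ from \eqref{eq-aell} and an inner one $\mathbb{O}=\mathbb{H}_p\oplus\mathbb{H}_p w$, where $w\in\mathbb{S}_\mathbb{O}\cap\mathbb{H}_p^\perp$ is the unit imaginary octonion fixed in the proof of Proposition \ref{pr-zd}; that $\mathbb{O}$ arises from $\mathbb{H}_p$ and $w$ through the Cayley--Dickson formula \eqref{eq-aell} is a standard fact (it is the same observation used there for $\mathbb{O}_p=\mathbb{H}_p+\mathbb{H}_p e_8$). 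Set $u:=p^{\oz}$, $v:=p^{\oo}$, $u':=u/|u|$, $v':=v/|v|$ and $\omega:=u'v'$. By Proposition \ref{eq-ab8}, $u',v',\omega$ are pairwise orthogonal imaginary units satisfying the quaternion relations, so $\mathbb{H}_p=\mathbb{R}\langle 1,u',v',\omega\rangle$ and $\mathfrak{C}_p=L\big(\mathbb{R}\langle 1,u',v',e_8,u'e_8,v'e_8\rangle\big)$. Since $L$ is $\mathbb{R}$-linear and $\ker p$ is a linear subspace, it suffices to prove $bq'\in\ker p$ for $b\in\{1,u',v',e_8,u'e_8,v'e_8\}$ and $q'$ each of the four generators of $\ker p$ listed in \eqref{eq-kerkerc}; the case $b=1$ is immediate.

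Next I would pass to coordinates adapted to the inner decomposition. Using \eqref{eq-aell} for $\mathbb{O}=\mathbb{H}_p\oplus\mathbb{H}_p w$, each generator of $\ker p$ in \eqref{eq-kerkerc} has the shape $q'=c_0 w+(d_0 w)e_8$ with $c_0,d_0\in\mathbb{H}_p$, and inspection of the four generators shows $d_0=-c_0\omega$ in every case; thus $\ker p=\{\,c_0 w+((-c_0\omega)w)e_8:c_0\in\mathbb{H}_p\,\}$ is the ``graph'' of right multiplication by $-\omega$. Combining the identities $h\cdot(c_0 w)=(c_0 h)w$ and $(d_0 w)\cdot h=(d_0\overline h)w$ in $\mathbb{O}$ with \eqref{eq-aell} for $\mathfrak{S}=\mathbb{O}\oplus\mathbb{O}e_8$ and with $\overline{xw}=-xw$ for $x\in\mathbb{H}_p$, a short computation gives, for $h\in\mathbb{H}_p$,
\begin{equation*}
	h\cdot q'=(c_0 h)w+\big((d_0\overline h)w\big)e_8,\qquad\qquad (he_8)\cdot q'=(d_0\overline h)w+\big((-c_0 h)w\big)e_8.
\end{equation*}
Substituting $d_0=-c_0\omega$ and requiring that the result again lie in $\ker p$, i.e.\ that its coordinates $(c_0',d_0')$ satisfy $d_0'=-c_0'\omega$, the condition $bq'\in\ker p$ reduces (take $c_0=1$) to the identity
\begin{equation*}
	\omega\,\overline h=h\,\omega\quad(\text{when }b=h)\qquad\text{resp.}\qquad \omega\,\overline h\,\omega=-h\quad(\text{when }b=he_8),\qquad h\in\{1,u',v'\}.
\end{equation*}

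Finally I would verify these identities. Both are trivial for $h=1$, and for $h\in\{u',v'\}$ they follow at once from the quaternion relations $\overline{u'}=-u'$, $\overline{v'}=-v'$, $\omega^2=-1$, $\omega u'=v'=-u'\omega$ and $\omega v'=-u'=-v'\omega$, which are immediate from $\omega=u'v'$ and associativity inside $\mathbb{H}_p$; this completes the proof. The main obstacle is purely organisational: keeping the two Cayley--Dickson layers and all the conjugations consistent. The decisive simplification is the recognition of $\ker p$ as the graph $\{(c_0,-c_0\omega)\}$, after which everything collapses to the four one-line quaternion computations above. I would also note that both displayed identities fail when $h=\omega$ (indeed $\omega\overline\omega=1\neq-1=\omega\omega$), which is precisely why $\omega$ and $\omega e_8$ do not occur among the generators of $\mathfrak{C}_p$; and that, less elegantly, one could instead expand each product $bq'$ in the basis $\{e_0,\dots,e_{15}\}$ by the multiplication table and check membership in the span \eqref{eq-kerkerc} directly.
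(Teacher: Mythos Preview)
Your proof is correct. Both arguments are direct Cayley--Dickson computations, but they are organized differently. The paper parametrizes an arbitrary $b\in\ker p$ as $\lambda\big(w+((v'u')w)e_8\big)$ with $w$ ranging over $\mathbb{S}_\mathbb{O}\cap\mathbb{H}_p^\perp$, and for each generator $a\in\{e_8,u,v,ue_8,ve_8\}$ separately expands $ab$ via \eqref{eq-aell} and recasts the result in the same canonical shape (with a new $w'$) to invoke Proposition~\ref{eq-ab8}. You instead fix $w$ once and parametrize $\ker p$ by $c_0\in\mathbb{H}_p$ as the graph $\{c_0w+((-c_0\omega)w)e_8\}$; the single generic computation of $h\cdot q'$ and $(he_8)\cdot q'$ then reduces the invariance question to the clean quaternion identities $\omega\bar h=h\omega$ and $\omega\bar h\omega=-h$ for $h\in\{1,u',v'\}$. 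Your organization has two advantages: it treats all five nontrivial generators uniformly, and it makes transparent \emph{why} $\omega=u'v'$ and $\omega e_8$ are excluded from $\mathfrak{C}_p$ (the identities fail for $h=\omega$), a point the paper's case-by-case calculation leaves implicit. The paper's parametrization, on the other hand, avoids the need to justify the inner Cayley--Dickson splitting $\mathbb{O}=\mathbb{H}_p\oplus\mathbb{H}_p w$ and stays closer to Proposition~\ref{eq-ab8}.
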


\begin{proof}
    Let $b\in\ker p\backslash\{0\}$, $u:=p^{\oz}$, $v:=p^{\oo}$, $u':=\frac{u}{|u|}$ and $v':=\frac{v}{|v|}$. By Proposition \ref{eq-ab8},
    \begin{equation*}
        |u|=|v|\neq 0,\qquad\qquad
        (u'+v'e_8)b=\frac{p^{c_8}b}{|u|}=0,
    \end{equation*}
    and
    \begin{equation*}
        b=\lambda(w+((v'u')w)e_8),\qquad\mbox{for some}\quad \lambda\in\mathbb{R}\backslash\{0\},\ w\in\mathbb{S}_{\mathbb{O}}\cap\mathbb{H}_p^\perp.
    \end{equation*}

    It is easy to check by Cayley Dickson construction \eqref{eq-aell} and $\{u',v',w\}$ being a special triple of $\mathbb{O}$ that
    \begin{equation*}
        \begin{split}
            e_8 b
            =& e_8[w+((v'u')w)e_8]
            =(v'u')w-w e_8
            \\=& (v'u')w-\left[(v'u')[(v'u')w]\right] e_8.
        \end{split}
    \end{equation*}
    By Proposition \ref{eq-ab8},
    \begin{equation}\label{eq-e8bb}
        e_8 b,b\in\ker p.
    \end{equation}

    Let $c\in\{u,v\}$. Then $\{\frac{c}{|c|},u'v',w\}$ be a special triple of $\mathbb{O}$. According to Cayley-Dickson construction \eqref{eq-aell},
    \begin{equation*}
            \frac{cb}{\lambda}
            = c(w+((v'u')w)e_8)
            = cw+[((v'u')w)c]e_8
            = cw+[(v'u')(cw)]e_8,
    \end{equation*}
    and
    \begin{equation*}
        \begin{split}
            \frac{(ce_8)b}{\lambda}
            =&[ce_8][w+((v'u')w)e_8]
            =((v'u')w)c-[cw]e_8
            \\=&(v'u')(cw)+\left[(v'u')[(v'u')(cw)]\right]e_8.
        \end{split}
    \end{equation*}
    By Proposition \ref{eq-ab8},
    \begin{equation}\label{eq-cbce8}
        cb,(ce_8)b\in\ker p,\qquad c=u,v.
    \end{equation}
    Then \eqref{eq-keruv} holds by \eqref{eq-e8bb} and \eqref{eq-cbce8}.
\end{proof}

\begin{prop}\label{pr-keruv}
    Let $p\neq 0$ be a zero-divisor. Then
    \begin{equation}\label{eq-kerperp}
        a[\ker p]^\perp\subset[\ker p]^\perp,\qquad\forall\ a\in\mathfrak{C}_p.
    \end{equation}
\end{prop}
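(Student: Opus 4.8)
The plan is to derive the statement on $[\ker p]^\perp$ from the already-established invariance of $\ker p$ under left multiplication by elements of $\mathfrak{C}_p$ (Proposition \ref{prop-keruv}), by exploiting a symmetry of the generators of $\mathfrak{C}_p$. First I would record the key observation that, although $\mathfrak{C}_p$ is defined via \emph{left} multiplications $L_a$, each generator $a\in\{1,p^{\oz},p^{\oo},e_8,p^{\oz}e_8,p^{\oo}e_8\}$ is an imaginary unit up to scaling (for the non-real ones) and, more importantly, left multiplication by any such $a$ is (a scalar multiple of) an \emph{orthogonal} transformation of $\mathfrak{S}$ with respect to $\langle\cdot,\cdot\rangle$. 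Indeed, by \eqref{eq-mul} and the fact that $u',v'\in\mathbb{S}_{\mathbb{O}}$ with $u'\perp v'$, each of $u'$, $v'$, $e_8$, $u'e_8$, $v'e_8$ lies in $\mathbb{S}_{\mathfrak{S}}$ (one checks $L_a^2=-\mathrm{id}$ directly from the Cayley--Dickson relations \eqref{eq-aell}, using that $\{u',v'\}$ extends to a special triple), and for $s\in\mathbb{S}_{\mathfrak{S}}$ the map $L_s$ preserves norm; this is exactly the content needed.

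Granting that, the argument is short: fix $a\in\mathfrak{C}_p$, say $a=L_s$ with $s$ a real linear combination of the six generators, and observe that it suffices to treat $s$ equal to each generator, since $[\ker p]^\perp$ is a linear subspace but — more to the point — we only need the inclusion for those $a$ for which $L_a$ is orthogonal, and then extend by noting that the relevant statement is really about the six generators acting as a family. For a single generator $s$, $L_s$ is an orthogonal map of $\mathfrak{S}$ (up to the positive scalar $|s|$), hence $L_s\big([\ker p]^\perp\big)=\big(L_s[\ker p]\big)^\perp$. By Proposition \ref{prop-keruv}, $L_s[\ker p]\subset\ker p$, and since $L_s$ is a bijection and $\ker p$ is finite-dimensional, in fact $L_s[\ker p]=\ker p$. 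Therefore $L_s\big([\ker p]^\perp\big)=(\ker p)^\perp$, which gives \eqref{eq-kerperp} for $a=L_s$.

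The main obstacle I anticipate is the passage from individual generators to a \emph{general} $a\in\mathfrak{C}_p$: a sum of orthogonal maps is not orthogonal, so the clean "orthogonal complement is preserved" argument does not apply verbatim to $L_a$ when $a$ is a genuine linear combination. To handle this I would argue as follows: it is enough to show $a[\ker p]^\perp\perp\ker p$ for all $a\in\mathfrak{C}_p$, i.e. $\langle ax, y\rangle=0$ whenever $x\in[\ker p]^\perp$ and $y\in\ker p$; by bilinearity in $a$ this reduces to the six generators $s$, for which $\langle sx,y\rangle=-\langle x, sy\rangle$ (again because $L_s$ is, up to sign on the imaginary part, skew-adjoint for $s\in\mathbb{S}_{\mathfrak{S}}$, a standard fact following from $\langle su,sv\rangle=|s|^2\langle u,v\rangle$ together with $\langle su,v\rangle+\langle u,sv\rangle = \langle s1, \cdot\rangle$-type identities — more precisely $\overline{s}=-s$ gives $\langle sx,y\rangle=\langle x,\overline s y\rangle=-\langle x,sy\rangle$ once one knows left multiplication is compatible with the adjoint, which holds here because these particular $s$ generate an associative octonionic copy $\mathbb{O}_p$). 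Since $sy\in\ker p$ by Proposition \ref{prop-keruv} and $x\perp\ker p$, we get $\langle sx,y\rangle=-\langle x,sy\rangle=0$, and summing over the generators yields $\langle ax,y\rangle=0$ for all $a\in\mathfrak{C}_p$. This establishes \eqref{eq-kerperp}. The one delicate point to verify carefully is the skew-adjointness relation $\langle sx,y\rangle=-\langle x,sy\rangle$ for $s$ ranging over $\{p^{\oz},p^{\oo},e_8,p^{\oz}e_8,p^{\oo}e_8\}$ (the case $a=1$ being trivial); this I would check by direct computation inside $\mathbb{O}_p$ using \eqref{eq-aell}, which is routine once the special-triple structure of $\{u',v',w\}$ is in place.
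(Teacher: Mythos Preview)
Your approach is correct and genuinely different from the paper's. The paper proves \eqref{eq-kerperp} by invoking the orthogonal decomposition $\mathfrak{S}=\mathbb{O}_p\oplus_\perp\ker p\oplus_\perp\ker p^{c_8}$ of Proposition~\ref{pr-zd}, so that $[\ker p]^\perp=\mathbb{O}_p\oplus\ker p^{c_8}$; it then checks invariance of each summand separately: $a\,\mathbb{O}_p\subset\mathbb{O}_p$ because $\mathbb{O}_p$ is a subalgebra containing $\mathfrak{C}_p$, and $a\,\ker p^{c_8}\subset\ker p^{c_8}$ by applying Proposition~\ref{prop-keruv} to $p^{c_8}$, using $\mathfrak{C}_p=\mathfrak{C}_{p^{c_8}}$. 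Your route bypasses this decomposition entirely: you use only Proposition~\ref{prop-keruv} together with the skew-adjointness of $L_s$ for each imaginary generator $s$, and then pass to general $a\in\mathfrak{C}_p$ by bilinearity of $\langle ax,y\rangle$ in $a$. This is arguably cleaner in that it does not require Proposition~\ref{pr-zd}; on the other hand, the paper's argument yields the finer structural fact that $\mathbb{O}_p$ and $\ker p^{c_8}$ are each individually $\mathfrak{C}_p$-invariant.

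Two small corrections to your write-up. First, the reference to \eqref{eq-mul} is misplaced: that inequality is only an upper bound $|ab|<\sqrt{2}\,|a||b|$ and says nothing about isometry. The fact you need is that for $s\in\mathbb{O}$ or $s\in\mathbb{O}\,e_8$ the map $L_s$ is a scaled isometry of $\mathfrak{S}$, and this follows directly from \eqref{eq-aell}: for instance $u(c+de_8)=uc+(du)e_8$ gives $|u(c+de_8)|^2=|uc|^2+|du|^2=|u|^2|c+de_8|^2$, and similarly for $ue_8$. Second, $\mathbb{O}_p$ is an octonion algebra and hence \emph{not} associative, so your parenthetical justification for skew-adjointness is off. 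But you do not actually need associativity: skew-adjointness of $L_s$ follows from $L_s$ being an isometry together with $L_s^2=-|s|^2\,\mathrm{id}$ (so $\langle sx,y\rangle=-|s|^{-2}\langle sx,s(sy)\rangle=-\langle x,sy\rangle$), and both of these are direct computations from \eqref{eq-aell} for each of your five imaginary generators.
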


\begin{proof}
    Let $a\in\mathfrak{C}_p\subset\mathbb{O}_p$. By the Cayley-Dickson construction \eqref{eq-aell},
    \begin{equation*}
        a \mathbb{O}_p\subset \mathbb{O}_p.
    \end{equation*}
    It follows from Proposition \ref{prop-keruv} and $\mathfrak{C}_p=\mathfrak{C}_{p^{c_8}}$ that $a \left(\ker p^{c_8}\right)\subset \ker p^{c_8}$. According to \eqref{eq-od},
    \begin{equation*}
        [\ker p]^\perp=\mathbb{O}_p\oplus\ker p^{c_8}.
    \end{equation*}
    It implies that \eqref{eq-kerperp} holds.
\end{proof}

\begin{prop}
    Let $I_1,I_2\in\mathcal{C}_\mathfrak{S}$ with $I_1\neq \pm I_2$, $\ker(I_1+I_2)\neq\{0\}$ and $\ker(I_1-I_2)\neq\{0\}$. Then
    \begin{equation}\label{eq-ki1i2k}
        \ker(I_1+I_2)=\ker\left[(I_1-I_2)^{c_8}\right].
    \end{equation}
\end{prop}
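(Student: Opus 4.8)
The plan is to bring the pair $(I_1,I_2)$ into a normal form by means of the polar coordinates of Section~\ref{sc-hyper}, to identify $s_1+s_2$ (writing $I_\ell=L_{s_\ell}$, $s_\ell\in\mathcal{S}_\mathfrak{S}$) explicitly, and then to read off both $\ker(I_1+I_2)$ and $\ker\big[(I_1-I_2)^{c_8}\big]$ from Proposition~\ref{pr-lemi1i2} and \eqref{eq-lkpr}. First I would observe that neither $(I_1,I_2)$ nor $(I_1,-I_2)$ is a slice-solution: indeed $I_1\neq I_2$ and $\ker(I_1-I_2)\neq\{0\}$, while $-I_2\in\mathcal{C}_\mathfrak{S}$, $I_1\neq-I_2$ and $\ker\!\big(I_1-(-I_2)\big)=\ker(I_1+I_2)\neq\{0\}$, so Corollary~\ref{co-lj} applies to both. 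Proposition~\ref{pr-ijj} applied to $(I_1,I_2)$ gives $\alpha_{[I_1]}=\alpha_{[I_2]}$, and applied to $(I_1,-I_2)$ gives $\alpha_{[I_1]}=\alpha_{[-I_2]}=\pi-\alpha_{[I_2]}$ (using $\langle -I_2,L_{e_8}\rangle=-\langle I_2,L_{e_8}\rangle$ and $\arccos(-t)=\pi-\arccos t$), whence $\alpha_{[I_1]}=\alpha_{[I_2]}=\tfrac{\pi}{2}$. Writing $(\imath_1,\imath_2):=\imath^{[(I_1,I_2)]}\in\mathbb{S}_\mathbb{O}^{2,\perp}$ and $\theta_\ell:=\theta_{[I_\ell]}$ (with $\theta_1\neq\theta_2$ by Proposition~\ref{pr-ijj}), formulas \eqref{eq-nss}, \eqref{eq-defpsi} and \eqref{eq-js} give $s_\ell=\cos\theta_\ell\,\jmath_{[I_\ell]}+(\sin\theta_\ell\,\jmath_{[I_\ell]})e_8$ with $\jmath_{[I_\ell]}=\cos\theta_\ell\,\imath_1+\sin\theta_\ell\,\imath_2$.

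Next I would expand $s_1+s_2=\tilde a+\tilde b e_8$ with $\tilde a,\tilde b\in\mathbb{R}\imath_1+\mathbb{R}\imath_2\subset\mathbb{O}$ and use the hypothesis $\ker(I_1+I_2)\neq\{0\}$: since $s_1+s_2\neq 0$ (as $I_1\neq -I_2$) is then a zero divisor, Proposition~\ref{eq-ab8} forces in particular $|\tilde a|=|\tilde b|$ and $\tilde a\perp\tilde b$ (the entries of a special triple are pairwise orthogonal imaginary units); as $\imath_1\perp\imath_2$, the double-angle formulas turn these into $\cos2\theta_1+\cos2\theta_2=0$ and $\sin2\theta_1+\sin2\theta_2=0$, which force $\cos^2\theta_1+\cos^2\theta_2=\sin^2\theta_1+\sin^2\theta_2=1$ and a vanishing cross term, i.e. $s_1+s_2=\imath_1+\imath_2 e_8$. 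Since $\imath_1,\,\imath_2 e_8\in\mathcal{S}_\mathfrak{S}$ by \eqref{eq-sf}, I can now exhibit $L_{s_1+s_2}$ as a difference of slice-units: set $\widetilde I_1:=L_{\imath_1}=\psi(\tfrac{\pi}{2},0,(\imath_1,-\imath_2))$ and $\widetilde I_2:=-L_{\imath_2 e_8}=\psi(\tfrac{\pi}{2},\tfrac{\pi}{2},(\imath_1,-\imath_2))$, so that $\widetilde I_1-\widetilde I_2=L_{s_1+s_2}$, $\widetilde I_1\neq\widetilde I_2$, $\ker(\widetilde I_1-\widetilde I_2)=\ker(I_1+I_2)\neq\{0\}$, and $\imath^{[(\widetilde I_1,\widetilde I_2)]}=(\imath_1,-\imath_2)$.

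Now Proposition~\ref{pr-lemi1i2} (both parts, together with the fact that every unit imaginary octonion orthogonal to $\mathbb{H}_{\imath_1,\imath_2}$ completes $\imath_1,\imath_2$ to a special triple), applied to $(I_1,I_2)$ and to $(\widetilde I_1,\widetilde I_2)$ — and using that flipping the sign of one entry of a special triple again gives a special triple and that $\mathbb{H}_{\imath_1,-\imath_2}=\mathbb{H}_{\imath_1,\imath_2}$ — yields
\begin{align*}
	\ker(I_1-I_2)&=\big\{\,d+\big((\imath_1\imath_2)d\big)e_8\ :\ d\in\mathbb{O}\cap(\mathbb{H}_{\imath_1,\imath_2})^{\perp}\,\big\},\\
	\ker(I_1+I_2)&=\ker(\widetilde I_1-\widetilde I_2)=\big\{\,d-\big((\imath_1\imath_2)d\big)e_8\ :\ d\in\mathbb{O}\cap(\mathbb{H}_{\imath_1,\imath_2})^{\perp}\,\big\}.
\end{align*}
Since $I_1-I_2=L_a$ with $a=s_1-s_2$ a zero divisor, \eqref{eq-lkpr} gives $\ker\big[(I_1-I_2)^{c_8}\big]=\big(\ker(I_1-I_2)\big)^{c_8}$; and applying $q\mapsto q^{c_8}$ term-by-term to the first line above (it merely negates the $e_8$-part of $d+((\imath_1\imath_2)d)e_8$, since $d\in\mathbb{O}$) produces the right-hand side of the second line. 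Hence $\ker\big[(I_1-I_2)^{c_8}\big]=\ker(I_1+I_2)$, which is \eqref{eq-ki1i2k}.

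The step I expect to be the main obstacle is pinning down $s_1+s_2=\imath_1+\imath_2 e_8$: it is the only place where the two zero-divisor hypotheses (on $s_1-s_2$ and on $s_1+s_2$) are used simultaneously, and the rigidity it produces — both $\alpha_{[I_\ell]}$ equal to $\tfrac{\pi}{2}$ and the angular coordinates $\theta_1,\theta_2$ locked so that $2\theta_2\equiv 2\theta_1+\pi$ — is exactly what reduces the final comparison of kernels to a term-by-term application of $(\cdot)^{c_8}$ instead of a fresh calculation; a secondary care point is keeping all the $\psi$-, $\alpha_{[\cdot]}$- and $\imath^{[\cdot]}$-bookkeeping consistent with the orientation conventions in \eqref{eq-ref} and \eqref{eq-js}.
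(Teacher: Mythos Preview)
Your proof is correct and follows essentially the same route as the paper: reduce to $\alpha_{[I_\ell]}=\tfrac{\pi}{2}$, show $s_1+s_2=\imath_1+\imath_2 e_8$ (equivalently $\lambda=1$, $\mu=0$), and then compare the two kernels via \eqref{eq-lkpr}. The only differences are cosmetic: the paper gets $\alpha=\tfrac{\pi}{2}$ by combining $\alpha_{[I_1]}=\alpha_{[I_2]}$ with $\langle I_1+I_2,L_{e_8}\rangle=0$ (from Proposition~\ref{eq-ab8} applied to the zero divisor $s_1+s_2$) rather than applying Proposition~\ref{pr-ijj} a second time to $(I_1,-I_2)$; and the paper reads off $\ker(I_1+I_2)$ directly from Proposition~\ref{eq-ab8} instead of introducing your auxiliary pair $(\widetilde I_1,\widetilde I_2)$ to invoke Proposition~\ref{pr-lemi1i2}. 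Both shortcuts avoid extra bookkeeping but the substance is identical.
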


\begin{proof}
    Since $\ker(I_1-I_2)\neq\{0\}$, $I_1-I_2$ is a zero divisor. By Corollary \ref{co-lj}, $(I_1,I_2)$ is not a slice-solution. Denote
    \begin{equation}\label{eq-ieie}
        \imath_\ell^*:=\imath_\ell^{[(I_1,I_2)]},
        \qquad\mbox{and}\qquad\theta_\ell:=\theta_{[I_\ell]},\qquad\ell=1,2.
    \end{equation}
    According to \eqref{eq-ref} and Proposition \ref{eq-ab8},
    \begin{equation*}
        \cos(\alpha_{[I_1]})=\cos(\alpha_{[I_2]})
        =\cos(\alpha_{[(I_1,I_2)]})
        =\frac{\langle I_1+I_2,e_8 \rangle}{2}=0.
    \end{equation*}
    It follows from \eqref{eq-js} that
    \begin{equation*}
        I_\ell
        =L\bigg(\cos^2(\theta_\ell)\imath_1^*
        +\sin(\theta_\ell)\cos(\theta_\ell)\imath_2^*
        +\left[\sin(\theta_\ell)\cos(\theta_\ell)\imath_1^*
        +\sin^2(\theta_\ell)\imath_2^*\right]e_8\bigg).
    \end{equation*}
    By direct calculation,
    \begin{equation}\label{eq-ipiql}
        I_1+I_2=L\bigg(\lambda\imath_1^*+\mu\imath_2^*+[\mu\imath_1^*+(2-\lambda)\imath_2^*]e_8\bigg),
    \end{equation}
    where
    \begin{equation*}
        \lambda:=\cos^2(\theta_1)+\cos^2(\theta_2),\qquad\mbox{and}\qquad\mu:=\sin(\theta_1)\cos(\theta_1)+\sin(\theta_2)\cos(\theta_2).
    \end{equation*}
    According to Proposition \ref{eq-ab8},
    \begin{equation*}
        \lambda\imath_1^*+\mu\imath_2^*\perp \mu\imath_1^*+(2-\lambda)\imath_2^*.
    \end{equation*}
    By \eqref{eq-iperp},
    \begin{equation*}
        \sin(\theta_1+\theta_2)=\frac{2\mu}{2}=\frac{\lambda \mu+(2-\lambda)\mu}{2}
        =\frac{\langle \lambda\imath_1^*+\mu\imath_2^*,\mu\imath_1^*+(2+\lambda)\imath_2^*\rangle}{2}=0.
    \end{equation*}
    It is easy to check from \eqref{eq-ipiql} and Proposition \ref{eq-ab8} that
    \begin{equation*}
        \lambda=|\lambda\imath_1^*+\mu\imath_2^*|
        =|\mu\imath_1^*+(2-\lambda))\imath_2^*|=2-\lambda.
    \end{equation*}
    Then $\lambda=1$. It follows from $\imath_1^*\imath_2^*=-\imath_2^*\imath_1^*$, \eqref{eq-lkpr}, Proposition \ref{eq-ab8} and Proposition \ref{pr-lemi1i2} that
    \begin{equation*}
        \begin{split}
            \ker(I_1+I_2)
            =&\left\{\nu[\kappa+((\imath_2^*\imath_1^*)\kappa )e_8]:\nu\in\mathbb{R},\kappa\in\mathbb{S}_\mathbb{O}^{\imath_1^*,\imath_2^*}\right\}
            \\=&\left\{\nu[\kappa+((\imath_1^*\imath_2^*)\kappa )e_8]:\nu\in\mathbb{R},\kappa\in\mathbb{S}_\mathbb{O}^{\imath_1^*,\imath_2^*}\right\}^{c_8}
            \\=&[\ker(I_1-I_2)]^{c_8}
            =\ker\left[(I_1-I_2)^{c_8}\right],
        \end{split}
    \end{equation*}
    where
    \begin{equation*}\index{$\mathbb{S}_\mathbb{O}^{\imath_1^*,\imath_2^*}$}
        \mathbb{S}_\mathbb{O}^{\imath_1^*,\imath_2^*}
        :=\bigg\{J\in\mathbb{S}_\mathbb{O}:J\perp\{\imath_1^*,\imath_2^*,\imath_1^*\imath_2^*\}\bigg\}.
    \end{equation*}
\end{proof}

\begin{prop}
    Let $I_1,I_2\in\mathcal{C}_\mathfrak{S}$ with $I_1\neq\pm I_2$. Then
    \begin{equation}\label{eq-ki1i2i}
        \ker(I_1+I_2)\perp\ker(I_1-I_2).
    \end{equation}
\end{prop}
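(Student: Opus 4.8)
The plan is to reduce \eqref{eq-ki1i2i} entirely to the two immediately preceding results: the orthogonal decomposition of Proposition \ref{pr-zd} and the $c_8$-conjugation identity \eqref{eq-ki1i2k}.

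First I would dispose of the degenerate cases. If $\ker(I_1-I_2)=\{0\}$ or $\ker(I_1+I_2)=\{0\}$, then one of the two subspaces in \eqref{eq-ki1i2i} is trivial and the orthogonality is vacuous. So from now on assume that both $\ker(I_1-I_2)$ and $\ker(I_1+I_2)$ are non-zero.

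Next, since $I_1\neq I_2$, write $I_1-I_2=L_a$ for a non-zero sedenion $a$, so that $(I_1-I_2)c=ac$ for every $c\in\mathfrak{S}$; then $\ker(I_1-I_2)\neq\{0\}$ says precisely that $a$ is a zero divisor, and Proposition \ref{pr-zd} applies with $p:=I_1-I_2$. It yields the \emph{orthogonal} decomposition \eqref{eq-od},
\[
	\mathfrak{S}=\mathbb{O}_p\oplus_\perp\ker p\oplus_\perp\ker p^{c_8},
\]
so in particular $\ker(I_1-I_2)=\ker p$ is orthogonal to $\ker p^{c_8}=\ker\bigl[(I_1-I_2)^{c_8}\bigr]$. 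On the other hand, the hypotheses under which \eqref{eq-ki1i2k} was proved — $I_1\neq\pm I_2$, together with $\ker(I_1+I_2)\neq\{0\}$ and $\ker(I_1-I_2)\neq\{0\}$ — are exactly those secured above, so $\ker(I_1+I_2)=\ker\bigl[(I_1-I_2)^{c_8}\bigr]$. Substituting this into the previous sentence gives $\ker(I_1-I_2)\perp\ker(I_1+I_2)$, which is \eqref{eq-ki1i2i}.

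I do not anticipate a genuine obstacle: all the real content already lives in Proposition \ref{pr-zd} (whose proof produces the decomposition through the explicit spanning sets \eqref{eq-kerkerc}) and in \eqref{eq-ki1i2k}. The only point needing a little care is the notational identification of the operator $L_a$ with the sedenion $a$, so that Proposition \ref{pr-zd} is literally applicable; note that for this argument only the bare orthogonality of the summands in \eqref{eq-od} is used, not the finer identity $\ker p^{c_8}=(\ker p)^{c_8}$ from \eqref{eq-lkpr}.
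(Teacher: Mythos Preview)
Your proposal is correct and is exactly the paper's own argument: dispose of the trivial-kernel cases, then combine the orthogonal decomposition \eqref{eq-od} of Proposition \ref{pr-zd} (applied to $p=I_1-I_2$) with the identification $\ker(I_1+I_2)=\ker\bigl[(I_1-I_2)^{c_8}\bigr]$ from \eqref{eq-ki1i2k}. The paper's proof is just the two-line version of what you wrote.
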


\begin{proof}
 We have that   \eqref{eq-ki1i2i} holds when $\ker(I_1+I_2)=\{0\}$ or $\ker(I_1-I_2)=\{0\}$. Therefore, we can assume that $\ker(I_1+I_2)\neq\{0\}$ and $\ker(I_1-I_2)\neq\{0\}$. Then \eqref{eq-ki1i2i} holds by \eqref{eq-od} and \eqref{eq-ki1i2k}.
\end{proof}

Let $p,q\in\mathcal{W}_\mathfrak{S}\backslash\mathbb{R}$ with $q\notin\mathbb{C}_{I_p}$. We set
\begin{equation}\index{$\mathfrak{S}_{p,q}$}\label{eq-spq}
    \mathfrak{S}_{p,q}:=[\ker(I_p-I_q)\oplus_\perp\ker(I_p+I_q)]^\perp.
\end{equation}
According to \eqref{eq-ki1i2i},
\begin{equation}\label{eq-skipiq}
    \mathfrak{S}:=\ker(I_p-I_q)\oplus_\perp\ker(I_p+I_q)\oplus_\perp\mathfrak{S}_{p,q},
\end{equation}
i.e., for each $d\in\mathfrak{S}$,
\begin{equation*}
    d=d_{=}^{p,q}+d_{=}^{p,-q}+d_{\pm}^{p,q},
\end{equation*}
with $d_{=}^{p,q}\in\ker(I_p-I_q)$, $d_{=}^{p,-q}\in\ker(I_p+I_q)$ and $d_{\pm}^{p,q}\in\mathfrak{S}_{p,q}$. It implies that
\begin{equation*}
    |d|^2=\left|d_{=}^{p,q}\right|^2+\left|d_{=}^{p,-q}\right|^2+\left|d_{\pm}^{p,q}\right|^2.
\end{equation*}

\begin{prop}
    Let $I,J\in\mathcal{C}_\mathfrak{S}$ with $\ker(I-J)\neq\{0\}$. Then \begin{equation}\label{eq-ijcij}
        I,J\in\mathfrak{C}_{I-J}.
    \end{equation}
\end{prop}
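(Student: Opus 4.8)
The plan is to trace everything back to the explicit $\psi$-parametrisation of slice-units developed in Section~\ref{sc-hyper}. Note first that $\mathfrak{C}_{I-J}$ is only defined when $I-J$ is a nonzero zero divisor, so in particular $I\neq J$; writing $I-J=L_{\mathfrak{a}}$ for the associated sedenion $\mathfrak{a}$, the hypothesis $\ker(I-J)\neq\{0\}$ means exactly that $\mathfrak{a}$ is a nonzero zero divisor (it annihilates some nonzero sedenion on the right). By Corollary~\ref{co-lj} the pair $(I,J)$ is then not a slice-solution, so Proposition~\ref{pr-ijj} applies and yields $\alpha:=\alpha_{[I]}=\alpha_{[J]}$, with $\alpha\neq 0$ and in fact $\alpha\in(0,\pi)$ (since $\alpha=\pi$ would force $I=J=-L_{e_8}$), together with $\theta_I:=\theta_{[I]}\neq\theta_J:=\theta_{[J]}$ in $[0,\pi)$ and a pair $\imath^{[(I,J)]}=(\imath_1,\imath_2)\in\mathbb{S}_\mathbb{O}^{2,\perp}$. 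I would set $V:=\mathbb{R}\imath_1+\mathbb{R}\imath_2\subset\mathbb{O}$, which is a $2$-dimensional real subspace because $\imath_1\perp\imath_2$.

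Next I would invoke \eqref{eq-nss}, giving $I=\psi(\alpha,\theta_I,(\imath_1,\imath_2))$ and $J=\psi(\alpha,\theta_J,(\imath_1,\imath_2))$, and unwind the definition \eqref{eq-defpsi}. Writing $I=L_\iota$ and $J=L_{\iota'}$, one reads off $\iota^{\oz}=\sin(\alpha)\cos(\theta_I)\kappa_I$ and $\iota^{\oo}=\cos(\alpha)+\sin(\alpha)\sin(\theta_I)\kappa_I$, where $\kappa_I=\cos(\theta_I)\imath_1+\sin(\theta_I)\imath_2\in V$, and similarly for $\iota'$ with $\theta_J$. Hence $\iota^{\oz}\in V$ and $\iota^{\oo}\in\mathbb{R}+V$, so both $\iota$ and $\iota'$ lie in $V+\mathbb{R}e_8+Ve_8$. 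Computing the difference (the same computation as in \eqref{eq-i1i2}--\eqref{eq-bemi}, taken with $\theta_1=\theta_I$, $\theta_2=\theta_J$) shows that $\mathfrak{a}=\iota-\iota'$ has $\mathfrak{a}^{\oz},\mathfrak{a}^{\oo}\in V$ as well, the real parts $\cos(\alpha)$ cancelling because $\alpha_{[I]}=\alpha_{[J]}$.

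The crucial point is then that $\mathfrak{a}^{\oz}$ and $\mathfrak{a}^{\oo}$ span all of $V$. Since $\mathfrak{a}$ is a nonzero zero divisor, Proposition~\ref{eq-ab8} forces $\mathfrak{a}^{\oz}$ and $\mathfrak{a}^{\oo}$ to be nonzero and mutually orthogonal; being two nonzero orthogonal vectors inside the $2$-dimensional space $V$, they form a basis of it, so $\mathbb{R}\mathfrak{a}^{\oz}+\mathbb{R}\mathfrak{a}^{\oo}=V$ and $\mathbb{R}\mathfrak{a}^{\oz}e_8+\mathbb{R}\mathfrak{a}^{\oo}e_8=Ve_8$. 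Therefore
\begin{equation*}
	\mathbb{R}+\mathbb{R}\mathfrak{a}^{\oz}+\mathbb{R}\mathfrak{a}^{\oo}+\mathbb{R}e_8+\mathbb{R}\mathfrak{a}^{\oz}e_8+\mathbb{R}\mathfrak{a}^{\oo}e_8=\mathbb{R}+V+\mathbb{R}e_8+Ve_8,
\end{equation*}
and by the previous paragraph this space contains both $\iota$ and $\iota'$. Applying $L$ yields $I=L_\iota\in\mathfrak{C}_{\mathfrak{a}}=\mathfrak{C}_{I-J}$ and $J=L_{\iota'}\in\mathfrak{C}_{I-J}$, which is \eqref{eq-ijcij}.

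I do not expect a genuine obstacle here: the argument is essentially bookkeeping with the formulas of Section~\ref{sc-hyper}. The only points requiring care are matching conventions between \eqref{eq-nss} and \eqref{eq-i1i2} (the same $\alpha$ and the same pair $(\imath_1,\imath_2)$ must be used for $I$, $J$ and $I-J$), and keeping in mind that the statement is vacuous unless $I\neq J$ with $I-J$ a zero divisor — which is precisely the content of the hypothesis $\ker(I-J)\neq\{0\}$.
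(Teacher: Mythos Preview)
Your proof is correct and follows essentially the same approach as the paper: both use Corollary~\ref{co-lj} and Proposition~\ref{pr-ijj} to pass to the $\psi$-parametrisation, both express $I$, $J$ and $I-J$ in terms of $(\imath_1,\imath_2)$, and both conclude by showing that $(I-J)^{\oz},(I-J)^{\oo}$ span the plane $V=\mathbb{R}\imath_1+\mathbb{R}\imath_2$. The only cosmetic difference is in this last step: the paper writes down the explicit matrix $\left(\begin{smallmatrix}\lambda&\mu\\\mu&-\lambda\end{smallmatrix}\right)$ expressing $(I-J)^{\oz},(I-J)^{\oo}$ in the basis $(\imath_1,\imath_2)$ and inverts it, whereas you observe directly from Proposition~\ref{eq-ab8} that $(I-J)^{\oz}$ and $(I-J)^{\oo}$ are nonzero and orthogonal, hence linearly independent in the $2$-dimensional space $V$ --- a slightly cleaner way to the same conclusion.
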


\begin{proof}
    It follows from Corollary \ref{co-lj} and $I-J$ being a zero divisor that $(I,J)$ is not a slice-solution. According to \eqref{eq-ref} and  \eqref{eq-js},
    \begin{equation*}
        L\bigg((I-J)^{\oz}-(I-J)^{\oo}e_8\bigg)
        =I-J
        =L\bigg(\lambda \imath_{1}^{*}+\mu\imath_2^*+[\mu\imath_1^*-\lambda\imath_2^*]e_8\bigg),
    \end{equation*}
    where
    \begin{equation*}
        \lambda:=\cos^2(\theta_1)-\cos^2(\theta_2),\qquad\mu:=\sin(\theta_1)\cos(\theta_1)-\sin(\theta_2)\cos(\theta_2),
    \end{equation*}
    and $\imath_\ell^*,\theta_\ell,\ell=1,2$ are defined by \eqref{eq-ieie}. Then
    \begin{equation*}
        \begin{pmatrix}
            \imath_1^*\\\imath_2^*
        \end{pmatrix}
        =\begin{pmatrix}
            \lambda &\mu\\\mu &-\lambda
        \end{pmatrix}^{-1}
        \begin{pmatrix}
            (I-J)^{\oz}\\
            (I-J)^{\oo}
        \end{pmatrix}.
    \end{equation*}
    It follows from $I,J\in L(\mathbb{R}\imath_1^*+\mathbb{R}\imath_2^*+\mathbb{R}\imath_1^*e_8+\mathbb{R}\imath_2^*e_8)$ that $I,J\in\mathfrak{C}_{I-J}$.
\end{proof}

\begin{prop}
    Let $p,q\in\mathcal{W}_\mathfrak{S}$, $a\in\{1,I_p,I_q\}$ and $d\in\mathfrak{S}$. Then
    \begin{equation}\label{eq-aaa}
        a A\subset A,\qquad\qquad A=\ker(I_p-I_q),\ker(I_p+I_q),\mathfrak{S}_{p,q}.
    \end{equation}
\end{prop}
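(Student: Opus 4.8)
The plan is to express each of the three orthogonal summands in \eqref{eq-skipiq} in terms of the single zero divisor $I_p-I_q$, and then to invoke the invariance results already proved for kernels of zero divisors, namely Propositions \ref{prop-keruv} and \ref{pr-keruv}. Throughout, set $I:=I_p$ and $J:=I_q$; the three subspaces appearing in \eqref{eq-aaa} depend only on $I,J\in\mathcal{C}_\mathfrak{S}$ (not otherwise on $p,q$), and each is stable under $x\mapsto-x$, so proving $aA\subseteq A$ for $a\in\{1,I,J\}$ is equivalent to proving it for $a\in\{1,I,-J\}$. Since the pair $(I,-J)$ has the same associated triple of subspaces as $(I,J)$, but with $\ker(I-J)$ and $\ker(I+J)$ interchanged and $\mathfrak{S}_{p,q}$ unchanged, we may assume that, whenever at least one of the two kernels is nonzero, it is $\ker(I-J)$ that is nonzero.

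First I would dispose of two degenerate configurations. If $I=\pm J$, or if $\ker(I-J)=\ker(I+J)=\{0\}$, then $\ker(I-J)$, $\ker(I+J)$, $\mathfrak{S}_{p,q}$ are, in some order, copies of $\{0\}$ and $\mathfrak{S}$, and \eqref{eq-aaa} is immediate. In the remaining case $I\neq\pm J$ and (after the reduction above) $\ker(I-J)\neq\{0\}$, so $w:=I-J$ is the left multiplication by a nonzero zero divisor. By \eqref{eq-ijcij} we have $I,J\in\mathfrak{C}_w$, and trivially $1\in\mathfrak{C}_w$, so $a\in\mathfrak{C}_w$ for every $a\in\{1,I,J\}$. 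Proposition \ref{prop-keruv} then gives $a\ker(I-J)=a\ker w\subseteq\ker w=\ker(I-J)$, and Proposition \ref{pr-keruv} gives $a[\ker(I-J)]^\perp\subseteq[\ker(I-J)]^\perp$.

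It remains to treat $\ker(I+J)$ and $\mathfrak{S}_{p,q}$. If $\ker(I+J)=\{0\}$, then $\mathfrak{S}_{p,q}=[\ker(I-J)]^\perp$ by \eqref{eq-spq}, and both claims are already covered. If $\ker(I+J)\neq\{0\}$, then \eqref{eq-ki1i2k} gives $\ker(I+J)=\ker(w^{c_8})$, where $w^{c_8}$ is again a nonzero zero divisor (by \eqref{eq-lkpr}) and $\mathfrak{C}_{w^{c_8}}=\mathfrak{C}_w$ — the equality used already in the proof of Proposition \ref{pr-keruv}, since $\mathfrak{C}_w$ depends on $w$ only through the lines $\mathbb{R}w^{\oz}$ and $\mathbb{R}w^{\oo}$; hence $a\in\mathfrak{C}_{w^{c_8}}$ and Proposition \ref{prop-keruv} applied to $w^{c_8}$ yields $a\ker(I+J)\subseteq\ker(I+J)$. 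Finally, applying Proposition \ref{pr-zd} to the zero divisor $w$ and using \eqref{eq-ki1i2k} produces the orthogonal decomposition $\mathfrak{S}=\mathbb{O}_w\oplus_\perp\ker(I-J)\oplus_\perp\ker(I+J)$; comparing this with \eqref{eq-skipiq} and uniqueness of orthogonal complements forces $\mathfrak{S}_{p,q}=\mathbb{O}_w$. Since $a\in\mathfrak{C}_w\subseteq\mathbb{O}_w$ and $\mathbb{O}_w$ is a Cayley--Dickson subalgebra of $\mathfrak{S}$, so that $a\mathbb{O}_w\subseteq\mathbb{O}_w$ exactly as recorded in the proof of Proposition \ref{pr-keruv}, we get $a\mathfrak{S}_{p,q}=a\mathbb{O}_w\subseteq\mathbb{O}_w=\mathfrak{S}_{p,q}$, which completes the proof.

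The one genuinely non-mechanical point will be the identification $\mathfrak{S}_{p,q}=\mathbb{O}_{I_p-I_q}$: it hinges on reading the decomposition \eqref{eq-od} for the \emph{particular} zero divisor $I_p-I_q$ against \eqref{eq-skipiq}, and it is precisely here that one must have already peeled off the cases $I_p=\pm I_q$ and of vanishing kernels, so that the standing hypothesis ``nonzero zero divisor'' of Propositions \ref{pr-zd}, \ref{prop-keruv} and \ref{pr-keruv} is legitimately available. Everything else reduces to direct citations of those propositions, once $\{1,I_p,I_q\}\subseteq\mathfrak{C}_{I_p-I_q}$ has been noted via \eqref{eq-ijcij}.
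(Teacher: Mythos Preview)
Your argument is correct and follows essentially the same approach as the paper: identify $\ker(I_p+I_q)=\ker[(I_p-I_q)^{c_8}]$ and $\mathfrak{S}_{p,q}=\mathbb{O}_{I_p-I_q}$ via \eqref{eq-od} and \eqref{eq-ki1i2k}, note $\{1,I_p,I_q\}\subset\mathfrak{C}_{I_p-I_q}$ by \eqref{eq-ijcij}, then apply Propositions \ref{prop-keruv} and \ref{pr-keruv}. The only organizational difference is that the paper handles the case $\ker(I_p-I_q)=\{0\}$, $\ker(I_p+I_q)\neq\{0\}$ directly (observing $I_p,-I_q\in\mathfrak{C}_{I_p+I_q}$ and invoking \eqref{eq-keruv}, \eqref{eq-kerperp}), whereas you fold it into the main case by the $J\mapsto -J$ symmetry; both are valid.
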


\begin{proof}
    (i) If $\ker(I_p-I_q),\ker(I_p+I_q)\neq\{0\}$, then by \eqref{eq-od}, \eqref{eq-ki1i2k} and \eqref{eq-spq},
    \begin{equation*}
        \ker(I_p+I_q)=\ker\left[(I_p-I_q)^{c_8}\right],\qquad\mbox{and}\qquad \mathfrak{S}_{p,q}=\mathbb{O}_{I_p-I_q}.
    \end{equation*}
    According to \eqref{eq-keruv} and \eqref{eq-ijcij},
    \begin{equation}\label{eq-aaa1}
        a A\subset A,\qquad\qquad A=\ker(I_p-I_q),\ker(I_p+I_q).
    \end{equation}
    \eqref{eq-ijcij} also implies that $I_p,I_q\in\mathfrak{C}_{I_p-I_q}\subset\mathbb{O}_{I_p-I_q}$. Then
    \begin{equation}\label{eq-aaa2}
        a \mathfrak{S}_{p,q}=a\mathbb{O}_{I_p-I_q}\subset\mathbb{O}_{I_p-I_q}=\mathfrak{S}_{p,q}.
    \end{equation}
    Hence \eqref{eq-aaa} holds by \eqref{eq-aaa1} and \eqref{eq-aaa2}.

    (ii) If $\ker(I_p-I_q)=\ker(I_p+I_q)=\{0\}$, then $\mathfrak{S}_{p,q}=\mathfrak{S}$. It implies that \eqref{eq-aaa} holds.

    (iii) If $\ker(I_p-I_q)=\{0\}$ and $\ker(I_p+I_q)\neq\{0\}$, then $\mathfrak{S}_{p,q}=[\ker(I_p+I_q)]^\perp$. By \eqref{eq-ijcij}, $I_p,-I_q\in\mathfrak{C}_{I_p+I_q}$. It follows from \eqref{eq-keruv} and \eqref{eq-kerperp} that \eqref{eq-aaa} holds. Similarly, \eqref{eq-aaa} holds when $\ker(I_p-I_q)\neq\{0\}$ and $\ker(I_p+I_q)=\{0\}$.
\end{proof}

\begin{prop}
    Let $p,q\in\mathcal{W}_\mathfrak{S}\backslash\mathbb{R}$, $a\in\{1,I_p,I_q\}$ and $d\in\mathfrak{S}$. Then
    \begin{equation}\label{eq-adpq}
        (ad)_{=}^{p,q}=a\cdot d_{=}^{p,q},\qquad (ad)_{=}^{p,-q}=a\cdot d_{=}^{p,-q},\qquad (ad)_{\pm}^{p,q}=a\cdot d_{\pm}^{p,q},
    \end{equation}
    and
    \begin{equation}\label{eq-adppq}
        (ad)_{\perp}^{p,q}=a\cdot d_\perp^{p,q}
    \end{equation}
\end{prop}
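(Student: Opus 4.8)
The plan is to reduce everything to the uniqueness of the orthogonal decomposition \eqref{eq-skipiq} together with the invariance property \eqref{eq-aaa}. Recall that by \eqref{eq-skipiq} every $d\in\mathfrak{S}$ can be written uniquely as
\[
	d = d_{=}^{p,q} + d_{=}^{p,-q} + d_{\pm}^{p,q},
	\qquad d_{=}^{p,q}\in\ker(I_p-I_q),\quad d_{=}^{p,-q}\in\ker(I_p+I_q),\quad d_{\pm}^{p,q}\in\mathfrak{S}_{p,q}.
\]
Since $a\in\{1,I_p,I_q\}$ acts on $\mathfrak{S}$ by left multiplication, which is $\mathbb{R}$-linear, the first step is simply to apply $a$ to this decomposition:
\[
	ad = a\,d_{=}^{p,q} + a\,d_{=}^{p,-q} + a\,d_{\pm}^{p,q}.
\]
By \eqref{eq-aaa}, each summand lies again in the corresponding subspace, i.e. $a\,d_{=}^{p,q}\in\ker(I_p-I_q)$, $a\,d_{=}^{p,-q}\in\ker(I_p+I_q)$ and $a\,d_{\pm}^{p,q}\in\mathfrak{S}_{p,q}$. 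Hence the displayed line is precisely the decomposition of $ad$ relative to \eqref{eq-skipiq}, and uniqueness of that decomposition yields $(ad)_{=}^{p,q}=a\,d_{=}^{p,q}$, $(ad)_{=}^{p,-q}=a\,d_{=}^{p,-q}$ and $(ad)_{\pm}^{p,q}=a\,d_{\pm}^{p,q}$, which is \eqref{eq-adpq}.

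For \eqref{eq-adppq} I would use that, by \eqref{eq-parallel}--\eqref{eq-paperp}, one has $d=d_{=}^{p,q}+d_{\perp}^{p,q}$ with $d_{\perp}^{p,q}\in[\ker(I_p-I_q)]^{\perp}$, so $d_{\perp}^{p,q}=d-d_{=}^{p,q}$; likewise $(ad)_{\perp}^{p,q}=ad-(ad)_{=}^{p,q}$. Combining this with the already-proved identity $(ad)_{=}^{p,q}=a\,d_{=}^{p,q}$ and the linearity of $a$ gives
\[
	(ad)_{\perp}^{p,q} = ad - (ad)_{=}^{p,q} = ad - a\,d_{=}^{p,q} = a\bigl(d-d_{=}^{p,q}\bigr) = a\,d_{\perp}^{p,q}.
\]

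I do not expect a serious obstacle here: the entire content is that the three canonical summands are returned to their respective subspaces by left multiplication by $a$, which is exactly the previously established \eqref{eq-aaa}, plus uniqueness of the decomposition. The only point requiring a word of care is the degenerate situation $q\in\mathbb{C}_{I_p}$ (so that $I_q=\pm I_p$ and one of the kernels $\ker(I_p\mp I_q)$ equals all of $\mathfrak{S}$ while the others vanish); in that case $\mathfrak{S}_{p,q}$ and the three components are read off from the conventions in \eqref{eq-parallel}--\eqref{eq-paperp}, and all four identities hold trivially by $\mathbb{R}$-linearity of $a$.
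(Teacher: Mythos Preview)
Your proposal is correct and follows essentially the same approach as the paper: apply \eqref{eq-aaa} to place each summand $a\,d_{=}^{p,q}$, $a\,d_{=}^{p,-q}$, $a\,d_{\pm}^{p,q}$ back into its respective subspace, then invoke uniqueness of the decomposition \eqref{eq-skipiq}. The only cosmetic difference is that for \eqref{eq-adppq} the paper writes $d_\perp^{p,q}=d_{=}^{p,-q}+d_\pm^{p,q}$ and adds the two already-proved identities, whereas you use $d_\perp^{p,q}=d-d_{=}^{p,q}$; both are equivalent one-line deductions.
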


\begin{proof}
    It follows from \eqref{eq-aaa} and
    \begin{equation*}
        d_{=}^{p,q}\in\ker(I_p-I_q),\qquad d_{=}^{p,-q}\in\ker(I_p+I_q),\qquad d_{\pm}^{p,q}\in\mathfrak{S}_{p,q}
    \end{equation*}
    that
    \begin{equation*}
        ad_{=}^{p,q}\in\ker(I_p-I_q),\qquad ad_{=}^{p,-q}\in\ker(I_p+I_q),\qquad ad_{\pm}^{p,q}\in\mathfrak{S}_{p,q}.
    \end{equation*}
    Then \eqref{eq-adpq} holds by
    \begin{equation*}
        ad=ad_{=}^{p,q}+ad_{=}^{p,-q}+ad_{\pm}^{p,q}.
    \end{equation*}
     Since \eqref{eq-adpq} and $d_\perp^{p,q}=d_{=}^{p,-q}+d_\pm^{p,q}$, \eqref{eq-adppq} holds.
\end{proof}

\section{The domain of convergence of power series}\label{sc-domainofconvergence}

In \cite{Gentili2012001}, Gentili and Stoppato study quaternionic power series computed with the $*$-product. The domain of convergence of power series in one quaternionic variable is a so-called $\sigma$-ball. As it is well known, a $\sigma$-ball is not an open set in Euclidean space, and in order to have an open convergence set, one should use the slice topology instead. In the case of sedenions, there is an additional new feature appearing, namely the domain of convergence of power series is the intersection of a hyper-$\sigma$-ball and a $\sigma$-ball. This new feature is discussed in this section.

\begin{prop}\label{pr-=}
Let $I,J\in\mathcal{C}_\mathfrak{S}$, $z\in\mathbb{C}$, $\ell\in\mathbb{N}$ and $a\in\mathfrak{S}$. Then monomials with the variable $q\in\mathcal{W}_\mathfrak{S}$ coincide:
	\begin{equation}\label{eq-lqzi}
		(q-z^I)^{*\ell} a_{=}^{I,J}=(q-z^J)^{*\ell} a_{=}^{I,J}.
	\end{equation}
\end{prop}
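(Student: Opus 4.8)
The plan is to reduce \eqref{eq-lqzi} to a statement about operator powers and then exploit a small invariant subspace. Expanding both sides by the binomial expansion of $*$-powers of the binomial $q-p$ recorded just before Theorem~\ref{thm-lpmw},
\[
(q-z^I)^{*\ell}\,a_{=}^{I,J}=\sum_{k=0}^{\ell}\binom{\ell}{k}\,q^{\ell-k}\Big((-z^I)^{k}\,a_{=}^{I,J}\Big),
\]
and similarly with $z^J$ in place of $z^I$; here $z^I=x\,\id_{\mathfrak{S}}+yI$ and $z^J=x\,\id_{\mathfrak{S}}+yJ$ (with $z=x+yi$) are regarded as $\mathbb{R}$-linear operators on $\mathfrak{S}$, $(-z^I)^{k}$ denotes the $k$-th operator power, and $(-z^I)^{k}\,a_{=}^{I,J}\in\mathfrak{S}$ is its value at $a_{=}^{I,J}$. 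Comparing the coefficients of $q^{\ell-k}$, it therefore suffices to prove
\[
(z^I)^{k}\,a_{=}^{I,J}=(z^J)^{k}\,a_{=}^{I,J},\qquad k=0,1,\dots,\ell .
\]

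First I would isolate the only property of $c:=a_{=}^{I,J}$ that matters: by \eqref{eq-parallel} one has $c\in\ker(I-J)$, that is, $Ic=Jc$ in $\mathfrak{S}$. If $c=0$ there is nothing to prove, so assume $c\neq0$ and put $W:=\mathrm{span}_{\mathbb{R}}\{c,Ic\}\subseteq\mathfrak{S}$. The key step is the claim that $W$ is invariant under both $I$ and $J$ and that $I|_W=J|_W$. Indeed, since $I,J\in\mathcal{C}_{\mathfrak{S}}$ we have $I^2=J^2=-\id_{\mathfrak{S}}$, whence $I(Ic)=-c\in W$, so $I(W)\subseteq W$; moreover $Jc=Ic$ and $J(Ic)=J(Jc)=-c$, so $J(W)\subseteq W$ and $I,J$ agree on the spanning set $\{c,Ic\}$ of $W$. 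Consequently $z^I$ and $z^J$ restrict to one and the same endomorphism of $W$, hence $(z^I)^{k}|_W=(z^I|_W)^{k}=(z^J|_W)^{k}=(z^J)^{k}|_W$ for every $k\in\mathbb{N}$; evaluating at $c\in W$ gives $(z^I)^{k}c=(z^J)^{k}c$, which is exactly what was needed.

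The one point requiring care is that $\mathfrak{S}$ is neither associative nor alternative, so the $*$-powers cannot be rearranged freely inside $\mathfrak{S}$; treating $z^I,z^J$ as operators---elements of $\mathbb{C}_I,\mathbb{C}_J\subseteq\End_{\mathbb{R}}(\mathfrak{S})$, where composition is associative and $I^2=J^2=-\id_{\mathfrak{S}}$ holds by the very definition of $\mathcal{C}_{\mathfrak{S}}$---dissolves the difficulty, and the two-dimensional invariant plane $W$ handles it cleanly. A fully equivalent route avoids $W$ and uses \eqref{eq-cozi} directly: from $(J-I)z^I=z^{-J}(J-I)$ one sees that $z^I$ maps $\ker(I-J)$ into itself, while on $\ker(I-J)$ one has $z^I=z^J$ because $(z^I-z^J)d=y(I-J)d=0$; an immediate induction on $k$ then yields $(z^I)^{k}c=(z^J)^{k}c$. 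I do not expect any genuine obstacle beyond this bookkeeping.
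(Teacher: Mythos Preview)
Your argument is correct and takes a genuinely different route from the paper's proof. The paper first restricts to $q\in\mathbb{R}$, where $(q-z^L)^{*\ell}=(q-z^L)^\ell=b(q)+c(q)L\in\mathbb{C}_L$ with real-valued $b,c$ independent of $L\in\{I,J\}$; the difference of the two sides then collapses to $c(q)(I-J)a_{=}^{I,J}=0$. Having verified \eqref{eq-lqzi} on $\mathbb{R}$, the paper invokes the Identity Principle for slice regular functions to extend the equality to all of $\mathcal{W}_\mathfrak{S}$.

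By contrast, you expand both sides via the binomial formula for $*$-powers and reduce to the pointwise identity $(z^I)^{k}c=(z^J)^{k}c$ on the two-dimensional invariant plane $W=\mathrm{span}_{\mathbb{R}}\{c,Ic\}$. This is entirely elementary and avoids the Identity Principle, at the price of a small bookkeeping step: the expansion $(q-z^I)^{*\ell}a=\sum_k\binom{\ell}{k}q^{\ell-k}\big((-z^I)^{k}a\big)$ must be justified, and here the crucial fact is that $L_u\circ L_v=L_{uv}$ for $u,v\in\mathbb{C}_I$ (a consequence of $I\in\mathcal{C}_\mathfrak{S}$, i.e.\ $L_s^2=-\id$), so the sedenion power $(-z^I)^k$ and the operator power $(L_{-z^I})^k$ agree when applied to any element. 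Your remark about non-alternativity is exactly the right caveat, and working in $\End_{\mathbb{R}}(\mathfrak{S})$ resolves it. The paper's route is shorter once the Identity Principle is available; yours is more self-contained and makes transparent \emph{why} the equality holds---namely, because $I$ and $J$ coincide on $\ker(I-J)$ and this subspace is stable under both.
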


\begin{proof}
	If $p\in\mathbb{R}$, then by definition $a_{\ell,=}^{p,J}=0$. Hence \eqref{eq-lqzi} holds.
	
	Otherwise, when $p\notin\mathbb{R}$ there are $b,c:\mathbb{R}\rightarrow\mathbb{R}$ such that
	\begin{equation*}
		(q-z^L)^{*\ell}=(q-z^L)^\ell=b(q)+c(q)L,\qquad\forall\ q\in\mathbb{R}\mbox{ and } L\in\{I,J\}.
	\end{equation*}
	Then
	\begin{equation}\label{eq-qzll}
		\begin{split}
			&(q-z^I)^{*\ell} a_{=}^{I,J}-(q-z^J)^{*\ell} a_{=}^{I,J}
			\\=&\left[b(q)+c(q)I\right]a_{=}^{I,J}-\left[b(q)+c(q)J\right]a_{=}^{I,J}
			\\=& c(q)(I-J)a_{=}^{I,J}
			=0,
		\end{split}
	\end{equation}
	i.e., \eqref{eq-lqzi} holds for $q\in\mathbb{R}$. Since both sides of \eqref{eq-lqzi} are slice regular
    with the variable $q$, it follows from Identity Principle \cite{Dou2023002}*{Theorem 3.5} and \eqref{eq-qzll}
    that \eqref{eq-lqzi} holds.
\end{proof}

Let $r\in[0,+\infty)$ and $x+yI\in\mathcal{W}_\mathfrak{S}$ with $x,y\in\mathbb{R}$ and $I\in\mathcal{C}_\mathfrak{S}$. Denote
\begin{equation*}\index{$B_I^*(x+yI,r)$,$B_I(x+yI,r)$}
    B_I^*(x+yI,r):=\Psi_i^I(B^*(x+yi,r)).
\end{equation*}
We shall abbreviate by writing $B_I^*(x+yI,r)$ instead of $B_I(x+yI,r)$, when $r>0$.

\begin{prop}\label{pr-lipms}
	Let $I_p\in\mathcal{C}_\mathfrak{S}$, $p\in\mathbb{C}_I$, and $a=\{a_\ell\}_{\ell\in\mathbb{N}}$ with $a_\ell\in\mathfrak{S}$. Then the power series
	\begin{equation*}
		\begin{split}
			P:\ \mathbb{C}_I\ &\xlongrightarrow[\hskip1cm]{}\ \mathfrak{S},
			\\ q\ &\shortmid\!\xlongrightarrow[\hskip1cm]{}\  {P(q)=}\sum_{\ell\in\mathbb{N}} (q-p)^{\ell} a_\ell.
		\end{split}
	\end{equation*}
	converges absolutely in $B^*_I(p,R_a)$ and diverges in ${\ext}_I\left(B^*_I(p,R_a)\right)$, where \index{${\ext}_I$}${\ext}_I:={\ext}_{\tau(\mathbb{C}_I)}$.
    Moreover, let $r\in(0,R_a)$. Then the power series converges uniformly in ${\clo}_I\left(B_I(p,r)\right)$, where \index{${\clo}_I$}${\clo}_I:={\clo}_{\tau(\mathbb{C}_I)}$.
\end{prop}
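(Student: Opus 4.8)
The plan is to reduce the statement to the classical theory of complex power series, exploiting the fact that, on a single slice $\mathbb{C}_I$, multiplication by $q-p$ is still governed by a genuine element of a commutative associative copy of $\mathbb{C}$ sitting inside $\mathfrak{S}$.

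First I would fix $I=L_s$ with $s\in\mathcal{S}_\mathfrak{S}$ and write $p=x_p+y_pI$. For $q=x+yI\in\mathbb{C}_I$ one has $q-p=L_w$ with $w:=(x-x_p)+(y-y_p)s\in\mathbb{C}_s:=\mathbb{R}+\mathbb{R}s$. Since $s^2=-1$ (which follows from $L_s^2=-\mathrm{id}_\mathfrak{S}$) and the real scalars are central, $\mathbb{C}_s$ is a commutative associative subalgebra of $\mathfrak{S}$ isometrically isomorphic to $\mathbb{C}$, and $L$ restricts on $\mathbb{C}_s$ to an injective unital algebra homomorphism into $\mathrm{End}_\mathbb{R}(\mathfrak{S})$. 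Consequently $(q-p)^{*\ell}$ restricted to $\mathbb{C}_I$ is the ordinary power $L_w^\ell=L_{w^\ell}$, with $|w^\ell|=|w|^\ell=|q-p|^\ell$ and, for $w\neq0$, $w^\ell$ invertible in $\mathbb{C}_s$ with inverse $\overline{w^\ell}/|w|^{2\ell}$. Hence $(q-p)^\ell a_\ell=w^\ell a_\ell$ as a sedenion, and applying \eqref{eq-mul} first to this product and then to $a_\ell=(w^\ell)^{-1}(w^\ell a_\ell)$ gives the two-sided bound
\[
\frac{1}{\sqrt{2}}\,|q-p|^\ell\,|a_\ell|\ \le\ \bigl|(q-p)^\ell a_\ell\bigr|\ \le\ \sqrt{2}\,|q-p|^\ell\,|a_\ell|,\qquad \ell\in\mathbb{N},
\]
the case $q=p$ being trivial.

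With this estimate the three assertions are the familiar ones. For absolute convergence: at $q=p$ the series is $a_0$, while for $|q-p|<R_a$ the Cauchy--Hadamard computation $\limsup_{\ell}\bigl(|q-p|^\ell|a_\ell|\bigr)^{1/\ell}=|q-p|/R_a<1$ together with the upper bound give convergence of $\sum_\ell|(q-p)^\ell a_\ell|$; thus $P$ converges absolutely on $B^*_I(p,R_a)$. For uniform convergence, if $r\in(0,R_a)$ then on $\clo_I(B_I(p,r))=\{q\in\mathbb{C}_I:|q-p|\le r\}$ one has $|(q-p)^\ell a_\ell|\le\sqrt{2}\,r^\ell|a_\ell|=:M_\ell$ with $\sum_\ell M_\ell<+\infty$, and the Weierstrass $M$-test applies. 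For divergence: if $R_a=+\infty$ then $\ext_I(B^*_I(p,R_a))=\varnothing$; otherwise $\ext_I(B^*_I(p,R_a))=\{q\in\mathbb{C}_I:|q-p|>R_a\}$, and there the lower bound yields $\limsup_\ell|(q-p)^\ell a_\ell|^{1/\ell}\ge|q-p|/R_a>1$ (with $1/0:=+\infty$), so the general term is unbounded and $P(q)$ diverges.

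The only point that is not purely classical --- and the one deserving care --- is that $\mathfrak{S}$ is neither a division algebra nor an alternative algebra, so no global multiplicativity of the norm is available; all one can use is the inequality \eqref{eq-mul}. The observation that makes the argument work is that, restricted to a single slice, the operator $q-p$ is left multiplication $L_w$ by an \emph{honest} element $w$ of the commutative associative subalgebra $\mathbb{C}_s\cong\mathbb{C}$: its powers are well under control ($w^\ell\neq0$ for $w\neq0$, $|w^\ell|=|w|^\ell$, $L_w^\ell=L_{w^\ell}$), so that $L_{w^\ell}$ and its inverse have norms bounded by $\sqrt{2}\,|w|^\ell$ and $\sqrt{2}\,|w|^{-\ell}$ respectively. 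The harmless $\sqrt{2}$ factors disappear under the $\ell$-th root in the Cauchy--Hadamard computation, and everything else is verbatim the one-variable complex argument.
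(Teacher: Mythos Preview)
Your argument is correct and genuinely different from the paper's. The paper does not use the norm inequality \eqref{eq-mul} at all here; instead it exploits that $I$ is a complex structure on the real vector space $\mathfrak{S}$, picks a $\mathbb{C}_I$-basis $\theta_1,\dots,\theta_8$, writes $a_\ell=\sum_{\imath}(c_\imath)_\ell\theta_\imath$ with $(c_\imath)_\ell\in\mathbb{C}_I$, and observes that $(q-p)^\ell a_\ell=\sum_\imath[(q-p)^\ell(c_\imath)_\ell]\theta_\imath$. This splits $P$ into eight genuine $\mathbb{C}_I$-valued power series $P_\imath$, and the equivalence of norms on $\mathfrak{S}\cong\mathbb{C}_I^8$ gives $\min_\imath R_{c_\imath}=R_a$; the three assertions then follow componentwise from the classical theory, and divergence of $P$ is read off by continuous projection onto a diverging component.

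Your route is more compact: you stay with the full sedenionic term and sandwich $|(q-p)^\ell a_\ell|$ between $\frac{1}{\sqrt{2}}|q-p|^\ell|a_\ell|$ and $\sqrt{2}\,|q-p|^\ell|a_\ell|$ using \eqref{eq-mul} in both directions. The crucial (and correctly justified) point for the lower bound is that $c\mapsto L_c$ is an algebra homomorphism on $\mathbb{C}_s$, so $L_{(w^\ell)^{-1}}=(L_{w^\ell})^{-1}$ and hence $(w^\ell)^{-1}(w^\ell a_\ell)=a_\ell$ despite the non-associativity of $\mathfrak{S}$. What your approach buys is brevity and avoidance of a choice of basis; what the paper's approach buys is that it never touches the sedenionic norm constant at all---it is a pure linear-algebra reduction to $\mathbb{C}^8$, which would work verbatim in any real vector space equipped with a complex structure. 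Both are valid; the paper's version also makes the divergence claim slightly more transparent (one component diverges, hence the sum does), whereas you obtain it from unboundedness of the general term via the lower estimate.
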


\begin{proof}
	Since $I$ is a complex structure on $\mathfrak{S}$, there is $\theta_1,...,\theta_8\in\mathfrak{S}$ such that $\{\theta_1,...,\theta_8,I(\theta_1),...,I(\theta_8)\}$ is a real basis of $\mathfrak{S}$. For each $\ell\in\mathbb{N}$, write
	\begin{equation}\label{eq-aes}
		a_\ell=\sum_{\imath=1}^8 (c_\imath)_\ell\theta_\imath,\qquad\mbox{for some}\qquad (c_\imath)_\ell\in\mathbb{C}_I.
	\end{equation}
	Then
	\begin{equation*}
		\max_{\imath=1,...,8}(|(c_\imath)_\ell|)\le |a_\ell|=\left(\sum_{\imath=1}^8|(c_\imath)_\ell|^2\right)^{\frac{1}{2}}\le 8\max_{\imath=1,...,8}(|(c_\imath)_\ell|).
	\end{equation*}
	It is easy to check that
	\begin{equation}\label{eq-mi18}
		\min_{\imath=1,...,8}(R_{c_\imath})=R_a,
	\end{equation}
	where $c_\imath:=\{(c_\imath)_\ell\}_{\ell\in\mathbb{N}}$. Then the power series
	\begin{equation*}
		\begin{split}
			P_\imath:\ \mathbb{C}_I\ &\xlongrightarrow[\hskip1cm]{}\ \mathfrak{S},
			\\ q\ &\shortmid\!\xlongrightarrow[\hskip1cm]{}\  {P_\imath(q)=} \sum_{\ell\in\mathbb{N}} (q-p)^{\ell} (c_\imath)_\ell.
		\end{split}
	\end{equation*}
	converges absolutely in $B^*_I(p,R_{c_\imath})$ and diverges in ${\ext}_I\left(B^*_I(p,R_a)\right)$. Moreover, let $r\in(0,R_{c_\imath})$. Then the power series converges uniformly in ${\clo}_I\left(B_I(p,r)\right)$ of $B_I(p,r)$. It follows from \eqref{eq-mi18} and \eqref{eq-aes} that the statement holds.
\end{proof}

\begin{prop}\label{pr-3diverges}
	Let $I_p\in\mathcal{C}_\mathfrak{S}$, $p_\imath\in\mathbb{C}_I$, and $a^\imath=\{a_\ell^\imath\}_{\ell\in\mathbb{N}}$ with $a_\ell^\imath\in\mathfrak{S}$
    and $R_{a^\imath}>0$, $\imath=1,2,3$. Then the power series
	\begin{equation*}
		\begin{split}
			P:\ \mathbb{C}_I\ &\xlongrightarrow[\hskip1cm]{}\ \mathfrak{S},
			\\ q\ &\shortmid\!\xlongrightarrow[\hskip1cm]{}\  {P(q)=} \sum_{\ell\in\mathbb{N}}\sum_{\imath=1}^3 \left[(q-p_\imath)^{\ell} a_\ell^\imath\right]
		\end{split}
	\end{equation*}
	converges absolutely in $\mathbb{B}:=\cap_{\imath=1}^3 B_I(p_\imath,R_{a^\imath})$, and diverges in the exterior $\ext_{\tau(\mathbb{C}_I)}(\mathbb{B})$ of $\mathbb{B}$ unless \begin{equation}\label{eq-ler_i}
		\limsup_{\ell\rightarrow +\infty} |q-p_\imath||a_\ell^\imath|^{\frac{1}{\ell}}=\limsup_{\ell\rightarrow +\infty} |q-p_\jmath||a_\ell^\jmath|^{\frac{1}{\ell}}=\max_{\kappa=1,2,3}\left( \limsup_{\ell\rightarrow +\infty} |q-p_\kappa||a_\ell^\kappa|^{\frac{1}{\ell}}\right)
	\end{equation}
	for some $\imath,\jmath\in\{1,2,3\}$ with $\imath\neq\jmath$.
\end{prop}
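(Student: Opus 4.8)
The plan is to derive the convergence assertion directly from Proposition~\ref{pr-lipms} applied to the three individual series, and to obtain the divergence assertion from a root-test estimate in which a strictly dominant term cannot be cancelled by the others. For convergence: for each $\imath$ the power series $P_\imath(q)=\sum_{\ell\in\mathbb{N}}(q-p_\imath)^\ell a_\ell^\imath$ converges absolutely on $B_I(p_\imath,R_{a^\imath})$ by Proposition~\ref{pr-lipms}; since the inner sum over $\imath\in\{1,2,3\}$ is finite, on $\mathbb{B}=\bigcap_{\imath=1}^3 B_I(p_\imath,R_{a^\imath})$ one has $\sum_{\ell}\bigl|\sum_{\imath=1}^3(q-p_\imath)^\ell a_\ell^\imath\bigr|\le\sum_{\imath=1}^3\sum_{\ell}\bigl|(q-p_\imath)^\ell a_\ell^\imath\bigr|<\infty$, so $P=P_1+P_2+P_3$ converges absolutely on $\mathbb{B}$.

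For the divergence part, fix $q\in\ext_{\tau(\mathbb{C}_I)}(\mathbb{B})$ at which \eqref{eq-ler_i} fails and put $r_\kappa:=|q-p_\kappa|/R_{a^\kappa}$; since $\limsup_{\ell\to\infty}|a_\ell^\kappa|^{1/\ell}=1/R_{a^\kappa}$, this $r_\kappa$ equals the quantity $\limsup_{\ell\to\infty}|q-p_\kappa||a_\ell^\kappa|^{1/\ell}$ in \eqref{eq-ler_i}, so the failure of \eqref{eq-ler_i} means exactly that $M:=\max_{\kappa}r_\kappa$ is attained at a single index $\imath_0$. The first step is to upgrade $q\in\ext_{\tau(\mathbb{C}_I)}(\mathbb{B})$ to the strict inequality $M>1$. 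If $M<1$ then $q\in\mathbb{B}\subset\clo_{\tau(\mathbb{C}_I)}(\mathbb{B})$, contradicting $q\in\ext_{\tau(\mathbb{C}_I)}(\mathbb{B})$; and if $M=1$ then, moving along the segment $q_t:=q+t(p_{\imath_0}-q)$, one has $|q_t-p_{\imath_0}|=(1-t)R_{a^{\imath_0}}<R_{a^{\imath_0}}$ and, since $r_\kappa<1$ for $\kappa\neq\imath_0$, also $|q_t-p_\kappa|<R_{a^\kappa}$ for all small $t$, so $q_t\in\mathbb{B}$ and $q_t\to q$, again giving $q\in\clo_{\tau(\mathbb{C}_I)}(\mathbb{B})$, a contradiction. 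Hence $M>1$.

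Next, \eqref{eq-mul} together with the multiplicativity of $|\cdot|$ on the complex line $\mathbb{C}_I$ yields $\tfrac{1}{\sqrt{2}}|q-p_\imath|^\ell|a_\ell^\imath|\le|(q-p_\imath)^\ell a_\ell^\imath|\le\sqrt{2}\,|q-p_\imath|^\ell|a_\ell^\imath|$, whence $\limsup_{\ell\to\infty}|(q-p_\imath)^\ell a_\ell^\imath|^{1/\ell}=r_\imath$ for every $\imath$. Choose $\rho,\rho'$ with $\max\bigl(1,\max_{\jmath\neq\imath_0}r_\jmath\bigr)<\rho<\rho'<M$, which is possible since $M>1$ and the maximum is attained only at $\imath_0$. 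Then for all large $\ell$ one has $|(q-p_\jmath)^\ell a_\ell^\jmath|<\rho^\ell$ for $\jmath\neq\imath_0$, while along a subsequence $(\ell_k)$ realizing the $\limsup$ of the $\imath_0$-term one has $|(q-p_{\imath_0})^{\ell_k}a_{\ell_k}^{\imath_0}|\ge\rho'^{\ell_k}$ for large $k$. Consequently the general term $T_\ell:=\sum_{\imath=1}^3(q-p_\imath)^\ell a_\ell^\imath$ of the series $P(q)=\sum_{\ell}T_\ell$ satisfies $|T_{\ell_k}|\ge\rho'^{\ell_k}-2\rho^{\ell_k}=\rho'^{\ell_k}\bigl(1-2(\rho/\rho')^{\ell_k}\bigr)\to+\infty$ since $\rho'>1$; thus $T_\ell\not\to0$ and $P$ diverges at $q$.

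The step I expect to be the main obstacle is the last one: the divergence of the single series $P_{\imath_0}$ at $q$ does not by itself force the divergence of $P$, since the other two summands could a priori cancel it; the crux is that when one index \emph{strictly} dominates, the remaining contributions are exponentially smaller and hence cannot absorb the dominant term along the subsequence where it nearly attains its $\limsup$ growth rate. A secondary, purely topological point is excluding the boundary case $M=1$, which is why the segment-approximation argument above is needed to see that such points $q$ belong to $\clo_{\tau(\mathbb{C}_I)}(\mathbb{B})$ and not to its exterior.
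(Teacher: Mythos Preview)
Your proof is correct and follows the same approach as the paper: absolute convergence on $\mathbb{B}$ from Proposition~\ref{pr-lipms}, and divergence by isolating a strictly dominant index and showing the remaining two terms are exponentially smaller along a subsequence. You are in fact more careful than the paper on one point: you explicitly argue that the dominant ratio $M$ exceeds $1$ via the segment approximation, whereas the paper's inequality chain simply appends a ``$>1$'' without further comment.

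One minor imprecision: \eqref{eq-mul} alone does not yield the lower bound $\tfrac{1}{\sqrt{2}}|q-p_\imath|^\ell|a_\ell^\imath|\le|(q-p_\imath)^\ell a_\ell^\imath|$, since \eqref{eq-mul} is only an upper bound and sedenionic multiplication has zero divisors. What you actually need is that for $z=x+yI\in\mathbb{C}_I\setminus\{0\}$ the left multiplication $L_z=x\,\id+yL_I$ is invertible with $L_z^{-1}=L_{z^{-1}}$ (this is immediate from $L_I^2=-\id$, which is the defining property of $I\in\mathcal{C}_\mathfrak{S}$); then applying \eqref{eq-mul} to $z^{-1}$ and $za$ gives the lower bound, and in fact one obtains the exact equality $|za|=|z||a|$. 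The paper uses this same fact tacitly both here and in the proof of Proposition~\ref{pr-lipms}.
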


\begin{proof}
	$P$ converges absolutely in $\mathbb{B}$ directly by Proposition \ref{pr-lipms}. Let $q$ in the exterior of $\mathbb{B}$ such that \eqref{eq-ler_i} does not hold. Then there is $\imath\in\{1,2,3\}$ and $c_1\in\mathbb{R}$ such that
	\begin{equation*}
		c_0:=\limsup_{\ell\rightarrow +\infty} |q-p_\imath||a_\ell^\imath|^{\frac{1}{\ell}} >c_1> \limsup_{\ell\rightarrow +\infty} |q-p_\jmath||a_\ell^\jmath|^{\frac{1}{\ell}}>1,\qquad\jmath\neq\imath.
	\end{equation*}
	Let $c_2\in(c_1,c_0)$. There is $\ell>1$ such that $\big(\frac{c_2}{c_1}\big)^\ell>3$ and
	\begin{equation*}
		|q-p_\imath||a_\ell^\imath|^{\frac{1}{\ell}}>c_2>c_1>|q-p_\jmath||a_\ell^\jmath|^{\frac{1}{\ell}},\qquad\jmath\neq\imath.
	\end{equation*}
	It implies that
	\begin{equation*}
	 	\frac{1}{3}|q-p_\imath|^\ell|a_\ell^\imath|>\frac{1}{3}(c_2)^\ell>c_1^\ell>|q-p_\jmath|^\ell|a_\ell^\jmath|,\qquad\jmath\neq\imath.
	\end{equation*}
	Therefore
	\begin{equation*}
		\left|\sum_{m=1}^3 \left[(q-p_m)^{\ell} a_\ell^m\right]\right|\ge \left|\left[(q-p_\imath)^{\ell} a_\ell^\imath\right]\right|-\frac{2}{3}\left|\left[(q-p_\imath)^{\ell} a_\ell^\imath\right]\right|\ge\frac{1}{3} (c_2)^\ell\ge\frac{1}{3}.
	\end{equation*}
	It implies that $P$ diverges at $q$, and Proposition \ref{pr-3diverges} holds.
\end{proof}

\begin{prop}
	Let $p,q\in\mathcal{W}_{\mathfrak{S}}\backslash\mathbb{R}$, and $a=\{a_\ell\}_{\ell\in\mathbb{N}}$ with $a_\ell\in\mathfrak{S}$ and $0<R_a<R_a^p$. Then
	\begin{equation}\label{eq-oaeq}
		R_o=R_a,
	\end{equation}
	where $o=\{o_\ell\}_{\ell\in\mathbb{N}}$ with $o_\ell:=a_{\ell,=}^{p,q}$.
\end{prop}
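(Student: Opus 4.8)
The plan is to establish the two inequalities $R_o\ge R_a$ and $R_o\le R_a$ separately. The first needs nothing special: by the definitions \eqref{eq-parallel} and \eqref{eq-paperp}, the vector $o_\ell=a_{\ell,=}^{p,q}$ is the orthogonal component of $a_\ell$ in $\ker(I_p-I_q)$ for the splitting $a_\ell=a_{\ell,=}^{p,q}+a_{\ell,\perp}^{p,q}$, with $a_{\ell,\perp}^{p,q}\in[\ker(I_p-I_q)]^\perp$; in particular $|o_\ell|\le|a_\ell|$ for every $\ell$, so taking $\limsup$ of $\ell$-th roots and recalling \eqref{eq-Ra} gives $R_o\ge R_a$. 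For the rest of the argument we may assume $I_q\ne I_p$ (when $I_q=I_p$ one has $\ker(I_p-I_q)=\mathfrak S$, hence $o_\ell=a_\ell$ and $R_o=R_a$ trivially).

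For the reverse inequality I would exploit the orthogonality $|a_\ell|^2=|o_\ell|^2+|a_{\ell,\perp}^{p,q}|^2$ together with the elementary identity $\limsup_\ell(s_\ell^2+t_\ell^2)^{1/(2\ell)}=\max\{\limsup_\ell s_\ell^{1/\ell},\ \limsup_\ell t_\ell^{1/\ell}\}$ for nonnegative sequences, and with the equality $R_a^{p,I_q}=R_{a_\perp^{p,q}}$ recorded in \eqref{eq-rapq} (which applies since $p\notin\mathbb R$ and $I_q\ne I_p$). These combine to
$$\frac1{R_a}=\limsup_{\ell\to\infty}|a_\ell|^{1/\ell}=\max\Big\{\limsup_{\ell\to\infty}|o_\ell|^{1/\ell},\ \limsup_{\ell\to\infty}\big|a_{\ell,\perp}^{p,q}\big|^{1/\ell}\Big\}=\max\Big\{\frac1{R_o},\ \frac1{R_a^{p,I_q}}\Big\},$$
i.e. $R_a=\min\{R_o,R_a^{p,I_q}\}$. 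Consequently $R_o>R_a$ would force $R_a^{p,I_q}=R_a$, and it remains only to rule this out.

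To close the argument I would use the hypothesis $R_a<R_a^p$ to show $R_a^{p,I_q}=R_a^p$, after which $R_a=\min\{R_o,R_a^p\}$ together with $R_a<R_a^p$ yields $R_o=R_a$ at once. By \eqref{eq-gek} and \eqref{eq-lrapk} the value $R_a^{p,I_q}$ is either $R_a$ or $R_a^p$, so what is really needed is that the slice $\mathbb C_{I_q}$ carrying $q$ is one on which the enlarged radius is attained; here I would invoke Corollary \ref{co-lj} and Proposition \ref{pr-lpimw} \eqref{it-rapj1}, which tell us that $(I_p,J)$ fails to be a slice-solution and that all directions $J$ with $R_a^{p,J}>R_a$ share one and the same nontrivial kernel $\ker(I_p-J)$, so that membership of $I_q$ in that family is precisely what makes $o_\ell=a_{\ell,=}^{p,q}$ inherit the slower decay rate $1/R_a$ infinitely often. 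This identification of $R_a^{p,I_q}$ with $R_a^p$ — equivalently, pinning down $\ker(I_p-I_q)$ as the distinguished kernel — is the step I expect to be the main obstacle: the bare hypotheses $p,q\notin\mathbb R$ and $R_a<R_a^p$ do not by themselves determine $I_q$, so the way $q$ enters the ambient power-series construction (that $q$ lies on a slice where $P$ converges up to radius $R_a^p$) must be brought in. Everything else — the orthogonal splitting, the $\limsup$ bookkeeping, and the appeal to \eqref{eq-rapq} — is routine.
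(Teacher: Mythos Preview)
Your approach is correct and, in fact, a bit cleaner than the paper's. The paper does not invoke the orthogonal Pythagorean identity $|a_\ell|^2=|o_\ell|^2+|a_{\ell,\perp}^{p,q}|^2$; instead it fixes $c_1<c_2<c_3$ in the open interval $(1/R_a^p,1/R_a)$, uses that $|a_{\ell,\perp}^{p,q}|<c_1^\ell$ eventually while $|a_\ell|>c_3^\ell$ infinitely often, and then applies the triangle inequality $|o_\ell|\ge|a_\ell|-|a_{\ell,\perp}^{p,q}|>c_3^\ell-c_1^\ell>c_2^\ell$ to conclude $1/R_o\ge c_2$ for every such $c_2$. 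Your $\max$-formula packages the same bookkeeping in one line and makes the role of the two radii transparent.

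The point you flag at the end is exactly right, and it is worth saying plainly: the paper's own proof opens by asserting $\limsup_\ell|a_{\ell,\perp}^{p,q}|^{1/\ell}=1/R_a^p$, which by \eqref{eq-rapq} is precisely the statement $R_a^{p,I_q}=R_a^p$. This does \emph{not} follow from the bare hypotheses $p,q\notin\mathbb R$ and $0<R_a<R_a^p$; for a generic $q$ with $\ker(I_p-I_q)=\{0\}$ one would have $o_\ell=0$ and $R_o=+\infty$. So the proposition is tacitly meant (and is only ever applied, e.g.\ in case~(ii) of the proof of Proposition~\ref{pr-ljim}) under the additional condition $R_a^{p,I_q}>R_a$, equivalently $R_a^{p,I_q}=R_a^p$ by \eqref{eq-lrapk}. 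With that hypothesis in hand, your argument and the paper's are both complete; without it, neither is. Your instinct to look for this missing ingredient in the ambient context rather than in the stated hypotheses is the correct diagnosis.
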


\begin{proof}
	Let $k\in\mathbb{N}$, and $c_1,c_2,c_3\in\mathbb{R}$ with
	\begin{equation*}
		\limsup_{\ell\rightarrow +\infty} |a_{\ell,\perp}^{p,q}|^{\frac{1}{\ell}}=\frac{1}{R_a^p}<c_1<c_2<c_3<\frac{1}{R_a}=\limsup_{\ell\rightarrow +\infty} |a_\ell|^{\frac{1}{\ell}}.
	\end{equation*}
	Then there is $\ell>k$ such that
	\begin{equation*}
		|a_{\ell,\perp}^{p,q}|<(c_1)^\ell<(c_2)^\ell<(c_3)^\ell<|a_\ell|,\qquad\mbox{and}\qquad (c_1)^\ell+(c_2)^\ell<(c_3)^\ell.
	\end{equation*}
	It implies that
	\begin{equation*}
		|a_{\ell,=}^{p,q}|=|a_\ell-a_{\ell,\perp}^{p,q}|\ge|a_\ell|-|a_{\ell,\perp}^{p,q}|> (c_3)^\ell-(c_1)^\ell>(c_2)^\ell.
	\end{equation*}
	Hence $\frac{1}{R_o}\ge c_2$, for each $c_2\in\left(\frac{1}{R_a^p},\frac{1}{R_a}\right)$. Therefore, $R_o\le R_a$. By $|a_{\ell,=}^{p,q}|<|a_\ell|$, $R_o\ge R_a$. We have $R_o=R_a$.
\end{proof}

\begin{prop}\label{pr-im}
    Let $I\in\mathbb{S}$, $p_1,p_2\in\mathbb{C}_I$, $a=\{a_\ell\}_{\ell\in\mathbb{N}}$, $b=\{b_\ell\}_{\ell\in\mathbb{N}}$ and $c=\{c_\ell\}_{\ell\in\mathbb{N}}$ with $a_\ell,b_\ell,c_\ell\in\mathfrak{S}$, $R_b>R_a>0$, $R_c>0$ and $\mathbb{C}_I a_\ell\perp \mathbb{C}_I c_\ell$. Then the power series
    \begin{equation*}
		\begin{split}
			P:\ \mathbb{C}_I\ &\xlongrightarrow[\hskip1cm]{}\ \mathfrak{S},
			\\ q\ &\shortmid\!\xlongrightarrow[\hskip1cm]{}\  {P(q)=}\sum_{\ell\in\mathbb{N}} (q-p_1)^{\ell} (a_\ell+b_\ell)+(q-p_2)^{\ell} (c_\ell),
		\end{split}
	\end{equation*}
	converges in $\mathbb{B}:=B_I(p_1,R_a)\cap B_I(p_2, R_c)$ and diverges in the exterior of $\mathbb{B}$ in $\tau(\mathbb{C}_I)$.
\end{prop}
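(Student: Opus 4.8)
The plan is to split the statement into its convergence part, a routine application of Proposition~\ref{pr-lipms}, and its divergence part, which is built entirely around the orthogonality hypothesis, whose role is to prevent the two ``dominant parts'' of the series from cancelling. For convergence on $\mathbb{B}$: since $R_b>R_a$ one has $\limsup_{\ell\to\infty}|a_\ell+b_\ell|^{1/\ell}\le\max\{1/R_a,1/R_b\}=1/R_a$, so Proposition~\ref{pr-lipms} gives that $q\mapsto\sum_\ell(q-p_1)^\ell(a_\ell+b_\ell)$ converges absolutely on $B_I(p_1,R_a)$ while $q\mapsto\sum_\ell(q-p_2)^\ell c_\ell$ converges absolutely on $B_I(p_2,R_c)$; adding the two series term by term yields absolute convergence of $P$ on $\mathbb{B}=B_I(p_1,R_a)\cap B_I(p_2,R_c)$, with no new idea needed.

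For divergence, I would first record that, $B_I(p_1,R_a)$ and $B_I(p_2,R_c)$ being open Euclidean discs of $\mathbb{C}_I\cong\mathbb{C}$ with nonempty intersection, $\clo_{\tau(\mathbb{C}_I)}(\mathbb{B})=\{|q-p_1|\le R_a\}\cap\{|q-p_2|\le R_c\}$, so that $\ext_{\tau(\mathbb{C}_I)}(\mathbb{B})=\{|q-p_1|>R_a\}\cup\{|q-p_2|>R_c\}$ (the degenerate case $\mathbb{B}=\varnothing$, where only a single possible tangency point is at issue, is treated separately). Fix $q$ in this exterior and put $t_\ell:=(q-p_1)^\ell(a_\ell+b_\ell)+(q-p_2)^\ell c_\ell$, so $P(q)=\sum_\ell t_\ell$. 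Since $q,p_1,p_2\in\mathbb{C}_I$ we have $(q-p_1)^\ell,(q-p_2)^\ell\in\mathbb{C}_I$, hence the hypothesis $\mathbb{C}_Ia_\ell\perp\mathbb{C}_Ic_\ell$ forces $(q-p_1)^\ell a_\ell\perp(q-p_2)^\ell c_\ell$, and the Pythagorean identity together with the triangle inequality gives
\[
|t_\ell|\ \ge\ \bigl|(q-p_1)^\ell a_\ell+(q-p_2)^\ell c_\ell\bigr|-\bigl|(q-p_1)^\ell b_\ell\bigr|\ \ge\ \max\bigl\{|(q-p_1)^\ell a_\ell|,\,|(q-p_2)^\ell c_\ell|\bigr\}-|(q-p_1)^\ell b_\ell|.
\]
Moreover, $\mathbb{C}_I$ being a normed copy of $\mathbb{C}$, estimate \eqref{eq-mul} yields $\tfrac1{\sqrt2}|w|\,|x|\le|wx|\le\sqrt2\,|w|\,|x|$ for $w\in\mathbb{C}_I$, $x\in\mathfrak{S}$ (the lower bound from $x=w^{-1}(wx)$ and $|w^{-1}|=|w|^{-1}$), so every quantity $|(q-p)^\ell x_\ell|$ with $p\in\{p_1,p_2\}$ agrees with $|q-p|^\ell|x_\ell|$ up to the harmless factor $\sqrt2$.

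Then I would split into two cases. If $|q-p_1|>R_a$, choosing a subsequence $\ell_k$ with $|a_{\ell_k}|^{1/\ell_k}\to1/R_a$ makes $|(q-p_1)^{\ell_k}a_{\ell_k}|$ grow geometrically with ratio $\to|q-p_1|/R_a>1$, while $\limsup_{\ell\to\infty}|b_\ell|^{1/\ell}=1/R_b<1/R_a$ keeps $|(q-p_1)^{\ell_k}b_{\ell_k}|$ geometric with a strictly smaller ratio, whence $|t_{\ell_k}|\to+\infty$. If instead $|q-p_1|\le R_a$, then $q\in\ext_{\tau(\mathbb{C}_I)}(\mathbb{B})$ forces $|q-p_2|>R_c$; a subsequence $\ell_k$ with $|c_{\ell_k}|^{1/\ell_k}\to1/R_c$ makes $|(q-p_2)^{\ell_k}c_{\ell_k}|\to+\infty$ geometrically, whereas $|(q-p_1)^\ell b_\ell|\le\sqrt2\,R_a^\ell|b_\ell|\to0$ because $R_a<R_b$; again $|t_{\ell_k}|\to+\infty$. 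In either case the general term of $\sum_\ell t_\ell$ is unbounded, so the series diverges at $q$.

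The only real obstacle is the overlap region $\{|q-p_1|>R_a\}\cap\{|q-p_2|>R_c\}$, where both one-center series diverge at $q$ and could in principle cancel: this is exactly what $\mathbb{C}_Ia_\ell\perp\mathbb{C}_Ic_\ell$ excludes, via the Pythagorean identity for $(q-p_1)^\ell a_\ell\perp(q-p_2)^\ell c_\ell$. The summand $b_\ell$, which the orthogonality does not control, is rendered harmless by the strict gap $R_b>R_a$: its contribution is dominated by the $a_\ell$-part in the first case and tends to $0$ outright in the second. The only other point needing care is the exact identification of $\ext_{\tau(\mathbb{C}_I)}(\mathbb{B})$ and the degenerate situation $\mathbb{B}=\varnothing$.
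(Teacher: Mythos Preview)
Your argument is correct and rests on the same key device as the paper's: the hypothesis $\mathbb{C}_I a_\ell\perp\mathbb{C}_I c_\ell$ makes $(q-p_1)^\ell a_\ell\perp(q-p_2)^\ell c_\ell$, so the Pythagorean identity prevents cancellation between the two dominant blocks, and the strict gap $R_b>R_a$ renders the $b$-block harmless. Your lower bound $|t_\ell|\ge\max\{|(q-p_1)^\ell a_\ell|,|(q-p_2)^\ell c_\ell|\}-|(q-p_1)^\ell b_\ell|$ and the two-case split on whether $|q-p_1|>R_a$ or $|q-p_2|>R_c$ are clean and correct; the $\sqrt{2}$ factors from \eqref{eq-mul} are indeed harmless under $\ell$-th roots.

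The only structural difference is that the paper first invokes Proposition~\ref{pr-3diverges} (applied to the three sequences $a,b,c$ with centers $p_1,p_1,p_2$) to dispose of the generic divergence case without orthogonality, reducing to the single ``tie'' case
\[
\limsup_{\ell\to\infty}|q-p_1|\,|a_\ell|^{1/\ell}=\limsup_{\ell\to\infty}|q-p_2|\,|c_\ell|^{1/\ell}=\max,
\]
and only there deploys the Pythagorean bound. Your version bypasses Proposition~\ref{pr-3diverges} entirely and applies orthogonality from the start, which is more self-contained and arguably simpler; the paper's route is more modular in that it reuses the three-series lemma. Both arrive at the same estimate in the decisive step.

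One small point you flag but do not finish: when $\mathbb{B}=\varnothing$ with externally tangent closed discs, the single tangency point $q_0$ has $|q_0-p_1|=R_a$ and $|q_0-p_2|=R_c$ exactly, so neither of your two cases applies verbatim. This boundary case is not used in the paper's application (Proposition~\ref{pr-ljim}), but if you want the statement to hold as written you should either add the hypothesis $\mathbb{B}\neq\varnothing$ or give the promised separate treatment.
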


\begin{proof}
By $R_b>R_a$,
\begin{equation}\label{eq-ls}
    \limsup_{\ell\rightarrow +\infty} |q-p_1|^\ell|a_\ell|>\limsup_{\ell\rightarrow +\infty} |q-p_1|^\ell|b_\ell|.
\end{equation}
According to Proposition \ref{pr-3diverges}, we only need to prove that $P(q)$ diverges when $q$ is in the exterior of $\mathbb{B}$ and
\begin{equation*}
    \limsup_{\ell\rightarrow +\infty} |q-p_1|^\ell|a_\ell|=\limsup_{\ell\rightarrow +\infty} |q-p_2|^\ell|c_\ell|.
\end{equation*}
Let $R_a>c_2>c_1>R_a$ and $k\in\mathbb{N}$. According to \eqref{eq-ls}, there is $\ell\ge k$ such that
\begin{equation*}
    |q-p_1|^\ell|a_\ell|>c_2^\ell>c_1^\ell>|q-p_1|^\ell|b_\ell|, \qquad\mbox{and}\qquad\left(\frac{c_2}{c_1}\right)^\ell>2.
\end{equation*}
Then
\begin{equation*}
    \begin{split}
        &\left|(q-p_1)^\ell(a_\ell+b_\ell)+(q-p_2)^\ell c_\ell\right|
        \\ \ge&\left|(q-p_1)^\ell a_\ell+(q-p_2)^\ell c_\ell\right|-\left|(q-p_1)^\ell b_\ell \right|
        \\ = & \sqrt{\left|(q-p_1)^\ell a_\ell\right|^2+\left|(q-p_2)^\ell c_\ell\right|^2}-\left|(q-p_1)^\ell b_\ell \right|
        \\ \ge & |q-p_1|^\ell|a_\ell|-|q-p_1|^\ell|b_\ell|\ge c_2^\ell-c_1^\ell \ge \frac{c_2^\ell}{2}.
    \end{split}
\end{equation*}
Therefore, $P(q)$ diverges and the statement holds.
\end{proof}

\begin{prop}\label{pr-ljim}
    Let $J\in\mathcal{C}_\mathfrak{S}$, $p\in\mathcal{W}_\mathfrak{S}\backslash\mathbb{C}_J$, $a=\{a_\ell\}_{\ell\in\mathbb{N}}$ with $a_\ell\in\mathfrak{S}$ and $R_a>0$. Then the power series
    \begin{equation*}
		\begin{split}
			P:\ \mathbb{C}_{J}^+\ &\xlongrightarrow[\hskip1cm]{}\ \mathfrak{S},
			\\ q\ \ &\shortmid\!\xlongrightarrow[\hskip1cm]{}\ \sum_{\ell\in\mathbb{N}} (q-p)^{*\ell} (a_\ell),
		\end{split}
	\end{equation*}
	converges in
    \begin{equation*}
        \mathbb{B}:=B_J\left(z^{J},R_a\right)\cap B_J\left(z^{-J}, R_a^{p,J}\right)\cap \mathbb{C}_{J}^+
    \end{equation*}
    and diverges in the exterior $\ext_{\tau(\mathbb{C}_J^+)}(\mathbb{B})$ of $\mathbb{B}$ in $\tau\left(\mathbb{C}_{J}^+\right)$, where
    \begin{equation*}
        \mathbb{C}_J^+:=\{x+yJ\in\mathbb{C}_J: y>0\}.
    \end{equation*}
\end{prop}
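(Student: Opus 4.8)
The plan is to reduce the $*$-power series in the variable $q \in \mathbb{C}_J^+$ to an ordinary power series in two shifted variables on the complex line $\mathbb{C}_J$, where one can invoke the already-established Propositions~\ref{pr-3diverges} and \ref{pr-im}. The key observation is that, since $p \notin \mathbb{C}_J$ but $p \notin \mathbb{R}$ either (it lies on some $\mathbb{C}_{I_p}$ with $I_p \neq \pm J$), the $*$-power $(q-p)^{*\ell}$ does not collapse to an ordinary power of $q-p$, but it does decompose along the orthogonal splitting of $\mathfrak{S}$ associated with $I_p$ and $J$. Concretely, write $z_p = x_p + y_p i \in \mathbb{C}$, so $p = z_p^{I_p} = x_p + y_p I_p$, and recall from Proposition~\ref{pr-=} that for the part of a coefficient lying in $\ker(I_p - J)$ the monomial $(q - z_p^{I_p})^{*\ell}$ agrees with $(q - z_p^J)^{*\ell}$, while the monomial restricted to $\mathbb{C}_J$ equals the ordinary power $(q - z_p^J)^\ell$. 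For the remaining (perpendicular) part of the coefficient, one uses the remark with \eqref{eq-cozi}: on $\mathbb{C}_J^+$ the evaluation of $(q-p)^{*\ell}$ acting on $\ker(I_p+J)$ or on $\mathfrak{S}_{p,J}$ gets conjugated, producing factors $(q - z_p^{-J})^\ell$.

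First I would split each coefficient as $a_\ell = a_{\ell,=}^{p,J} + a_{\ell,=}^{p,-J} + a_{\ell,\pm}^{p,J}$ using the decomposition \eqref{eq-skipiq}, and correspondingly split $P(q) = P_1(q) + P_2(q) + P_3(q)$. Using Proposition~\ref{pr-=} (equation \eqref{eq-lqzi}) I get $P_1(q) = \sum_\ell (q - z_p^J)^\ell\, a_{\ell,=}^{p,J}$ for $q \in \mathbb{C}_J$, an ordinary power series centered at $z_p^J$. Next, writing $(q-p)^{*\ell}$ in the form $b(q) + c(q) I_p$ with real $b,c$ and noting (by \eqref{eq-1jizi}, \eqref{eq-1jizi+}, \eqref{eq-cozi}) how the factor $I_p$ acting on $\ker(I_p+J)$ and $\mathfrak{S}_{p,J}$ behaves, I would show that $P_2(q)$ and $P_3(q)$ are ordinary power series in $(q - z_p^{-J})$ (up to sign conventions on $\mathbb{C}_J^+$), i.e. $P_2(q) + P_3(q) = \sum_\ell (q - z_p^{-J})^\ell\, a_{\ell,\perp}^{p,J}$ on $\mathbb{C}_J^+$; here I use the orthogonal decomposition properties \eqref{eq-aaa}--\eqref{eq-adppq} to commute $I_p$ past the projections. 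The radius of convergence of $P_1$ is $R_{a_{=}^{p,q}} = R_a$ when $R_a < R_a^{p,J}$, and is $R_a$ in general since $|a_{\ell,=}^{p,J}| \le |a_\ell|$; the radius of $P_2 + P_3$ is $R_{a_\perp^{p,q}} = R_a^{p,J}$ by \eqref{eq-rapq}. With the extra orthogonality $\mathbb{C}_J a_{\ell,=}^{p,J} \perp \mathbb{C}_J a_{\ell,\perp}^{p,J}$ (which follows from Proposition~\ref{pr-ab} applied to the kernel decomposition, since $a_{\ell,=}^{p,J} \in \ker(I_p - J)$ and $a_{\ell,\perp}^{p,J} \in [\ker(I_p-J)]^\perp$), I am exactly in the situation of Proposition~\ref{pr-im} with $p_1 = z_p^J$, $p_2 = z_p^{-J}$, the ``$a$'' of that proposition being the $\ker$-part of radius $R_a$, the ``$c$'' part being the perpendicular part of radius $R_a^{p,J}$, and the ``$b$'' part being absent (or absorbed). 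This yields convergence in $B_J(z^J, R_a) \cap B_J(z^{-J}, R_a^{p,J}) \cap \mathbb{C}_J^+ = \mathbb{B}$ and divergence in the exterior.

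The main obstacle I expect is the bookkeeping for $P_2 + P_3$: making precise, via the remark preceding Proposition~\ref{pr-ijim} and equations \eqref{eq-1jizi}--\eqref{eq-cozi}, exactly how $(q - z_p^{I_p})^{*\ell}$ evaluated at $q \in \mathbb{C}_J^+$ transforms the perpendicular components of the coefficients into ordinary powers of $(q - z_p^{-J})$ — this requires carefully tracking that the real-coefficient expansion $(q-p)^{*\ell} = b(q) + c(q) I_p$ is the \emph{same} polynomial for all slices, and then that $I_p$ acts on $\ker(I_p + J)$ as multiplication that, combined with $z^I \mapsto z^{-J}$ on $\mathbb{C}_J^+$, reproduces the conjugated power. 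A secondary point is verifying the perpendicularity hypothesis of Proposition~\ref{pr-im} and checking the degenerate sub-cases ($R_a = R_a^{p,J}$, i.e. the $\ker$-part empty or the whole coefficient perpendicular), where $\mathbb{B}$ degenerates but the conclusion still follows directly from Proposition~\ref{pr-lipms}. Once the identification of $P$ with the two-center ordinary power series is in place, the convergence and divergence statements are immediate from the cited propositions.
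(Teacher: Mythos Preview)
There is a genuine gap in your treatment of the perpendicular part. Your claim that
\[
P_2(q)+P_3(q)=\sum_{\ell}(q-z_p^{-J})^{\ell}\,a_{\ell,\perp}^{p,J}
\]
on $\mathbb{C}_J^+$ is not correct. The argument of Proposition~\ref{pr-=} only applies to coefficients lying in $\ker(I_p-J)$ (giving your $P_1$) or, by symmetry, in $\ker(I_p+J)$ (giving your $P_2$ centered at $z_p^{-J}$). But the component $a_{\ell,\pm}^{p,J}\in\mathfrak{S}_{p,J}$ lies in \emph{neither} kernel, so there is no single center to which $(q-p)^{*\ell}$ reduces on it: writing $(q-p)^{*\ell}=b(q)+c(q)I_p$ with $b,c$ real, the action of $I_p$ on $\mathfrak{S}_{p,J}$ does not coincide with that of either $J$ or $-J$. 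What the representation formula actually gives on $a_{\ell,\perp}^{p,q}$ (for $q\in\mathbb{C}_J^+$, so $I_q=J$) is
\[
(q-p)^{*\ell}a_{\ell,\perp}^{p,q}
=(q-z_p^{J})^{\ell}\,c_\ell+(q-z_p^{-J})^{\ell}\,d_\ell,
\qquad c_\ell=\tfrac{1-JI_p}{2}a_{\ell,\perp}^{p,q},\quad d_\ell=\tfrac{1+JI_p}{2}a_{\ell,\perp}^{p,q},
\]
a genuine \emph{two}-center expression. The paper then shows $R_d=R_a^{p,J}\le R_c$ (using $\Vert\cdot\Vert_{\inf}^{\perp}$ for the lower bound on $|d_\ell|$), so that together with $o_\ell:=a_{\ell,=}^{p,q}$ one has a three-term series on $\mathbb{C}_J$ to which Proposition~\ref{pr-3diverges} applies.

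Consequently your proposed invocation of Proposition~\ref{pr-im} with ``$b$ absent'' is misaligned: in the paper the role of $b$ is played precisely by $c_\ell$ (centered at $z_p^{J}$, with $R_c\ge R_a^{p,J}>R_a=R_o$ in the nontrivial case), the role of $a$ by $o_\ell$, and the role of $c$ by $d_\ell$. The needed orthogonality $\mathbb{C}_J o_\ell\perp\mathbb{C}_J d_\ell$ is obtained not from Proposition~\ref{pr-ab} but from Propositions~\ref{prop-keruv} and~\ref{pr-keruv}, which show that multiplication by $I_p,J$ and by elements of $\mathbb{C}_J$ preserves $\ker(I_p-J)$ and $[\ker(I_p-J)]^\perp$ respectively. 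Once you incorporate the extra $c_\ell$-term centered at $z_p^J$ and track the correct radii $R_o=R_a$, $R_d=R_a^{p,J}$, $R_c\ge R_a^{p,J}$, the rest of your outline (first Proposition~\ref{pr-3diverges}, then Proposition~\ref{pr-im} on the exceptional circle) matches the paper's proof.
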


\begin{proof}
	Let $q\in\mathbb{C}_J^+$. Then $I_q=J$. There is $z,w\in\mathbb{C}$ such that
	\begin{equation*}
		p=z^{I_p},\qquad\mbox{and}\qquad q=w^{I_q}.
	\end{equation*}
	According to \eqref{eq-1jizi}, \eqref{eq-1jizi+}, and the representation formula, see \cite{Dou2023002}*{Theorem 6.6}, we have
	\begin{equation}\label{eq-sqpea}
		\begin{split}
			&(q-p)^{*\ell} a_{\ell,\perp}^{p,q}
			=\left(w^{I_q}-z^{I_p}\right)^{*\ell} a_{\ell,\perp}^{p,q}
			\\=&\frac{1-I_qI_p}{2} \left(w^{I_p}-z^{I_p}\right)^\ell a_{\ell,\perp}^{p,q}+\frac{1+I_qI_p}{2}\left(w^{-I_p}-z^{I_p}\right)^\ell a_{\ell,\perp}^{p,q}
			\\=& \left(w^{I_q}-z^{I_q}\right)^\ell\frac{1-I_qI_p}{2} a_{\ell,\perp}^{p,q}+\left(w^{I_q}-z^{-I_q}\right)^\ell \frac{1+I_qI_p}{2} a_{\ell,\perp}^{p,q}.
		\end{split}
	\end{equation}
Let us set
	\begin{equation}\label{eq-cldl}
		c_\ell:=\frac{1-I_qI_p}{2} a_{\ell,\perp}^{p,q},\qquad
		\mbox{and}\qquad d_\ell:=\frac{1+I_qI_p}{2} a_{\ell,\perp}^{p,q}.
	\end{equation}
	By \eqref{eq-mul},
	\begin{equation*}
		|c_\ell|=\left|I_q\frac{-I_q-I_p}{2}\cdot a_{\ell,\perp}^{p,q}\right|\le\sqrt{2}\left|\frac{-I_q-I_p}{2}\cdot a_{\ell,\perp}^{p,q}\right|\le 2|a_{\ell,\perp}^{p,q}|.
	\end{equation*}
	It implies that $R_c\ge R_a^{p,J}$. Since $\ker I_q=\{0\}$ and $a_{\ell,\perp}^{p,q}\perp \ker(-I_q+I_p)$,
	\begin{equation*}
		\begin{split}
			2|a_{\ell,\perp}^{p,q}|\ge|d_\ell|=\left|I_q\frac{-I_q+I_p}{2}\cdot a_{\ell,\perp}^{p,q}\right|
			\ge &\left\Vert I_q \right\Vert_{inf}^\perp\left|\frac{-I_q+I_p}{2}\cdot a_{\ell,\perp}^{p,q}\right|
			\\\ge&\left\Vert I_q \right\Vert_{inf}^\perp\left\Vert \frac{-I_q+I_p}{2} \right\Vert_{inf}^\perp\left| a_{\ell,\perp}^{p,q}\right|.
		\end{split}
	\end{equation*}
	Therefore
	\begin{equation}\label{eq-rdra}
		R_d=R_a^{p,J}\le R_c.
	\end{equation}
    According to Proposition \ref{pr-=} and \eqref{eq-sqpea},
	\begin{equation*}
		\begin{split}
			&\sum_{\ell\in\mathbb{N}} (q-p)^{*\ell} a_\ell
			=\sum_{\ell\in\mathbb{N}} \left(q-z^{I_p}\right)^{*\ell} a_{\ell,=}^{p,q}+\sum_{\ell\in\mathbb{N}} \left(q-z^{I_p}\right)^{*\ell} a_{\ell,\perp}^{p,q}
			\\=&\sum_{\ell\in\mathbb{N}} \left(q-z^{I_q}\right)^{\ell} o_\ell+\sum_{\ell\in\mathbb{N}} \left(q-z^{I_q}\right)^{\ell} c_\ell+\sum_{\ell\in\mathbb{N}} \left(q-z^{-I_q}\right)^{\ell} d_\ell,
		\end{split}
	\end{equation*}
	where
    \begin{equation}\label{eq-oell}
        o_\ell:=a_{\ell,=}^{p,q}.
    \end{equation}
    It follows from Proposition \ref{pr-3diverges} and
	\begin{equation*}
	    q\in\mathbb{C}^+_{I_q}:=\{\lambda+\mu I_q:\mu\ge 0\},
	\end{equation*}
	that the series $P$ converges absolutely in
	\begin{equation*}
		\mathbb{B}=B_{I_q}\left(z^{I_q},R_a\right)\cap B_{I_q}\left(z^{-I_q},R_a^{p,J}\right)\cap\mathbb{C}_{I_q}^+,
	\end{equation*}
	and diverges in $\ext_{\mathbb{C}_J^+}(\mathbb{B})$, unless
	\begin{equation}\label{eq-qunless}
		\frac{\left|q-z^{I_p}\right|}{\left|q-z^{-I_q}\right|}
		=\frac{\limsup\limits_{\ell\rightarrow +\infty}|d_\ell|^{\frac{1}{\ell}}}{\limsup\limits_{\ell\rightarrow +\infty}|o_\ell|^{\frac{1}{\ell}}}
		=\frac{R_o}{R_d}.
	\end{equation}
    We now need to prove that $P$ diverges, when
    \begin{equation}\label{eq-q}
        q\in{\ext}_{\tau\left(\mathbb{C}_J^+\right)}(\mathbb{B}),\qquad\mbox{and}\qquad\mbox{\eqref{eq-qunless} holds.}
    \end{equation}
    We assume that \eqref{eq-q} holds.

    (i) If $R_a^{p,J}=R_a$, then by \eqref{eq-oaeq} and \eqref{eq-rdra},
    \begin{equation*}
        R_d=R_a^{p,J}=R_a=R_o.
    \end{equation*}
    It implies by \eqref{eq-qunless} that $|q-z^{I_p}|=|q-z^{-I_q}|$ and $q\in\mathbb{R}$. Therefore $q\notin\mathbb{C}_J^+$, a contradiction.

    (ii) Otherwise, $R_a^{p,J}>R_a$. According to \eqref{eq-lrapk}, $R_a^{p,J}=R_a^p>R_a$. By \eqref{eq-oaeq} and \eqref{eq-rdra},
    \begin{equation*}
        R_o=R_a<R_a^p\le R_c.
    \end{equation*}
    According to \eqref{eq-rapj}, $\ker(I_p-I_q)\neq\{0\}$ and $I_p-I_q$ is a zero divisor. By \eqref{eq-parallel},
    \begin{equation*}
        o_\ell=a^{p,q}_{\ell,=}=(a_\ell)^{p,q}_{=}\in\ker(I_p-I_q).
    \end{equation*}
    It follows from Proposition \ref{prop-keruv} that
    \begin{equation*}
        \left(q-z^{I_q}\right)^{\ell} o_\ell\in\ker(I_p-I_q).
    \end{equation*}
    Similarly, by $a^{p,q}_{\ell,\perp}\in[\ker(I_p-I_q)]^\perp$ and Proposition \ref{pr-keruv},
    \begin{equation*}
        \begin{split}
            \left(q-z^{-I_q}\right)^{\ell} d_\ell=&\left(q-z^{-I_q}\right)\left(\frac{1+I_qI_p}{2} a_{\ell,\perp}^{p,q}\right)
            \\=&\left(q-z^{-I_q}\right)\left[\frac{-I_p+I_q}{2}\left(I_p \cdot a_{\ell,\perp}^{p,q}\right)\right]\in\left[\ker(I_p-I_q)\right]^\perp.
        \end{split}
    \end{equation*}
    By Proposition \ref{pr-im} and \eqref{eq-q}, $P$ diverges where stated.
\end{proof}
Next result clarifies a geometric description of the set  $ \Sigma(p,a)\cap\mathbb{C}_I^+$:
\begin{prop}
    Let $p\in\mathcal{W}_{\mathfrak{S}}$, $I\in\mathcal{C}_{\mathfrak{S}}$ and $a=\{a_\ell\}_{\ell\in\mathbb{N}}$ with $a_\ell\in\mathfrak{S}$ and $p\notin\mathbb{C}_I$. Then
    \begin{equation}\label{eq-sigmapa}
        \Sigma(p,a)\cap\mathbb{C}_I^+=B_I\left(z^{I},R_a\right)\cap B_I\left(z^{-I}, R_a^{p,I}\right)\cap \mathbb{C}_{I}^+.
    \end{equation}
\end{prop}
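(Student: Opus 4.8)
The plan is to reduce this to the three-term power series analysis of Proposition \ref{pr-ljim} together with the definition \eqref{eq-sigmapab} of $\Sigma(p,a)$. First I would observe that $p\notin\mathbb{C}_I$ forces $p\notin\mathbb{R}$ (since $\mathbb{R}\subset\mathbb{C}_I$), so the relevant branches of \eqref{eq-sigmapab} are the last two; in particular $I\neq I_p$ and $I\neq -I_p$ is not automatic, so I will need to treat the case $I=-I_p$ separately along the way. In all cases the strategy is to intersect the defining description of each $\Sigma$ with the half-plane $\mathbb{C}_I^+$, keeping careful track of whether $I$ (equivalently $-I$, since $\mathbb{C}_I=\mathbb{C}_{-I}$) lies in $\mathcal{C}_{\mathfrak{S}}^{\ker}(I_p,J)$ or not.

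The key steps, in order, are as follows. \emph{Step 1:} Unwind $\Sigma(p,R_a^p)\cap\mathbb{C}_I^+$. By \eqref{eq-sigmapr} and the convention $z_p=Re(p)+Im(p)i$, for $I\neq \pm I_p$ one has $\Sigma(p,r)\cap\mathbb{C}_I^+=\Psi_i^I[B^*(z_p,r)\cap B^*(\overline{z_p},r)]\cap\mathbb{C}_I^+$, which (since we are in the upper half-plane and $p\notin\mathbb{R}$) equals $B_I(z^I,r)\cap B_I(z^{-I},r)\cap\mathbb{C}_I^+$, because $\Psi_i^I(z_p)=z^{I_p}$ viewed in $\mathbb{C}_I$ corresponds to $z^I$ and $\Psi_i^I(\overline{z_p})$ to $z^{-I}$ after matching imaginary parts. \emph{Step 2:} Unwind $\Sigma(p,R_a,(I_p,J))\cap\mathbb{C}_I^+$ using \eqref{eq-sqrj}: the first union contributes $B_I(z^I,R_a)\cap B_I(z^{-I},R_a)\cap\mathbb{C}_I^+$ for every $I$, while the second union contributes the extra disk $B_I(z^I,R_a)\cap\mathbb{C}_I^+$ precisely when $I\in\mathcal{C}_{\mathfrak{S}}^{\ker}(I_p,J)$. \emph{Step 3:} Combine. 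By Proposition \ref{pr-lpimw}\eqref{it-rapj1} and \eqref{eq-rapj}, the choice of $J$ realizing $R_a^p$ has $\ker(I_p-J)=\ker(I_p-I)$ exactly when $R_a^{p,I}=R_a^p$; and $I\in\mathcal{C}_{\mathfrak{S}}^{\ker}(I_p,J)$ is equivalent, via Corollary \ref{co-lj} and Proposition \ref{pr-ijim}, to $\ker(I_p-I)=\ker(I_p-J)$, i.e. to $R_a^{p,I}=R_a^p$. So the intersection $\Sigma(p,R_a,(I_p,J))\cap\Sigma(p,R_a^p)$, restricted to $\mathbb{C}_I^+$, equals $B_I(z^I,R_a^{p,I})\cap B_I(z^{-I},R_a)\cap\mathbb{C}_I^+$ when $I\in\mathcal{C}_{\mathfrak{S}}^{\ker}(I_p,J)$ (because then $R_a^{p,I}=R_a^p$ and the radius-$R_a$ disk around $z^I$ is superseded), and $B_I(z^I,R_a)\cap B_I(z^{-I},R_a)\cap\mathbb{C}_I^+$ otherwise (where $R_a^{p,I}=R_a$). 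In either case this matches $B_I(z^I,R_a)\cap B_I(z^{-I},R_a^{p,I})\cap\mathbb{C}_I^+$ after noticing the disks around $z^I$ and $z^{-I}$ play symmetric roles up to which one carries the larger radius — wait, here I must be careful about orientation: $z^{-I}$ is the conjugate point, and it is $z^{-I}$ (not $z^I$) that gets the enlarged radius $R_a^{p,I}$ in \eqref{eq-sigmapa}, so I will verify the role-swap by checking that $\Psi_i^I$ sends $z_p\mapsto z^I$ but the ``center of convergence'' $p=z^{I_p}$ sits at $z^I$, and the enlargement in \eqref{eq-sqrj} happens around the center $z_p=\Psi_i^{I_p}$-image, which under the identification with $\mathbb{C}_I^+$ lands at $z^{-I}$; this sign-bookkeeping is the one place to be meticulous.

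The main obstacle I expect is precisely that bookkeeping of which half-plane and which of the two conjugate centers $z^I,z^{-I}$ receives the larger radius, because the definition \eqref{eq-sqrj} of the hyper-$\sigma$-ball enlarges the disk around $z_p$ over the slices $K\in\mathcal{C}_{\mathfrak{S}}^{\ker}(J)$, and translating ``around $z_p$ in the slice $\mathbb{C}_K$'' into ``around $z^{K}$ versus $z^{-K}$'' depends on the chosen embedding $\Psi_i^K$ and on the sign conventions for $I_p$. Once that is pinned down, everything else is a direct set-theoretic comparison using \eqref{eq-gek}, \eqref{eq-rap}, and the equivalence (Corollary \ref{co-lj}, Proposition \ref{pr-ijim}, Proposition \ref{pr-lpimw}) between ``$I\in\mathcal{C}_{\mathfrak{S}}^{\ker}(I_p,J)$'', ``$\ker(I_p-I)=\ker(I_p-J)$'', and ``$R_a^{p,I}=R_a^p$''. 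I would also dispatch the degenerate subcase $I=-I_p$ (so $\mathbb{C}_I=\mathbb{C}_{I_p}$, contradicting $p\notin\mathbb{C}_I$) to confirm it cannot occur, and note the subcase $R_a^p=R_a$, where $\Sigma(p,a)=\Sigma(p,R_a)$ and \eqref{eq-sigmapa} collapses to $B_I(z^I,R_a)\cap B_I(z^{-I},R_a)\cap\mathbb{C}_I^+$ since then $R_a^{p,I}=R_a$ for all $I$ by \eqref{eq-lrapk}.
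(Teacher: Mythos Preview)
Your approach is the paper's approach: case-split on whether $R_a^p=R_a$ or $R_a^p>R_a$, and in the latter case on whether $R_a^{p,I}=R_a$ or $R_a^{p,I}=R_a^p$ (these are the only two values by \eqref{eq-lrapk}); then unwind \eqref{eq-sigmapr} and \eqref{eq-sqrj} on $\mathbb{C}_I^+$ and use the equivalence between $I\in\mathcal{C}_{\mathfrak{S}}^{\ker}(I_p,J)$ and $R_a^{p,I}=R_a^p$ coming from Proposition~\ref{pr-lpimw} and Proposition~\ref{pr-ijim}. Your handling of the degenerate cases ($I=-I_p$ excluded, $R_a^p=R_a$) is fine.

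The genuine problem is exactly where you flagged it: the orientation in Step~3 is swapped, and your attempted fix is backwards. When $I\in\mathcal{C}_{\mathfrak{S}}^{\ker}(I_p,J)$, the second union in \eqref{eq-sqrj} contributes $\Psi_i^{I}[B^*(z_p,R_a)]$, which is the disk $B_I(z^I,R_a)$ centered at $z^I=\Psi_i^I(z_p)$, \emph{not} at $z^{-I}$. So the hyper-$\sigma$-ball on $\mathbb{C}_I^+$ is simply $B_I(z^I,R_a)\cap\mathbb{C}_I^+$ (the constraint at $z^{-I}$ is dropped, not the one at $z^I$). Intersecting with $\Sigma(p,R_a^p)\cap\mathbb{C}_I^+=B_I(z^I,R_a^p)\cap B_I(z^{-I},R_a^p)\cap\mathbb{C}_I^+$ then gives
\[
B_I(z^I,R_a)\cap B_I(z^{-I},R_a^p)\cap\mathbb{C}_I^+
= B_I(z^I,R_a)\cap B_I(z^{-I},R_a^{p,I})\cap\mathbb{C}_I^+,
\]
since $R_a<R_a^p=R_a^{p,I}$. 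This is already the target \eqref{eq-sigmapa}; no ``role swap'' is needed, and your sentence claiming the enlargement ``lands at $z^{-I}$'' is false. In the complementary subcase $I\notin\mathcal{C}_{\mathfrak{S}}^{\ker}(I_p,J)$ (equivalently $R_a^{p,I}=R_a$), both sides reduce to $B_I(z^I,R_a)\cap B_I(z^{-I},R_a)\cap\mathbb{C}_I^+$, and there is nothing to check. Once you correct this sign, your argument goes through and coincides with the paper's.
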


\begin{proof}
    Since $p=z^J$ for some $z\in\mathbb{C}$ and $J\in\mathcal{C}_\mathfrak{S}$.

    (i) If $R_a=R_a^p$, then by \eqref{eq-rap},
    \begin{equation*}
        R_a^{p,I}=R_a=R_a^p.
    \end{equation*}
    According to \eqref{eq-sigmapr} and \eqref{eq-sigmapab},
    \begin{equation*}
        \begin{split}
            \Sigma(p,a)\cap\mathbb{C}_I^+=\Sigma(p,R_a)\cap\mathbb{C}_I^+
            =&\Psi_i^I \left[B(z,R_a)\cap B(\overline{z}, R_a)\right]\cap\mathbb{C}_I^+
            \\=&B_I(z^{I},R_a)\cap B_I(z^{-I}, R_a)\cap\mathbb{C}_I^+
            \\=&B_I(z^{I},R_a)\cap B_I(z^{-I}, R_a^p)\cap\mathbb{C}_I^+,
        \end{split}
    \end{equation*}
    i.e. \eqref{eq-sigmapa} holds.

    (ii) If $R_a^{p,I}=R_a<R_a^p$, then by \eqref{eq-lrapk}, there is $K\in\mathcal{C}_\mathfrak{S}$ such that $R_a^{p,K}=R_a^p$. By \eqref{eq-sigmapab},
    \begin{equation}\label{eq-sspr}
        \Sigma(p,a)=\Sigma(p,R_a,(I_p,K))\cap\Sigma(p,R_a).
    \end{equation}
    According to Proposition \ref{pr-lpimw}, $I\notin\mathcal{C}_{\mathfrak{S}}^{\ker} (I_p,J)$. It is easy to check by \eqref{eq-sqrj} and \eqref{eq-sigmapr} that
    \begin{equation*}
        \Sigma(p,R_a)\cap\mathbb{C}_I=B_I(z^{I},R_a)\cap B_I(z^{-I}, R_a)=\Sigma(p,R_a,(I_p,K))\cap\mathbb{C}_I.
    \end{equation*}
    Then \eqref{eq-sigmapa} holds by \eqref{eq-sspr}.

    (iii) Otherwise, by \eqref{eq-lrapk} we have $R_a<R_a^{p,I}=R_a^p$. According to \eqref{eq-sigmapab},
    \begin{equation*}
        \Sigma(p,a)=\Sigma(p,R_a,(I_p,I))\cap\Sigma(p,R_a^{p,I}).
    \end{equation*}
    It is easy to check by
    \begin{equation*}
        \Sigma(p,R_a,(I_p,I))\cap\mathbb{C}_I=B_I(z^I,R_a)\subset B_I(z^I,R_a^{p,I})
    \end{equation*}
    and
    \begin{equation*}
        \Sigma(p,R_a^{p,I})\cap\mathbb{C}_I=B_I(z^I,R_a^{p,I})\cap B_I(z^{-I},R_a^{p,I})
    \end{equation*}
    that \eqref{eq-sigmapa} holds.
\end{proof}

\begin{prop}
    Let $U\subset\mathcal{W}_\mathfrak{S}$. If
    \begin{equation}\label{eq-clo}
        {\clo}_I(U_I)\cap\mathbb{R}
        ={\clo}_{\mathbb{R}}(U\cap\mathbb{R}),\qquad\forall\ I\in\mathcal{C}_\mathfrak{S},
    \end{equation}
    then
    \begin{equation}\label{eq-exttau}
        {\ext}_{\tau_s}(U)\cap\mathbb{C}_I
        ={\ext}_I(U_I),\qquad\forall\ I\in\mathcal{C}_\mathfrak{S},
    \end{equation}
    where ${\clo}_\mathbb{R}:={\clo}_{\tau(\mathbb{R})}$.
    \index{${\clo}_\mathbb{R}$}
\end{prop}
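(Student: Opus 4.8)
The plan is to prove the sharper identity $\clo_{\tau_s}(U)\cap\mathbb{C}_I=\clo_I(U_I)$ for every $I\in\mathcal{C}_\mathfrak{S}$, since \eqref{eq-exttau} then follows at once by complementing inside $\mathbb{C}_I$: because $\mathbb{C}_I\subset\mathcal{W}_\mathfrak{S}$ one has $\ext_{\tau_s}(U)\cap\mathbb{C}_I=(\mathcal{W}_\mathfrak{S}\backslash\clo_{\tau_s}(U))\cap\mathbb{C}_I=\mathbb{C}_I\backslash\big(\clo_{\tau_s}(U)\cap\mathbb{C}_I\big)=\mathbb{C}_I\backslash\clo_I(U_I)=\ext_I(U_I)$.

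To pin down $\clo_{\tau_s}(U)$ I would propose the candidate $C:=\bigcup_{J\in\mathcal{C}_\mathfrak{S}}\clo_J(U_J)$ and verify it is slice-closed. Using that $\mathbb{C}_J=\mathbb{C}_I$ when $J=\pm I$ and that two distinct complex slices of $\mathcal{W}_\mathfrak{S}$ meet exactly along $\mathbb{R}$, the $I$-slice of $C$ computes to $C\cap\mathbb{C}_I=\clo_I(U_I)\cup\bigcup_{J:\,\mathbb{C}_J\neq\mathbb{C}_I}\big(\clo_J(U_J)\cap\mathbb{R}\big)$. Here the hypothesis \eqref{eq-clo} does the work: it makes every term $\clo_J(U_J)\cap\mathbb{R}$ equal to the single set $\clo_{\mathbb{R}}(U\cap\mathbb{R})$, which, again by \eqref{eq-clo}, is contained in $\clo_I(U_I)$ (and in any case $\clo_{\mathbb{R}}(U\cap\mathbb{R})\subset\clo_I(U\cap\mathbb{R})\subset\clo_I(U_I)$). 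Hence $C\cap\mathbb{C}_I=\clo_I(U_I)$, a closed subset of $\mathbb{C}_I$, so $C$ is slice-closed.

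The argument then closes by a routine double inclusion. Since each point of $U$ lies in some $U_I\subset\clo_I(U_I)\subset C$, we have $U\subset C$, and $C$ slice-closed gives $\clo_{\tau_s}(U)\subset C$; conversely $\clo_{\tau_s}(U)$ is slice-closed and contains $U$, so for every $I$ the slice $[\clo_{\tau_s}(U)]_I$ is closed in $\mathbb{C}_I$ and contains $U_I$, hence contains $\clo_I(U_I)$, and taking the union over $I$ yields $C\subset\clo_{\tau_s}(U)$. Therefore $\clo_{\tau_s}(U)=C$ and $[\clo_{\tau_s}(U)]_I=\clo_I(U_I)$, which is what the reduction above needs. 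There is no serious obstacle; the only delicate point is that, \emph{without} \eqref{eq-clo}, the union $\bigcup_{J}\big(\clo_J(U_J)\cap\mathbb{R}\big)$ is an infinite union of closed subsets of $\mathbb{R}$ and need not be closed, so $C$ need not be slice-closed and the conclusion can genuinely fail — it is exactly hypothesis \eqref{eq-clo} that collapses this union to $\clo_{\mathbb{R}}(U\cap\mathbb{R})$ and thereby makes the whole scheme work.
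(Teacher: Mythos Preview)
Your proof is correct, and it takes a genuinely different route from the paper. The paper argues the two inclusions of \eqref{eq-exttau} directly at the level of exteriors: the inclusion $\ext_{\tau_s}(U)\cap\mathbb{C}_I\subset\ext_I(U_I)$ is immediate by restricting a slice-open neighbourhood, while for the reverse inclusion the paper distinguishes $q\notin\mathbb{R}$ (where a small open set in $\mathbb{C}_I\setminus\mathbb{R}$ is already slice-open) from $q\in\mathbb{R}$, and in the latter case it explicitly constructs a slice-open neighbourhood $V=\bigcup_J V_{[J]}$ by choosing, for every $p\in(q-r,q+r)$ and every $J$, a small radius $r_p^J$ using \eqref{eq-clo}.

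Your approach is cleaner: you establish the stronger identity $\clo_{\tau_s}(U)\cap\mathbb{C}_I=\clo_I(U_I)$ by exhibiting $C=\bigcup_J\clo_J(U_J)$ as the slice-closure, and then pass to exteriors by complementation. This avoids the explicit neighbourhood construction entirely and isolates precisely where \eqref{eq-clo} enters, namely in collapsing the real trace $\bigcup_{J}\big(\clo_J(U_J)\cap\mathbb{R}\big)$ to a single closed set. The paper's argument is more hands-on but has the minor advantage of working directly with the object in the statement; your argument yields a slightly stronger conclusion (the closure identity) with less bookkeeping.
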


\begin{proof}
Fix $I\in\mathcal{C}_{\mathfrak{S}}$.

(i) Let $q\in{\ext}_{\tau_s}(U)\cap\mathbb{C}_I$. Then there is $\mathcal{O}\in\tau_s(\mathcal{W}_{\mathfrak{S}})$ with $q\in\mathcal{O}\subset{\ext}_{\tau_s}(U)$. It follows that
\begin{equation*}
    q\in\mathcal{O}_I,\qquad
    \mathcal{O}_I\in\tau(\mathbb{C}_I)
    \qquad\mbox{and}\qquad
    \mathcal{O}\subset\mathbb{C}_I\backslash U_I.
\end{equation*}
It follows that $q\in{\ext}_I(U_I)$ and
\begin{equation*}
    {\ext}_{\tau_s}(U)\cap\mathbb{C}_I
    \subset{\ext}_I(U_I).
\end{equation*}

(ii) Let $q\in{\ext}_I(U_I)$. If $q\notin\mathbb{R}$, then there is $\mathcal{U}\in\tau(\mathbb{C}_I)$ such that
\begin{equation*}
    \mathcal{U}\cap\mathbb{R}=\varnothing\qquad\mbox{and}\qquad q\in \mathcal{U}\in\tau(\mathbb{C}_I\backslash U_I).
\end{equation*}
It is easy to check that
\begin{equation*}
    \mathcal{U}\in\tau_s(\mathcal{W}_\mathfrak{S})\qquad\mbox{and}\qquad q\in\mathcal{U}\subset\mathcal{W}_\mathfrak{S}.
\end{equation*}

Otherwise, $q\in\mathbb{R}$. There is $r>0$ such that $B_I(q,r)\subset \mathbb{C}_I\backslash U_I$. According to \eqref{eq-clo}, for each $p\in (q-r,q+r)$ and $J\in\mathcal{C}_\mathfrak{S}$, there is $r_p^I>0$ such that
\begin{equation*}
    \left(p-r_p^J,p+r_p^J\right)\subset(q-r,q+r)\qquad\mbox{and}\qquad B_J(p,r_p^I)\subset \mathbb{C}_J\backslash U_J.
\end{equation*}
Then we set
\begin{equation*}
    V_{[J]}:=\bigcup_{p\in(q-r,q+r)}B_J(p,r_p^J)\in\tau(\mathbb{C}_I)
\end{equation*}
and
\begin{equation*}
    V:=\bigcup_{I\in\mathcal{C}_\mathfrak{S}}V_{[J]}
    \in\tau_s(\mathcal{W}_\mathfrak{S}).
\end{equation*}
It is easy to check that $q\in V\subset\mathcal{W}_\mathfrak{S}\backslash U$. It implies that $q\in {\ext}_{\tau_s}(U)\cap\mathbb{C}_I$,
\begin{equation*}
    {\ext}_{\tau_s}(U)\cap\mathbb{C}_I
    \supset{\ext}_I(U_I)
\end{equation*}
and \eqref{eq-exttau} follows.
\end{proof}

\begin{prop}
    Let $p\in\mathcal{W}_\mathfrak{S}\backslash\mathbb{R}$, $a=\{a_\ell\}_{\ell\in\mathbb{N}}$ with $a_\ell\in\mathfrak{S}$ and $R_a=0$. Then
    \begin{equation}\label{eq-sigmapa0}
        \Sigma(p,a)=
        \begin{cases}
            \{p\},\qquad\qquad &R_a^p\le 2 Im(p),\\
            \left\{\Psi_{I_p}^{J}(p):R_a^{p,J}>0\right\},\qquad\qquad & \mbox{otherwise}.
        \end{cases}
    \end{equation}
\end{prop}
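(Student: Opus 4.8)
The plan is to unwind the definition \eqref{eq-sigmapab} of $\Sigma(p,a)$ in the degenerate regime $R_a=0$, reducing everything to evaluating the $\sigma$-ball \eqref{eq-sigmapr} and the hyper-$\sigma$-ball \eqref{eq-sqrj} of radius $0$ (and of radius $R_a^p$). The only genuine inputs are: the convention $B^*(z,0)=\{z\}$, so that $B^*(z_p,0)\cap B^*(\overline{z_p},0)=\varnothing$ since $p\notin\mathbb{R}$ forces $Im(p)=\langle p,I_p\rangle>0$ and hence $z_p\neq\overline{z_p}$; Proposition \ref{pr-lpimw} to control the set $\{K\in\mathcal{C}_\mathfrak{S}:R_a^{p,K}>0\}$; and the explicit parametrization of $\mathcal{C}_\mathfrak{S}^{\ker}(I_p,J_0)$ from Proposition \ref{pr-st} together with the kernel description \eqref{eq-kerkerc} of Proposition \ref{pr-zd}. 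I would first record that $p\in\Sigma(p,a)$ always: it is $\Psi_i^{I_p}(z_p)$, and $P(p)=a_0$ converges; so no computation is allowed to drop it.

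Next I would dispatch the case $R_a^p=R_a=0$. Then \eqref{eq-sigmapab} gives $\Sigma(p,a)=\Sigma(p,R_a^p)=\Sigma(p,0)$, and reading off \eqref{eq-sigmapr} with $r=0$, the symmetric union $\bigcup_K\Psi_i^K[B^*(z_p,0)\cap B^*(\overline{z_p},0)]$ is empty, leaving $\Psi_i^{I_p}[\{z_p\}]=\{p\}$. Since $R_a^p=0<2\,Im(p)$ we are in the first branch of \eqref{eq-sigmapa0}, and both sides equal $\{p\}$.

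The substantive case is $R_a^p>0$. Pick $J_0\in\mathcal{C}_\mathfrak{S}$ with $R_a^{p,J_0}=R_a^p$ (it exists by \eqref{eq-rap}, cf. \eqref{eq-lrapk}); note $J_0\neq I_p$ because $R_a^{p,I_p}=R_a=0$ by \eqref{eq-rapj}, and $\ker(I_p-J_0)\neq\{0\}$ by Proposition \ref{pr-lpimw}~\eqref{it-rapj1}. By \eqref{eq-sigmapab}, $\Sigma(p,a)=\Sigma(p,0,(I_p,J_0))\cap\Sigma(p,R_a^p)$. From \eqref{eq-sqrj} with $r=0$ the symmetric union is again empty, so $\Sigma(p,0,(I_p,J_0))=\{\Psi_i^K(z_p):K\in\mathcal{C}_\mathfrak{S}^{\ker}(I_p,J_0)\}$. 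Intersecting with $\Sigma(p,R_a^p)$ amounts to asking which points $\Psi_i^K(z_p)$ lie in the $\sigma$-ball \eqref{eq-sigmapr}: for $K\neq I_p$ this requires $z_p\in B(\overline{z_p},R_a^p)$, i.e. $R_a^p>|z_p-\overline{z_p}|=2\,Im(p)$, while $K=I_p$ always contributes $p$. Hence if $R_a^p\le 2\,Im(p)$ only $p$ survives (first branch), and if $R_a^p>2\,Im(p)$ the whole set $\{\Psi_i^K(z_p):K\in\mathcal{C}_\mathfrak{S}^{\ker}(I_p,J_0)\}$ survives. To finish I would identify the index set: Proposition \ref{pr-lpimw}~\eqref{it-rapj1} shows that any $K$ with $R_a^{p,K}>0$ lies on $\mathcal{C}_\mathfrak{S}^{\ker}(I_p,J_0)$ with $\ker(I_p-K)=\ker(I_p-J_0)$ and $K\neq I_p$; conversely, using $\mathcal{C}_\mathfrak{S}^{\ker}(I_p,J_0)=\psi(\{\alpha\}\times[0,\pi)\times\{\imath\})$ with $\alpha=\alpha_{[(I_p,J_0)]}$, $\imath=\imath^{[(I_p,J_0)]}$ from Proposition \ref{pr-st}, and the fact that the kernel in \eqref{eq-kerkerc} depends only on the subalgebra generated by $\imath_1,\imath_2$ (hence is constant along the curve), every $K$ on the curve other than $I_p$ satisfies $R_a^{p,K}=R_a^{p,J_0}=R_a^p>0$. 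Since $\Psi_i^{I_p}(z_p)=p$ and $\Psi_i^K(z_p)\neq p$ for $K\neq I_p$ (as $z_p\notin\mathbb{R}$), the surviving set is exactly $\{\Psi_{I_p}^{J}(p):R_a^{p,J}>0\}$ together with the base point $p$, which is the second branch of \eqref{eq-sigmapa0}.

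The main obstacle is this last identification, namely proving $\{K\in\mathcal{C}_\mathfrak{S}:R_a^{p,K}>0\}=\mathcal{C}_\mathfrak{S}^{\ker}(I_p,J_0)\setminus\{I_p\}$. One inclusion is immediate from Proposition \ref{pr-lpimw}, but the reverse forces us to argue that $\ker(I_p-K)$ is the \emph{same} nonzero subspace for every $K$ on the curve $\mathcal{C}_\mathfrak{S}^{\ker}(I_p,J_0)\setminus\{I_p\}$; this is precisely where the structural facts about sedenionic zero divisors (Proposition \ref{eq-ab8}, Proposition \ref{pr-zd} and the computation \eqref{eq-kerkerc}, combined with the parametrization in Proposition \ref{pr-st}) are really used. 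A secondary, purely bookkeeping, subtlety is to keep track of the base point $p$ throughout, since at $R_a=0$ the balls $B^*(z_p,0)$ and $B^*(\overline{z_p},0)$ collapse to single points and their intersection becomes empty.
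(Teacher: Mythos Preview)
Your approach is essentially the paper's: case-split on whether $R_a^p=0$ or $R_a^p>0$, and in the latter case compute $\Sigma(p,0,(I_p,J_0))=\{\Psi_{I_p}^K(p):K\in\mathcal{C}_\mathfrak{S}^{\ker}(I_p,J_0)\}$ and intersect with $\Sigma(p,R_a^p)$. You supply more justification than the paper for the identification $\{K:R_a^{p,K}>0\}=\mathcal{C}_\mathfrak{S}^{\ker}(I_p,J_0)\setminus\{I_p\}$; the paper simply asserts the corresponding equality in \eqref{eq-sigma2}. A cleaner route to the reverse inclusion than your parametrization argument: if $K\in\mathcal{C}_\mathfrak{S}^{\ker}(I_p,J_0)\setminus\{I_p\}$ then, by Proposition~\ref{pr-ijim}, $\ker(I_p-K)\supset\ker(I_p-J_0)$, and since both are $4$-dimensional by Proposition~\ref{pr-zd} they coincide, whence $R_a^{p,K}=R_a^{p,J_0}>0$.

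Your base-point bookkeeping is sharper than the paper's. As stated, the second branch of \eqref{eq-sigmapa0} does not literally contain $p$ (since $R_a^{p,I_p}=R_a=0$ and no other $K$ gives $\Psi_{I_p}^K(p)=p$), whereas $p\in\Sigma(p,a)$ always; your computed answer $\{\Psi_{I_p}^J(p):R_a^{p,J}>0\}\cup\{p\}$ is the honest one, and the paper's \eqref{eq-sigma2} has the same slip.
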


\begin{proof}
    If $R_a^p=R_a(=0)$, then by \eqref{eq-sigmapab},
    \begin{equation*}
        \begin{cases}
            \Sigma(p,a)=B_{\mathcal{W}_\mathfrak{S}}^*(p,0)=\{p\},\qquad\qquad & p\in\mathbb{R},\\
            \Sigma(p,a)=\Sigma(p,0)=\{p\},\qquad\qquad &\mbox{otherwise},
        \end{cases}
    \end{equation*}
    and \eqref{eq-sigmapa0} holds.

    Otherwise, $R_a^p\neq R_a$. By \eqref{eq-lrapk}, there is $J\in\mathcal{C}_\mathfrak{S}$ such that $R_a^{p,J}=R_a^p$. It follows from \eqref{eq-sigmapr} and for each $K\in\mathcal{C}_\mathfrak{S}\backslash\{I_p\}$
    \begin{equation*}
        \begin{cases}
            \Psi_{I_p}^K(p)\notin \Psi_i^K\left[B^*(z_p,R_a^p)\cap B^*(\overline{z_p},R_a^p)\right],\qquad\qquad & R_a^p\le 2 Im(p),\\
            \Psi_{I_p}^K(p)\in \Psi_i^K\left[B^*(z_p,R_a)\cap B^*(\overline{z_p},R_a^p)\right],\qquad\qquad &\mbox{otherwise},
        \end{cases}
    \end{equation*}
    that
    \begin{equation}\label{eq-sigma1}
        \left\{\Psi_{I_p}^L(p)\right\}_{L\in\mathcal{C}_\mathfrak{S}}\cap\Sigma(p,R_a^p)=
        \begin{cases}
            \{p\},\qquad\qquad & R_a^p\le 2 Im(p),\\
            \left\{\Psi_{I_p}^L(p)\right\}_{L\in\mathcal{C}_\mathfrak{S}},\qquad\qquad & \mbox{otherwise}.
        \end{cases}
    \end{equation}
    It is easy to check by \eqref{eq-sqrj} that
    \begin{equation}\label{eq-sigma2}
        \begin{split}
            \Sigma(p,R_a,(I_p,J))=&\Sigma(p,0,(I_p,J))
            \\=&\left\{\Psi_{I_p}^{K}(p):R_a^{p,K}>0\right\}
            \subset \left\{\Psi_{I_p}^L(p)\right\}_{L\in\mathcal{C}_\mathfrak{S}}.
        \end{split}
    \end{equation}
    By \eqref{eq-sigmapab}, $\Sigma(p,a)=\Sigma(p,R_a,(I_p,J))\cap\Sigma(p, R_a^p)$. Then \eqref{eq-sigmapa0} holds by \eqref{eq-sigma1} and \eqref{eq-sigma2}.
\end{proof}

\begin{prop}\label{pr-ra0}
    Theorem \ref{thm-lpmw} holds when $R_a=0$ and $p\notin\mathbb{R}$.
\end{prop}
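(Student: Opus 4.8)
The plan is to reduce the statement slice-by-slice to the one–complex–variable results already established, and then to treat the degenerate radius $R_a=0$ by hand. Since $R_a=0$, the hypothesis $R_a\in(0,+\infty]$ of the last clause of Theorem \ref{thm-lpmw} fails, so that clause is vacuous; moreover, by \eqref{eq-sigmapa} and \eqref{eq-sigmapa0}, $\Sigma(p,a)$ is a discrete set of points, all lying off $\mathbb{R}$ because $p\notin\mathbb{R}$, on which slice regularity holds trivially. Hence it remains to show that $P$ converges absolutely on $\Sigma(p,a)$ and diverges on $\ext_{\tau_s}[\Sigma(p,a)]$. As $\Sigma(p,a)\cap\mathbb{R}=\varnothing$ and each of its slices is a finite set of non-real points, condition \eqref{eq-clo} holds for $U=\Sigma(p,a)$, so by \eqref{eq-exttau} one has $\ext_{\tau_s}[\Sigma(p,a)]\cap\mathbb{C}_I=\ext_I\big([\Sigma(p,a)]_I\big)$ for every $I\in\mathcal{C}_\mathfrak{S}$. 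Since $P(q)$ is computed inside $\mathbb{C}_{I_q}$ and $\ext_I(V)\subseteq\mathbb{C}_I\setminus V$, it suffices to prove, for each $I\in\mathcal{C}_\mathfrak{S}$, that $P_I:=P|_{\mathbb{C}_I}$ converges absolutely on $[\Sigma(p,a)]_I$ and diverges on $\ext_I\big([\Sigma(p,a)]_I\big)$.

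If $p\in\mathbb{C}_I$ (i.e.\ $I\in\{\pm I_p\}$) then $X-p$ has coefficients in $\mathbb{C}_I$, so $(q-p)^{*\ell}=(q-p)^\ell$ there and $P_I(q)=\sum_\ell(q-p)^\ell a_\ell$; Proposition \ref{pr-lipms} with radius $R_a=0$ then yields convergence exactly on $B^*_I(p,0)=\{p\}$ and divergence on $\mathbb{C}_I\setminus\{p\}$, which matches $[\Sigma(p,a)]_I=\{p\}$. If $I\notin\{\pm I_p\}$, it is enough — after replacing $I$ by $-I$ if necessary, and treating real $q$ exactly as in the previous case — to consider $q=x+yI\in\mathbb{C}_I^+$, so $I_q=I$. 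Just as in the proof of Proposition \ref{pr-ljim}, Proposition \ref{pr-=} together with the representation formula splits $P_I(q)=\Sigma_1(q)+\Sigma_2(q)$, where $\Sigma_1(q):=\sum_\ell\big(q-z^{I_q}\big)^\ell a_{\ell,=}^{p,q}$ and $\Sigma_2(q):=\sum_\ell\big(q-z^{I_q}\big)^\ell c_\ell+\sum_\ell\big(q-z^{-I_q}\big)^\ell d_\ell$ with $c_\ell,d_\ell$ as in \eqref{eq-cldl}. By \eqref{eq-aaa} the subspaces $\ker(I_p-I_q)$ and $[\ker(I_p-I_q)]^\perp$ are invariant under $L_w$ for every $w\in\mathbb{C}_{I_q}$, so every partial sum of $\Sigma_1(q)$ lies in $\ker(I_p-I_q)$ and every partial sum of $\Sigma_2(q)$ lies in $[\ker(I_p-I_q)]^\perp$; by orthogonality, $P_I(q)$ converges if and only if both $\Sigma_1(q)$ and $\Sigma_2(q)$ do.

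Now write $o_\ell:=a_{\ell,=}^{p,q}$. From $|a_\ell|^2=|o_\ell|^2+|a_{\ell,\perp}^{p,q}|^2$ and \eqref{eq-rapq} one gets $\max\{R_o^{-1},(R_a^{p,q})^{-1}\}=R_a^{-1}=+\infty$, so $R_o=0$ whenever $R_a^{p,q}>0$; in every case Proposition \ref{pr-lipms} shows $\Sigma_1(q)$ converges only for $|q-z^{I_q}|<R_o$, and in particular always at $q=z^{I_q}$, where its terms of index $\ell\ge1$ vanish. Since $|c_\ell|$ and $|d_\ell|$ are, by \eqref{eq-cldl} and the estimates in the proof of Proposition \ref{pr-ljim}, each bounded above and below by a positive multiple of $|a_{\ell,\perp}^{p,q}|$, we have $R_c=R_d=R_a^{p,q}$. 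When $R_a^{p,q}>0$, Proposition \ref{pr-3diverges} applies — its ``unless'' alternative being impossible here, since it would force $|q-z^{I_q}|=|q-z^{-I_q}|$, i.e.\ $q\in\mathbb{R}$ — and gives that $\Sigma_2(q)$ converges exactly on $B_I(z^{I_q},R_a^{p,q})\cap B_I(z^{-I_q},R_a^{p,q})\cap\mathbb{C}_I^+$; when $R_a^{p,q}=0$, instead, $\Sigma_1(q)$ already diverges for $q\ne z^{I_q}$ while $\Sigma_2(z^{I_q})$ diverges because $R_d=0$, so $P_I$ diverges on all of $\mathbb{C}_I^+$. Comparing the resulting convergence set of $P_I$ on $\mathbb{C}_I^+$ with the description $[\Sigma(p,a)]_I\cap\mathbb{C}_I^+=\{z^I\}\cap B_I(z^{-I},R_a^{p,I})$ coming from \eqref{eq-sigmapa} and \eqref{eq-sigmapa0} then finishes the slicewise statement, and hence the proposition.

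The main obstacle is exactly the degenerate regime $R_a=0$: Propositions \ref{pr-3diverges}, \ref{pr-im} and \ref{pr-ljim} all carry a positivity hypothesis on the relevant convergence radius, so they cannot be quoted verbatim, and the dominant-term estimates must be carried out directly. The orthogonal splitting $P_I=\Sigma_1+\Sigma_2$ is what makes this tractable, because it rules out any cancellation between the $\ker(I_p-I_q)$-part — whose coefficients carry the super-exponential growth forced by $R_a=0$ — and the orthogonal part, and the equality $R_c=R_d$ (rather than merely $R_c\ge R_d$) is what localizes where $\Sigma_2$ fails to converge. The one point that still needs care is the behaviour on the circle $|q-z^{-I_q}|=R_a^{p,q}$, where $\Sigma_2$ becomes a one–complex–variable power series evaluated on its boundary of convergence.
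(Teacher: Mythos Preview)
Your orthogonal-splitting approach is attractive and gets the case $R_a^{p,I_q}>0$ essentially right (the single point $q=z^{I_q}$ is all that survives $\Sigma_1$, and there only $R_d$ matters). But there are two genuine gaps.

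\emph{First, the claim $R_c=R_d$ is false.} The estimates in the proof of Proposition~\ref{pr-ljim} give a two-sided bound only for $|d_\ell|$; for $c_\ell=-I_q\frac{I_q+I_p}{2}a_{\ell,\perp}^{p,q}$ the operator $\frac{I_q+I_p}{2}$ kills $\ker(I_p+I_q)\subset[\ker(I_p-I_q)]^\perp$, so $|c_\ell|$ is comparable to $|a_{\ell,\pm}^{p,q}|$, not to $|a_{\ell,\perp}^{p,q}|$. Hence $R_c\ge R_d$ in general, and your invocation of Proposition~\ref{pr-3diverges} (where the ``unless'' clause would force $q\in\mathbb{R}$) breaks down. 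Fortunately this is not fatal in the case $R_a^{p,I_q}>0$, since for $q\ne z^{I_q}$ you already get divergence from $\Sigma_1$, and at $q=z^{I_q}$ the $c$-sum vanishes.

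\emph{Second, and more seriously, the case $R_a^{p,I_q}=0$ is incomplete.} You assert ``$\Sigma_1(q)$ already diverges for $q\ne z^{I_q}$'', but nothing forces $R_o=0$ here: from $\max\{R_o^{-1},(R_a^{p,q})^{-1}\}=+\infty$ and $R_a^{p,q}=0$ you learn nothing about $R_o$. In fact if $I_p-I_q$ is not a zero divisor then $\ker(I_p-I_q)=\{0\}$, $o_\ell\equiv 0$, and $\Sigma_1\equiv 0$. So for $q\ne z^{I_q}$ you must show $\Sigma_2$ diverges; but with $R_c$ and $R_d$ both possibly zero, the two sums $\sum(q-z^{I_q})^\ell c_\ell$ and $\sum(q-z^{-I_q})^\ell d_\ell$ live in the same subspace $[\ker(I_p-I_q)]^\perp$ and may cancel. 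The paper handles this (its case~(iii)) by a direct dominant-term estimate: it uses $|{\overline{z_q}-z_p}|>|z_q-z_p|$ to show that for a subsequence $\{\jmath_\imath\}$ the projection $[(q-p)^{*\jmath_\imath}a_{\jmath_\imath}]_\perp^{p,q}$ stays bounded away from zero, so the terms of $P$ cannot tend to zero. Your orthogonal decomposition alone does not supply this; you would need either a further refinement (splitting off the $\ker(I_p+I_q)$-component of $\Sigma_2$ and controlling the $\mathfrak{S}_{p,q}$-part separately) or the explicit growth comparison the paper carries out.

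For comparison, the paper does not maintain the $\Sigma_1+\Sigma_2$ splitting throughout; it argues case by case on $q$: (i) explicit convergence at the points of $\Sigma(p,a)$ via the representation formula; (ii) for $q\notin\Sigma(p,a)$ with $R_a^{p,I_q}>0$, it exploits $R_a^{p,-I_q}=0$ (from \eqref{eq-kerJ1J2}) to produce a subsequence of large terms; (iii) for $R_a^{p,I_q}=0$, the dominant-term estimate above.
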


\begin{proof}
    According to Proposition \ref{pr-lipms}, the $*$-power series $P$ converges in $\{p\}$ and diverges in $\mathbb{C}_{I_p}\backslash\{p\}$. It implies that Theorem \ref{thm-lpmw} holds when $q\in\mathbb{C}_{I_p}$. Therefore we can assume that $q\notin\mathbb{C}_{I_p}$.

    (i) Suppose that $q\in\Sigma(p,a)$. By \eqref{eq-sigmapa0}, $q=\Psi_{I_p}^J(p)$, for some $J\in\mathcal{C}_\mathfrak{S}\backslash\{\pm I_p\}$ and
    \begin{equation*}
        R_a^{p,J}=R_a^p>2 Im(p) > 0=R_a.
    \end{equation*}
    By \eqref{eq-rapq} and $\frac{1}{R_a^{p,J}}<\frac{2}{R_a^p+2 Im(p)}$, there is $m\in\mathbb{N}$ such that
    \begin{equation*}
        \left|a_{\ell,\perp}^{p,q}\right|<\left(\frac{2}{R_a^p+2 Im(p)}\right)^\ell,\qquad\forall\ \ell>m.
    \end{equation*}
    According to the representation formula, see \cite{Dou2023002}*{Theorem 6.6},
    \begin{equation*}
        \begin{split}
            (q-p)^{*\ell}a_\ell=&(1,J)
            \begin{pmatrix}
                1 & I_p \\ 1 & -I_p
            \end{pmatrix}^{-1}
            \begin{pmatrix}
                (p-p)^\ell a_\ell\\
                (\overline{p}-p)^\ell a_\ell
            \end{pmatrix}
            \\=&\frac{J}{2}(I_p-J)(\overline{p}-p)^\ell a_\ell
            \\=&\frac{J}{2}(I_p-J)(2 Im(p))^\ell I_p^\ell\cdot a_\ell
            \\=&\frac{J}{2}(-J)^\ell(2 Im(p))^\ell\left[(I_p-J)a_\ell\right]
            \\=&-\frac{(-J)^{\ell+1}}{2}(2 Im(p))^\ell\left[(I_p-J)a_{\ell,\perp}^{p,q}\right].
        \end{split}
    \end{equation*}
    It follows from \eqref{eq-mul} and $(-J)^{\ell+1}\in\{\pm J\}$ that
    \begin{equation*}
        \left|(q-p)^{*\ell}a_\ell\right|
        \le \frac{\sqrt{2}}{2}(2 Im(p))^\ell \cdot2\sqrt{2}\cdot\left|a_{\ell,\perp}^{p,q}\right|
        <2\left(\frac{4 Im(p)}{R_a^p+2 Im(p)}\right)^\ell.
    \end{equation*}
    Since $\frac{4 Im(p)}{R_a^p+2 Im(p)}<1$, $P$ converges. Then Theorem \ref{thm-lpmw} holds when $q\in\Sigma(p,a)$.

    (ii) Suppose that $q\notin\Sigma(p,a)$ with $R_a^{p,I_q}>0$. Suppose that $z_p=z_q$. By \eqref{eq-sigmapa0}, $q=\Psi_{I_p}^{I_q}(p)\in\Sigma(p,a)$, a contradiction. Therefore, $z_p\neq z_q$. By \eqref{eq-rapq}, there is $m\in\mathbb{N}$ such that
    \begin{equation*}
        \left|a_{\ell,\perp}^{p,q}\right|<\left(\frac{2}{R_a^{p,I_q}}\right)^\ell,\qquad\forall\ \ell>m.
    \end{equation*}

    If $R_a^{p,-I_q}>R_a=0$, then from \eqref{eq-kerJ1J2} we deduce that
    \begin{equation*}
        \begin{split}
            \{0\}\neq\ker(I_p-I_q)=&\ker(I_p-(-I_q))
            \\=&\ker((I_p-I_q)+(I_p-(-I_q)))=\ker(2\cdot I_p)=\{0\},
        \end{split}
    \end{equation*}
    a contradiction. Therefore, $R_a^{p,-I_q}=R_a=0$. Let
    \begin{equation*}
        \lambda>\frac{2}{R_a^{p,I_q}}\frac{\left|z_q^{I_q}-z_p^{-I_q}\right|}{\left|z_q^{I_q}-z_p^{I_q}\right|}+\frac{1}{\left|z_q^{I_q}-z_p^{I_q}\right|}.
    \end{equation*}
    According to \eqref{eq-rapj}, \eqref{eq-paperp} and $R_a^{p,-I_q}=0$, there is $\{\ell_\imath\}_{\imath\in\mathbb{N}}$ such that $m<\ell_\imath<\ell_{\imath+1}$ and
    \begin{equation*}
        \left|a_{\ell_\imath,\perp}^{p,-I_q}\right|>\frac{64}{|I_p+I_q|_{\inf}^\perp}\lambda^{\ell_\imath},\qquad\forall\ \imath\in\mathbb{N}.
    \end{equation*}
    By \eqref{eq-ipiq} and \eqref{eq-cozi},
    \begin{equation*}
        \begin{split}
            \left|-I_q\frac{I_q+I_p}{2}(z_q^{I_p}-p)a_{\ell_\imath}\right|
            =&\left|-I_q\left(z_q^{I_q}-z_p^{I_q}\right)^{\ell_\imath} \frac{I_q+I_p}{2}a_{\ell_\imath}\right|
            \\=&\left|-I_q \left(z_q^{I_q}-z_p^{I_q}\right)^{\ell_\imath}\frac{I_q+I_p}{2} a_{{\ell_\imath},\perp}^{p,-I_q}\right|
            \\ \ge & \frac{|I_q|}{2}\left|z_q^{I_q}-z_p^{I_q}\right|^{\ell_\imath} \frac{|I_q+I_p|_{\inf}^{\perp}}{2}\left[\frac{64}{|I_p+I_q|_{\inf}^{\perp}} \lambda^{\ell_\imath}\right]
            \\=& 16|I_q|\left|z_q^{I_q}-z_p^{I_q}\right|^{\ell_\imath}\lambda^{\ell_\imath}
            \\\ge & 16|I_q|,
        \end{split}
    \end{equation*}
    and
    \begin{equation*}
        \begin{split}
            \left|-I_q\frac{I_q-I_p}{2}\left(z_q^{-I_p}-p\right)^{\ell_\imath} a_{\ell_\imath}\right|
            = & \left|-I_q\left(z_q^{I_q}-z_p^{-I_q}\right)^{\ell_\imath} \frac{I_q-I_p}{2} a_{\ell_\imath}\right|
            \\ = & \left|-I_q\left(z_q^{I_q}-z_p^{-I_q}\right)^{\ell_\imath} \frac{I_q-I_p}{2} a_{{\ell_\imath},\perp}^{p,I_q}\right|
            \\ \le & 4|I_q|\left|z_q^{I_q}-z_p^{-I_q}\right|^{\ell_\imath}\left|\frac{I_q-I_p}{2}\right|\left(\frac{2}{R_a^{p,I_q}}\right)^{\ell_\imath}
            \\  \le & 8 |I_q|\left(\frac{2}{R_a^{p,I_q}}\frac{\left|z_q^{I_q}-z_p^{-I_q}\right|}{\left|z_q^{I_q}-z_p^{I_q}\right|}\right)^{\ell_\imath}\left|z_q^{I_q}-z_p^{I_q}\right|^{\ell_\imath}
            \\\le & 8|I_q|\left|z_q^{I_q}-z_p^{I_q}\right|^{\ell_\imath}\lambda^{\ell_\imath}
            \\\le & \frac{1}{2}\left|-I_q\frac{I_q+I_p}{2}(z_q^{I_p}-p)^{\ell_\imath}a_{\ell_\imath}\right|.
        \end{split}
    \end{equation*}

    According to representation formula, see \cite{Dou2023002}*{Theorem 6.6},
    \begin{equation*}
        \begin{split}
            \left|(q-p)^{*\ell_\imath}a_{\ell_\imath}\right|
            =&\left|(1,I_q)
            \begin{pmatrix}
                1 & I_p \\ 1 & -I_p
            \end{pmatrix}^{-1}
            \begin{pmatrix}
                (z_q^{I_p}-p)^{\ell_\imath} a_{\ell_\imath}\\
                (z_q^{-I_p}-p)^{\ell_\imath} a_{\ell_\imath}
            \end{pmatrix}\right|
            \\ = &\left|-I_q\frac{I_q+I_p}{2}\left(z_q^{I_p}-p\right)^{\ell_\imath} a_{\ell_\imath}-I_q\frac{I_q-I_p}{2}\left(z_q^{-I_p}-p\right)^{\ell_\imath} a_{\ell_\imath}\right|
            \\ = &\left|-I_q\frac{I_q+I_p}{2}\left(z_q^{I_p}-p\right)^{\ell_\imath} a_{\ell_\imath}\right|
            -\left|I_q\frac{I_q-I_p}{2}\left(z_q^{-I_p}-p\right)^{\ell_\imath} a_{\ell_\imath}\right|
            \\ \ge & \frac{1}{2} \left|-I_q\frac{I_q+I_p}{2}\left(z_q^{I_p}-p\right)^{\ell_\imath} a_{\ell_\imath}\right|
            \\ \ge & 16|I_q|.
        \end{split}
    \end{equation*}
    It implies that $P$ diverges. Therefore Theorem \ref{thm-lpmw} holds when $q\notin\Sigma(p,a)$ with $R_a^{p,I_q}>0$.

    (iii) Suppose that $q\notin\Sigma(p,a)$ with $R_a^{p,I_q}=0$. By \eqref{eq-rapq},
    \begin{equation*}
        R_{a_{\perp}^{p,q}}=R_a^{p,q}=R_a^{p,I_q}=0.
    \end{equation*}
    By $|\overline{z_q}-z_p|>|z_q-z_p|$, there is $m\in\mathbb{N}$ with
    \begin{equation*}
        \left(\frac{|z_q-z_p|}{|\overline{z_q}-z_p|}\right)^\ell
        <\frac{|I_q-I_p|_{\inf}^{\perp}}{32|I_q+I_p|},\qquad\qquad\forall\ \ell>m.
    \end{equation*}
    Then
    \begin{equation*}
        \begin{split}
            \left|-\frac{I_q+I_p}{2}I_p\left(z_q^{I_p}-p\right)^{\ell} a_{\ell,\perp}^{p,q}\right|
            < & 4\cdot\frac{|I_q+I_p|}{2}\cdot 1 \cdot |z_q-z_p|^{\ell}\cdot \left|a_{\ell,\perp}^{p,q}\right|
            \\< & 2\cdot|I_q+I_p|\cdot \frac{|I_q-I_p|_{\inf}^\perp}{16|I_q+I_p|}\cdot |\overline{z_q}-z_p|^\ell\cdot\left|a_{\ell,\perp}^{p,q}\right|
            \\=& \frac{1}{8}\frac{|I_q-I_p|_{\inf}^\perp}{2}\cdot |\overline{z_q}-z_p|^\ell\cdot\left|a_{\ell,\perp}^{p,q}\right|
            \\< & \frac{1}{2}\left|\frac{I_q-I_p}{2}I_p\left(z_q^{-I_p}-p\right)^{\ell}a_{\ell,\perp}^{p,q}\right|.
        \end{split}
    \end{equation*}
    Since $R_{a_\perp^{p,q}}=0$, there is $\{\jmath_\imath\}_{\imath\in\mathbb{N}}$ such that for each $\imath\in\mathbb{N}$,
    \begin{equation*}
        \jmath_\imath\in\mathbb{N},\qquad
        \jmath_{\imath+1}>\jmath_\imath>\ell,\qquad\mbox{and}\qquad
        \left|a_{\jmath_\imath,\perp}^{p,q}\right|>\frac{1}{|\overline{z_q}-z_p|^{\jmath_\imath}}.
    \end{equation*}
    According to \eqref{eq-adppq},
    \begin{equation*}
        \begin{split}
            &\left|(q-p)^{*\jmath_\imath}a_{\jmath_\imath}\right|
            \ge \left|\left[(q-p)^{*\jmath_\imath}a_{\jmath_\imath}\right]_{\perp}^{p,q}\right|
            \\=&\left|\left[-\frac{I_q+I_p}{2}I_p\left(z_q^{I_p}-p\right)^{\jmath_\imath} a_{\jmath_\imath}+\frac{I_q-I_p}{2}I_p\left(z_q^{-I_p}-p\right)^{\jmath_\imath}a_{\jmath_\imath}\right]_{\perp}^{p,-q}\right|
            \\=&\left|-\frac{I_q+I_p}{2}I_p\left(z_q^{I_p}-p\right)^{\jmath_\imath} a_{\jmath_\imath,\perp}^{p,-q}+\frac{I_q-I_p}{2}I_p\left(z_q^{-I_p}-p\right)^{\jmath_\imath}a_{\jmath_\imath,\perp}^{p,-q}\right|
            \\\ge&\left|\frac{I_q-I_p}{2}I_p\left(z_q^{-I_p}-p\right)^{\jmath_\imath}a_{\jmath_\imath,\perp}^{p,-q}\right|-\left|-\frac{I_q+I_p}{2}I_p\left(z_q^{I_p}-p\right)^{\jmath_\imath} a_{\jmath_\imath,\perp}^{p,-q}\right|
            \\\ge&\frac{1}{2}\left|\frac{I_q-I_p}{2}I_p\left(z_q^{-I_p}-p\right)^{\jmath_\imath}a_{\jmath_\imath,\perp}^{p,-q}\right|
            \\ \ge &\frac{1}{8}\frac{\left|I_q-I_p\right|_{\inf}^\perp}{2}\cdot1\cdot|\overline{z_q}-z_p|^{\jmath_\imath}\cdot\frac{1}{|\overline{z_q}-z_p|^{\jmath_\imath}}
            \\=&\frac{\left|I_q-I_p\right|_{\inf}^\perp}{16},
        \end{split}
    \end{equation*}
    which implies that $P$ diverges and Theorem \ref{thm-lpmw} holds when $q\notin\Sigma(p,a)$ with $R_a^{p,I_q}=0$.
\end{proof}

\begin{prop}\label{pr-0inf}
    Theorem \ref{thm-lpmw} holds when $R_a=0$.
\end{prop}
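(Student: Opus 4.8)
The plan is to reduce to cases already settled. By Proposition \ref{pr-ra0}, Theorem \ref{thm-lpmw} holds whenever $R_a=0$ and $p\notin\mathbb{R}$, so the only remaining case is $p\in\mathbb{R}$ with $R_a=0$. Here \eqref{eq-sigmapab} gives $\Sigma(p,a)=B^*_{\mathcal{W}_\mathfrak{S}}(p,R_a)=B^*_{\mathcal{W}_\mathfrak{S}}(p,0)=\{p\}$. The convergence and regularity assertions of Theorem \ref{thm-lpmw} are then immediate: $(p-p)^{*\ell}=0$ for every $\ell\geq 1$, so $P(p)=a_0$; there is nothing to check for regularity on the one-point set $\{p\}$; and the uniform-convergence clause is vacuous since $R_a\notin(0,+\infty]$. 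So the entire content lies in the divergence statement.

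For divergence I would work slice by slice. Fix $I\in\mathcal{C}_\mathfrak{S}$. Since $p\in\mathbb{R}\subset\mathbb{C}_I$, the coefficients $\binom{\ell}{k}(-p)^k$ of $(q-p)^{*\ell}$ are real, hence commute with every $q\in\mathbb{C}_I$, so $(q-p)^{*\ell}=(q-p)^\ell$ on $\mathbb{C}_I$ and $P|_{\mathbb{C}_I}(q)=\sum_{\ell\in\mathbb{N}}(q-p)^\ell a_\ell$. Applying Proposition \ref{pr-lipms} with radius $R_a=0$, for which $B^*_I(p,R_a)=\{p\}$, shows that this one-variable power series diverges at every $q\in\mathbb{C}_I\setminus\{p\}=\ext_I(\{p\})$.

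The last step is to identify the slice-exterior of the point. Either one notes directly that $\mathcal{W}_\mathfrak{S}\setminus\{p\}$ is slice-open, its slice $(\mathcal{W}_\mathfrak{S}\setminus\{p\})_I=\mathbb{C}_I\setminus\{p\}$ being open for every $I$, whence $\ext_{\tau_s}[\{p\}]=\mathcal{W}_\mathfrak{S}\setminus\{p\}$; or one invokes the characterization \eqref{eq-clo}$\Rightarrow$\eqref{eq-exttau} with $U=\{p\}$, whose hypothesis reads $\clo_I(\{p\})\cap\mathbb{R}=\{p\}=\clo_{\mathbb{R}}(\{p\})$ and so holds, giving $\ext_{\tau_s}[\{p\}]\cap\mathbb{C}_I=\ext_I(\{p\})=\mathbb{C}_I\setminus\{p\}$ for every $I\in\mathcal{C}_\mathfrak{S}$. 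Combined with the previous paragraph, $P$ diverges at every point of $\ext_{\tau_s}[\{p\}]$, which completes the proof of Theorem \ref{thm-lpmw} in the case $R_a=0$.

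I do not expect a genuine obstacle here: the analytic work — what a $*$-power series with $R_a=0$ does off the base slice — was already carried out in Proposition \ref{pr-ra0} (for $p\notin\mathbb{R}$) and, on a single slice, in Proposition \ref{pr-lipms}. The only care needed is bookkeeping with the degenerate conventions ($B^*(z,0)=\{z\}$, $\Sigma(p,a)$ collapsing to $\{p\}$) and correctly computing the slice-exterior of a single point, for which the previously established exterior characterization is precisely the right tool.
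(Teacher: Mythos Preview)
Your proposal is correct and follows essentially the same approach as the paper: reduce to $p\in\mathbb{R}$ via Proposition \ref{pr-ra0}, then apply Proposition \ref{pr-lipms} on each slice $\mathbb{C}_I$ and identify $\ext_{\tau_s}[\{p\}]=\mathcal{W}_\mathfrak{S}\setminus\{p\}$. Your version is slightly more detailed (e.g., justifying $(q-p)^{*\ell}=(q-p)^\ell$ on $\mathbb{C}_I$ when $p\in\mathbb{R}$), but the structure is identical.
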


\begin{proof}
    Assume that $p\in\mathbb{R}$. According to Proposition \ref{pr-lipms}, for each $I\in\mathcal{C}_\mathfrak{S}$ the power series $P$ converges in
    \begin{equation*}
        \{p\}=B_{\mathcal{W}_\mathfrak{S}}^*(p,R_a)=\Sigma(p,a)
    \end{equation*}
    and diverges in $\mathbb{C}_I\backslash\{p\}={\ext}_I\big(B_I^*(p,R_a)\big)$. Therefore, $P$ diverges in
    \begin{equation*}
        \bigcup_{I\in\mathcal{C}_\mathfrak{S}}\left(\mathbb{C}_I\backslash\{p\}\right)
        =\mathcal{W}_\mathfrak{S}\backslash\{p\}
        ={\ext}_{\tau_s}\{p\}
        ={\ext}_{\tau_s}[\Sigma(p,a)],
    \end{equation*}
    and Theorem \ref{thm-lpmw} holds when $R_a=0$ and $p\in\mathbb{R}$. It follows from Proposition \ref{pr-ra0} that Proposition \ref{pr-0inf} holds.
\end{proof}

\begin{prop}\label{pr-0inf1}
    Theorem \ref{thm-lpmw} holds when $R_a=+\infty$.
\end{prop}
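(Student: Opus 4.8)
The plan is to reduce the statement to a convergence question (the divergence part being vacuous in this case), prove convergence by the root test, and extract the uniform-convergence statement from the same estimate.

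When $R_a=+\infty$, \eqref{eq-gek} forces $R_a^{p,K}=+\infty$ for every $K\in\mathcal{C}_{\mathfrak{S}}$, hence $R_a^p=+\infty=R_a$, and then \eqref{eq-sigmapab} together with \eqref{eq-sigmapr} (in which $B^*(z_p,+\infty)=\mathbb{C}$) gives $\Sigma(p,a)=\mathcal{W}_{\mathfrak{S}}$ in all three cases. Thus $\ext_{\tau_s}[\Sigma(p,a)]=\varnothing$ and the divergence claim of Theorem \ref{thm-lpmw} holds vacuously; it remains to show that $P$ converges absolutely and is slice regular on $\mathcal{W}_{\mathfrak{S}}$, and that it converges uniformly on $\clo_{\tau_s}[\Sigma(p,\varepsilon)]$ for each $\varepsilon\in(0,+\infty)$.

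Next I would record one pointwise estimate. If $p\in\mathbb{R}$ or $q\in\mathbb{C}_{I_p}$, then $(q-p)^{*\ell}$ is an ordinary $\ell$-th power inside a commutative copy of $\mathbb{C}$, so \eqref{eq-mul} gives $|(q-p)^{*\ell}a_\ell|\le\sqrt{2}\,|q-p|^{\ell}|a_\ell|$ (equivalently, one may invoke Proposition \ref{pr-lipms} on each slice). Otherwise $p\notin\mathbb{R}$ and $q\notin\mathbb{C}_{I_p}$, and applying the representation formula \cite{Dou2023002}*{Theorem 6.6} to the slice regular monomial $q\mapsto(q-p)^{*\ell}a_\ell$, together with $\begin{pmatrix}1&I_p\\1&-I_p\end{pmatrix}^{-1}=\tfrac{1}{2}\begin{pmatrix}1&1\\-I_p&I_p\end{pmatrix}$ (which uses only $I_p^2=-\id$), yields
\begin{equation*}
	(q-p)^{*\ell}a_\ell=\tfrac{1}{2}(1-I_qI_p)\big(z_q^{I_p}-p\big)^{\ell}a_\ell+\tfrac{1}{2}(1+I_qI_p)\big(z_q^{-I_p}-p\big)^{\ell}a_\ell .
\end{equation*}
Since $\|I_qI_p\|\le\|I_q\|\,\|I_p\|\le 2$ by \eqref{eq-mul} (slice-units in $\mathfrak{S}$ have unit norm), each operator coefficient has norm $\le\tfrac{3}{2}$; and since $\Psi_i^{I_p}$ and $\Psi_i^{-I_p}$ are Euclidean isometries, $|z_q^{I_p}-p|=|z_q-z_p|$ and $|z_q^{-I_p}-p|=|z_q-\overline{z_p}|$, so $|(q-p)^{*\ell}a_\ell|\le\tfrac{3}{2}\big(|z_q-z_p|^{\ell}+|z_q-\overline{z_p}|^{\ell}\big)|a_\ell|$. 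In all cases $\limsup_{\ell\to+\infty}|(q-p)^{*\ell}a_\ell|^{1/\ell}\le\rho_q\cdot\limsup_{\ell\to+\infty}|a_\ell|^{1/\ell}=\rho_q/R_a=0$ for the finite $\rho_q$ occurring (by \eqref{eq-Ra} and the convention $1/(+\infty)=0$), so the root test gives absolute convergence at $q$; the bounds are uniform on compact subsets of each slice, hence the convergence is locally uniform on every $\mathbb{C}_I$, and since each partial sum is a slice regular polynomial and slice regularity is preserved under locally uniform limits on each slice, $P$ is slice regular on $\mathcal{W}_{\mathfrak{S}}=\Sigma(p,a)$.

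Finally, fix $\varepsilon\in(0,+\infty)$. On $\Sigma(p,\varepsilon)\cap\mathbb{C}_{I_p}$ one has $|q-p|<\varepsilon$, so the first estimate gives $|(q-p)^{*\ell}a_\ell|\le\sqrt{2}\,\varepsilon^{\ell}|a_\ell|$; on $\Sigma(p,\varepsilon)\cap\mathbb{C}_J$ with $J\ne\pm I_p$, the definition \eqref{eq-sigmapr} forces $|z_q-z_p|<\varepsilon$ and $|z_q-\overline{z_p}|<\varepsilon$ at once, so the second estimate gives $|(q-p)^{*\ell}a_\ell|\le 3\,\varepsilon^{\ell}|a_\ell|$. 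Hence $\sup_{q\in\Sigma(p,\varepsilon)}|(q-p)^{*\ell}a_\ell|\le 3\varepsilon^{\ell}|a_\ell|$; since the monomials are continuous on each slice and, arguing as in the passage from \eqref{eq-clo} to \eqref{eq-exttau}, $\clo_{\tau_s}[\Sigma(p,\varepsilon)]$ meets each slice $\mathbb{C}_I$ inside $\clo_I[\Sigma(p,\varepsilon)\cap\mathbb{C}_I]$, this bound persists on $\clo_{\tau_s}[\Sigma(p,\varepsilon)]$. Because $\limsup_{\ell\to+\infty}(3\varepsilon^{\ell}|a_\ell|)^{1/\ell}=\varepsilon/R_a=0<1$, the Weierstrass $M$-test gives uniform convergence of $P$ there. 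The two points demanding care — which I expect to be the only genuine obstacles — are deriving the operator bound $\|I_qI_p\|\le 2$ and the inverse above using only identities valid in the non-alternative algebra $\mathfrak{S}$, namely \eqref{eq-mul} and $L_s^2=-\id$ for slice-units, and the slice-by-slice control of the closure $\clo_{\tau_s}[\Sigma(p,\varepsilon)]$; the remainder is a routine root/Weierstrass argument.
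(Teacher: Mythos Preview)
Your proposal is correct and follows essentially the same route as the paper: both identify $\Sigma(p,a)=\mathcal{W}_{\mathfrak{S}}$ when $R_a=+\infty$, then use the representation formula to rewrite $(q-p)^{*\ell}a_\ell$ as a combination of ordinary powers $(z_q^{\pm I_p}-p)^{\ell}a_\ell$ with bounded operator coefficients, and conclude by a root/Weierstrass argument. The only notable difference is in handling the closure: where you bound $(q-p)^{*\ell}a_\ell$ directly on $\Sigma(p,\varepsilon)$ and then argue that the bound persists on $\clo_{\tau_s}[\Sigma(p,\varepsilon)]$ via a slicewise closure comparison, the paper sidesteps this by using the inclusion $\clo_{\tau_s}[\Sigma(p,\varepsilon)]\subset\Sigma(p,\varepsilon+1)$ and applying Proposition~\ref{pr-lipms} on $B_{I_p}(p,\varepsilon+1)$ to get a uniform tail estimate there, which then transfers through the representation formula.
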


\begin{proof}
     {Let $\varepsilon\in(0,+\infty)$ be arbitrary and since}
    \begin{equation*}
        \mathcal{W}_\mathfrak{S}=\bigcup_{r\in(0,+\infty)}\Sigma(p,r),
    \end{equation*}
    we only need to prove that $P$ converges uniformly in $\clo_{\tau_s}\left[\Sigma(p,\varepsilon)\right]$.

    Let $\delta>0$. By Proposition \ref{pr-lipms}, $P$ converges uniformly in $B_{I_p}(p,\varepsilon+1)$. Then there is $m\in\mathbb{N}$ such that
    \begin{equation*}
        \left|\sum_{\imath=\ell}^{+\infty}(w-p)^\imath a_\imath\right|<\frac{\delta}{8},
        \qquad\qquad\forall\ \ell>m\quad\mbox{and}\quad w\in B_{I_p}(p,\varepsilon+1).
    \end{equation*}
    Let $q\in\clo_{\tau_s}(\Sigma(p,\varepsilon))\subset\Sigma(p,\varepsilon+1)$. Then
    \begin{equation*}
        z_p^{I_p},z_p^{-I_p}\in B_{I_p}(p,\varepsilon+1).
    \end{equation*}
    By the representation formula, for each $\ell>m$, we have
    \begin{equation*}
        \begin{split}
            &\left|\sum_{\imath=\ell}^{+\infty}(q-p)^{*\imath} a_\imath\right|
            \\=&\left|-\frac{I_q+I_p}{2}I_p\sum_{\imath=\ell}^{+\infty}\left(z_q^{I_p}-p\right)^\imath a_\imath+\frac{I_q-I_p}{2}I_p\sum_{\imath=\ell}^{+\infty}\left(z_q^{-I_p}-p\right)^\imath a_\imath\right|
            \\\le & 4\left|\frac{I_q+I_p}{2}\right||I_p|\left|\sum_{\imath=\ell}^{+\infty}\left(z_q^{I_p}-p\right)^\imath a_\imath\right|+4\left|\frac{I_q-I_p}{2}\right||I_p|\left|\sum_{\imath=\ell}^{+\infty}\left(z_q^{-I_p}-p\right)^\imath a_\imath\right|
            \\\le & 4\cdot 1\cdot 1\cdot\frac{\delta}{8}+4\cdot 1\cdot 1\cdot\frac{\delta}{8}=\delta.
        \end{split}
    \end{equation*}
    It implies that $P$ converges uniformly in $\clo_{\tau_s}\left[\Sigma(p,\varepsilon)\right]$.
\end{proof}

Let $(X,\tau)$ be a topological space, $V\subset X$ and $U\in\tau(X)$. It is easy to check that
\begin{equation}\label{eq-taux}
    {\ext}_{\tau(X)}(V)\cap U={\ext}_{\tau(X)}(V\cap U)\cap U.
\end{equation}

We are now in position to prove our main result that we repeat for the reader's convenience.\\
{\bf Theorem \ref{thm-lpmw}}
{\em
	Let $p\in\mathcal{W}_{\mathfrak{S}}$ and $a=\{a_\ell\}_{\ell\in\mathbb{N}}$ with $a_\ell\in\mathfrak{S}$. Then the power series
	\begin{equation*}
		\begin{split}
			P:\ \mathcal{W}_\mathfrak{S}\ &\xlongrightarrow[\hskip1cm]{}\ \mathfrak{S},
			\\ q\ \ &\shortmid\!\xlongrightarrow[\hskip1cm]{}\ P(q)=\sum_{\ell\in\mathbb{N}} (q-p)^{*\ell} a_\ell,
		\end{split}
	\end{equation*}
	converges absolutely and  {it is} slice regular in $\Sigma(p,a)$, moreover it diverges in
    ${\ext}_{\tau_s}\left[\Sigma(p,a)\right]$.
	
	Moreover, if $R_a\in(0,+\infty]$ and $\varepsilon\in\left(0,R_a\right)$, then the power series $P$ converges uniformly in
    \begin{equation*}
        \begin{cases}
            \clo_{\tau_s}\left[\Sigma(p,\varepsilon)\right],\qquad\qquad&R_a=+\infty,
            \\\clo_{\tau_s}\left[\Sigma(p,a,\varepsilon)\right],\qquad\qquad&\mbox{otherwise.}
        \end{cases}
    \end{equation*}
}

\begin{proof}[Proof of Theorem \ref{thm-lpmw}]
	 Theorem \ref{thm-lpmw} holds when $R_a=0,+\infty$ by Proposition \ref{pr-0inf} and Proposition \ref{pr-0inf1}. Assume that $R_a\in (0,+\infty)$. Let
    \begin{equation*}
        \delta>0,\qquad r_1:=R_a-\frac{\varepsilon}{2},\qquad r_1^p:=R_a^p-\frac{\varepsilon}{2},
    \end{equation*}
    \begin{equation*}
        r_2\in\left(r_1,R_a\right),\qquad\mbox{and}\qquad r_2^p\in\left(r_1^p,R_a^p\right).
    \end{equation*}
    By Corollary \ref{cor-pws}, there is $m\in\mathbb{N}$ such that for each $\ell>m$ we have
    \begin{equation*}
        |a_\ell|<\frac{1}{r_2^\ell},
    \end{equation*}
    \begin{equation}\label{eq-2ai}
        \left|a_{\ell,\perp}^{p,J}\right|<\frac{1}{(r_2^p)^\ell},\qquad\forall\ J\in\mathcal{C}_\mathfrak{S}\quad\mbox{and}\quad R_a^{p,J}=R_a^p,
    \end{equation}
    and
    \begin{equation*}
        \max\left\{\frac{\left(\frac{r_1}{r_2}\right)^m}{1-\frac{r_1}{r_2}},\frac{\left(\frac{r_1^p}{r_2^p}\right)^m}{1-\frac{r_1^p}{r_2^p}}\right\}<\frac{\delta}{12}.
    \end{equation*}
	
	(i) Let $I\in\mathcal{C}_{\mathfrak{S}}$ with $p\in\mathbb{C}_I$, then by Proposition \ref{pr-lipms} and \eqref{eq-exttau}, the series $P$ converges absolutely in $B_I(p,R_a)=\left[\Sigma(p,a)\right]_I$, diverges in \begin{equation*}
	    {\ext}_{I} \left[B_I(p,R_a)\right]
        ={\ext}_{I} \left[\Sigma(p,a)\cap\mathbb{C}_I\right]
        ={\ext}_{\tau_s}\left[\Sigma(p,a)\right]\cap\mathbb{C}_I.
	\end{equation*}
    For each
	\begin{equation*}
		q\in{\clo}_{I}(B_I(p,R_a-\varepsilon))=\left({\clo}_{\tau_s}\left[\Sigma(p,a,\varepsilon)\right]\right)_I,
	\end{equation*}
	it follows from \eqref{eq-mul} that
	\begin{equation}\label{eq-bsls}
		\begin{split}
			\left|\sum_{\ell=m}^{+\infty} (q-p)^{*\ell}a_\ell\right|
			\le &\sqrt{2}\cdot \sum_{\ell=m}^{+\infty} |q-p|^{\ell}\left|a_\ell\right|
			\\< &\sqrt{2}\cdot\sum_{\ell=m}^{+\infty}(r_1)^\ell\frac{1}{(r_2)^\ell}=\sqrt{2}\cdot \frac{\left(\frac{r_1}{r_2}\right)^m}{1-\frac{r_1}{r_2}}<\frac{\delta}{6}.
		\end{split}
	\end{equation}
	
	(ii) Suppose that there is $J\in\mathcal{C}_{\mathfrak{S}}$ such that $p\notin\mathbb{C}_J$. By \eqref{eq-taux} and $\mathbb{C}_J^+\in\tau(\mathbb{C}_J)$,
    \begin{equation*}
        {\ext}_J\left(\Sigma(p,a)\cap\mathbb{C}_J^+\right)={\ext}_J(\Sigma(p,a)\cap\mathbb{C}_J).
    \end{equation*}
    According to Proposition \ref{pr-ljim}, \eqref{eq-sigmapa} and \eqref{eq-exttau}, $P$ converges in
    \begin{equation*}
        \mathbb{B}:=B_J\left(z^{J},R_a\right)\cap B_J\left(z^{-J}, R_a^{p,J}\right)\cap \mathbb{C}_{J}^+
        =
        \Sigma(p,a)\cap \mathbb{C}_{J}^+,
    \end{equation*}
    and diverges in
    \begin{equation*}
        \begin{split}
            {\ext}_{\tau\left(\mathbb{C}_J^+\right)}(\mathbb{B})
            =&{\ext}_J(\mathbb{B})\cap\mathbb{C}_J^+
            ={\ext}_J\left[\Sigma(p,a)\cap\mathbb{C}_J^+\right]\cap\mathbb{C}_J^+
            \\=&{\ext}_J\left[\Sigma(p,a)\cap\mathbb{C}_J\right]\cap\mathbb{C}_J^+
            ={\ext}_{\tau_s}\left[\Sigma(p,a)\right]\cap\mathbb{C}_J\cap\mathbb{C}_J^+
            \\=&{\ext}_{\tau_s}\left[\Sigma(p,a)\right]\cap\mathbb{C}_J^+.
        \end{split}
    \end{equation*}
    By (i), $P$ converges in $\Sigma(p,a)$ and converges in $\ext_{\tau_s}\left[\Sigma(p,a)\right]$.
	
	If
    \begin{equation*}
        \begin{split}
            q \in & {\clo}_{\tau_s}\left[\Sigma\left(p,a,\varepsilon\right)\right]
            \subset \Sigma\left(p,a,\frac{\varepsilon}{2}\right)
            \\=&\Sigma\left(p,R_a-\frac{\varepsilon}{2},\left(I_p,I_q\right)\right)\cap\Sigma\left(p, R_a^p-\frac{\varepsilon}{2}\right),
        \end{split}
    \end{equation*}
    then by \eqref{eq-oaeq} and \eqref{eq-rdra},
    \begin{equation*}
        R_o=R_a\le R_a^p=R_d\le R_c,
    \end{equation*}
    where $o$, $c$ and $d$ are defined in \eqref{eq-cldl} and \eqref{eq-oell}.
    By \eqref{eq-2ai}, for each $\ell>m$, we have
    \begin{equation*}
        |d_\ell|=\left|\frac{1+I_qI_p}{2} a_{\ell,\perp}^{p,q}\right|
        =\left|I_q\frac{-I_q+I_p}{2} a_{\ell,\perp}^{p,q}\right|\le (\sqrt 2 )^2\left|a_{\ell,\perp}^{p,q}\right|<\frac{2}{(r_2^p)^\ell}.
    \end{equation*}
    Similarly,
    \begin{equation*}
        |o_\ell|,|c_\ell|\le2|a_\ell|<\frac{2}{(r_2)^\ell}.
    \end{equation*}

    Therefore,
	\begin{equation*}
		\begin{split}
			&\left|\sum_{\ell=m}^{+\infty} (q-p)^{*\ell} a_\ell\right|
			\\\le&\sqrt{2}\left( \sum_{\ell=m}^{+\infty} \left|q-z^{I_q}\right|^{\ell} |o_\ell|+\sum_{\ell=m}^{+\infty} \left|q-z^{I_q}\right|^{\ell} |c_\ell|+\sum_{\ell=m}^{+\infty} \left|q-z^{-I_q}\right|^{\ell} |d_\ell|\right)
			\\<&\sqrt{2}\left( \sum_{\ell=m}^{+\infty}(r_1)^\ell\frac{2}{(r_2)^\ell}+\sum_{\ell=m}^{+\infty}(r_1)^\ell\frac{2}{(r_2)^\ell}+\sum_{\ell=m}^{+\infty}(r_1^p)^\ell\frac{2}{(r_2^p)^\ell}\right)
			\\=&\sqrt{2}\left(  4\cdot\frac{\left(\frac{r_1}{r_2}\right)^m}{1-\frac{r_1}{r_2}}+2\cdot\frac{\left(\frac{r_1^p}{r_2^p}\right)^m}{1-\frac{r_1^p}{r_2^p}}\right)<\sqrt{2}\left(\frac{4\delta}{12}+\frac{2\delta}{12}\right)<\delta.
		\end{split}
	\end{equation*}
	By \eqref{eq-bsls}, $P$ converges uniformly in  $\clo_{\tau_s}\left[\Sigma(p,a,\varepsilon)\right]$. It implies that $P$ is a slice regular function in $\Sigma(p,a)$.
\end{proof}

\newpage

%\bibliographystyle{plain}
%\bibliography{mybibfile}

% \bib, bibdiv, biblist are defined by the amsrefs package.
\begin{bibdiv}
\begin{biblist}

\bib{Adler1995001B}{book}{
      author={Adler, Stephen~L.},
       title={Quaternionic quantum mechanics and quantum fields},
      series={International Series of Monographs on Physics},
   publisher={The Clarendon Press, Oxford University Press, New York},
        date={1995},
      volume={88},
        ISBN={0-19-506643-X},
      review={\MR{1333599}},
}

\bib{Alpay2016001B}{book}{
      author={Alpay, Daniel},
      author={Colombo, Fabrizio},
      author={Sabadini, Irene},
       title={Slice hyperholomorphic {S}chur analysis},
      series={Operator Theory: Advances and Applications},
   publisher={Birkh\"auser/Springer, Cham},
        date={2016},
      volume={256},
        ISBN={978-3-319-42513-9; 978-3-319-42514-6},
         url={https://doi.org/10.1007/978-3-319-42514-6},
      review={\MR{3585855}},
}

\bib{Biss2009001}{article}{
      author={Biss, Daniel~K.},
      author={Christensen, J.~Daniel},
      author={Dugger, Daniel},
      author={Isaksen, Daniel~C.},
       title={Eigentheory of {C}ayley-{D}ickson algebras},
        date={2009},
        ISSN={0933-7741,1435-5337},
     journal={Forum Math.},
      volume={21},
      number={5},
       pages={833\ndash 851},
         url={https://doi.org/10.1515/FORUM.2009.041},
      review={\MR{2560394}},
}

\bib{Colombo2024001}{article}{
      author={Cerejeiras, P.},
      author={Colombo, F.},
      author={Debernardi~Pinos, A.},
      author={K\"ahler, U.},
      author={Sabadini, I.},
       title={Nuclearity and {G}rothendieck-{L}idskii formula for quaternionic
  operators},
        date={2024},
        ISSN={0001-8708,1090-2082},
     journal={Adv. Math.},
      volume={442},
       pages={Paper No. 109558, 51},
         url={https://doi.org/10.1016/j.aim.2024.109558},
      review={\MR{4710871}},
}

\bib{Chanyal2014001}{article}{
      author={Chanyal, Bhupesh~C.},
       title={Sedenion unified theory of gravi-electromagnetism},
        date={2014},
        ISSN={0973-1458},
     journal={Indian J. Phys.},
      volume={88},
      number={11},
       pages={1197\ndash 1205},
}

\bib{Colombo2019001B}{book}{
      author={Colombo, Fabrizio},
      author={Gantner, Jonathan},
       title={Quaternionic closed operators, fractional powers and fractional
  diffusion processes},
      series={Operator Theory: Advances and Applications},
   publisher={Birkh\"{a}user/Springer, Cham},
        date={2019},
      volume={274},
        ISBN={978-3-030-16408-9; 978-3-030-16409-6},
         url={https://doi.org/10.1007/978-3-030-16409-6},
      review={\MR{3967697}},
}

\bib{Colombo2018001B}{book}{
      author={Colombo, Fabrizio},
      author={Gantner, Jonathan},
      author={Kimsey, David~P.},
       title={Spectral theory on the {S}-spectrum for quaternionic operators},
      series={Operator Theory: Advances and Applications},
   publisher={Birkh\"auser/Springer, Cham},
        date={2018},
      volume={270},
        ISBN={978-3-030-03073-5; 978-3-030-03074-2},
      review={\MR{3887616}},
}

\bib{Colombo2022001}{article}{
      author={Colombo, Fabrizio},
      author={Gantner, Jonathan},
      author={Kimsey, David~P.},
      author={Sabadini, Irene},
       title={Universality property of the {$S$}-functional calculus,
  noncommuting matrix variables and {C}lifford operators},
        date={2022},
        ISSN={0001-8708,1090-2082},
     journal={Adv. Math.},
      volume={410},
       pages={Paper No. 108719, 39},
         url={https://doi.org/10.1016/j.aim.2022.108719},
      review={\MR{4496722}},
}

\bib{Colombo2009001}{article}{
      author={Colombo, Fabrizio},
      author={Gentili, Graziano},
      author={Sabadini, Irene},
      author={Struppa, Daniele},
       title={Extension results for slice regular functions of a quaternionic
  variable},
        date={2009},
        ISSN={0001-8708},
     journal={Adv. Math.},
      volume={222},
      number={5},
       pages={1793\ndash 1808},
         url={https://doi.org/10.1016/j.aim.2009.06.015},
      review={\MR{2555912}},
}

\bib{Colombo2007001}{article}{
      author={Colombo, Fabrizio},
      author={Gentili, Graziano},
      author={Sabadini, Irene},
      author={Struppa, Daniele~C.},
       title={A functional calculus in a noncommutative setting},
        date={2007},
        ISSN={1935-9179},
     journal={Electron. Res. Announc. Math. Sci.},
      volume={14},
       pages={60\ndash 68},
      review={\MR{2342715}},
}

\bib{MR2089988}{book}{
      author={Colombo, Fabrizio},
      author={Sabadini, Irene},
      author={Sommen, Franciscus},
      author={Struppa, Daniele~C.},
       title={Analysis of {D}irac systems and computational algebra},
      series={Progress in Mathematical Physics},
   publisher={Birkh\"{a}user Boston, Inc., Boston, MA},
        date={2004},
      volume={39},
        ISBN={0-8176-4255-2},
         url={https://doi.org/10.1007/978-0-8176-8166-1},
      review={\MR{2089988}},
}

\bib{Colombo2009002}{article}{
      author={Colombo, Fabrizio},
      author={Sabadini, Irene},
      author={Struppa, Daniele~C.},
       title={Slice monogenic functions},
        date={2009},
        ISSN={0021-2172},
     journal={Israel J. Math.},
      volume={171},
       pages={385\ndash 403},
         url={https://doi.org/10.1007/s11856-009-0055-4},
      review={\MR{2520116}},
}

\bib{Colombo2011001B}{book}{
      author={Colombo, Fabrizio},
      author={Sabadini, Irene},
      author={Struppa, Daniele~C.},
       title={Noncommutative functional calculus},
      series={Progress in Mathematics},
   publisher={Birkh\"auser/Springer Basel AG, Basel},
        date={2011},
      volume={289},
        ISBN={978-3-0348-0109-6},
         url={https://doi.org/10.1007/978-3-0348-0110-2},
        note={Theory and applications of slice hyperholomorphic functions},
      review={\MR{2752913}},
}

\bib{Colombo2012002}{article}{
      author={Colombo, Fabrizio},
      author={Sabadini, Irene},
      author={Struppa, Daniele~C.},
       title={Algebraic properties of the module of slice regular functions in
  several quaternionic variables},
        date={2012},
        ISSN={0022-2518},
     journal={Indiana Univ. Math. J.},
      volume={61},
      number={4},
       pages={1581\ndash 1602},
         url={https://doi.org/10.1512/iumj.2012.61.4978},
      review={\MR{3085619}},
}

\bib{Corti2015001}{article}{
      author={Corti, Alessio},
      author={Haskins, Mark},
      author={Nordstr{\"o}m, Johannes},
      author={Pacini, Tommaso},
       title={{$G_2$}-manifolds and associative submanifolds via semi-{F}ano
  3-folds},
        date={2015},
        ISSN={0012-7094,1547-7398},
     journal={Duke Math. J.},
      volume={164},
      number={10},
       pages={1971\ndash 2092},
         url={https://doi.org/10.1215/00127094-3120743},
      review={\MR{3369307}},
}

\bib{Cullen1965001}{article}{
      author={Cullen, Charles.~G.},
       title={An integral theorem for analytic intrinsic functions on
  quaternions},
        date={1965},
        ISSN={0012-7094},
     journal={Duke Math. J.},
      volume={32},
       pages={139\ndash 148},
         url={http://projecteuclid.org/euclid.dmj/1077375642},
      review={\MR{0173012}},
}

\bib{Dickson1919001}{article}{
      author={Dickson, Leonard~E.},
       title={On quaternions and their generalization and the history of the
  eight square theorem},
        date={1919},
        ISSN={0003-486X},
     journal={Ann. of Math. (2)},
      volume={20},
      number={3},
       pages={155\ndash 171},
         url={https://doi.org/10.2307/1967865},
      review={\MR{1502549}},
}

\bib{Dou2021001}{article}{
      author={Dou, Xinyuan},
      author={Jin, Ming},
      author={Ren, Guangbin},
      author={Sabadini, Irene},
       title={A new approach to slice analysis via slice topology},
        date={2021},
        ISSN={0188-7009,1661-4909},
     journal={Adv. Appl. Clifford Algebr.},
      volume={31},
      number={5},
       pages={Paper No. 67, 36},
         url={https://doi.org/10.1007/s00006-021-01170-3},
      review={\MR{4309797}},
}

\bib{Dou2024003}{article}{
      author={Dou, Xinyuan},
      author={Ming, Jin},
      author={Ren, Ren~Guangbin},
      author={Ting, Yang},
       title={Algebra of slice regular functions on non-symmetric domains in
  several quaternionic variables},
        date={2024},
      eprint={arXiv:2401.04895},
}

\bib{Dou2024002}{article}{
      author={Dou, Xinyuan},
      author={Ming, Jin},
      author={Ren, Ren~Guangbin},
      author={Ting, Yang},
       title={Path-slice star-product on non-axially symmetric domains in
  several quaternionic variables},
        date={2024},
      eprint={arXiv:2401.04401},
}

\bib{Dou2024001}{article}{
      author={Dou, Xinyuan},
      author={Ming, Jin},
      author={Ren, Ren~Guangbin},
      author={Ting, Yang},
       title={Zeroes of weakly slice regular functions of several quaternionic
  variables on non-axially symmetric domains},
        date={2024},
      eprint={arXiv:2401.04899},
}

\bib{Dou2022001}{article}{
      author={Dou, Xinyuan},
      author={Ren, Guangbin},
      author={Sabadini, Irene},
       title={Extension theorem and representation formula in
  non-axially-symmetric domains for slice regular functions},
        date={2023},
        ISSN={1435-9855,1435-9863},
     journal={J. Eur. Math. Soc. (JEMS)},
      volume={25},
      number={9},
       pages={3665\ndash 3694},
         url={https://doi.org/10.4171/jems/1260},
      review={\MR{4634680}},
}

\bib{Dou2023002}{article}{
      author={Dou, Xinyuan},
      author={Ren, Guangbin},
      author={Sabadini, Irene},
       title={A representation formula for slice regular functions over
  slice-cones in several variables},
        date={2023},
        ISSN={0373-3114,1618-1891},
     journal={Ann. Mat. Pura Appl. (4)},
      volume={202},
      number={5},
       pages={2421\ndash 2446},
         url={https://doi.org/10.1007/s10231-023-01325-y},
      review={\MR{4634271}},
}

\bib{Figueroa1998001}{article}{
      author={Figueroa-O'Farrill, J.~M.},
      author={K\"ohl, C.},
      author={Spence, B.},
       title={Supersymmetric {Y}ang-{M}ills, octonionic instantons and
  triholomorphic curves},
        date={1998},
        ISSN={0550-3213,1873-1562},
     journal={Nuclear Phys. B},
      volume={521},
      number={3},
       pages={419\ndash 443},
         url={https://doi.org/10.1016/S0550-3213(98)00285-5},
      review={\MR{1635760}},
}

\bib{Fliess}{article}{
      author={Fliess, Michel},
       title={Matrices de {H}ankel},
        date={1974},
        ISSN={0021-7824},
     journal={J. Math. Pures Appl. (9)},
      volume={53},
       pages={197\ndash 222},
      review={\MR{364328}},
}

\bib{Frenkel2008001}{article}{
      author={Frenkel, Igor},
      author={Libine, Matvei},
       title={Quaternionic analysis, representation theory and physics},
        date={2008},
        ISSN={0001-8708,1090-2082},
     journal={Adv. Math.},
      volume={218},
      number={6},
       pages={1806\ndash 1877},
         url={https://doi.org/10.1016/j.aim.2008.03.021},
      review={\MR{2431662}},
}

\bib{Fueter1934001}{article}{
      author={Fueter, Rudolf},
       title={Die {F}unktionentheorie der {D}ifferentialgleichungen {$\Delta
  u=0$} und {$\Delta\Delta u=0$} mit vier reellen {V}ariablen},
        date={1934},
        ISSN={0010-2571},
     journal={Comment. Math. Helv.},
      volume={7},
      number={1},
       pages={307\ndash 330},
         url={https://doi.org/10.1007/BF01292723},
      review={\MR{1509515}},
}

\bib{Fueter1935001}{article}{
      author={Fueter, Rudolf},
       title={\"{U}ber die analytische {D}arstellung der regul\"{a}ren
  {F}unktionen einer {Q}uaternionenvariablen},
        date={1935},
        ISSN={0010-2571},
     journal={Comment. Math. Helv.},
      volume={8},
      number={1},
       pages={371\ndash 378},
         url={https://doi.org/10.1007/BF01199562},
      review={\MR{1509533}},
}

\bib{Gantner2020001}{article}{
      author={Gantner, Jonathan},
       title={Operator theory on one-sided quaternion linear spaces: intrinsic
  {$S$}-functional calculus and spectral operators},
        date={2020},
        ISSN={0065-9266,1947-6221},
     journal={Mem. Amer. Math. Soc.},
      volume={267},
      number={1297},
       pages={iii+101},
         url={https://doi.org/10.1090/memo/1297},
      review={\MR{4195727}},
}

\bib{Gentili2008001}{article}{
      author={Gentili, Graziano},
      author={Stoppato, Caterina},
       title={Zeros of regular functions and polynomials of a quaternionic
  variable},
        date={2008},
        ISSN={0026-2285},
     journal={Michigan Math. J.},
      volume={56},
      number={3},
       pages={655\ndash 667},
         url={https://doi.org/10.1307/mmj/1231770366},
      review={\MR{2490652}},
}

\bib{Gentili2012001}{article}{
      author={Gentili, Graziano},
      author={Stoppato, Caterina},
       title={Power series and analyticity over the quaternions},
        date={2012},
        ISSN={0025-5831},
     journal={Math. Ann.},
      volume={352},
      number={1},
       pages={113\ndash 131},
         url={https://doi.org/10.1007/s00208-010-0631-2},
      review={\MR{2885578}},
}

\bib{Gentili2007001}{article}{
      author={Gentili, Graziano},
      author={Struppa, Daniele~C.},
       title={A new theory of regular functions of a quaternionic variable},
        date={2007},
        ISSN={0001-8708},
     journal={Adv. Math.},
      volume={216},
      number={1},
       pages={279\ndash 301},
         url={https://doi.org/10.1016/j.aim.2007.05.010},
      review={\MR{2353257}},
}

\bib{Gentili2008004}{article}{
      author={Gentili, Graziano},
      author={Struppa, Daniele~C.},
       title={On the multiplicity of zeroes of polynomials with quaternionic
  coefficients},
        date={2008},
        ISSN={1424-9286,1424-9294},
     journal={Milan J. Math.},
      volume={76},
       pages={15\ndash 25},
         url={https://doi.org/10.1007/s00032-008-0093-0},
      review={\MR{2465984}},
}

\bib{Gentili2010001}{article}{
      author={Gentili, Graziano},
      author={Struppa, Daniele~C.},
       title={Regular functions on the space of {C}ayley numbers},
        date={2010},
        ISSN={0035-7596},
     journal={Rocky Mountain J. Math.},
      volume={40},
      number={1},
       pages={225\ndash 241},
         url={https://doi.org/10.1216/RMJ-2010-40-1-225},
      review={\MR{2607115}},
}

\bib{Ghiloni2011001}{article}{
      author={Ghiloni, Riccardo},
      author={Perotti, Alessandro},
       title={Slice regular functions on real alternative algebras},
        date={2011},
        ISSN={0001-8708},
     journal={Adv. Math.},
      volume={226},
      number={2},
       pages={1662\ndash 1691},
         url={https://doi.org/10.1016/j.aim.2010.08.015},
      review={\MR{2737796}},
}

\bib{Ghiloni2012001}{article}{
      author={Ghiloni, Riccardo},
      author={Perotti, Alessandro},
       title={Slice regular functions of several {C}lifford variables},
        date={2012},
     journal={AIP Conference Proceedings},
      volume={1493, 734},
         url={https://doi.org/10.1063/1.4765569},
}

\bib{Ghiloni0214002}{article}{
      author={Ghiloni, Riccardo},
      author={Perotti, Alessandro},
       title={Power and spherical series over real alternative
  {$^*$}-algebras},
        date={2014},
        ISSN={0022-2518},
     journal={Indiana Univ. Math. J.},
      volume={63},
      number={2},
       pages={495\ndash 532},
         url={https://doi.org/10.1512/iumj.2014.63.5227},
      review={\MR{3233217}},
}

\bib{Ghiloni2022001}{article}{
      author={Ghiloni, Riccardo},
      author={Perotti, Alessandro},
       title={Slice regular functions in several variables},
        date={2022},
        ISSN={0025-5874,1432-1823},
     journal={Math. Z.},
      volume={302},
      number={1},
       pages={295\ndash 351},
         url={https://doi.org/10.1007/s00209-022-03066-9},
      review={\MR{4462677}},
}

\bib{Gillard2019001}{article}{
      author={Gillard, Adam~B.},
      author={Gresnigt, Niels~G.},
       title={Three fermion generations with two unbroken gauge symmetries from
  the complex sedenions},
        date={2019},
        ISSN={1434-6052},
     journal={Euro. Phys. J. C},
      volume={79},
       pages={446},
         url={https://doi.org/10.1140/epjc/s10052-019-6967-1},
}

\bib{Gunaydin2016001}{article}{
      author={G\"unaydin, Murat},
      author={L\"ust, Dieter},
      author={Malek, Emanuel},
       title={Non-associativity in non-geometric string and {M}-theory
  backgrounds, the algebra of octonions, and missing momentum modes},
        date={2016},
        ISSN={1126-6708,1029-8479},
     journal={J. High Energy Phys.},
      number={11},
       pages={027, front matter + 30},
         url={https://doi.org/10.1007/JHEP11(2016)027},
      review={\MR{3584460}},
}

\bib{MR2369875}{book}{
      author={G\"{u}rlebeck, Klaus},
      author={Habetha, Klaus},
      author={Spr\"{o}\ss~ig, Wolfgang},
       title={Holomorphic functions in the plane and {$n$}-dimensional space},
   publisher={Birkh\"{a}user Verlag, Basel},
        date={2008},
        ISBN={978-3-7643-8271-1},
        note={Translated from the 2006 German original, With 1 CD-ROM (Windows
  and UNIX)},
      review={\MR{2369875}},
}

\bib{Khokulan2015001}{article}{
      author={Khokulan, Mohananathan},
      author={Thirulogasanthar, Kengatharam},
      author={Muraleetharan, Balachandiran},
       title={S-spectrum and associated continuous frames on quaternionic
  {H}ilbert spaces},
        date={2015},
        ISSN={0393-0440,1879-1662},
     journal={J. Geom. Phys.},
      volume={96},
       pages={107\ndash 122},
         url={https://doi.org/10.1016/j.geomphys.2015.06.007},
      review={\MR{3372022}},
}

\bib{Li2013001}{article}{
      author={Li, Yi},
      author={Wu, Congjun},
       title={High-dimensional topological insulators with quaternionic
  analytic landau levels},
        date={2013},
        ISSN={0031-9007},
     journal={Phys. Rev. Lett.},
      volume={110},
      number={21},
}

\bib{Mirzaiyan2023001}{article}{
      author={Mirzaiyan, Zahra},
      author={Esposito, Giampiero},
       title={Generating rotating black hole solutions by using the
  {C}ayley-{D}ickson construction},
        date={2023},
        ISSN={0003-4916,1096-035X},
     journal={Ann. Physics},
      volume={450},
       pages={Paper No. 169223, 15},
         url={https://doi.org/10.1016/j.aop.2023.169223},
      review={\MR{4537578}},
}

\bib{Moisil}{article}{
      author={Moisil, Gr.~C.},
       title={Sur les quaternions monog\'enes},
        date={1931},
     journal={Bull. Sc. Math. II},
      volume={LV},
       pages={168\ndash 175},
}

\bib{Moreno1998001}{article}{
      author={Moreno, Guillermo},
       title={The zero divisors of the {C}ayley-{D}ickson algebras over the
  real numbers},
        date={1998},
        ISSN={1405-213X},
     journal={Bol. Soc. Mat. Mexicana (3)},
      volume={4},
      number={1},
       pages={13\ndash 28},
      review={\MR{1625585}},
}

\bib{Muraleetharan2018001}{article}{
      author={Muraleetharan, Balachandiran},
      author={Sabadini, Irene},
      author={Thirulogasanthar, Kengatharam},
       title={S-spectrum and the quaternionic {C}ayley transform of an
  operator},
        date={2018},
        ISSN={0393-0440,1879-1662},
     journal={J. Geom. Phys.},
      volume={124},
       pages={442\ndash 455},
         url={https://doi.org/10.1016/j.geomphys.2017.12.001},
      review={\MR{3754524}},
}

\bib{Muraleetharan2017001}{article}{
      author={Muraleetharan, Balachandiran},
      author={Thirulogasanthar, Kengatharam},
      author={Sabadini, Irene},
       title={A representation of {W}eyl-{H}eisenberg {L}ie algebra in the
  quaternionic setting},
        date={2017},
        ISSN={0003-4916,1096-035X},
     journal={Ann. Physics},
      volume={385},
       pages={180\ndash 213},
         url={https://doi.org/10.1016/j.aop.2017.07.014},
      review={\MR{3707610}},
}

\bib{Muses1980001}{article}{
      author={Mus\`es, Charles~A.},
       title={Hypernumbers and quantum field theory with a summary of
  physically applicable hypernumber arithmetics and their geometries},
        date={1980},
        ISSN={0096-3003},
     journal={Appl. Math. Comput.},
      volume={6},
      number={1},
       pages={63\ndash 94},
         url={https://doi.org/10.1016/0096-3003(80)90016-8},
      review={\MR{552808}},
}

\bib{Qian1998001}{article}{
      author={Qian, Tao},
       title={Singular integrals on star-shaped {L}ipschitz surfaces in the
  quaternionic space},
        date={1998},
        ISSN={0025-5831,1432-1807},
     journal={Math. Ann.},
      volume={310},
      number={4},
       pages={601\ndash 630},
         url={https://doi.org/10.1007/s002080050162},
      review={\MR{1619732}},
}

\bib{Schafer1954001}{article}{
      author={Schafer, Richard~D.},
       title={On the algebras formed by the {C}ayley-{D}ickson process},
        date={1954},
        ISSN={0002-9327,1080-6377},
     journal={Amer. J. Math.},
      volume={76},
       pages={435\ndash 446},
         url={https://doi.org/10.2307/2372583},
      review={\MR{61098}},
}

\bib{Sudbery1979001}{article}{
      author={Sudbery, Anthony},
       title={Quaternionic analysis},
        date={1979},
        ISSN={0305-0041,1469-8064},
     journal={Math. Proc. Cambridge Philos. Soc.},
      volume={85},
      number={2},
       pages={199\ndash 224},
         url={https://doi.org/10.1017/S0305004100055638},
      review={\MR{516081}},
}

\bib{Thirulogasanthar2017001}{article}{
      author={Thirulogasanthar, Kengatharam},
      author={Twareque~Ali, Syed},
       title={General construction of reproducing kernels on a quaternionic
  {H}ilbert space},
        date={2017},
        ISSN={0129-055X,1793-6659},
     journal={Rev. Math. Phys.},
      volume={29},
      number={5},
       pages={1750017, 29},
         url={https://doi.org/10.1142/S0129055X17500179},
      review={\MR{3663094}},
}

\bib{Wang2022001}{article}{
      author={Wang, Chao},
      author={Qin, Guangzhou},
      author={Agarwal, Ravi~P.},
       title={Quaternionic exponentially dichotomous operators through
  {$S$}-spectral splitting and applications to {C}auchy problem},
        date={2022},
        ISSN={0001-8708,1090-2082},
     journal={Adv. Math.},
      volume={410},
       pages={Paper No. 108747, 97},
         url={https://doi.org/10.1016/j.aim.2022.108747},
      review={\MR{4496729}},
}

\bib{Xu2024001}{article}{
      author={Xu, Zhenghua},
      author={Sabadini, Irene},
       title={Generalized partial-slice monogenic functions: a synthesis of two
  function theories},
        date={2024},
        ISSN={0188-7009,1661-4909},
     journal={Adv. Appl. Clifford Algebr.},
      volume={34},
      number={2},
       pages={Paper No. 10, 14},
         url={https://doi.org/10.1007/s00006-024-01314-1},
      review={\MR{4715294}},
}

\end{biblist}
\end{bibdiv}

\printindex

\newpage

\section*{Appendix: Sedenion multiplication table}

\begin{table}[htbp]
	\centering\small
	\resizebox{\textwidth}{3.5cm}{
		\begin{tabular}{|>{\columncolor[gray]{.9}}c|c c c c|c c c c|c c c c|c c c c|}
			\hline\rowcolor[gray]{.9}$\cdot$&$1$&$e_1$&$e_2$&$e_3$&$e_4$&$e_5$&$e_6$&$e_7$&$e_8$&$e_9$&$e_{10}$&$e_{11}$&$e_{12}$&$e_{13}$&$e_{14}$&$e_{15}$\\
			\hline$1$&$1$&$e_1$&$e_2$&$e_3$&$e_4$&$e_5$&$e_6$&$e_7$&$e_8$&$e_9$&$e_{10}$&$e_{11}$&$e_{12}$&$e_{13}$&$e_{14}$&$e_{15}$\\
			$e_1$&$e_1$&$-1$&$e_3$&$-e_2$&$e_5$&$-e_4$&$-e_7$&$e_6$&$e_9$&$-e_8$&$-e_{11}$&$e_{10}$&$-e_{13}$&$e_{12}$&$e_{15}$&$-e_{14}$\\
			$e_2$&$e_2$&$-e_3$&$-1$&$e_1$&$e_6$&$e_7$&$-e_4$&$-e_5$&$e_{10}$&$e_{11}$&$-e_8$&$-e_9$&$-e_{14}$&$-e_{15}$&$e_{12}$&$e_{13}$\\
			$e_3$&$e_3$&$e_2$&$-e_1$&$-1$&$e_7$&$-e_6$&$e_5$&$-e_4$&$e_{11}$&$-e_{10}$&$e_9$&$-e_8$&$-e_{15}$&$e_{14}$&$-e_{13}$&$e_{12}$\\
			\hline$e_4$&$e_4$&$-e_5$&$-e_6$&$-e_7$&$-1$&$e_1$&$e_2$&$e_3$&$e_{12}$&$e_{13}$&$e_{14}$&$e_{15}$&$-e_8$&$-e_9$&$-e_{10}$&$-e_{11}$\\
			$e_5$&$e_5$&$e_4$&$-e_7$&$e_6$&$-e_1$&$-1$&$-e_3$&$e_2$&$e_{13}$&$-e_{12}$&$e_{15}$&$-e_{14}$&$e_9$&$-e_8$&$e_{11}$&$-e_{10}$\\
			$e_6$&$e_6$&$e_7$&$e_4$&$-e_5$&$-e_2$&$e_3$&$-1$&$-e_1$&$e_{14}$&$-e_{15}$&$-e_{12}$&$e_{13}$&$e_{10}$&$-e_{11}$&$-e_8$&$e_9$\\
			$e_7$&$e_7$&$-e_6$&$e_5$&$e_4$&$-e_3$&$-e_2$&$e_1$&$-1$&$e_{15}$&$e_{14}$&$-e_{13}$&$-e_{12}$&$e_{11}$&$e_{10}$&$-e_9$&$-e_8$\\
			\hline$e_8$&$e_8$&$-e_9$&$-e_{10}$&$-e_{11}$&$-e_{12}$&$-e_{13}$&$-e_{14}$&$-e_{15}$&$-1$&$e_1$&$e_2$&$e_3$&$e_4$&$e_5$&$e_6$&$e_7$\\
			$e_9$&$e_9$&$e_8$&$-e_{11}$&$e_{10}$&$-e_{13}$&$e_{12}$&$e_{15}$&$-e_{14}$&$-e_1$&$-1$&$-e_3$&$e_2$&$-e_5$&$e_4$&$e_7$&$-e_6$\\
			$e_{10}$&$e_{10}$&$e_{11}$&$e_8$&$-e_9$&$-e_{14}$&$-e_{15}$&$e_{12}$&$e_{13}$&$-e_2$&$e_3$&$-1$&$-e_1$&$-e_6$&$-e_7$&$e_4$&$e_5$\\
			$e_{11}$&$e_{11}$&$-e_{10}$&$e_9$&$e_8$&$-e_{15}$&$e_{14}$&$-e_{13}$&$e_{12}$&$-e_3$&$-e_2$&$e_1$&$-1$&$-e_7$&$e_6$&$-e_5$&$e_4$\\
			\hline$e_{12}$&$e_{12}$&$e_{13}$&$e_{14}$&$e_{15}$&$e_8$&$-e_9$&$-e_{10}$&$-e_{11}$&$-e_4$&$e_5$&$e_6$&$e_7$&$-1$&$-e_1$&$-e_2$&$-e_3$\\
			$e_{13}$&$e_{13}$&$-e_{12}$&$e_{15}$&$-e_{14}$&$e_9$&$e_8$&$e_{11}$&$-e_{10}$&$-e_5$&$-e_4$&$e_7$&$-e_6$&$e_1$&$-1$&$e_3$&$-e_2$\\
			$e_{14}$&$e_{14}$&$-e_{15}$&$-e_{12}$&$e_{13}$&$e_{10}$&$-e_{11}$&$e_8$&$e_9$&$-e_6$&$-e_7$&$-e_4$&$e_5$&$e_2$&$-e_3$&$-1$&$e_1$\\
			$e_{15}$&$e_{15}$&$e_{14}$&$-e_{13}$&$-e_{12}$&$e_{11}$&$e_{10}$&$-e_9$&$e_8$&$-e_7$&$e_6$&$-e_5$&$-e_4$&$e_3$&$e_2$&$-e_1$&$-1$\\\hline
	\end{tabular}}
\end{table}

\end{document}